\theoremstyle{plain}
\newtheorem{theorem}{Theorem}[section]
\newtheorem*{theoremB}{Theorem A}
\newtheorem*{corollaryC}{Theorem B}
\newtheorem{lemma}[theorem]{Lemma}
\newtheorem{proposition}[theorem]{Proposition}
\newtheorem{proposition-definition}[theorem]{Proposition-Definition}
\newtheorem{corollary}[theorem]{Corollary}
\newtheorem{claim}[theorem]{Claim}
\theoremstyle{definition}
\newtheorem{definition}{Definition}[section]
\theoremstyle{remark}
\newtheorem{remark}[theorem]{Remark}
\newcommand{\NN}{\mathbb{N}}
\newcommand{\ZZ}{\mathbb{Z}}
\newcommand{\QQ}{\mathbb{Q}}
\newcommand{\RR}{\mathbb{R}}
\newcommand{\CC}{\mathbb{C}}
\newcommand{\KK}{\mathbb{K}}
 \newcommand{\esssup}{\mathop{\operator@font ess.sup}\displaylimits}
 \newcommand{\essinf}{\mathop{\operator@font ess.inf}\displaylimits}
 \def\BIG#1{%
  {\hbox{$\left#1\vbox to20.5\p@{}\right.\n@space$}}}
 \def\BIGl{\mathopen\BIG}
 \def\BIGr{\mathclose\BIG}
 \let\@@pmod\pmod 
 \DeclareRobustCommand{\pmod}{\@ifstar\@pmods\@@pmod} 
 \def\@pmods#1{\mkern4mu({\operator@font mod}\mkern 6mu#1)} 
\renewcommand{\leq}{\leqslant}
\renewcommand{\geq}{\geqslant}
\renewcommand{\mod}{\mathop{\mathrm{mod}}}
\DeclareMathOperator{\Ker}{Ker}
\DeclareMathOperator{\aHz}{\widehat{\Gamma}^{\rm ss}}
\DeclareMathOperator{\aHzf}{\widehat{\Gamma}^{\rm f}}
\DeclareMathOperator{\trdeg}{tr.\! deg}
\DeclareMathOperator{\depth}{depth}
\DeclareMathOperator{\DV}{\mathfrak{V}}
\DeclareMathOperator{\ratrk}{rat.\!rk}
\DeclareMathOperator{\codim}{codim}
\DeclareMathOperator{\Spec}{Spec}
\DeclareMathOperator{\Supp}{Supp}
\DeclareMathOperator{\CSupp}{Supp_C}
\DeclareMathOperator{\WSupp}{Supp_W}
\DeclareMathOperator{\Rat}{Rat}
\DeclareMathOperator{\Exc}{Ex}
\DeclareMathOperator{\SBs}{\mathbf{B}}
\DeclareMathOperator{\aSBs}{\widehat{\mathbf{B}}^{\rm ss}}
\DeclareMathOperator{\Bsp}{\mathbf{B}_+}
\DeclareMathOperator{\Div}{Div}
\DeclareMathOperator{\WDiv}{WDiv}
\DeclareMathOperator{\Bl}{Bl}
\DeclareMathOperator{\aTheta}{\widehat{\Theta}}
\DeclareMathOperator{\aBigCone}{\widehat{Big}}
\DeclareMathOperator{\aBBig}{\widehat{\mathbb{B}ig}}
\DeclareMathOperator{\aBWBig}{\widehat{W\mathbb{B}ig}}
\DeclareMathOperator{\aBDBig}{\widehat{D\mathbb{B}ig}}
\DeclareMathOperator{\aBVDiv}{\widehat{B\mathbb{D}iv}}
\DeclareMathOperator{\aBDDiv}{\widehat{D\mathbb{D}iv}}
\DeclareMathOperator{\aBVBig}{\widehat{B\mathbb{B}ig}}
\DeclareMathOperator{\VDiv}{BC}
\DeclareMathOperator{\aNef}{\widehat{Nef}}
\DeclareMathOperator{\aDiv}{\widehat{Div}}
\DeclareMathOperator{\aInt}{\widehat{Int}}
\DeclareMathOperator{\aBDiv}{\widehat{\mathbb{D}iv}}
\DeclareMathOperator{\aBWDiv}{\widehat{W\mathbb{D}iv}}
\DeclareMathOperator{\aBsp}{\widehat{\mathbf{B}}_+}
\DeclareMathOperator{\Weil}{W}
\DeclareMathOperator{\adeg}{\widehat{deg}}
\DeclareMathOperator{\ord}{ord}
\DeclareMathOperator{\vol}{vol}
\DeclareMathOperator{\avol}{\widehat{vol}}
\DeclareMathOperator{\rk}{rk}
\DeclareMathOperator{\sgn}{sgn}
\newcommand{\lex}{{\rm lex}}
\newcommand{\pr}{{\rm pr}}
\newcommand{\aDelta}{\mbox{$\widehat{\Delta}$}}
\newcommand{\sbullet}{{\scriptscriptstyle\bullet}}
\def\aSBss#1{\widehat{\mathbf{B}}^{#1}}
\def\aHzq#1{\widehat{\Gamma}^{\rm ss}_{#1}}
\def\aHzsq#1#2{\widehat{\Gamma}^{#1}_{#2}}
\def\aHzsmq#1{\widehat{\Gamma}^{\rm s}_{#1}}
\def\aSpan#1#2{\langle#2\rangle_{#1}}
\title[Adelic Cartier divisors with base conditions]{Adelic Cartier divisors with base conditions and the Bonnesen--Diskant--type inequalities}
\author{Hideaki Ikoma}
\thanks{This research is supported by JSPS KAKENHI 25$\cdot$1895.}
\address{Graduate School of Mathematical Sciences, The University of Tokyo, Tokyo, 153-8914, Japan}
\email{ikoma@ms.u-tokyo.ac.jp}
\subjclass{Primary 14G40; Secondary 11G50}
\keywords{Arakelov theory, adelic divisors, base conditions, arithmetic volumes}
\begin{document}

\begin{abstract}
In this paper, we introduce positivity notions for pairs of adelic $\RR$-Cartier divisors and $\RR$-base conditions, and study fundamental properties of the arithmetic volumes defined for such pairs.
We show that the G\^ateaux derivatives of the arithmetic volume function at big pairs along the directions of adelic $\RR$-Cartier divisors are given by suitable arithmetic positive intersection numbers.
As a corollary, we obtain an Arakelov theoretic analogue of the Bonnesen--Diskant inequality in convex geometry.
\end{abstract}

\maketitle
\tableofcontents

\section{Introduction}\label{sec:Intro}

Let $X$ be a normal projective variety that is geometrically irreducible over a number field $K$, and let $\Rat(X)$ be the field of rational functions on $X$.
We freely use the definition and basic properties of the adelic $\RR$-Cartier divisors, and refer to \cite{MoriwakiAdelic} for details (see also Notation and terminology~4).
To an adelic $\RR$-Cartier divisor $\overline{D}$ on $X$, we assign a finite set of all the \emph{strictly small} sections of $\overline{D}$,
\[
 \aHz(\overline{D}):=\left\{\phi\in\Rat(X)^{\times}\,:\,\overline{D}+\widehat{(\phi)}>0\right\}\cup\{0\},
\]
and define the \emph{arithmetic volume} of $\overline{D}$ as
\[
 \avol(\overline{D}):=\limsup_{\substack{m\in\NN, \\ m\to+\infty}}\frac{\log\sharp\aHz(m\overline{D})}{m^{\dim X+1}/(\dim X+1)!}.
\]
The arithmetic Siu inequality of Yuan \cite{Yuan07} is essentially equivalent to the fact that the G\^ateaux derivatives of the arithmetic volume function at big adelic $\RR$-Cartier divisors are given by the arithmetic positive intersection numbers (see \cite{Chen11,IkomaCon}).
It also implies the equidistribution theorem of algebraic points with small heights, and has fruitful applications to arithmetic dynamical systems.

The purpose of this paper is to introduce the notion of pairs of adelic $\RR$-Cartier divisors and $\RR$-base conditions, and to study their positivity properties.
We show that the above-mentioned result on the differentiability of the arithmetic volume function can be naturally generalized to the arithmetic volume function defined for such pairs.

Let $\DV(\Rat(X))$ be the set of all the (nontrivial) normalized discrete valuations of $\Rat(X)$.
An \emph{$\RR$-base condition} on $X$ is defined as a finite formal sum
\[
 \mathcal{V}=\sum_{\nu\in\DV(\Rat(X))}\nu(\mathcal{V})[\nu]
\]
with coefficients $\nu(\mathcal{V})$ in $\RR$.
We denote by $\aBVDiv_{\RR,\RR}(X)$ the $\RR$-vector space of all the pairs of adelic $\RR$-Cartier divisors and $\RR$-base conditions.
As in the case of adelic $\RR$-Cartier divisors, we can assign to such a pair $(\overline{D};\mathcal{V})\in\aBVDiv_{\RR,\RR}(X)$ a finite set of all the strictly small sections of $\overline{D}$ vanishing along the positive part of $\mathcal{V}$; namely
\[
 \aHz(\overline{D};\mathcal{V}):=\left\{\phi\in\Rat(X)^{\times}\,:\,\overline{D}+\widehat{(\phi)}>0,\,\nu_X(D+(\phi))\geq \nu(\mathcal{V})\right\}\cup\left\{0\right\}
\]
(see sections~\ref{subsec:Prelim} and \ref{subsec:Base_Cond} for detail).
We then define the arithmetic volume of $(\overline{D};\mathcal{V})$ as
\[
 \avol(\overline{D};\mathcal{V}):=\limsup_{\substack{m\in\NN, \\ m\to+\infty}}\frac{\log\sharp\aHz(m\overline{D};m\mathcal{V})}{m^{\dim X+1}/(\dim X+1)!}.
\]
We say that a pair $(\overline{D};\mathcal{V})$ is \emph{big} if there exists a \emph{weakly ample} adelic $\RR$-Cartier divisor $\overline{A}$ on $X$ (see Notation and terminology~5) such that $\overline{D}-\overline{A}$ is strictly effective and $\nu_X(D-A)\geq\nu(\mathcal{V})$ for every $\nu\in\DV(\Rat(X))$.
Our main theorem is then stated as follows.

\begin{theoremB}[Theorem~\ref{thm:diff_along_arith}]
Let $X$ be a normal projective variety over a number field, let $(\overline{D};\mathcal{V})\in\aBVDiv_{\RR,\RR}(X)$, and let $\overline{D}'$ be an adelic $\RR$-Cartier divisor on $X$.
If $(\overline{D};\mathcal{V})$ is big, then the G\^ateaux derivative of the arithmetic volume function at $(\overline{D};\mathcal{V})$ along $\overline{D}'$ is given by the formula
\[
 \lim_{r\to 0}\frac{\avol(\overline{D}+r\overline{D}';\mathcal{V})-\avol(\overline{D};\mathcal{V})}{r}=(\dim X+1)\cdot\langle(\overline{D};\mathcal{V})^{\cdot\dim X}\rangle\cdot\overline{D}'.
\]
\end{theoremB}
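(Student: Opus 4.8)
The plan is to reduce the statement to the already-known differentiability of the arithmetic volume function for adelic $\RR$-Cartier divisors (without base conditions), following the strategy that establishes the analogous result in \cite{IkomaCon}. The key mechanism is the arithmetic Siu-type inequality (Yuan's inequality), which I expect to have been extended in an earlier section to pairs with $\RR$-base conditions, together with the continuity and concavity properties of $\avol(\,\cdot\,;\mathcal{V})$ on the big cone of pairs. First I would record that, since $(\overline{D};\mathcal{V})$ is big, for all sufficiently small $|r|$ the pair $(\overline{D}+r\overline{D}';\mathcal{V})$ remains big, so the limit on the left-hand side is taken over an interval on which $\avol(\,\cdot\,;\mathcal{V})$ is finite, continuous, and differentiable enough to make sense of the derivative; concavity of $r\mapsto\avol(\overline{D}+r\overline{D}';\mathcal{V})^{1/(\dim X+1)}$ (a Brunn--Minkowski-type inequality, presumably established earlier) guarantees that the one-sided derivatives exist.

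Next I would treat the two inequalities separately. For the lower bound on the derivative from the right, I would use the arithmetic Siu inequality applied to the pair: writing $n=\dim X$, for a weakly ample $\overline{A}$ witnessing bigness one gets an estimate of the form
\[
 \avol(\overline{D}+r\overline{D}';\mathcal{V})\geq\avol(\overline{D};\mathcal{V})+(n+1)\,r\,\langle(\overline{D};\mathcal{V})^{\cdot n}\rangle\cdot\overline{D}'-o(r)
\]
obtained by approximating $(\overline{D};\mathcal{V})$ by suitable models (Fujita-type approximations of the pair) on which intersection numbers compute the positive intersection number, and by controlling the error using that $\overline{D}'$ can be written as a difference of two big adelic $\RR$-Cartier divisors. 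The positive intersection number $\langle(\overline{D};\mathcal{V})^{\cdot n}\rangle$ is defined as a limit/supremum over such approximations, which is exactly what makes this estimate tight. For the upper bound, I would use that the positive intersection product is the derivative candidate and invoke superadditivity of $\langle\cdot\rangle$ together with the fact that for an ample-type perturbation the volume and the top self-intersection agree, pushing through the standard telescoping argument. Combining the one-sided estimates with the concavity from the first step forces the two-sided limit to exist and equal $(n+1)\langle(\overline{D};\mathcal{V})^{\cdot n}\rangle\cdot\overline{D}'$.

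The main obstacle I anticipate is the Fujita-type approximation step for pairs: one needs that $\langle(\overline{D};\mathcal{V})^{\cdot n}\rangle\cdot\overline{D}'$ is genuinely approached by honest arithmetic intersection numbers $\widehat{\mathrm{deg}}(\widehat{D}_1\cdots\widehat{D}_n\cdot\widehat{D}')$ coming from models that simultaneously account for the archimedean metrics, the non-archimedean data, \emph{and} the vanishing imposed by the positive part of $\mathcal{V}$ along the valuations $\nu$. Handling the base condition requires passing to a birational model (or a sequence of models) where $\mathcal{V}$ becomes an honest effective divisor and $\overline{D}$ pulls back to something whose small sections match $\aHz(\overline{D};\mathcal{V})$ up to controlled error; keeping the archimedean Green functions and the continuity of $\avol$ under this process is the delicate point. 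Once the approximation is in place, the derivative computation is formally parallel to the divisor-only case, so the real content is the construction and convergence of these approximating data for the pair $(\overline{D};\mathcal{V})$.
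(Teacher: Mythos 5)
Your plan follows essentially the same route as the paper: the arithmetic Siu inequality applied symmetrically to Fujita-type approximations of $(\overline{D};\mathcal{V})$ and of $(\overline{D}+r\overline{D}';\mathcal{V})$ yields two-sided bounds quadratic in $r$, which combined with the identity $\avol(\overline{D};\mathcal{V})=\langle(\overline{D};\mathcal{V})^{\cdot(\dim X+1)}\rangle$ and the continuity of the positive intersection product gives the derivative, and you correctly identify the approximation theorem for pairs as the real content. One small correction: the Siu estimate requires $\overline{D}'$ to be \emph{integrable} (a difference of nef, not big, adelic divisors), and since a general adelic $\RR$-Cartier divisor need not be integrable the paper finishes with a Stone--Weierstrass smoothing of the archimedean Green function, a reduction your sketch glosses over.
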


The right-hand side of the formula is the arithmetic positive intersection number defined for pairs (see section~\ref{subsec:aPosIntNum} for detail).
An \emph{approximation} of a big pair $(\overline{D};\mathcal{V})$ is a couple $(\mu:X'\to X,\overline{M})$ consisting of a modification $\mu:X'\to X$ and a nef and big adelic $\RR$-Cartier divisor $\overline{M}$ on $X'$ such that $(\mu^*\overline{D}-\overline{M};\mathcal{V}^{\mu})$ is pseudo-effective (see (\ref{eqn:pull-back_base_cond}) and Definition~\ref{defn:positivity}).
We denote by $\aTheta(\overline{D};\mathcal{V})$ the set of all the approximations of $(\overline{D};\mathcal{V})$.
For a nef and big adelic $\RR$-Cartier divisor $\overline{N}$, we define
\[
 \langle(\overline{D};\mathcal{V})^{\cdot\dim X}\rangle\cdot\overline{N}:=\sup_{(\mu,\overline{M})\in\aTheta(\overline{D};\mathcal{V})}\adeg\left(\overline{M}^{\cdot\dim X}\cdot\mu^*\overline{N}\right),
\]
which we can extend by linearity and continuity to
\[
 \langle(\overline{D};\mathcal{V})^{\cdot\dim X}\rangle\cdot:\aDiv_{\RR}(X)\to\RR
\]
(see Definition~\ref{defn:aPosInt}).

It is known that, if $X$ is a smooth curve and $(\overline{D};\mathcal{V})$ is big, then the ordered set
\[
 \Upsilon(\overline{D};\mathcal{V}):=\left\{\overline{P}\,:\,\text{$\overline{P}$ is nef and $(\overline{D}-\overline{P};\mathcal{V})\geq 0$}\right\}
\]
admits a unique maximal element $\overline{P}(\overline{D};\mathcal{V})$ (see \cite{MoriwakiAdelic}).
In this case, an arithmetic positive intersection number is given by
\[
 \langle(\overline{D};\mathcal{V})\rangle\cdot\overline{D}'=\adeg\left(\overline{P}(\overline{D};\mathcal{V})\cdot\overline{D}'\right)
\]
for every adelic $\RR$-Cartier divisor $\overline{D}'$.

In the context of convex geometry, T. Bonnesen gave a systematic proof to the classical isoperimetric inequality in dimension two by showing a stronger inequality called Bonnesen's inequality (see \cite{Bonnesen29}).
The method was generalized to the case of arbitrary dimensions by V. I. Diskant (see \cite{TeissierBonn, Diskant}).
Analogous inequalities in the context of algebraic geometry were established by Boucksom--Favre--Jonsson (see \cite{Bou_Fav_Mat06,Cutkosky13}), and those in the context of Arakelov geometry are in \cite{IkomaCon}.
These inequalities are important in studying the properties of the volume functions and the Zariski decompositions of divisors.
As a corollary of Theorem~A, we can generalize the Bonnesen--Diskant--type inequalities (\cite[Theorem~7.1 and Corollary~7.3]{IkomaCon}) to the case of pairs.

\begin{corollaryC}[Theorem~\ref{thm:Diskant}]
Let $X$ be a normal projective variety over a number field, and let $(\overline{D}_1;\mathcal{V}_1),(\overline{D}_2,\mathcal{V}_2)\in\aBVDiv_{\RR,\RR}(X)$ be big pairs.
We set
\[
 s_i:=\langle(\overline{D}_1;\mathcal{V}_1)^{\cdot i}\cdot(\overline{D}_2;\mathcal{V}_2)^{\cdot(\dim X+1-i)}\rangle
\]
for $i=0,\dots,\dim X+1$,
\[
 r=r((\overline{D}_1;\mathcal{V}_1),(\overline{D};\mathcal{V}_2)):=\inf_{(\mu,\overline{M})\in\aTheta(\overline{D}_2;\mathcal{V}_2)}\sup\left\{t\in\RR\,:\,(\mu^*\overline{D}_1-t\overline{M};\mathcal{V}_1^{\mu})\succeq 0\right\},
\]
and
\[
 R=R((\overline{D}_1;\mathcal{V}_1),(\overline{D}_2;\mathcal{V}_2)):=\frac{1}{r((\overline{D}_2;\mathcal{V}_2),(\overline{D}_1;\mathcal{V}_1))}.
\]
One then has
\begin{enumerate}
\item (an arithmetic Diskant inequality)
\[
 0\leq\left(s_{\dim X}^{\frac{1}{\dim X}}-rs_0^{\frac{1}{\dim X}}\right)^{\dim X+1}\leq s_{\dim X}^{1+\frac{1}{\dim X}}-s_{\dim X+1}\cdot s_0^{\frac{1}{\dim X}},
\]
\item
\begin{multline*}
 \frac{s_{\dim X}^{\frac{1}{\dim X}}-\left(s_{\dim X}^{1+\frac{1}{\dim X}}-s_{\dim X+1}\cdot s_0^{\frac{1}{\dim X}}\right)^{\frac{1}{\dim X+1}}}{s_0^{\frac{1}{\dim X}}}\leq r \\
 \leq\frac{s_{\dim X+1}}{s_{\dim X}}\leq\dots\leq\frac{s_1}{s_0}\\
 \leq R\leq\frac{s_{\dim X+1}^{\frac{1}{\dim X}}}{s_1^{\frac{1}{\dim X}}-\left(s_1^{1+\frac{1}{\dim X}}-s_0\cdot s_{\dim X+1}^{\frac{1}{\dim X}}\right)^{\frac{1}{\dim X+1}}},
\end{multline*}
and
\item (an arithmetic Bonnesen inequality)
\[
 \left(\frac{s_0}{2}(R-r)\right)^2\leq s_1^2-s_0s_2
\]
if $X$ has dimension one.
\end{enumerate}
In particular, if the big pairs $(\overline{D}_1;\mathcal{V}_1),(\overline{D}_2,\mathcal{V}_2)\in\aBVDiv_{\RR,\RR}(X)$ satisfy
\[
 \avol(\overline{D}_1+\overline{D}_2;\mathcal{V}_1+\mathcal{V}_2)^{\frac{1}{\dim X+1}}=\avol(\overline{D}_1;\mathcal{V}_1)^{\frac{1}{\dim X+1}}+\avol(\overline{D}_2;\mathcal{V}_2)^{\frac{1}{\dim X+1}},
\]
then $s_{\dim X}^{\dim X+1}=s_{\dim X+1}^{\dim X}\cdot s_0$, $s_1^{\dim X+1}=s_0^{\dim X}\cdot s_{\dim X}$, and
\[
 \left(\frac{s_{\dim X}}{s_0}\right)^{\frac{1}{\dim X}}=r=\frac{s_{\dim X+1}}{s_{\dim X}}=\dots=\frac{s_1}{s_0}=R=\left(\frac{s_{\dim X+1}}{s_1}\right)^{\frac{1}{\dim X}}.
\]
\end{corollaryC}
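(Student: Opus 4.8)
The plan is to deduce the theorem from Theorem~A by the argument of \cite{IkomaCon}, carried out on a common approximation so as to keep the relevant base conditions \emph{fixed}. Write $n=\dim X$ and abbreviate $\overline E_i=(\overline D_i;\mathcal V_i)$. I would first collect the standard ingredients: the arithmetic Brunn--Minkowski inequality $\avol(\overline E+\overline E')^{1/(n+1)}\geq\avol(\overline E)^{1/(n+1)}+\avol(\overline E')^{1/(n+1)}$ for big pairs (a consequence of Theorem~A together with superadditivity of the positive intersection product); the identifications $s_0=\avol(\overline E_2)$, $s_{n+1}=\avol(\overline E_1)$, and $\avol(\overline M)=\adeg(\overline M^{\cdot(n+1)})$ for nef and big $\overline M$; the Khovanskii--Teissier log-concavity $s_i^2\geq s_{i-1}s_{i+1}$ for $1\leq i\leq n$, obtained by choosing a modification $\mu:X'\to X$ and nef and big $\overline M_1\in\aTheta(\overline E_1)$, $\overline M_2\in\aTheta(\overline E_2)$ on $X'$ that simultaneously compute all the $s_i$ to within $\varepsilon$ (using that $\aTheta$ is directed under common modifications), applying the arithmetic Khovanskii--Teissier inequality for nef adelic $\RR$-Cartier divisors on $X'$, and letting $\varepsilon\to 0$; and finally the coincidence of the pseudo-effective and big thresholds, so that for a near-optimal $\overline M_2$ the function $\avol(\mu^*\overline D_1-t\overline M_2;\mathcal V_1^\mu)$ is strictly positive for $0\leq t<t_\varepsilon:=\sup\{t:(\mu^*\overline D_1-t\overline M_2;\mathcal V_1^\mu)\succeq 0\}$ and vanishes at $t_\varepsilon$, with $t_\varepsilon\to r$ as $\varepsilon\to 0$. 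The monotone chain $\tfrac{s_{n+1}}{s_n}\leq\cdots\leq\tfrac{s_1}{s_0}$ in part~(2) is then immediate from the log-concavity.

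For part~(1), consider $g(t):=\avol(\mu^*\overline D_1-t\overline M_2;\mathcal V_1^\mu)^{1/(n+1)}$ for such a near-optimal choice; since the base condition $\mathcal V_1^\mu$ is fixed, Brunn--Minkowski makes $g$ concave on $[0,t_\varepsilon]$ and Theorem~A makes it differentiable on $(0,t_\varepsilon)$. One has $g(0)=s_{n+1}^{1/(n+1)}$; applying Brunn--Minkowski to $\mu^*\overline D_1=(\mu^*\overline D_1-t\overline M_2)+t\overline M_2$ gives $g(t)\leq s_{n+1}^{1/(n+1)}-t\,\adeg(\overline M_2^{\cdot(n+1)})^{1/(n+1)}$; Theorem~A gives $g'(0^+)=-\langle(\mu^*\overline D_1;\mathcal V_1^\mu)^{\cdot n}\rangle\cdot\overline M_2\cdot s_{n+1}^{-n/(n+1)}$, a quantity converging to $-s_n s_{n+1}^{-n/(n+1)}$ as $\varepsilon\to 0$; and $g(t_\varepsilon)=0$. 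Feeding $g(0)$, the tangent bound $g(t_\varepsilon)\leq g(0)+t_\varepsilon g'(0^+)$, the Brunn--Minkowski majorant, and $g(t_\varepsilon)=0$ into Diskant's elementary convexity lemma (as in \cite{Diskant,TeissierBonn} and \cite{IkomaCon}), and letting $\varepsilon\to 0$, yields $0\leq(s_n^{1/n}-rs_0^{1/n})^{n+1}\leq s_n^{1+1/n}-s_{n+1}s_0^{1/n}$; rearranging gives the lower bound on $r$ in part~(2), while $r\leq s_{n+1}/s_n$ follows from the tangent bound alone.

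For the bounds on $R$ in part~(2), apply the inequalities just proved to the pair obtained by interchanging $\overline E_1$ and $\overline E_2$, using $R=r(\overline E_2,\overline E_1)^{-1}$ and the symmetry $\langle\overline E_2^{\cdot i}\cdot\overline E_1^{\cdot(n+1-i)}\rangle=s_{n+1-i}$. When $n=1$, the two instances of (1) (for $(\overline E_1,\overline E_2)$ and for $(\overline E_2,\overline E_1)$) together with $r\leq R$ give $s_0 r\geq s_1-\sqrt{s_1^2-s_0s_2}$ and $s_0 R\leq s_1+\sqrt{s_1^2-s_0s_2}$, whence $\big(\tfrac{s_0}{2}(R-r)\big)^2\leq s_1^2-s_0s_2$, which is part~(3). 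For the equality assertion, the hypothesis is exactly that Brunn--Minkowski holds with equality for $\overline E_1+\overline E_2$; by the principle that a concave function agreeing with its chord at an interior point is affine, together with homogeneity of $\avol$, the restriction of $\avol$ to the cone $\RR_{\geq 0}\overline E_1+\RR_{\geq 0}\overline E_2$ equals $(a\,s_{n+1}^{1/(n+1)}+b\,s_0^{1/(n+1)})^{n+1}$; computing the derivatives of $\avol$ along the extreme rays of this cone (via Theorem~A and approximations) forces $s_i=s_0^{(n+1-i)/(n+1)}s_{n+1}^{i/(n+1)}$ for all $i$, and the stated relations and the chain of equalities $(s_n/s_0)^{1/n}=r=s_{n+1}/s_n=\cdots=s_1/s_0=R$ follow by substitution.

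The step I expect to be the main obstacle is securing the two ``global'' inputs in the pair setting: the arithmetic Khovanskii--Teissier inequality for the mixed numbers $s_i$ — which requires controlling several suprema over approximations simultaneously on a common model — and the convergence $\langle(\mu^*\overline D_1;\mathcal V_1^\mu)^{\cdot n}\rangle\cdot\overline M_2\to s_n$ as the approximations improve (the only place the full strength of the positive intersection formalism for pairs is used). Once these, together with the coincidence of the pseudo-effective and big thresholds, are in hand, the remainder is the purely formal Diskant machinery.
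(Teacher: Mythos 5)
Your overall architecture (reduce to a nef and big approximant of $(\overline{D}_2;\mathcal{V}_2)$, invoke Theorem~A, interchange the two pairs to bound $R$, and finish part~(3) by the elementary computation from part~(2)) matches the paper, and your deduction of part~(2) from part~(1) together with $r^js_i\leq s_{i+j}$ and the log-concavity $s_i^2\geq s_{i-1}s_{i+1}$ is correct. However, writing $n=\dim X$, there is a genuine gap in your derivation of the Diskant inequality itself. The data you propose to feed into ``Diskant's elementary convexity lemma'' --- concavity of $g(t)=\avol(\mu^*\overline{D}_1-t\overline{M}_2;\mathcal{V}_1^{\mu})^{1/(n+1)}$, the values $g(0)$ and $g(t_\varepsilon)=0$, the one-sided derivative $g'(0^+)$, and the Brunn--Minkowski majorant $g(t)\leq g(0)-t\avol(\overline{M}_2)^{1/(n+1)}$ --- cannot yield the conclusion, because every one of these constraints bounds $g$, and hence the vanishing point $t_\varepsilon$, only \emph{from above}, whereas the Diskant inequality is equivalent to a \emph{lower} bound on $r$. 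Concretely, a piecewise-linear concave function with slope $g'(0^+)$ on $[0,\delta]$ and an arbitrarily steep negative slope thereafter satisfies all of your listed constraints and vanishes arbitrarily close to $\delta$, with $\delta$ arbitrarily small; so no lower bound on $t_\varepsilon$ can follow from that data.

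What is actually required, and what the paper uses, is an upper bound on the rate of decrease of the volume at \emph{every} $t$: Theorem~A gives $\avol(\overline{D}_1;\mathcal{V}_1)=(n+1)\int_0^{s}\langle(\overline{D}_1-t\overline{P};\mathcal{V}_1)^{\cdot n}\rangle\cdot\overline{P}\,dt$ for a nef and big approximant $\overline{P}$ with threshold $s$, and the integrand is bounded for each $t$ by $\bigl(\bigl(\langle(\overline{D}_1;\mathcal{V}_1)^{\cdot n}\rangle\cdot\overline{P}\bigr)^{1/n}-t\avol(\overline{P})^{1/n}\bigr)^{n}$ via the Minkowski-type superadditivity of $(\overline{D};\mathcal{V})\mapsto\bigl(\langle(\overline{D};\mathcal{V})^{\cdot n}\rangle\cdot\overline{P}\bigr)^{1/n}$ (Proposition~\ref{prop:akt2}(4)) --- a statement about the degree-$n$ positive product paired with $\overline{P}$, not the Brunn--Minkowski inequality for $\avol^{1/(n+1)}$. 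Integrating this pointwise bound over $[0,s]$ is what forces $s$ to exceed the Diskant lower bound. A second, more technical point you flag but do not resolve: since $r$ is an infimum and the $s_i$ are suprema over the same index set, one must exhibit a single nef and big $\overline{\mathscr{M}}$ that simultaneously nearly computes $r$ from above and the relevant positive intersection numbers from below; the paper achieves this not in $\aTheta(\overline{D}_2;\mathcal{V}_2)$ directly but in the filtered set $\aTheta_{\rm ad}(\overline{\mathscr{D}}_2;\mathcal{V}_2)$ of admissible approximations over a model (Proposition~\ref{prop:approx_by_models}(2), Lemmas~\ref{lem:inradius_lemma1} and \ref{lem:inradius_lemma3}), using the monotonicity of both quantities under enlarging the approximant.
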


The structure of this paper is as follows.
After defining of the notation and terminology we use in this paper in section~\ref{subsec:notation_and_terminology}, we introduce, in section~\ref{sec:adelic_with_base}, the notion of pairs of adelic $\RR$-Cartier divisors and $\RR$-base conditions and their positivity properties.
Here we would like to treat the positivity and the $\nu$-positivity, simultaneously, because the arguments are almost parallel.
The latter will be used elsewhere.
The main purpose of section~\ref{sec:adelic_with_base} is Theorem~\ref{thm:abbig_cone} asserting the openness of the big cone of pairs.

Sections~\ref{subsec:ample} and \ref{subsec:arith_base_loci} are devoted to studying fundamental properties of the arithmetically ample adelic $\RR$-Cartier divisors.
We give the definitions of arithmetic volumes and arithmetic base loci associated to pairs in section~\ref{subsec:pairs} and in section~\ref{subsec:arith_base_loci}, respectively.

In section~\ref{sec:Approx_pairs}, we show several preliminary results that will be used to show the main theorems.
In particular, we introduce the arithmetic positive intersection numbers for pairs in section~\ref{subsec:aPosIntNum}.
Finally, we give proofs of Theorems~\ref{thm:diff_along_arith} and \ref{thm:Diskant} in section~\ref{sec:Diff}.

\subsection{Notation and terminology}\label{subsec:notation_and_terminology}

\paragraph{1.}
Let $R$ be a ring, let $M$ be an $R$-module, and let $\Gamma$ be a subset of $M$.
We denote by $\langle\Gamma\rangle_R$ the $R$-submodule of $M$ spanned by $\Gamma$.

\paragraph{2.}
Let $X$ be a projective variety over a field $k$ of characteristic zero.
We denote the field of rational functions on $X$ by $\Rat(X)$.
Let $\KK$ be either a blank, $\QQ$ or $\RR$.
The $\KK$-module of all the $\KK$-Cartier divisors (respectively, $\KK$-Weil divisors) on $X$ is denoted by $\Div_{\KK}(X)$ (respectively, by $\WDiv_{\KK}(X)$).

Let $D$ be an $\RR$-Cartier divisor on $X$; namely, $D$ can be written as an $\RR$-linear combination
\[
 D=\sum_{i=1}^la_iD_i
\]
with $D_i\in\Div(X)$ and $a_i\in\RR$.
A \emph{local equation} defining $D$ around a point $x\in X$ is
\begin{equation}
 f_x:=f_1^{\otimes a_i}\otimes\dots\otimes f_l^{\otimes a_l}\in\Rat(X)^{\times}\otimes_{\ZZ}\RR,
\end{equation}
where $f_i$ is a local equation defining $D_i$ around $x$ for each $i$.

The \emph{(Cartier) support} of $D\in\Div_{\RR}(X)$ is defined as
\begin{equation}
 \CSupp(D):=\left\{x\in X\,:\,f_x\notin\mathcal{O}_{X,x}^{\ast}\otimes_{\ZZ}\RR\right\},
\end{equation}
which we know is a proper Zariski closed subset of $X$ (see \cite[Proposition~1.1.1]{MoriwakiAdelic}).

Let $\mu:Y\to X$ be a morphism of $k$-varieties.
If $\mu(Y)$ is not contained in $\CSupp(D)$, then we can define the pull-back $\mu^*D\in\Div_{\RR}(Y)$.

The \emph{(Weil) support} of $D\in\WDiv_{\RR}(X)$ is defined as
\begin{equation}
 \WSupp(D):=\bigcup_{\substack{\text{$Z$: prime Weil divisor}, \\ \ord_Z(D)\neq 0}}Z.
\end{equation}
We know that, if $X$ is regular, then
\[
 \CSupp(D)=\WSupp(D)
\]
(see for example \cite[Proposition~1.1.3]{MoriwakiAdelic}).
Actually, this equation is valid as soon as $X$ is normal (see Lemma~\ref{lem:valuation_of_divisors}(2)), so that we can write
\begin{equation}
 \Supp(D):=\CSupp(D)=\WSupp(D).
\end{equation}

Suppose that $X$ is normal.
Let $D\in\WDiv_{\RR}(X)$, and let $\KK$ be either a blank, $\QQ$, or $\RR$.
We set
\begin{equation}
H^0_{\KK}(D):=\left\{\phi\in\Rat(X)\otimes_{\ZZ}\KK\,:\,D+(\phi)\geq 0\right\}\cup\{0\}.
\end{equation}

\paragraph{3.}
Let $\mu:X'\to X$ be a morphism of projective varieties over a field.
The \emph{exceptional locus} of $\mu$ is defined as the minimal Zariski closed subset of $X'$ such that the restriction
\[
 \mu:X'\setminus\Exc(\mu)\to X
\]
is an immersion (see \cite[(3.6)]{IkomaRem}).
If $X$ is normal, then, by Zariski's main theorem \cite[Proposition~(4.4.1)]{EGAIII_1}, one has
\begin{equation}
 \Exc(\mu)=\left\{x'\in X'\,:\,\dim_{x'}(\mu^{-1}(\mu(x')))\geq 1\right\}=\bigcup_{\substack{Z\subset X', \\ \dim\mu(Z)<\dim Z}}Z.
\end{equation}

\paragraph{4.}
Let $K$ be a number field, and let $O_K$ be the ring of integers of $K$.
The set of all the finite places of $K$ is denoted by $M_K$.
For each $v\in M_K$, let $K_v$ be the $v$-adic completion of $K$, let $O_{K_v}$ be the ring of integers of $K_v$, and let $\widetilde{K}_v$ be the residue field of $K_v$.

Let $X$ be a projective $K$-variety.
For each $v\in M_K$, we denote by $(X_v^{\rm an},\pi_v:X_v^{\rm an}\to X_{K_v})$ the Berkovich analytic space associated to $X_{K_v}:=X\times_{\Spec(K)}\Spec(K_v)$ and, for $v=\infty$, denote by $(X_{\infty}^{\rm an},\pi_v:X_{\infty}^{\rm an}\to X_{\CC})$ the complex analytic space associated to $X_{\CC}:=X\times_{\Spec(\QQ)}\Spec(\CC)$.

Let $D$ be an $\RR$-Cartier divisor on $X$, and let $v\in M_K\cup\{\infty\}$.
A \emph{$D$-Green function (of continuous type)} on $X_v^{\rm an}$ is a continuous map
\[
 g_v^{\overline{D}}:(X\setminus\CSupp(D))_v^{\rm an}\to\RR
\]
such that, for each $x\in X_v^{\rm an}$, $g_v^{\overline{D}}+\log|f_x|^2$ extends to a continuous function around $x$, where $f_x$ is a local equation defining $D$ around $\pi_v(x)$ (see Notation and terminology~2).

Let $U$ be a nonempty open subscheme of $\Spec(O_K)$.
A \emph{$U$-model} of $(X,D)$ is a couple $(\mathscr{X}_U,\mathscr{D}_U)$ such that $\mathscr{X}_U$ is a reduced, irreducible, projective, and flat $U$-scheme with a fixed $K$-isomorphism from $X$ onto the generic fiber $\mathscr{X}_U\times_U\Spec(K)$, and such that $\mathscr{D}_U$ is an $\RR$-Cartier divisor on $\mathscr{X}_U$ satisfying $\mathscr{D}_U|_X=D$.

Given a $U$-model $(\mathscr{X}_U,\mathscr{D}_U)$ of $(X,D)$ and a $v\in M_K\cap U$, we define the $D$-Green function $g_v^{(\mathscr{X}_U,\mathscr{D}_U)}$ associated to $(\mathscr{X}_U,\mathscr{D}_U)$ as
\begin{equation}
 g_v^{(\mathscr{X}_U,\mathscr{D}_U)}(x):=-\log |f_x'|^2,
\end{equation}
where $f_x'$ is a local equation defining $\mathscr{D}_U$ around $r^{\mathscr{X}_U}_v(x)$ and $r^{\mathscr{X}_U}_v:X_v^{\rm an}\to\mathscr{X}_U\times_U\Spec(\widetilde{K}_v)$ denotes the reduction map over $v$ (see \cite[section~1.2]{MoriwakiAdelic}).

Let $\KK$ be either a blank, $\QQ$, or $\RR$.
An \emph{adelic $\KK$-Cartier divisor} on $X$ is a couple
\[
 \overline{D}=\left(D,\sum_{v\in M_K\cup\{\infty\}}g_v^{\overline{D}}[v]\right)
\]
having the following properties.
\begin{itemize}
\item $D$ is a $\KK$-Cartier divisor on $X$,
\item For each $v\in M_K$, $g_v^{\overline{D}}$ is a $D$-Green function on $X_v^{\rm an}$.
\item For $v=\infty$, $g_{\infty}^{\overline{D}}$ is a $D$-Green function on $X_{\infty}^{\rm an}$ that is invariant under the complex conjugation.
\item There exists a nonempty open subset $U$ of $\Spec(O_K)$ and a $U$-model $(\mathscr{X}_U,\mathscr{D}_U)$ of $(X,D)$ such that $g_v^{\overline{D}}=g_v^{(\mathscr{X}_U,\mathscr{D}_U)}$ for every $v\in M_K\cap U$.
\end{itemize}
We call the $U$-model $(\mathscr{X}_U,\mathscr{D}_U)$ appearing in the above definition a \emph{$U$-model of definition for $\overline{D}$}.
We denote by $\aDiv_{\KK}(X)$ the $\KK$-module of all the adelic $\KK$-Cartier divisors on $X$.

For each $v\in M_K$, $C^0_v(X)$ denotes the $\RR$-vector space of all the $\RR$-valued continuous functions on $X_v^{\rm an}$, and, for $v=\infty$, $C^0_{\infty}(X)$ denotes the $\RR$-vector space of all the $\RR$-valued continuous functions on $X_{\infty}^{\rm an}$ that are invariant under the complex conjugation.

\begin{description}
\item[($\RR$-linear equivalence)] For $\overline{D}_1,\overline{D}_2\in\aDiv_{\RR}(X)$, we write $\overline{D}_1\sim_{\RR}\overline{D}_2$ if there exists a $\phi\in\Rat(X)^{\times}\otimes_{\ZZ}\RR$ such that $\overline{D}_1-\overline{D}_2=\widehat{(\phi)}$.
\item[(effective)] Note that, if $D\geq 0$, then $\essinf_{x\in X_v^{\rm an}}g_v^{\overline{D}}(x)>-\infty$ for every $v\in M_K\cup\{\infty\}$ and $\essinf_{x\in X_v^{\rm an}}g_v^{\overline{D}}(x)\geq 0$ for all but finitely many $v\in M_K$.
We say that $\overline{D}$ is \emph{effective} if
\[
D\geq 0\quad\text{and}\quad g_v^{\overline{D}}\geq 0\quad\text{for every $v\in M_K\cup\{\infty\}$}.
\]
Moreover, we say that $\overline{D}$ is \emph{strictly effective} if 
\[
\text{$\overline{D}$ is effective}\quad\text{and}\quad\essinf_{x\in X_{\infty}^{\rm an}}g_{\infty}^{\overline{D}}(x)>0.
\]
\end{description}
We write
\begin{equation}
 \overline{D}_1\leq\overline{D}_2\quad \text{(respectively, $\overline{D}_1<\overline{D}_2$)}
\end{equation}
if $\overline{D}_2-\overline{D}_1$ is effective (respectively, strictly effective).

Let $\KK$ be either a blank, $\QQ$, or $\RR$.
We set
\begin{align}
 &\aHzsq{\rm f}{\KK}(\overline{D}) \\
 &\, :=\left\{\phi\in\Rat(X)\otimes_{\ZZ}\KK\,:\,\text{$D+(\phi)\geq 0$ and $g_v^{\overline{D}+\widehat{(\phi)}}\geq 0$, $\forall v\in M_K$}\right\}\cup\{0\}, \nonumber\\
 &\aHzq{\KK}(\overline{D}):=\left\{\phi\in\Rat(X)\otimes_{\ZZ}\KK\,:\,\overline{D}+\widehat{(\phi)}>0\right\}\cup\{0\},
\end{align}
and
\begin{equation}
 \aHzsmq{\KK}(\overline{D}):=\left\{\phi\in\Rat(X)\otimes_{\ZZ}\KK\,:\,\overline{D}+\widehat{(\phi)}\geq 0\right\}\cup\{0\}.
\end{equation}
We then define the \emph{arithmetic volume} of $\overline{D}$ as
\begin{equation}
 \avol(\overline{D}):=\limsup_{\substack{m\in\NN, \\ m\to+\infty}}\frac{\log\sharp\aHz(m\overline{D})}{m^{\dim X+1}/(\dim X+1)!}.
\end{equation}

\paragraph{5.}
Let $\overline{A}=\left(A,\sum_{v\in M_K\cup\{\infty\}}g_v^{\overline{A}}[v]\right)$ be an adelic $\RR$-Cartier divisor on $X$.
\begin{description}
\item[(big)] We say that $\overline{A}$ is \emph{big} if $\avol(\overline{A})>0$.
The cone of all the big adelic $\RR$-Cartier divisors on $X$ is denoted by $\aBigCone_{\RR}(X)$.
\item[(pseudo-effective)] We say that $\overline{A}$ is \emph{pseudo-effective} if $\avol(\overline{A}+\overline{B})>0$ for every $\overline{B}\in\aBigCone_{\RR}(X)$.
We write $\overline{D}_1\preceq \overline{D}_2$ if $\overline{D}_2-\overline{D}_1$ is pseudo-effective.
\item[(nef)] We say that $\overline{A}$ is \emph{relatively nef} if $A$ is nef and $g_v^{\overline{A}}$ is semipositive for every $v\in M_K\cup\{\infty\}$ (see \cite[section~4.4]{MoriwakiAdelic} for the notion of semipositivity).
We say that $\overline{A}$ is \emph{nef} if $\overline{A}$ is relatively nef and $\inf_{x\in X(\overline{K})}h_{\overline{A}}(x)\geq 0$, where
\[
 h_{\overline{A}}(x):=\frac{1}{[\kappa(x):K]}\adeg\left(\overline{A}|_x\right)
\]
is the \emph{height} of $x\in X(\overline{K})$ with respect to $\overline{A}$, $\kappa(x)$ is the residue field of the image of $x$, and
\[
 \adeg\left(\overline{A}|_x\right):=\frac{1}{2}\sum_{v\in M_K}\sum_{\substack{w\in M_{\kappa(x)}, \\ w|v}}[\kappa(x)_w:K_v]g_v^{\overline{A}}(x^w)+\frac{1}{2}\sum_{\sigma:\kappa(x)\to\CC}g_{\infty}^{\overline{A}}(x^{\sigma})
\]
(see \cite[sections~2.4 and 4.2]{MoriwakiAdelic} for detail).
We denote by $\aNef_{\RR}(X)$ the cone of all the nef adelic $\RR$-Cartier divisors on $X$.
Note that a relatively nef adelic $\RR$-Cartier divisor $\overline{A}$ is nef if and only if
\[
 \adeg\left((\overline{A}|_Y)^{\cdot(\dim Y+1)}\right)\geq 0
\]
for every closed subvariety $Y$ of $X$.
\item[(integrable)] We say that $\overline{A}$ is \emph{integrable} if $\overline{A}$ can be written as a difference of two nef adelic $\RR$-Cartier divisors.
We denote by $\aInt_{\RR}(X)$ the $\RR$-vector space of all the integrable adelic $\RR$-Cartier divisors on $X$.
\item[(w-ample)] We say that $\overline{A}$ is \emph{weakly ample} or \emph{w-ample} for short if $\overline{A}$ is a positive $\RR$-linear combination $\sum_{i=1}^la_i\overline{A}_i$ of adelic Cartier divisors $\overline{A}_i$ such that each $A_i$ is ample and such that $H^0(mA_i)=\langle\aHz(m\overline{A}_i)\rangle_{K}$ for every $m\gg 1$.
Note that this definition does not depend on the choice of $K$ (see \cite[Theorem~4.3]{IkomaRem}).
\item[(ample)] $\overline{A}$ is said to be \emph{ample} (in the sense of Zhang) if $A$ is relatively nef and
\[
 \adeg\left((\overline{A}|_Y)^{\cdot(\dim Y+1)}\right)>0
\]
for every closed subvariety $Y$ of $X$.
\end{description}

Let $\mathscr{X}$ be a normal projective arithmetic variety over $\Spec(O_K)$ such that $\mathscr{X}_K$ is $K$-isomorphic to $X$.
To an arithmetic $\RR$-Cartier divisor $\overline{\mathscr{D}}=(\mathscr{D},g^{\overline{\mathscr{D}}})$ on $\mathscr{X}$ (see \cite[section~5]{MoriwakiZar}), we can associate an adelic $\RR$-Cartier divisor
\begin{equation}
 \overline{\mathscr{D}}^{\rm ad}:=\left(\mathscr{D}_K,\sum_{v\in M_K}g_v^{(\mathscr{X},\mathscr{D})}[v]+g^{\overline{\mathscr{D}}}[\infty]\right)
\end{equation}
on $X$.
We say that $\overline{\mathscr{D}}$ is \emph{w-ample} (respectively, \emph{ample}, etc.) if so is $\overline{\mathscr{D}}^{\rm ad}$.

\paragraph{6.}
By \cite[section 4.5]{MoriwakiAdelic} and the same arguments as in \cite[Lemma~2.5]{IkomaCon}, we can uniquely extend the arithmetic intersection numbers of $C^{\infty}$-Hermitian line bundles to a multilinear map
\begin{align}
 \adeg:\aInt_{\RR}(X)^{\times\dim X}\times\aDiv_{\RR}(X) &\to\RR, \\
 (\overline{D}_1,\dots,\overline{D}_{\dim X+1}) &\mapsto\adeg\left(\overline{D}_1\cdots\overline{D}_{\dim X+1}\right), \nonumber
\end{align}
in such a way that
\begin{enumerate}
\item[(i)] the restriction $\adeg:\aInt_{\RR}(X)^{\times (\dim X+1)}\to\RR$ is symmetric,
\item[(ii)] $\adeg\left(\overline{N}^{\cdot(\dim X+1)}\right)=\avol\left(\overline{N}\right)$ for every $\overline{N}\in\aNef_{\RR}(X)$, and
\item[(iii)] $\adeg\left(\overline{D}_1\cdots\overline{D}_{\dim X+1}\right)\geq 0$ for every $\overline{D}_1,\dots,\overline{D}_{\dim X}\in\aNef_{\RR}(X)$ and $\overline{D}_{\dim X+1}\succeq 0$.
\end{enumerate}

\section{Adelic Cartier divisors with base conditions}\label{sec:adelic_with_base}

\subsection{Preliminaries on the valuations}\label{subsec:Prelim}

In this subsection, we recall several basic facts on the valuations.

\begin{definition}
Let $(\Lambda,\leq)$ be a totally ordered $\ZZ$-module of rank $r$.
By \cite[Chap.\ VI, \S 10, no.\ 2, Proposition~4]{BourbakiCA85}, $(\Lambda,\leq)$ is isomorphic to $(\ZZ^r,\leq_{\lex})$ if and only if the height of $\Lambda$ equals $r$, where $\leq_{\lex}$ denotes the lexicographical order.

Let $F\supset k$ be a field extension and let
\begin{equation}
 \nu:F^{\times}\to\Lambda
\end{equation}
be a \emph{valuation of $F/k$ with values in $\Lambda$}; namely, $\nu$ satisfies
\begin{enumerate}
\item[(i)] $\nu(a)=0$ for $a\in k^{\times}$,
\item[(ii)] $\nu(\phi\psi)=\nu(\phi)+\nu(\psi)$ for $\phi,\psi\in F^{\times}$, and
\item[(iii)] $\nu(\phi+\psi)\geq\min\{\nu(\phi),\nu(\psi)\}$ for $\phi,\psi\in F^{\times}$ with $\phi+\psi\neq 0$.
\end{enumerate}
The \emph{valuation ring} of $\nu$ is
\begin{equation}
 O_{\nu}:=\{\phi\in F^{\times}\,:\,\nu(\phi)\geq 0\}\cup\{0\}
\end{equation}
and the \emph{maximal ideal} of $\nu$ is
\begin{equation}
 \mathfrak{m}_{\nu}:=\{\phi\in F^{\times}\,:\,\nu(\phi)>0\}\cup\{0\}.
\end{equation}
We put $O_{\nu}^{\ast}:=O_{\nu}\setminus\mathfrak{m}_{\nu}=\{\phi\in F^{\times}\,:\,\nu(\phi)=0\}$, and put $k_{\nu}:=O_{\nu}/\mathfrak{m}_{\nu}$.
The \emph{value group} of $\nu$ is defined as
\begin{equation}
 \Lambda_{\nu}:=\nu(F^{\times})=F^{\times}/O_{\nu}^{\ast}
\end{equation}
endowed with the order $\leq$, and the \emph{rational rank} of $\nu$ is defined as $\ratrk(\nu):=\rk_{\QQ}\Lambda_{\nu}\otimes_{\ZZ}\QQ$.

Two valuations $\nu_1:F^{\times}\to\Lambda_{\nu_1}$ and $\nu_2:F^{\times}\to\Lambda_{\nu_2}$ are said to be \emph{equivalent} if the following equivalent conditions are satisfied.
\begin{itemize}
\item There exists an order-preserving isomorphism $\iota:\Lambda_{\nu_1}\to\Lambda_{\nu_2}$ such that $\nu_2=\iota\circ\nu_1$.
\item $O_{\nu_1}=O_{\nu_2}$ in $F$.
\end{itemize}
A non-trivial valuation $\nu:F^{\times}\to\Lambda_{\nu}$ is said to be \emph{discrete} if the value group $(\Lambda_{\nu},\leq)$ is isomorphic to $(\ZZ,\leq)$.
We denote by $\DV(F)=\DV(F/k)$ the set of all the equivalence classes of the discrete valuations of $F/k$.
Given any $\nu\in\DV(F)$, there exists a unique valuation $\nu'$ of $F$ such that $\nu'$ is equivalent to $\nu$ and such that $\nu'$ has value group $(\ZZ,\leq)$.
Hence, in the following, we always assume that the value group of a $\nu\in\DV(F)$ is \emph{normalized} to $(\ZZ,\leq)$.
\end{definition}

\begin{definition}
Let $X$ be a projective variety over a field $k$, let $\Rat(X)$ be the field of rational functions on $X$, and let $\nu:\Rat(X)^{\times}\to\Lambda_{\nu}$ be a valuation of $\Rat(X)/k$.
The \emph{center of $\nu$ on $X$} is a point $c_X(\nu)\in X$ such that
\[
\mathcal{O}_{X,c_X(\nu)}\subset O_{\nu}\quad\text{and}\quad\mathfrak{m}_{c_X(\nu)}=\mathfrak{m}_{\nu}\cap\mathcal{O}_{X,c_X(\nu)}.
\]
By the valuative criterion of properness, there exists a unique center on $X$ for each $\nu\in\DV(\Rat(X))$.

Let $D$ be an effective Cartier divisor on $X$.
If $f,g$ are two local equations defining $D$ around $c_X(\nu)$, then $f/g\in\mathcal{O}_{X,c_X(\nu)}^{\ast}$; thus $\nu(f)=\nu(g)$.
Therefore, we can define
\begin{equation}
 \nu_X(D):=\nu(f)\in\Lambda_{\nu}.
\end{equation}
Note that $\nu_X(0)$ is defined as $\nu(1)=0$.
Since
\[
 \nu_X(D+D')=\nu_X(D)+\nu(D')
\]
for two effective Cartier divisors $D,D'$ on $X$, we can uniquely extend the map $\nu_X:D\mapsto\nu_X(D)$ to an $\RR$-linear map
\begin{equation}\label{eqn:defn_valuation_R-div}
 \nu_X:\Div_{\RR}(X)\to\Lambda_{\nu}\otimes_{\ZZ}\RR
\end{equation}
by linearity.
\end{definition}

\begin{remark}\label{rem:center_and_valuation}
Let $\pi:X'\to X$ be a birational projective morphism.
Then
\[
 \nu_{X'}(\pi^*D)=\nu_X(D).
\]
In fact, we have $\pi(c_{X'}(\nu))=c_X(\nu)$.
So, if $f$ is a local equation defining $D$ around $c_X(\nu)$, then $\pi^*f$ is a local equation defining $\pi^*D$ around $c_{X'}(\nu)$.
Hence $\nu_{X'}(\pi^*D)=\nu(\pi^*f)=\nu(f)=\nu_{X}(D)$.
\end{remark}

\begin{lemma}\label{lem:Q-lin_indep}
Let $X$ be a projective variety over a field and let $D$ be an $\RR$-Cartier divisor on $X$.
\begin{enumerate}
\item There exist Cartier divisors $D_1,\dots,D_l$ and $a_1,\dots,a_l\in\RR$ such that $a_1,\dots,a_l$ are $\QQ$-linearly independent and $D=\sum_{i=1}^la_iD_i$.
Moreover, in this case, one has
\[
 \CSupp(D)=\bigcup_{i=1}^l\CSupp(D_i)\quad\text{and}\quad\WSupp(D)=\bigcup_{i=1}^l\WSupp(D_i).
\]
\item Let $\phi\in\Rat(X)^{\times}\otimes_{\ZZ}\RR$.
There exist $\phi_1,\dots,\phi_l\in\Rat(X)^{\times}$ and $a_1,\dots,a_l\in\RR$ such that $a_1,\dots,a_l$ are $\QQ$-linearly independent and $(\phi)=\sum_{i=1}^la_i(\phi_i)$.
Moreover, in this case, one has
\[
 \CSupp((\phi))=\bigcup_{i=1}^l\CSupp((\phi))\quad\text{and}\quad\WSupp((\phi))=\bigcup_{i=1}^l\WSupp((\phi_i)).
\]
\end{enumerate}
\end{lemma}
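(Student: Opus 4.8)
The plan is to handle the existence assertions in (1) and (2) by one elementary linear-algebra manipulation, and then to deduce the support identity in (2) from the one in (1).

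For existence in (1), start from \emph{any} expression $D=\sum_{j=1}^{m}b_jE_j$ with $E_j\in\Div(X)$ and $b_j\in\RR$. Let $a_1,\dots,a_l$ be a $\QQ$-basis of the finite-dimensional $\QQ$-subspace $\vectspan_{\QQ}\{b_1,\dots,b_m\}\subseteq\RR$ and write $b_j=\sum_iq_{ij}a_i$ with $q_{ij}\in\QQ$. Pick $M\in\NN$ with $Mq_{ij}\in\ZZ$ for all $i,j$ and set $D_i:=\sum_j(Mq_{ij})E_j\in\Div(X)$; then $D=\sum_i(a_i/M)D_i$, and $a_1/M,\dots,a_l/M$ are again $\QQ$-linearly independent, since rescaling by nonzero rationals preserves $\QQ$-linear independence. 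The existence in (2) is the same argument applied to an expression $(\phi)=\sum_jb_j(\phi_j)$ with $\phi_j\in\Rat(X)^{\times}$: it produces $\phi_i:=\phi_1^{Mq_{i1}}\cdots\phi_m^{Mq_{im}}\in\Rat(X)^{\times}$ with $(\phi)=\sum_i(a_i/M)(\phi_i)$.

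For the ``moreover'' in (1), fix a decomposition $D=\sum_{i=1}^{l}a_iD_i$ with $D_i\in\Div(X)$ and $a_1,\dots,a_l$ $\QQ$-linearly independent, and let $f_{i,x}$ denote a local equation of $D_i$ at $x$, so $f_x=f_{1,x}^{\otimes a_1}\otimes\cdots\otimes f_{l,x}^{\otimes a_l}$. The inclusions $\CSupp(D)\subseteq\bigcup_i\CSupp(D_i)$ and $\WSupp(D)\subseteq\bigcup_i\WSupp(D_i)$ hold for \emph{every} such decomposition: off all the $\CSupp(D_i)$ each $f_{i,x}$ is a unit, hence so is $f_x$; and for a prime Weil divisor $Z$ one has $\ord_Z(D)=\sum_ia_i\ord_Z(D_i)$, so $\ord_Z(D)\neq0$ forces some $\ord_Z(D_i)\neq0$. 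For the reverse inclusions I would use the elementary fact that a relation $\sum_ia_iv_i=0$ in $V\otimes_{\QQ}\RR$, with $V$ a $\QQ$-vector space, $v_i\in V$, and $a_i$ $\QQ$-linearly independent, forces $v_i=0$ for all $i$ (pass to a $\QQ$-basis of $\vectspan_{\QQ}\{v_i\}$ and use that the basis vectors stay $\RR$-independent after $\otimes\RR$; the scalar case $V=\QQ$ says $\sum_ia_in_i=0$ with $n_i\in\QQ$ implies all $n_i=0$). Applied with $V=\QQ$ and $n_i=\ord_Z(D_i)\in\ZZ$ this gives $\ord_Z(D)=0\Rightarrow\ord_Z(D_i)=0$ for all $i$, hence $\WSupp(D_i)\subseteq\WSupp(D)$. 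For $\CSupp$, fix $x\notin\CSupp(D)$, so $f_x\in\mathcal{O}_{X,x}^{\ast}\otimes_{\ZZ}\RR$; pushing forward along $\Rat(X)^{\times}\otimes_{\ZZ}\RR\to(\Rat(X)^{\times}/\mathcal{O}_{X,x}^{\ast})\otimes_{\ZZ}\RR$ (which kills $\mathcal{O}_{X,x}^{\ast}\otimes_{\ZZ}\RR$) yields $\sum_i\overline{f_{i,x}}\otimes a_i=0$ there. When $\mathcal{O}_{X,x}$ is a normal local domain the group $\Rat(X)^{\times}/\mathcal{O}_{X,x}^{\ast}$ is torsion-free (one has $F^{\times}/R^{\times}\hookrightarrow\bigoplus_{\mathrm{ht}\,\mathfrak{p}=1}\ZZ$ via the $\ord_{\mathfrak{p}}$), so this equality lives in $V\otimes_{\QQ}\RR$ with $V=(\Rat(X)^{\times}/\mathcal{O}_{X,x}^{\ast})\otimes_{\ZZ}\QQ$, and the fact above forces every $\overline{f_{i,x}}=0$, i.e. $x\notin\CSupp(D_i)$. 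Hence $\CSupp(D_i)\subseteq\CSupp(D)$, and combining inclusions gives the stated equalities.

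The ``moreover'' in (2) is then immediate: the decomposition $(\phi)=\sum_i(a_i/M)(\phi_i)$ produced above is a decomposition of the $\RR$-Cartier divisor $(\phi)$ of exactly the kind treated in (1), with each $(\phi_i)\in\Div(X)$ and with $\QQ$-linearly independent coefficients, so (1) yields $\CSupp((\phi))=\bigcup_i\CSupp((\phi_i))$ and $\WSupp((\phi))=\bigcup_i\WSupp((\phi_i))$ (the right-hand side of the $\CSupp$ identity as printed should read $\bigcup_i\CSupp((\phi_i))$). The one genuinely non-formal point is the reverse inclusion for $\CSupp$: it is exactly here that torsion-freeness of $\Rat(X)^{\times}/\mathcal{O}_{X,x}^{\ast}$, hence normality of the local rings of $X$, enters, and the inclusion really does fail without it — e.g. at the node $p$ of the nodal cubic $\{y^2=x^2(x+1)\}$, for $t=y/x$ one has $t^2=x+1\in\mathcal{O}_{X,p}^{\ast}$, so with $D=\sqrt{2}\,\div_X(t)$ one gets $p\in\CSupp(\div_X(t))$ but $p\notin\CSupp(D)$. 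Alternatively, once the equality $\CSupp=\WSupp$ on normal varieties is available (without circularity with the present lemma), the $\CSupp$ statement can simply be read off from the $\WSupp$ statement, which needs only the easy arguments.
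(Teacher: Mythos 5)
Your existence construction and your treatment of $\WSupp$ are correct and essentially equivalent to the paper's: the paper extracts $\QQ$-linearly independent coefficients by a minimality argument rather than your explicit basis construction, and it invokes \cite[Lemma~1.3.1]{MoriwakiAdelic} exactly where you use your elementary fact that $\sum_i a_iv_i=0$ with $\QQ$-linearly independent $a_i$ forces $v_i=0$.

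The genuine problem is in your treatment of $\CSupp$. The lemma is stated for an \emph{arbitrary} projective variety, with the paper's definition $\CSupp(D)=\{x\,:\,f_x\notin\mathcal{O}_{X,x}^{\ast}\otimes_{\ZZ}\RR\}$, whereas you prove the reverse inclusion only under normality of $\mathcal{O}_{X,x}$ and then assert that the statement fails without it. Your nodal-cubic example is not a counterexample to the lemma as stated: with the paper's definition one has $t\otimes 1=(x+1)\otimes\tfrac{1}{2}\in\mathcal{O}_{X,p}^{\ast}\otimes_{\ZZ}\RR$, so $p\notin\CSupp(\div(t))$ either. You are implicitly using the classical condition $t\notin\mathcal{O}_{X,p}^{\ast}$, which defines an a priori larger support; the paper identifies the two only for normal $X$ (that is part of Lemma~\ref{lem:valuation_of_divisors}, which is proved \emph{after} and using the present lemma). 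Moreover, your own argument proves the lemma in full generality if you stop one step earlier: from $\sum_i\overline{f_{i,x}}\otimes a_i=0$ in $(\Rat(X)^{\times}/\mathcal{O}_{X,x}^{\ast})\otimes_{\ZZ}\RR=\bigl((\Rat(X)^{\times}/\mathcal{O}_{X,x}^{\ast})\otimes_{\ZZ}\QQ\bigr)\otimes_{\QQ}\RR$, your elementary fact already forces each $\overline{f_{i,x}}$ to vanish in the $\QQ$-vector space, i.e.\ to be torsion in $\Rat(X)^{\times}/\mathcal{O}_{X,x}^{\ast}$; hence $f_{i,x}^{N}\in\mathcal{O}_{X,x}^{\ast}$ for some $N\geq 1$, so $f_{i,x}\otimes 1=f_{i,x}^{N}\otimes\tfrac{1}{N}\in\mathcal{O}_{X,x}^{\ast}\otimes_{\ZZ}\RR$, which is exactly $x\notin\CSupp(D_i)$. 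Torsion-freeness of the quotient (hence normality) is needed only for the stronger conclusion $f_{i,x}\in\mathcal{O}_{X,x}^{\ast}$, which the lemma does not claim; the weaker statement is precisely the content of \cite[Lemma~1.3.1]{MoriwakiAdelic} that the paper invokes. The same remark rules out your proposed alternative of deducing the $\CSupp$ identity from the $\WSupp$ one, which again is available only for normal $X$.
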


\begin{proof}
(1): Choose an expression $D=\sum_{i=1}^la_iD_i$ such that $a_i\in\RR$, $D_i\in\Div_{\QQ}(X)$, and $l$ is minimal among such expressions.
If $a_i$ are $\QQ$-linearly dependent, one can find, after renumbering $a_1,\dots,a_l$, an expression
\[
 \sum_{i=1}^lr_ia_i=0
\]
such that $r_1,\dots,r_l\in\ZZ$ and $r_l\neq 0$.
Then
\[
 D=\sum_{i=1}^{l-1}a_iD_i-\frac{1}{r_l}\left(\sum_{i=1}^{l-1}r_ia_i\right)D_l=\sum_{i=1}^{l-1}\frac{a_i}{r_l}(r_lD_i-r_iD_l),
\]
which contradicts the minimality of $l$.

For the first equality, the inclusion $\subset$ is clear.
Suppose that $x\notin\CSupp(D)$; hence $f_1^{\otimes a_1}\otimes\dots\otimes f_l^{\otimes a_l}\in\mathcal{O}_{X,x}^{\ast}\otimes_{\ZZ}\RR$, where $f_i$ is a local equation defining $D_i$ around $x$.
By applying \cite[Lemma~1.3.1]{MoriwakiAdelic}, one has $f_i\in\mathcal{O}_{X,x}^{\ast}\otimes_{\ZZ}\QQ$ for every $i$.
Thus $x\notin\CSupp(D_i)$ for every $i$.

For the second, the inclusion $\subset$ is clear.
Let $V$ be the $\QQ$-subspace of $\WDiv_{\QQ}(X)$ generated by the irreducible components of $\WSupp(D)$.
Since $a_1D_1+\dots+a_lD_l\in V\otimes_{\QQ}\RR$, one has $D_i\in V$ for every $i$ by \cite[Lemma~1.3.1]{MoriwakiAdelic}.
Hence $\WSupp(D_i)\subset\WSupp(D)$ for every $i$.

The same arguments as above also show the assertion (2).
\end{proof}

\begin{lemma}\label{lem:valuation_of_divisors}
Let $D\in\Div_{\RR}(X)$, and suppose that $X$ is normal.
\begin{enumerate}
\item For any $\nu\in\DV(\Rat(X))$, if $D\geq 0$, then $\nu_X(D)\geq 0$.
\item One has
\[
 \CSupp(D)=\WSupp(D)=\bigcup_{\substack{\nu\in\DV(\Rat(X)), \\ \nu_X(D)\neq 0}}\overline{\{c_X(\nu)\}}.
\]
\end{enumerate}
\end{lemma}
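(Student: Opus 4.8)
For part (1), the plan is to reduce to a single effective Cartier divisor. Set $c:=c_X(\nu)$, so that $\mathcal{O}_{X,c}\subseteq O_{\nu}$ and hence $\mathcal{O}_{X,c}^{\times}\subseteq O_{\nu}^{\ast}$. If $c\notin\CSupp(D)$, a local equation $f$ for $D$ around $c$ lies in $\mathcal{O}_{X,c}^{\times}\otimes_{\ZZ}\RR$, so $\nu_X(D)=\nu(f)=0$. In general, $D\geq 0$ means that, at least locally around $c$, $D$ is a nonnegative $\RR$-linear combination $\sum_j b_j E_j$ of effective Cartier divisors; a local equation of each $E_j$ at $c$ is a regular function, hence lies in $\mathcal{O}_{X,c}\subseteq O_{\nu}$ and has nonnegative $\nu$-value, so $\nu_X(D)=\sum_j b_j\nu_X(E_j)\geq 0$.

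For part (2), I would argue along the chain $\WSupp(D)\subseteq\CSupp(D)$, $\CSupp(D)=\WSupp(D)$, $\WSupp(D)=\bigcup_{\nu}\overline{\{c_X(\nu)\}}$. The inclusion $\WSupp(D)\subseteq\CSupp(D)$ holds on any variety, since a point outside $\CSupp(D)$ has a unit as local equation and therefore lies on no prime divisor $Z$ with $\ord_Z(D)\neq 0$. For the reverse inclusion I use normality together with Lemma~\ref{lem:Q-lin_indep}(1): choosing $D=\sum_{i=1}^{l}a_iD_i$ with the $a_i$ $\QQ$-linearly independent and the $D_i\in\Div(X)$, that lemma gives $\CSupp(D)=\bigcup_i\CSupp(D_i)$ and $\WSupp(D)=\bigcup_i\WSupp(D_i)$, reducing us to a single Cartier divisor $D_i$. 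If $x\notin\WSupp(D_i)$ and $f$ is a local equation of $D_i$ at $x$, then $f$ and $f^{-1}$ have nonnegative order along every prime divisor through $x$, i.e.\ nonnegative valuation at every height-one prime of the normal Noetherian local domain $\mathcal{O}_{X,x}$; by Serre's criterion $\mathcal{O}_{X,x}=\bigcap_{\mathrm{ht}\,\mathfrak p=1}\mathcal{O}_{X,x,\mathfrak p}$, so $f\in\mathcal{O}_{X,x}^{\times}$ and $x\notin\CSupp(D_i)$. This yields $\CSupp(D)=\WSupp(D)=:\Supp(D)$ (in particular the claim recorded in Notation and terminology~2).

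It then remains to identify $\Supp(D)$ with $\bigcup\overline{\{c_X(\nu)\}}$ over $\nu\in\DV(\Rat(X))$ with $\nu_X(D)\neq 0$. For the inclusion $\subseteq$: given $x\in\WSupp(D)$, pick a prime divisor $Z\ni x$ with $\ord_Z(D)\neq 0$ and take $\nu:=\ord_Z$, a nontrivial normalized discrete valuation of $\Rat(X)$ whose center on $X$ is the generic point of $Z$; then $\overline{\{c_X(\nu)\}}=Z\ni x$ and $\nu_X(D)=\ord_Z(D)\neq 0$. For $\supseteq$: given $\nu$ with $\nu_X(D)\neq 0$ and $c:=c_X(\nu)$, it suffices to show $c\in\Supp(D)$ because $\Supp(D)$ is closed; and if $c\notin\Supp(D)=\CSupp(D)$, then the argument of part (1) forces $\nu_X(D)=0$, a contradiction.

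The only genuinely delicate step is the inclusion $\CSupp(D)\subseteq\WSupp(D)$ for $X$ merely normal rather than regular, which is exactly where Serre's criterion and the support decomposition of Lemma~\ref{lem:Q-lin_indep}(1) enter; the remaining steps are routine unwindings of the definitions of center, local equation and $\nu_X$, together with $\mathcal{O}_{X,c_X(\nu)}\subseteq O_{\nu}$. If one prefers to read ``$D\geq 0$'' in the Weil sense ($\ord_Z(D)\geq 0$ for all prime divisors $Z$), then part (1) needs in addition a Gordan-type argument at $c:=c_X(\nu)$: the relevant local equations generate a finitely generated subgroup of $\Rat(X)^{\times}$ on which the finitely many nonzero height-one valuations of $\mathcal{O}_{X,c}$ cut out a rational polyhedral cone whose integral points coincide, again by Serre's criterion, with the elements lying in $\mathcal{O}_{X,c}$, and writing the local equation of $D$ as a nonnegative combination of such points reduces to the case already handled.
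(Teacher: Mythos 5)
Your proof is correct, but it takes a noticeably different route from the paper's at the two nontrivial points. For (1), the paper pulls back along a resolution of singularities $\pi\colon X'\to X$, writes $\pi^*D=\sum a_iD_i'$ with $a_i\geq 0$ and $D_i'$ prime on the smooth model, and evaluates $\nu$ on local equations at $c_{X'}(\nu)$; you instead work directly at $c_X(\nu)$ on $X$ itself via $\mathcal{O}_{X,c_X(\nu)}\subseteq O_\nu$. For the key inclusion $\CSupp(D)\subseteq\WSupp(D)$ in (2), the paper argues that every maximal point of $\CSupp(D)$ has $\depth(\mathcal{O}_{X,x})=1$, hence $\dim(\mathcal{O}_{X,x})=1$ by normality (so $\CSupp(D)$ is pure of codimension one), and then matches these points against $\CSupp(\pi^*D)=\WSupp(\pi^*D)$ on a resolution over the locus where $\pi$ is an isomorphism; you instead apply Serre's criterion $\mathcal{O}_{X,x}=\bigcap_{\mathrm{ht}\,\mathfrak p=1}(\mathcal{O}_{X,x})_{\mathfrak p}$ to conclude that a local equation with vanishing order along every height-one prime through $x$ is a unit. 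Your argument is more elementary and avoids resolution of singularities altogether (an advantage outside characteristic zero), while the paper's reduces to the regular case where $\CSupp=\WSupp$ is already known; both use Lemma~\ref{lem:Q-lin_indep}(1) to reduce to honest Cartier divisors, and both treat the identification with $\bigcup\overline{\{c_X(\nu)\}}$ identically. The one delicate point you rightly flag — what ``$D\geq 0$'' means for an $\RR$-Cartier divisor and whether a Weil-effective $\RR$-Cartier divisor admits local nonnegative decompositions — is present in the paper's proof as well (it implicitly uses that $\pi^*D$ is effective with $a_i\geq 0$), so your explicit Gordan-type remark is an honest completion rather than a detour.
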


\begin{proof}
(1): Let $\pi:X'\to X$ be a resolution of singularities, and write
\[
 \pi^*D=a_1D_1'+\dots+a_lD_l'
\]
with $a_i\geq 0$ and prime divisors $D_i'$ on $X'$.
Let $f_i$ be a local equation defining $D_i'$ around $c_{X'}(\nu)$.
Since $f_i\in\mathcal{O}_{X',c_{X'}(\nu)}\setminus\{0\}$, one has $\nu(f_i)\geq 0$ and
\[
 \nu_X(D)=\nu_{X'}(\pi^*D)=\sum_{i=1}^ka_i\nu(f_i)\geq 0.
\]

(2): Let $\nu\in\DV(\Rat(X))$ such that $\nu_X(D)\neq 0$.
If $c_X(\nu)\notin\CSupp(D)$, then there is a local equation $f\in\mathcal{O}_{X,c_X(\nu)}^{\ast}\otimes_{\ZZ}\RR$ defining $D$ around $c_X(\nu)$.
So $\nu_X(D)=\nu(f)=0$, and it is a contradiction.
Therefore, one obtains the inclusions
\[
 \CSupp(D)\supset\bigcup_{\substack{\nu\in\DV(\Rat(X)), \\ \nu_X(D)\neq 0}}\overline{\{c_X(\nu)\}}\supset\WSupp(D).
\]
We are going to show $\CSupp(D)=\WSupp(D)$.
Let $D=\sum_{i=1}^la_iD_i$ be an expression as in Lemma~\ref{lem:Q-lin_indep}(1).
Since
\[
 \CSupp(D)=\bigcup_{i=1}^l\CSupp(D_i)\quad\text{and}\quad\WSupp(D)=\bigcup_{i=1}^l\WSupp(D_i),
\]
we can assume $D\in\Div(X)$.
In this case, $\CSupp(D)$ is nothing but the usual (Cartier) support $\left\{x\in X\,:\,f\notin\mathcal{O}_{X,c_X(\nu)}^{\ast}\right\}$ (see \cite[Proposition~1.1.1]{MoriwakiAdelic}).

We endow $\CSupp(D)$ with the reduced induced scheme structure, and let $x$ be a maximal point of $\CSupp(D)$.
By \cite[Corollaire~(21.1.9)]{EGAIV_4}, we have $\depth(\mathcal{O}_{X,x})=1$; hence $\dim(\mathcal{O}_{X,x})=1$ since $X$ is normal.
Therefore, $\CSupp(D)$ is a Zariski closed subset of pure codimension one in $X$.

Let $\pi:X'\to X$ be a resolution of singularities of $X$.
There exists an open subset $U$ of $X$ such that $\codim(X\setminus U,X)\geq 2$ and $\pi:\pi^{-1}(U)\to U$ is an isomorphism.
If $x\in U$ is a maximal point of $\CSupp(D)$, then $\mathcal{O}_{X',\pi^{-1}(x)}=\mathcal{O}_{X,x}$, and $\pi^{-1}(x)$ belongs to $\CSupp(\pi^*D)=\WSupp(\pi^*D)$.
Clearly, $\pi(\WSupp(\pi^*D))=\WSupp(D)$; hence $x\in\WSupp(D)$.
\end{proof}

\begin{lemma}\label{lem:abhyankar}
Let $F\supset k$ be a field extension.
\begin{enumerate}
\item Let $\Lambda,\Lambda'$ be totally ordered $\ZZ$-modules.
Let $\ell:\Lambda\to\Lambda'$ be an order-preserving homomorphism; namely, $\ell$ is a homomorphism of $\ZZ$-modules such that $\lambda\geq 0$ implies $\ell(\lambda)\geq 0$ for every $\lambda\in\Lambda$.
If $\nu:F^{\times}\to\Lambda$ is a valuation of $F$, then so is $\ell\circ\nu:F^{\times}\to\Lambda'$.
\item Suppose that $F$ is finitely generated over $k$.
Let $X$ be a projective $k$-variety with $\Rat(X)=F$ and let $r:=\trdeg_kF$.
For a $\nu\in\DV(F)$, the following are equivalent.
\begin{enumerate}
\item $\trdeg_kk_{\nu}=r-1$.
\item There exists a valuation $\bm{\nu}:F^{\times}\to\ZZ^r$ of $F$ such that $\bm{\nu}$ has value group $(\ZZ^r,\leq_{\lex})$ and $\nu=\pr_1\circ\bm{\nu}$.
\item There exist a valuation $\bm{\nu}:F^{\times}\to\Lambda_{\nu}$ of $F$ and an order-preserving surjective linear form $\ell:\Lambda_{\nu}\to\ZZ$ such that $\rk_{\QQ}\Lambda_{\nu}\otimes_{\ZZ}\QQ=r$ and $\nu=\ell\circ\bm{\nu}$.
\item There exist a birational projective morphism $X'\to X$ and a prime Weil divisor $Y'$ of $X'$ such that $X'$ is normal and $\nu$ is equivalent to $\ord_{Y'}$.
\end{enumerate}
\end{enumerate}
\end{lemma}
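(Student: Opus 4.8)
For part (1), since $\ell$ is an order-preserving homomorphism of $\ZZ$-modules, $\ell(\nu(a))=\ell(0)=0$ for $a\in k^{\times}$ and $\ell(\nu(\phi\psi))=\ell(\nu(\phi))+\ell(\nu(\psi))$, while applying $\ell$ --- which, being order-preserving, commutes with $\min$ --- to $\nu(\phi+\psi)\geq\min\{\nu(\phi),\nu(\psi)\}$ gives $\ell(\nu(\phi+\psi))\geq\min\{\ell(\nu(\phi)),\ell(\nu(\psi))\}$; hence $\ell\circ\nu$ is a valuation of $F/k$ with values in $\Lambda'$, and (1) follows. Part (2) is the classical characterization of divisorial (Abhyankar) valuations, and the plan is to prove the cycle of implications $(a)\Rightarrow(d)\Rightarrow(b)\Rightarrow(c)\Rightarrow(a)$.

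For $(a)\Rightarrow(d)$ I would start from an arbitrary projective model $X$ with $\Rat(X)=F$ and successively modify it until the center of $\nu$ becomes a divisor. Writing $Z:=\overline{\{c_X(\nu)\}}$, so that $\dim Z=\trdeg_k\kappa(c_X(\nu))$, the inclusion $\kappa(c_X(\nu))\hookrightarrow k_{\nu}$ together with $(a)$ gives $\dim Z\leq r-1$. If $\dim Z<r-1$ then, since $\trdeg_kk_{\nu}=r-1>\dim Z$, I may choose $\phi\in O_{\nu}$ with $\nu(\phi)=0$ whose image in $k_{\nu}$ is transcendental over $\kappa(c_X(\nu))$, and pass to a projective birational model on which $\phi$ is regular at the center of $\nu$ --- for instance the reduced closure in $X\times\PP^1$ of the graph of $\phi$ over its domain of definition; the local ring of the new center then contains $\mathcal{O}_{X,c_X(\nu)}[\phi]$, so the transcendence degree of its residue field, hence $\dim Z$, strictly increases. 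As $\dim Z$ is bounded by $r-1$ throughout, the process terminates at a model on which $\dim Z=r-1$; after normalizing it, the center of $\nu$ is the generic point $\eta_{Y'}$ of a prime Weil divisor $Y'$, and $\mathcal{O}_{X',\eta_{Y'}}$ is a discrete valuation ring with fraction field $F$ that is dominated by $O_{\nu}$. Since the only rings between a discrete valuation ring and its fraction field are the ring itself and the whole field, and $O_{\nu}\neq F$, we get $O_{\nu}=\mathcal{O}_{X',\eta_{Y'}}$, i.e.\ $\nu$ is equivalent to $\ord_{Y'}$.

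For $(d)\Rightarrow(b)$ I would use that $k_{\nu}=\Rat(Y')$ is finitely generated over $k$ of transcendence degree $r-1$, and construct by induction on the transcendence degree a valuation $w$ of $\Rat(Y')/k$ with value group $(\ZZ^{r-1},\leq_{\lex})$ and $\trdeg_kk_w=0$ --- the inductive step composing the order along a prime divisor on a normal projective model with the valuation furnished inductively on that divisor's function field. Composing $\nu=\ord_{Y'}$ with $w$, the valuation ring being the preimage of $O_w$ under the residue map $O_{\nu}\to k_{\nu}$, produces a valuation $\bm{\nu}$ with $\nu=\pr_1\circ\bm{\nu}$ whose value group is an extension of $(\ZZ,\leq)$ by the convex subgroup $(\ZZ^{r-1},\leq_{\lex})$; this group has rank $r$ equal to its rational rank, so by \cite[Chap.\ VI, \S 10, no.\ 2, Proposition~4]{BourbakiCA85} it is order-isomorphic to $(\ZZ^r,\leq_{\lex})$, which is $(b)$. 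The implication $(b)\Rightarrow(c)$ is then immediate with $\ell=\pr_1:\ZZ^r\to\ZZ$.

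Finally, for $(c)\Rightarrow(a)$ I would invoke Abhyankar's inequality twice: applied to the discrete valuation $\nu$ on $F/k$ it gives $1+\trdeg_kk_{\nu}\leq r$, hence $\trdeg_kk_{\nu}\leq r-1$; and applied to the valuation $\bar{\bm{\nu}}$ induced by $\bm{\nu}$ on $k_{\nu}$ --- whose value group is the convex subgroup $\ker\ell\subset\Lambda_{\nu}$, of rational rank $r-1$, and whose base field $k_{\nu}$ has finite transcendence degree over $k$ by the previous line --- it gives $r-1\leq\ratrk(\bar{\bm{\nu}})+\trdeg_kk_{\bar{\bm{\nu}}}\leq\trdeg_kk_{\nu}$, whence $\trdeg_kk_{\nu}=r-1$. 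I expect the main obstacle to be $(a)\Rightarrow(d)$: making rigorous both that a well-chosen modification strictly increases the dimension of the center and that the local enlargement $\mathcal{O}_{X,c_X(\nu)}[\phi]$ is realized by a projective birational model; the remaining implications are formal once Abhyankar's inequality and the Bourbaki classification of lexicographic value groups are at hand.
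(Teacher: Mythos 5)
Your argument is correct, but it is organized quite differently from the paper's. The paper proves the cycle (a) $\Rightarrow$ (b) $\Rightarrow$ (c) $\Rightarrow$ (a) and treats (d) separately: for (a) $\Rightarrow$ (d) it simply cites Vaqui\'e's result that a discrete valuation satisfying Abhyankar's equality admits a codimension-one center on some normal modification, and for (a) $\Rightarrow$ (b) it first invokes the Bourbaki consequence of Abhyankar's equality to see that $k_{\nu}$ is finitely generated of transcendence degree $r-1$, then writes the composite valuation explicitly as $\phi\mapsto(\nu(\phi),\overline{\nu}(\phi\varpi^{-\nu(\phi)}\bmod\mathfrak{m}_{\nu}))$ for a uniformizer $\varpi$. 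You instead run the single cycle (a) $\Rightarrow$ (d) $\Rightarrow$ (b) $\Rightarrow$ (c) $\Rightarrow$ (a): the citation of Vaqui\'e is replaced by a self-contained Zariski-style uniformization (graph closures in $X\times\PP^1$ strictly increasing $\trdeg_k\kappa(c_X(\nu))$ until the center is a divisor, then normalizing and using that a DVR is maximal among proper subrings of its fraction field), and (b) is reached from (d) rather than from (a) --- the same composite-valuation construction, with the finite generation and transcendence degree of $k_{\nu}$ now coming from $k_{\nu}=\Rat(Y')$ instead of from Bourbaki. For (c) $\Rightarrow$ (a) both proofs pass to the induced valuation on $k_{\nu}$ with value group $\Ker(\ell)$; the paper checks the valuation axioms and the surjectivity onto $\Ker(\ell)$ by hand, where you quote Abhyankar's inequality twice, which is equivalent in content. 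Your route buys independence from Vaqui\'e's proposition at the cost of the (routine but nontrivial) verifications you yourself flag: that the graph closure is a projective birational model whose new center dominates $\mathcal{O}_{X,c_X(\nu)}[\phi]$, and that the preimage of $O_w$ in $O_{\nu}$ is a valuation ring whose value group is the extension of $(\ZZ,\leq)$ by the convex subgroup $(\ZZ^{r-1},\leq_{\lex})$; both are standard and your sketch of them is sound.
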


\begin{definition}\label{defn:divisorial}
We call a valuation $\nu:\Rat(X)^{\times}\to\Lambda_{\nu}$ \emph{divisorial} if $\nu$ is discrete and satisfies the equivalent conditions in Lemma~\ref{lem:abhyankar}(2).
\end{definition}

\begin{proof}[Proof of Lemma~\ref{lem:abhyankar}]
The assertion (1) is obvious.

(2): Since $\nu$ is a discrete valuation, $\mathfrak{m}_{\nu}$ is a principal ideal with a generator $\varpi$.
The implication (b) $\Rightarrow$ (c) is clear.

(a) $\Rightarrow$ (b): The valuation $\nu$ satisfies $\ratrk(\nu)=1$ and
\[
 \ratrk(\nu)+\trdeg_kk_{\nu}=\trdeg_kF.
\]
Hence, by \cite[Chap.\ VI, \S 10, no.\ 3, Corollaire~1]{BourbakiCA85}, $k_{\nu}$ is a finitely generated field extension of $k$ with $\trdeg_kk_{\nu}=r-1$.
There exists a valuation $\overline{\nu}:k_{\nu}^{\times}\to\ZZ^{r-1}$ having value group $(\ZZ^{r-1},\leq_{\lex})$.
We define $\bm{\nu}:F^{\times}\to\ZZ^r$ by
\[
 \phi\mapsto (\nu(\phi),\overline{\nu}(\phi\cdot\varpi^{-\nu(\phi)}\mod\mathfrak{m}_{\nu})).
\]
One can verify that $\bm{\nu}$ is a valuation of $F$.
Given any $(n_1,\dots,n_r)\in\ZZ^r$, there exists a $\psi\in O_{\nu}^{\ast}$ such that $\overline{\nu}(\psi\mod\mathfrak{m}_{\nu})=(n_2,\dots,n_r)$.
Hence, $\bm{\nu}(\psi\cdot\varpi^{n_1})=(n_1,\dots,n_r)$ and the value group of $\bm{\nu}$ is $(\ZZ^r,\leq_{\lex})$.

(c) $\Rightarrow$ (a): We know that $\mathfrak{m}_{\nu}$ is a prime ideal of $O_{\bm{\nu}}$ and $\overline{V}:=O_{\bm{\nu}}/\mathfrak{m}_{\nu}$ is a valuation ring for $k_{\nu}$ (see \cite[Chap.\ VI, \S 4, no.\ 1, Propositions~1 et 2]{BourbakiCA85}).
So we have a homomorphism of semigroups
\[
 \overline{V}\setminus\{0\}=O_{\bm{\nu}}\setminus\mathfrak{m}_{\nu}\xrightarrow{\bm{\nu}}\Ker(\ell).
\]
We can uniquely extend this to a homomorphism $\overline{\nu}:k_{\nu}^{\times}\to\Ker(\ell)$ of abelian groups.
We are going to show that $\overline{\nu}$ is a valuation of $k_{\nu}$ with $\ratrk(\overline{\nu})=r-1$.
The conditions (i) and (ii) are obvious.
For $\phi,\psi\in O_{\bm{\nu}}\setminus\mathfrak{m}_{\nu}$ with $\phi+\psi\not\in\mathfrak{m}_{\nu}$, we have
\begin{align*}
 \overline{\nu}(\phi+\psi\mod\mathfrak{m}_{\nu})&=\bm{\nu}(\phi+\psi) \\
 &\geq\min\{\bm{\nu}(\phi),\bm{\nu}(\psi)\}=\min\{\overline{\nu}(\phi\mod\mathfrak{m}_{\nu}),\overline{\nu}(\psi\mod\mathfrak{m}_{\nu})\}.
\end{align*}
So the condition (iii) holds in general.
Take an $e_1\in\Lambda_{\nu}$ with $\ell(e_1)=1$.
Then $\ell:\Lambda_{\nu}\to\ZZ$ splits and $\Lambda_{\nu}=\ZZ e_1\otimes\Ker(\ell)$ as ordered $\ZZ$-modules, where the right-hand side is endowed with the lexicographical order.
Let $\lambda\in\Ker(\ell)$.
Either $\lambda$ or $-\lambda$ is non-negative, so we can assume $\lambda\geq 0$.
There exists a $\phi\in O_{\bm{\nu}}$ such that $\bm{\nu}(\phi)=\lambda\geq 0$; thus $\overline{\nu}$ is surjective.
Since $\ratrk(\overline{\nu})\leq\trdeg_kk_{\nu}\leq r-1$, we have $\trdeg_kk_{\nu}=r-1$.

(a) $\Rightarrow$ (d): By \cite[Proposition~2.3]{Vaquie06}, there exist a birational projective morphism $X'\to X$ and a point $\xi'\in X'$ of codimension one such that $X'$ is normal and $\nu$ has center $\xi'$ on $X'$.
Since $\mathcal{O}_{X',\xi'}$ is a discrete valuation ring dominated by $O_{\nu}$, we have $\mathcal{O}_{X',\xi'}=O_{\nu}$ and $\nu$ is equivalent to $\ord_{\xi'}$.

(d) $\Rightarrow$ (a): If $\nu$ is equivalent to $\ord_{Y'}$, then $k_{\nu}=\Rat(Y')$ has transcendence degree $r-1$ over $k$.
\end{proof}

\subsection{Base conditions}\label{subsec:Base_Cond}

A purpose of this subsection is to introduce the notion of pairs of adelic $\RR$-Cartier divisors and $\RR$-base conditions (see Definition~\ref{defn:Adelic_with_Base_Conditions4}).

\begin{definition}\label{defn:Base_Cond}
Let $K$ be a number field, let $X$ be a normal projective $K$-variety, and let $\DV(\Rat(X))$ be the set of all the (non-trivial) normalized discrete valuations of $\Rat(X)/K$.
Let $\KK$ be either $\RR$, $\QQ$, or $\ZZ$.
A \emph{$\KK$-base condition} is defined as a finite formal sum
\begin{equation}
 \mathcal{V}:=\sum_{\nu\in\DV(\Rat(X))}a_{\nu}[\nu],
\end{equation}
where $a_{\nu}\in\KK$ and $\nu\in\DV(\Rat(X))$.
We denote by $\VDiv_{\KK}(X)$ the $\KK$-module of all the $\KK$-base conditions on $X$.
The \emph{order} of an $\RR$-base condition $\mathcal{V}$ along $\nu$ is defined as
\begin{equation}\label{eqn:defn_nu_base_cond}
 \nu(\mathcal{V}):=a_{\nu}.
\end{equation}

We say that $\mathcal{V}$ is \emph{effective} if $\nu(\mathcal{V})\geq 0$ for every $\nu\in\DV(\Rat(X))$ and denote it by $\mathcal{V}\geq 0$.
Put
\begin{equation}
 \mathcal{V}_+:=\sum_{\nu(\mathcal{V})\geq 0}\nu(\mathcal{V})[\nu]\quad\text{and}\quad \mathcal{V}_-:=\mathcal{V}_+-\mathcal{V}.
\end{equation}
We say that $\mathcal{V}$ is \emph{divisorial} if $\nu(\mathcal{V})\neq 0$ implies that $\nu$ is divisorial (see Definition~\ref{defn:divisorial}).

We define the \emph{support} on $X$ of an $\RR$-base condition $\mathcal{V}$ as
\begin{equation}\label{eqn:support_of_base_cond}
 \Supp_X(\mathcal{V}):=\bigcup_{\nu(\mathcal{V})\neq 0}\overline{\{c_X(\nu)\}},
\end{equation}
which is a Zariski closed subset of $X$.
\end{definition}

We can naturally regard an $\RR$-Cartier divisor as an $\RR$-Weil divisor.
To an $\RR$-Weil divisor
\[
 \Xi=\sum_{\text{$Z$: prime Weil divisor}}a_ZZ,
\]
we can naturally associate an $\RR$-base condition
\begin{equation}\label{eqn:associated_base_cond}
 [\Xi]:=\sum_{\text{$Z$: prime Weil divisor}}a_Z[\ord_Z].
\end{equation}

\begin{remark}
Let $Z$ be a prime Weil divisor on $X$, and let $D\in\Div_{\RR}(X)$.
One then has
\[
 \ord_Z([D])=\ord_Z(D)=\ord_{Z,X}(D)
\]
(see (\ref{eqn:defn_valuation_R-div}), (\ref{eqn:defn_nu_base_cond}), and (\ref{eqn:associated_base_cond})).
In fact, it suffices to show the equality for $D\in\Div(X)$.
Let $f$ be a local equation defining $D$ around the generic point of $Z$.
Then $\ord_{Z,X}(D)=\ord_Z(f)=\ord_Z(D)=\ord_Z([D])$.
\end{remark}

\begin{definition}\label{defn:Adelic_with_Base_Conditions4}
Let $\KK$ and $\KK'$ be either $\RR$, $\QQ$, or $\ZZ$.
We set
\begin{align}
 \aBVDiv_{\KK,\KK'}(X) &:=\aDiv_{\KK}(X)\times\VDiv_{\KK'}(X), \\
 \aBWDiv_{\KK,\KK'}(X) &:=\aDiv_{\KK}(X)\times\WDiv_{\KK'}(X), \\
 \aBDiv_{\KK,\KK'}(X) &:=\aDiv_{\KK}(X)\times\Div_{\KK'}(X),
\end{align}
and
\begin{equation}
 \aBDDiv_{\KK,\KK'}(X):=\left\{(\overline{D};\mathcal{V})\in\aBVDiv_{\KK,\KK'}(X)\,:\,\text{$\mathcal{V}$ is divisorial}\right\}
\end{equation}
(see Notation and terminology~2 and Definitions~\ref{defn:divisorial} and \ref{defn:Base_Cond}).
We always identify a pair $(\overline{D};0)$ with the adelic $\RR$-Cartier divisor $\overline{D}$.
In particular, we have the natural inclusions of the five types of base conditions;
\[
 \aDiv_{\KK}(X)\subset\aBDiv_{\KK,\KK'}(X)\subset\aBWDiv_{\KK,\KK'}(X)\subset\aBDDiv_{\KK,\KK'}(X)\subset\aBVDiv_{\KK,\KK'}(X).
\]

Let $(\overline{D};\mathcal{V})\in\aBVDiv_{\RR,\RR}(X)$.
\begin{description}
\item[(effective)] We say that $(\overline{D};\mathcal{V})$ is \emph{effective} (respectively, \emph{strictly effective}) if $\overline{D}\geq 0$ (respectively, $\overline{D}>0$; see Notation and terminology~4 for definition of the inequality signs) and $\nu_X(D)\geq\nu(\mathcal{V})$ for every $\nu\in\DV(\Rat(X))$.
For two pairs $(\overline{D}_1;\mathcal{V}_1),(\overline{D}_2;\mathcal{V}_2)$ on $X$, we write
\begin{equation}
 (\overline{D}_1;\mathcal{V}_1)\leq (\overline{D}_2;\mathcal{V}_2)\quad \text{(respectively, $(\overline{D}_1;\mathcal{V}_1)<(\overline{D}_2;\mathcal{V}_2)$)}
\end{equation}
if $(\overline{D}_2-\overline{D}_1;\mathcal{V}_2-\mathcal{V}_1)$ is effective (respectively, strictly effective).
\item[($\nu_0$-effective)] Let $\nu_0\in\DV(\Rat(X))$.
We say that $(\overline{D};\mathcal{V})$ is \emph{$\nu_0$-effective} (respectively, \emph{strictly $\nu_0$-effective}) if $(\overline{D};\mathcal{V})\geq 0$ (respectively, $(\overline{D};\mathcal{V})>0$) and $\nu_{0,X}(D)=\nu_0(\mathcal{V})$.
We write
\begin{equation}\label{eqn:nu0-effective}
 (\overline{D}_1;\mathcal{V}_1)\leq_{\nu_0}(\overline{D}_2;\mathcal{V}_2)\quad \text{(respectively, $(\overline{D}_1;\mathcal{V}_1)<_{\nu_0}(\overline{D}_2;\mathcal{V}_2)$)}
\end{equation}
if $(\overline{D}_2-\overline{D}_1;\mathcal{V}_2-\mathcal{V}_1)$ is $\nu_0$-effective (respectively, strictly $\nu_0$-effective).
Obviously, if $(\overline{D};\mathcal{V})\geq_{\nu_0}0$, then $\nu_0(\mathcal{V})=\nu_{0,X}(D)\geq 0$.
\end{description}

Let $\KK$ be either a blank, $\QQ$, or $\RR$.
Given a pair $(\overline{D};\mathcal{V})\in\aBVDiv_{\RR,\RR}(X)$, we set
\begin{equation}
 \aHzq{\KK}(\overline{D};\mathcal{V}) :=\left\{\phi\in\Rat(X)^{\times}\otimes_{\ZZ}\KK\,:\,(\overline{D}+\widehat{(\phi)};\mathcal{V})>0\right\}\cup\{0\}
\end{equation}
and
\begin{equation}
 \aHzsmq{\KK}(\overline{D};\mathcal{V}):=\left\{\phi\in\Rat(X)^{\times}\otimes_{\ZZ}\KK\,:\,(\overline{D}+\widehat{(\phi)};\mathcal{V})\geq 0\right\}\cup\{0\}
\end{equation}
(see Definition~\ref{defn:Adelic_with_Base_Conditions4}).
If $\Xi$ is an effective $\RR$-Weil divisor on $X$, then
\[
 \aHzsq{?}{\KK}(\overline{D};[\Xi])=\aHzsq{?}{\KK}(\overline{D})\cap H^0_{\KK}(D-\Xi)
\]
for $?=\text{s or ss}$ and $\KK=\text{a blank}$, $\QQ$, or $\RR$.

It follows from definition that
\begin{equation}\label{eqn:general_rule1}
 (\overline{E};[E])\geq_{\nu} 0
\end{equation}
for every effective $\overline{E}\in\aDiv_{\RR}(X)$ and $\nu\in\DV(\Rat(X))$.
Moreover,
\begin{equation}\label{eqn:general_rule2}
 (0;-\mathcal{V})\geq 0\quad\text{(respectively, $(0;-\mathcal{V})\geq_{\nu}0$)}
\end{equation}
for every effective $\mathcal{V}\in\VDiv_{\RR}(X)$ (respectively, effective $\mathcal{V}\in\VDiv_{\RR}(X)$ with $\nu(\mathcal{V})=0$).
\end{definition}

\begin{remark}\label{rem:general_rule1}
\begin{enumerate}
\item By Lemma~\ref{lem:valuation_of_divisors}(1), it follows that $\aHzsq{?}{\KK}(\overline{D};\mathcal{V})=\aHzsq{?}{\KK}(\overline{D};\mathcal{V}_+)$ for $?=\text{s or ss}$ and $\KK=\text{a blank}$, $\QQ$, or $\RR$.
\item Let $\KK$ be either a blank, $\QQ$, or $\RR$.
Let $(\overline{D};\mathcal{V}),(\overline{D}';\mathcal{V}')\in\aBVDiv_{\RR,\RR}(X)$.
If $\phi\in\aHzq{\KK}(\overline{D};\mathcal{V})$ and $\phi'\in\aHzsmq{\KK}(\overline{D}';\mathcal{V}')$, then $\phi\cdot\phi'\in\aHzq{\KK}(\overline{D}+\overline{D}';\mathcal{V}+\mathcal{V}')$, where $\mathcal{V}$ and $\mathcal{V}'$ may not be effective.
\end{enumerate}
\end{remark}

\begin{lemma}\label{lem:atarimae}
Let $S$ be a projective scheme over a Noetherian ring and let $\mathcal{A}$ be an ample invertible sheaf on $S$.
Let $T$ be a closed subscheme of $S$ and let $x\in S$ be a point not contained in $T$.
There exist an $m\geq 1$ and an $s\in H^0(\mathcal{A}^{\otimes m})$ such that $s$ vanishes along $T$ and $s(x)\neq 0$.
\end{lemma}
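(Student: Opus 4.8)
The plan is to deduce this from Serre's theorem asserting that a coherent sheaf becomes globally generated after twisting by a sufficiently large power of an ample invertible sheaf. First I would let $\mathcal{I}_T\subseteq\mathcal{O}_S$ be the coherent ideal sheaf cutting out the closed subscheme $T$; this is coherent because $S$, being projective over a Noetherian ring, is itself Noetherian. Since $x\notin T$, the stalk $\mathcal{I}_{T,x}$ equals the whole local ring $\mathcal{O}_{S,x}$, so the inclusion $\mathcal{I}_T\hookrightarrow\mathcal{O}_S$ is an isomorphism in a neighbourhood of $x$; in particular $x\notin\Supp(\mathcal{O}_T)$.

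Next I would apply Serre's theorem to the coherent sheaf $\mathcal{I}_T$: for every $m\gg 1$ the twist $\mathcal{I}_T\otimes\mathcal{A}^{\otimes m}$ is generated by its global sections. Fixing such an $m$, the evaluation map $H^0(S,\mathcal{I}_T\otimes\mathcal{A}^{\otimes m})\to(\mathcal{I}_T\otimes\mathcal{A}^{\otimes m})\otimes_{\mathcal{O}_{S,x}}\kappa(x)$ is surjective, and by the previous step the target is canonically identified with the fibre $\mathcal{A}^{\otimes m}(x)\cong\kappa(x)$, which is nonzero. Hence there is a global section $s$ of $\mathcal{I}_T\otimes\mathcal{A}^{\otimes m}$ whose value at $x$ is nonzero.

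Finally I would regard $s$ as an element of $H^0(S,\mathcal{A}^{\otimes m})$ via the natural inclusion $\mathcal{I}_T\otimes\mathcal{A}^{\otimes m}\hookrightarrow\mathcal{A}^{\otimes m}$; then $s(x)\neq 0$ by construction. Moreover the composite $\mathcal{I}_T\hookrightarrow\mathcal{O}_S\twoheadrightarrow\mathcal{O}_T$ is the zero morphism, so the image of $s$ in $H^0(T,\mathcal{A}^{\otimes m}|_T)$ vanishes, which is exactly the statement that $s$ vanishes along $T$. There is no serious obstacle in this argument: the only substantial input is Serre's global generation theorem for ample sheaves on a Noetherian projective scheme, and the sole point needing a moment's care is the identification of the fibre $(\mathcal{I}_T\otimes\mathcal{A}^{\otimes m})\otimes\kappa(x)$ with $\mathcal{A}^{\otimes m}(x)$, which relies only on the hypothesis $x\notin T$.
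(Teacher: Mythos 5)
Your argument is correct and is essentially the same as the paper's: both apply Serre's global generation theorem to $\mathcal{I}_T\otimes\mathcal{A}^{\otimes m}$, use $x\notin T$ to identify the stalk (and hence the fibre) at $x$ with that of $\mathcal{A}^{\otimes m}$, pick a section not vanishing at $x$, and push it into $H^0(\mathcal{A}^{\otimes m})$. No issues.
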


\begin{proof}
Let $\mathcal{I}$ be the ideal sheaf defining $T$.
For an $m\geq 1$, $\mathcal{I}\otimes_{\mathcal{O}_S}\mathcal{A}^{\otimes m}$ is generated by its global sections.
Since $x\notin T$, $(\mathcal{I}\otimes_{\mathcal{O}_S}\mathcal{A}^{\otimes m})_x$ is isomorphic to $\mathcal{O}_{S,x}$ as $\mathcal{O}_{S,x}$-modules.
Thus, one finds an $s'\in H^0(\mathcal{I}\otimes_{\mathcal{O}_S}\mathcal{A}^{\otimes m})$ such that $s'(x)\neq 0$.
The image of $s'$ via $H^0(\mathcal{I}\otimes_{\mathcal{O}_S}\mathcal{A}^{\otimes m})\to H^0(\mathcal{A}^{\otimes m})$ has the required properties.
\end{proof}

The following lemma, which we will use in the proof of Theorem~\ref{thm:abbig_cone}, gives a sufficient condition to generalize the relation (\ref{eqn:general_rule1}).

\begin{lemma}\label{lem:vanish_base_cond}
Let $\mathcal{V}\in\VDiv_{\RR}(X)$ and let $\nu_0\in\DV(\Rat(X))$.
\begin{enumerate}
\item There exists an adelic Cartier divisor $\overline{A}$ such that $(\overline{A};\mathcal{V})>0$.
\item For $\nu\in\DV(\Rat(X))$, the following are equivalent.
\begin{enumerate}
\item There exists an adelic $\QQ$-Cartier divisor $\overline{A}$ such that $(\overline{A};[\nu])>_{\nu_0}0$.
\item Either $\nu=\nu_0$ or $c_X(\nu_0)\notin\overline{\{c_X(\nu)\}}$.
\end{enumerate}
\item If
\[
 \nu_0(\mathcal{V})\geq 0\quad\text{and}\quad c_X(\nu_0)\notin\Supp_X(\mathcal{V}_+-\nu_0(\mathcal{V})[\nu_0]),
\]
then there exists an adelic $\RR$-Cartier divisor $\overline{A}$ such that $(\overline{A};\mathcal{V})>_{\nu_0}0$.
\end{enumerate}
\end{lemma}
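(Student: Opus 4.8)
The plan is to deduce part (3) from parts (1) and (2) together with Lemma~\ref{lem:atarimae}. First I would reduce to the case where $\mathcal{V}$ is effective: by Remark~\ref{rem:general_rule1}(1) the relation $\aHz$-membership only sees $\mathcal{V}_+$, but for the $\nu_0$-effectivity relation $(\overline{A};\mathcal{V})>_{\nu_0}0$ I must keep track of $\mathcal{V}$ itself. However, using (\ref{eqn:general_rule2}), replacing $\mathcal{V}$ by $\mathcal{V}_+$ costs us a summand $(0;-\mathcal{V}_-)\geq 0$ (with $\nu_0(\mathcal{V}_-)=0$ by the hypothesis $\nu_0(\mathcal{V})\geq 0$, so in fact $(0;-\mathcal{V}_-)\geq_{\nu_0}0$), and the sum of two $\nu_0$-effective pairs with compatible $\nu_0$-orders is $\nu_0$-effective. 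So it is enough to find $\overline{A}$ with $(\overline{A};\mathcal{V}_+)>_{\nu_0}0$; thus I may assume $\mathcal{V}=\mathcal{V}_+\geq 0$, and write $\mathcal{V}=\nu_0(\mathcal{V})[\nu_0]+\mathcal{V}'$ where $\mathcal{V}'=\mathcal{V}_+-\nu_0(\mathcal{V})[\nu_0]\geq 0$ and, by hypothesis, $c_X(\nu_0)\notin\Supp_X(\mathcal{V}')$.

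Next I would handle the two pieces separately. For the piece $\nu_0(\mathcal{V})[\nu_0]$: by part~(2), applied with $\nu=\nu_0$ (so condition (b) holds trivially), there is an adelic $\QQ$-Cartier divisor $\overline{A}_0$ with $(\overline{A}_0;[\nu_0])>_{\nu_0}0$, and scaling by the real number $\nu_0(\mathcal{V})\geq 0$ gives $(\nu_0(\mathcal{V})\overline{A}_0;\nu_0(\mathcal{V})[\nu_0])\geq_{\nu_0}0$ (strict if $\nu_0(\mathcal{V})>0$; if $\nu_0(\mathcal{V})=0$ this summand is $0$ and may be dropped, but we need strictness somewhere, which will come from adding a small strictly effective adelic divisor at the end). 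For the piece $\mathcal{V}'$: since $c_X(\nu_0)\notin\Supp_X(\mathcal{V}')$, the finitely many centers $\overline{\{c_X(\nu)\}}$ ($\nu(\mathcal{V}')\neq 0$) form a closed subset $T$ of $X$ not containing $c_X(\nu_0)$. Choosing an ample Cartier divisor $A_1$ on $X$ and applying Lemma~\ref{lem:atarimae} with $S=X$, $\mathcal{A}=\mathcal{O}_X(A_1)$, and the point $x=c_X(\nu_0)$ (or rather, to be safe, a closed point in the open complement of $T$ in the closure of $\{c_X(\nu_0)\}$, so that the valuation $\nu_0$ does not see the section either — one should pass to the generic point of $c_X(\nu_0)$ via localization), I get, for each such $\nu$ with $\nu(\mathcal{V}')\neq 0$, some $m_\nu\geq 1$ and a section vanishing along $\overline{\{c_X(\nu)\}}$ to sufficiently high order and not vanishing at/along $\nu_0$. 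Taking the associated $\RR$-Cartier divisor $E'$ as a suitable positive combination and equipping $mA_1$ with a suitably positive Green function so that $\overline{mA_1}-\overline{E'}$ is strictly effective, I obtain an adelic $\RR$-Cartier divisor $\overline{A}_1$ with $A_1\geq E'\geq$ (enough to dominate $\mathcal{V}'$), $\nu_X(A_1)\geq\nu(\mathcal{V}')$ for all $\nu$, and $\nu_{0,X}(A_1)=0=\nu_0(\mathcal{V}')$; i.e. $(\overline{A}_1;\mathcal{V}')\geq_{\nu_0}0$, and we can make it strict by scaling the Green functions.

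Finally I would set $\overline{A}:=\nu_0(\mathcal{V})\overline{A}_0+\overline{A}_1$ (plus, if needed, a small multiple of a fixed adelic Cartier divisor $\overline{B}$ with $(\overline{B};0)>0$ to guarantee overall strictness, noting $(\overline{B};0)\geq_{\nu_0}0$ since $\nu_{0,X}(B)\geq 0$ would have to be arranged — more cleanly, build strictness into $\overline{A}_1$ from the start), and conclude $(\overline{A};\mathcal{V})=(\overline{A};\nu_0(\mathcal{V})[\nu_0]+\mathcal{V}')>_{\nu_0}0$ since $\nu_0$-effectivity is additive in the way recorded after (\ref{eqn:nu0-effective}): the $\nu_0$-orders add, $\nu_{0,X}(A)=\nu_0(\mathcal{V})+0=\nu_0(\mathcal{V})$. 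The main obstacle I anticipate is the bookkeeping in Step~2 around the point $x$: Lemma~\ref{lem:atarimae} gives a section not vanishing at a \emph{closed} point, whereas $c_X(\nu_0)$ may be a non-closed point and what we truly need is $\nu_0(s)=0$ for the section $s$; this is handled by choosing $x$ to be a closed point specializing from $c_X(\nu_0)$ and outside $T$ (possible since $\overline{\{c_X(\nu_0)\}}\not\subset T$ and $X$ is a variety over a field), because then $s$, being a unit in $\mathcal{O}_{X,x}$, is a fortiori a unit in the larger local ring $\mathcal{O}_{X,c_X(\nu_0)}\subset O_{\nu_0}$, so $\nu_0(s)=0$. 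The rest is the routine construction of Green functions making a model divisor strictly effective, which is standard and parallels \cite[Lemma~2.5]{IkomaCon}.
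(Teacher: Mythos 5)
Your argument for part~(3) is correct and is essentially the paper's own proof: reduce to $\mathcal{V}=\mathcal{V}_+$ via the relation $(0;-\mathcal{V}_-)\geq_{\nu_0}0$, split off the $\nu_0(\mathcal{V})[\nu_0]$ piece, and for each remaining $\nu$ use Lemma~\ref{lem:atarimae} to produce an effective divisor through $c_X(\nu)$ missing $c_X(\nu_0)$, then take the positive combination with coefficients $\nu(\mathcal{V})$. Two small remarks: Lemma~\ref{lem:atarimae} is stated for an arbitrary point $x\in S$, not just a closed one, so your detour through a closed specialization of $c_X(\nu_0)$ is unnecessary (though harmless); and your write-up proves only part~(3), taking (1) and (2) as inputs --- the direction (b)$\Rightarrow$(a) of (2) that you actually invoke is exactly the normalization $(1/\nu_{0,X}(A'_{\nu_0}))\overline{A}'_{\nu_0}$ of a strictly effective adelic Cartier divisor through $c_X(\nu_0)$, so nothing is circular, but (1) and both directions of (2) still need to be written out.
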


\begin{proof}
We omit the proof of the assertion (1).

(2)(a) $\Rightarrow$ (b): Assume $\nu\neq\nu_0$ in $\DV(\Rat(X))$.
Since $(\overline{A};[\nu])>_{\nu_0}0$, $A\geq 0$ and $\nu_{0,X}(A)=0$; thus $c_X(\nu_0)\notin\Supp(A)$.
On the other hand, since $\nu_X(A)=1$, one has $c_X(\nu)\in\Supp(A)$.
Hence, $c_X(\nu_0)\notin\overline{\{c_X(\nu)\}}$.

(b) $\Rightarrow$ (a): Choose a strictly effective adelic Cartier divisor $\overline{A}_{\nu_0}'$ such that $A_{\nu_0}'$ passes through $c_X(\nu_0)$, and set $\overline{A}_{\nu_0}:=(1/\nu_{0,X}(A_{\nu_0}'))\overline{A}_{\nu_0}'$.
Then
\[
 (\overline{A}_{\nu_0};[\nu_0])>_{\nu_0}0.
\]

Let $\nu\in\DV(\Rat(X))$ such that $c_X(\nu_0)\notin\overline{\{c_X(\nu)\}}$.
By Lemma~\ref{lem:atarimae}, there exists an effective Cartier divisor $A_{\nu}'$ on $X$ such that
\[
 c_X(\nu)\in\Supp(A_{\nu}')\quad\text{and}\quad c_X(\nu_0)\notin\Supp(A_{\nu}').
\]
We endow $A_{\nu}:=(1/\nu_X(A_{\nu}'))A_{\nu}'$ with $A_{\nu}$-Green functions such that $\overline{A}_{\nu}>0$.
We then have
\[
 (\overline{A}_{\nu};[\nu])>_{\nu_0}0.
\]

(3): Since $\nu_0(\mathcal{V}_-)=0$, we have
\[
 (\overline{D};\mathcal{V})\geq_{\nu_0}(\overline{D};\mathcal{V}_+)
\]
for every adelic $\RR$-Cartier divisor $\overline{D}$ (see (\ref{eqn:general_rule2})), so that we can assume $\mathcal{V}\geq 0$.
We fix $\overline{A}_{\nu_0}$ and $\overline{A}_{\nu}$ as above for $\nu\in\DV(\Rat(X))$ with $c_X(\nu_0)\notin\overline{\{c_X(\nu)\}}$, and set
\[
 \overline{A}:=\nu_0(\mathcal{V})\overline{A}_{\nu_0}+\sum_{c_X(\nu_0)\notin\overline{\{c_X(\nu)\}}}\nu(\mathcal{V})\overline{A}_{\nu}.
\]
Then $(\overline{A};\mathcal{V})>_{\nu_0}0$.
\end{proof}

\begin{definition}
Let $X$ be a normal projective $K$-variety and let
\[
 \overline{D}=\left(D,\sum_{v\in M_K\cup\{\infty\}}g_v^{\overline{D}}\right)
\]
be an adelic $\RR$-Cartier divisor on $X$.
By Lemma~\ref{lem:Q-lin_indep}(1), one can write
\[
 D=\sum_{i=1}^la_iD_i
\]
with Cartier divisors $D_1,\dots,D_l$ and $a_i\in\RR$ such that $\Supp(D)=\bigcup_{i=1}^l\Supp(D_i)$.
Let $\iota:Y\to X$ be a $K$-morphism of normal projective $K$-varieties.
If $\iota(Y)$ is not contained in $\Supp(D)$, then $\iota(Y)$ is not contained in $\Supp(D_i)$ for every $i$.
One can define the \emph{pull-back} of $\overline{D}$ via $\iota$ by
\[
 \iota^*\overline{D}:=\left(\sum_{i=1}^la_i\iota^*D_i,\sum_{v\in M_K\cup\{\infty\}}g_v^{\overline{D}}\circ\iota_v^{\rm an}[v]\right),
\]
which one can see is an adelic $\RR$-Cartier divisor on $Y$ (see \cite[Proposition~2.1.4]{MoriwakiAdelic}).

Let $\overline{D}'$ be another adelic $\RR$-Cartier divisor on $X$ such that $\overline{D}'\sim_{\RR}\overline{D}$ and $\iota(Y)$ is not contained in $\Supp(D')$.
By using Lemma~\ref{lem:Q-lin_indep}(1), one can find $\phi_1,\dots,\phi_k\in\Rat(X)^{\times}$ and $r_1,\dots,r_k\in\RR$ such that
\[
 \overline{D}'=\overline{D}+r_1\widehat{(\phi_1)}+\dots+r_k\widehat{(\phi_k)}
\]
and $\iota(Y)$ is not contained in $\Supp((\phi_i))$ for every $i$.
So $\iota^*\overline{D}'\sim_{\RR}\iota^*\overline{D}$.
\end{definition}

The functoriality of the pairs can now be described as follows.
Let $\mu:X'\to X$ be a birational morphism of normal projective varieties.
We can consider a \emph{pull-back} of $(\overline{D};\mathcal{V})\in\aBVDiv_{\RR,\RR}(X)$ defined by
\begin{equation}
 \mu_*^{-1}:\aBVDiv_{\RR,\RR}(X)\to\aBVDiv_{\RR,\RR}(X'),\quad (\overline{D};\mathcal{V})\mapsto (\mu^*\overline{D};\mathcal{V}^{\mu}),
\end{equation}
where we set
\begin{equation}\label{eqn:pull-back_base_cond}
 \mathcal{V}^{\mu}:=\sum_{\nu\in\DV(\Rat(X))}\nu(\mathcal{V})[\nu\circ{\mu^*}^{-1}:\Rat(X')^{\times}\to\ZZ].
\end{equation}
Note that $\mu$ is isomorphic over an open subscheme $U$ of $X$ with $\codim(X\setminus U,X)\geq 2$.
If $Z$ is a prime Weil divisor on $X$, then
\begin{equation}\label{eqn:strict_trans}
 \ord_Z=\ord_{Z'}:\Rat(X)^{\times}\to\ZZ
\end{equation}
holds for the strict transform $Z'$ of $Z$ via $\mu$.

Let $\nu_1,\dots,\nu_l$ be divisorial valuations of $\Rat(X)$ and let $a_,\dots,a_l$ be real numbers.
By applying Lemma~\ref{lem:abhyankar}(2) to $\nu_1,\dots,\nu_l$ successively, one can find a birational projective morphism $\mu:X'\to X$ such that $X'$ is smooth and prime divisors $Y_1,\dots,Y_l$ on $X'$ such that $\nu_i$ is equivalent to $\ord_{Y_i}$ for each $i$.
One then has
\begin{equation}
 \mu_*^{-1}\left(\overline{D};\sum_{i=1}^la_i[\nu_i]\right)=\left(\mu^*\overline{D};\sum_{i=1}^la_i[Y_i]\right)\in\aBDiv_{\RR,\RR}(X').
\end{equation}

If $(\overline{D};[E])\in\aBDiv_{\RR,\RR}(X)$, one can consider another \emph{pull-back} of $(\overline{D};[E])$ defined as
\begin{equation}
 \mu^*:\aBDiv_{\RR,\RR}(X)\to\aBDiv_{\RR,\RR}(X'),\quad (\overline{D};[E])\mapsto (\mu^*\overline{D};[\mu^*E]).
\end{equation}

\begin{lemma}\label{lem:birat_small_sections}
Let $\mu:X'\to X$ be a birational morphism of normal projective varieties and let $\nu\in\DV(\Rat(X))$.
\begin{enumerate}
\item Let $(\overline{D};\mathcal{V})\in\aBVDiv_{\RR,\RR}(X)$.
For $\KK=\RR$, $\QQ$, and a blank and $?=\text{ss}$ and s, one has
\[
 \aHzsq{?}{\KK}(\overline{D};\mathcal{V})\overset{\mu^*}{=}\aHzsq{?}{\KK}(\mu^*\overline{D};\mathcal{V}^{\mu}).
\]
In particular, $(\overline{D};\mathcal{V})\in\aBVDiv_{\RR,\RR}(X)$ is effective (respectively, strictly effective, $\nu$-effective, strictly $\nu$-effective) if and only if so is $\mu_*^{-1}(\overline{D};\mathcal{V})$.
\item Let $(\overline{D};[E])\in\aBDiv_{\RR,\RR}(X)$.
For $\KK=\RR$, $\QQ$, and a blank and $?=\text{ss}$ and s, one has
\[
 \aHzsq{?}{\KK}(\overline{D};[E])\overset{\mu^*}{=}\aHzsq{?}{\KK}(\mu^*\overline{D};[\mu^*E]).
\]
In particular, $(\overline{D};[E])\in\aBDiv_{\RR,\RR}(X)$ is effective (respectively, strictly effective, $\nu$-effective, strictly $\nu$-effective) if and only if so is $\mu^*(\overline{D};[E])$.
\end{enumerate}
\end{lemma}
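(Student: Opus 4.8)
The plan is to deduce both assertions from a single fact about the birational morphism $\mu$, which I will call $(\star)$: \emph{for $\overline{E}\in\aDiv_{\RR}(X)$ one has $\overline{E}\geq 0$ (resp.\ $\overline{E}>0$) if and only if $\mu^*\overline{E}\geq 0$ (resp.\ $\mu^*\overline{E}>0$).} To prove $(\star)$, recall first that $\mu^*\widehat{(\phi)}=\widehat{(\mu^*\phi)}$, that $\mu^*$ is additive, and that $\mu^*$ induces the identity on $\Rat(X)^{\times}\otimes_{\ZZ}\KK=\Rat(X')^{\times}\otimes_{\ZZ}\KK$; these are immediate from the definition of the pull-back. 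For the Cartier part, if $D\in\Div_{\RR}(X)$ and $W$ is a prime divisor on $X'$, then $\ord_W\in\DV(\Rat(X))$, so Remark~\ref{rem:center_and_valuation} (extended by $\RR$-linearity) gives $\ord_{W}(\mu^*D)=\ord_{W,X'}(\mu^*D)=\ord_{W,X}(D)$; when $D\geq 0$ the right-hand side is $\geq 0$ by Lemma~\ref{lem:valuation_of_divisors}(1), whence $\mu^*D\geq 0$, and conversely $\mu^*D\geq 0$ forces $\ord_Z(D)=\ord_{Z'}(\mu^*D)\geq 0$ for the strict transform $Z'$ of every prime divisor $Z$ on $X$ by \eqref{eqn:strict_trans}. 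For the Green functions, the conditions ``$g_v^{\overline{E}}\geq 0$'' ($v\in M_K\cup\{\infty\}$) and ``$\essinf_{X_{\infty}^{\rm an}}g_{\infty}^{\overline{E}}>0$'' are preserved and reflected by precomposition with $\mu_v^{\rm an}$, because $\mu$, hence each analytification $\mu_v^{\rm an}$, is a surjective continuous map of compact spaces. This establishes $(\star)$.

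With $(\star)$ in hand, part (1) is formal. Fix $\phi\in\Rat(X)^{\times}\otimes_{\ZZ}\KK$; since $\mu^*$ is a bijection between $\Rat(X)^{\times}\otimes_{\ZZ}\KK$ and $\Rat(X')^{\times}\otimes_{\ZZ}\KK$, it suffices to check that $\phi$ lies in $\aHzsq{?}{\KK}(\overline{D};\mathcal{V})$ if and only if $\mu^*\phi$ lies in $\aHzsq{?}{\KK}(\mu^*\overline{D};\mathcal{V}^{\mu})$. The membership of $\phi$ is the conjunction of an adelic condition, $\overline{D}+\widehat{(\phi)}>0$ (for $?=\mathrm{ss}$) or $\geq 0$ (for $?=\mathrm{s}$), and the base condition $\nu_X(D+(\phi))\geq\nu(\mathcal{V})$ for all $\nu\in\DV(\Rat(X))$. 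By $(\star)$ applied to $\overline{E}=\overline{D}+\widehat{(\phi)}$, the adelic condition for $\phi$ w.r.t.\ $\overline{D}$ is equivalent to that for $\mu^*\phi$ w.r.t.\ $\mu^*\overline{D}$; and under the canonical bijection $\DV(\Rat(X))\cong\DV(\Rat(X'))$, $\nu\mapsto\nu\circ{\mu^*}^{-1}$, Remark~\ref{rem:center_and_valuation} shows $(\nu\circ{\mu^*}^{-1})_{X'}(\mu^*(D+(\phi)))=\nu_X(D+(\phi))$ while $(\nu\circ{\mu^*}^{-1})(\mathcal{V}^{\mu})=\nu(\mathcal{V})$ by \eqref{eqn:pull-back_base_cond}, so the base condition is literally unchanged. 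Hence $\phi\mapsto\mu^*\phi$ restricts to the asserted bijection. For the ``in particular'' statements, $(\overline{D};\mathcal{V})\geq 0$ means $1\in\aHzsm(\overline{D};\mathcal{V})$ and $(\overline{D};\mathcal{V})>0$ means $1\in\aHz(\overline{D};\mathcal{V})$ (as $\widehat{(1)}=0$), so effectivity and strict effectivity transfer by the bijection just proved; $\nu$-effectivity adds the equality $\nu_X(D)=(\nu\circ{\mu^*}^{-1})_{X'}(\mu^*D)$, which again is Remark~\ref{rem:center_and_valuation}.

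For part (2), observe that $\mu^*(\overline{D};[E])=(\mu^*\overline{D};[\mu^*E])$ and $\mu_*^{-1}(\overline{D};[E])=(\mu^*\overline{D};([E])^{\mu})$ have the same adelic part, and differ only in that $[\mu^*E]$ carries, in addition to $([E])^{\mu}$, the components of $\mu^*E$ along $\Exc(\mu)$. For $\phi$ with $\overline{D}+\widehat{(\phi)}>0$, the base condition defining $\aHzsq{?}{\KK}(\mu^*\overline{D};[\mu^*E])$ reads $\ord_W(\mu^*D+(\mu^*\phi))\geq\ord_W(\mu^*E)$ for every prime divisor $W$ on $X'$; since $D$, $(\phi)$ and $E$ are $\RR$-Cartier, this says exactly that $\mu^*(D+(\phi)-E)$ is effective in $\Div_{\RR}(X')$, which by the Cartier part of $(\star)$ holds if and only if $D+(\phi)-E$ is effective in $\Div_{\RR}(X)$, i.e.\ if and only if $\ord_Z(D+(\phi))\geq\ord_Z(E)$ for every prime divisor $Z$ on $X$ --- precisely the base condition defining $\aHzsq{?}{\KK}(\overline{D};[E])$. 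Combined with $(\star)$ for the adelic part, $\phi\mapsto\mu^*\phi$ again restricts to a bijection, and the ``in particular'' part is deduced as in (1).

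The only step needing care is $(\star)$, and specifically its Cartier part: effectivity of an $\RR$-Cartier divisor is \emph{not} read off coefficient-by-coefficient from a $\QQ$-linearly independent expression, so one genuinely has to test all discrete valuations (Lemma~\ref{lem:valuation_of_divisors}(1), Remark~\ref{rem:center_and_valuation}, \eqref{eqn:strict_trans}) rather than argue componentwise; and the strictness $\essinf g_{\infty}^{\overline{E}}>0$ must be tracked through the surjective continuous map $\mu_{\infty}^{\rm an}$ of compact analytic spaces. The remainder is bookkeeping with the identification $\Rat(X)=\Rat(X')$.
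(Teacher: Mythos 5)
Your proof is correct and follows essentially the same route as the paper's: the forward inclusion is formal, the reverse inclusion reduces to descending effectivity along $\mu$ (which the paper dispatches with "since $X$ is normal" and you justify via strict transforms, \eqref{eqn:strict_trans} and Lemma~\ref{lem:valuation_of_divisors}(1)), and the valuation bookkeeping is Remark~\ref{rem:center_and_valuation}. Your explicit handling in part (2) of the exceptional components of $[\mu^*E]$ --- rewriting both base conditions as effectivity of $D+(\phi)-E$ and of its pull-back --- is a welcome expansion of the paper's "similar arguments" but not a different method.
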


\begin{proof}
If $\phi\in\aHzsq{?}{\KK}(\overline{D};\mathcal{V})$, then obviously $\mu^*\phi\in\aHzsq{?}{\KK}(\mu^*\overline{D};\mathcal{V}^{\mu})$.
Suppose that $\phi'\in\aHzsq{?}{\KK}(\mu^*\overline{D};\mathcal{V}^{\mu})\setminus\{0\}$ and set $\phi:={\mu^*}^{-1}(\phi')$.
Since $X$ is normal, one has $\phi\in\aHzsq{?}{\KK}(\overline{D};\mathcal{V})$.

Note that a $\phi\in\aHzsq{?}{\KK}(\overline{D};\mathcal{V})\setminus\{0\}$ satisfies $\nu_X(D+(\phi))=\nu(\mathcal{V})$ if and only if $\nu_{X'}(\mu^*D+(\mu^*\phi))=\nu(\mathcal{V}^{\mu})$ (see Remark~\ref{rem:center_and_valuation}).

Similar arguments also imply the assertion (2).
\end{proof}

\subsection{Arithmetic ampleness}\label{subsec:ample}

This subsection and the next are devoted to showing several fundamental properties of ``arithmetically ample'' adelic $\RR$-Cartier divisors.
As in Notation and terminology~5, we consider two notions of ``arithmetic ampleness'', which we call the ``weak ampleness'' (see Lemma~\ref{lem:w-ample}) and the ``ampleness (in the sense of Zhang)'' (see Theorem~\ref{thm:ample}).

\begin{lemma}\label{lem:w-ample}
Let $X$ be a normal projective $K$-variety and let $\overline{A}\in\aDiv_{\RR}(X)$.
\begin{enumerate}
\item If $\overline{A}\in\aDiv_{\RR}(X)$ is w-ample, then so is $\overline{A}+\widehat{(\phi)}$ for every $\phi\in\Rat(X)^{\times}\otimes_{\ZZ}\RR$.
\item\label{enum:w-ample_restriction} Let $Y$ be a closed subvariety of $X$.
If $\overline{A}$ is a w-ample adelic $\RR$-Cartier divisor on $X$ such that $Y\not\subset \Supp(A)$, then the restriction $\overline{A}|_Y$ is again w-ample.
\item\label{enum:w-ample_Y-big} Let $\nu\in\DV(\Rat(X))$.
If $\overline{A}\in\aDiv_{\RR}(X)$ is w-ample, then there exists a $\phi\in\aHzq{\RR}(\overline{A})$ such that $\overline{A}+\widehat{(\phi)}>_{\nu}0$.
\item\label{enum:w-ample_is_open1} Let $\overline{D}_1,\dots,\overline{D}_m\in\aDiv_{\RR}(X)$, $v_1,\dots,v_l\in M_K\cup\{\infty\}$, and $\varphi_1\in C^0_{v_1}(X),\dots,\varphi_l\in C^0_{v_l}(X)$.
If $\overline{A}\in\aDiv_{\RR}(X)$ is w-ample, then there exists an $\varepsilon>0$ such that
\[
 \overline{A}+\sum_{i=1}^m\varepsilon_i\overline{D}_i+\sum_{k=1}^l(0,\varphi_k[v_k])
\]
is also w-ample for every $\varepsilon_i,\varphi_k$ with $|\varepsilon_i|\leq\varepsilon$ and $\|\varphi_k\|_{\sup}\leq\varepsilon$.
\item\label{enum:w-ampleQ} For any w-ample adelic $\RR$-Cartier divisor $\overline{A}$ on $X$, there exists a w-ample adelic $\QQ$-Cartier divisor $\overline{A}'$ such that $\overline{A}>\overline{A}'$.
\end{enumerate}
\end{lemma}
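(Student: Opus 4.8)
The statement to prove is Lemma~\ref{lem:w-ample}, which collects five routine-but-essential stability properties of w-ampleness for adelic $\RR$-Cartier divisors. My overall strategy is to reduce each assertion, via linearity in the positive $\RR$-linear combination $\overline{A}=\sum a_i\overline{A}_i$ from the definition (Notation and terminology~5), to the corresponding statement about genuine adelic Cartier divisors $\overline{A}_i$ with $A_i$ ample and the section-generation property $H^0(mA_i)=\langle\aHz(m\overline{A}_i)\rangle_K$ for $m\gg 1$, and then to invoke classical projective geometry plus the basic theory of adelic Green functions. For (1), since $\widehat{(\phi)}$ has trivial underlying Cartier-divisor class up to $\RR$-linear equivalence, twisting by $\widehat{(\phi)}$ only changes each $f_x$ by a unit times a power of $\phi$; the ampleness of the $A_i$ is unaffected and the section-generation property is preserved because $H^0(m(A_i+(\phi_i)))=\phi_i^{-m}H^0(mA_i)$, so the set of strictly small sections transforms compatibly. (This is essentially \cite[Theorem~4.3]{IkomaRem}, already cited.) For (2), restriction to a closed subvariety $Y\not\subset\Supp(A)$: each $A_i|_Y$ stays ample, and the surjectivity $H^0(X,mA_i)\twoheadrightarrow H^0(Y,mA_i|_Y)$ for $m\gg1$ (Serre vanishing, after passing to a $U$-model or working on the generic fiber) together with the fact that restriction sends strictly small sections to strictly small sections lets one propagate the generation property; one then uses the fact that the restricted Green functions are still Green functions of the correct type.

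For (3), the $\nu$-positivity refinement: using (1) I may first arrange $\overline{A}>0$, and then I need to further perturb by some $\widehat{(\phi)}$ with $\phi$ strictly small so that $\nu_X(A+(\phi))=\nu(\mathcal{V})$—wait, more precisely so that equality $\nu_X(A+(\phi))=$ the prescribed value holds. Here I would pass to a model $\mu:X'\to X$ on which $\nu$ becomes $\ord_{Y'}$ for a prime divisor $Y'$ (Lemma~\ref{lem:abhyankar}(2)), use Lemma~\ref{lem:birat_small_sections} to transfer the question upstairs, and then apply Lemma~\ref{lem:atarimae}: ampleness of (a suitable twist of) $\mu^*A$ produces, for large $m$, a strictly small section vanishing to exactly the right order along $Y'$ and not more, by a general-position / Bertini-type argument. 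Dividing by $m$ and rescaling realizes the desired $\phi\in\aHzq{\RR}(\overline{A})$ with $\overline{A}+\widehat{(\phi)}>_{\nu}0$.

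For (4), openness of the w-ample locus under small perturbations in finitely many divisor directions and finitely many continuous-function directions at finitely many places: this is the standard openness-of-ampleness argument. On the Cartier side, ampleness is an open condition in $\Div_{\RR}(X)$ (a finite-dimensional statement once one fixes the span of the $D_i$ and $A_i$ inside $\Pic_\RR$), so one obtains a uniform $\varepsilon$ for the underlying $\RR$-divisor part; the generation property $H^0(mB)=\langle\aHz(m\overline{B})\rangle_K$ survives because it is governed by Serre-type vanishing that is uniform on a compact neighborhood, and for the archimedean/non-archimedean metric perturbations one uses that adding $\varphi_k[v_k]$ with $\|\varphi_k\|_{\sup}$ small keeps all the relevant strict inequalities $g^{\overline{A}+\cdots}>0$ and $\essinf g_\infty>0$ intact. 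Finally, (5) is a matter of approximating the real coefficients $a_i$ by slightly smaller rationals and simultaneously decreasing each Green function by a small positive constant: since $\overline{A}$ is strictly effective enough to spare it (after possibly using (1) to make room, as in the paragraph on strict effectivity in Notation and terminology~4), the resulting $\overline{A}'$ is a w-ample adelic $\QQ$-Cartier divisor with $\overline{A}>\overline{A}'$; the generation property for $\overline{A}'$ follows from that for a nearby $\overline{A}$ by the same uniform-vanishing reasoning used in (4). \textbf{The main obstacle} I anticipate is assertion (3): controlling the order of vanishing of a strictly small section along a divisorial valuation \emph{exactly}, rather than just bounding it, requires combining Lemma~\ref{lem:atarimae} with a transversality argument on the blow-up model and then checking that the rescaled $\RR$-section one extracts is genuinely strictly small (the archimedean Green function must stay strictly positive after the $1/m$-scaling and the rescaling by $1/\nu_X(A')$), which is where the $\RR$-coefficient bookkeeping is most delicate; (1), (2), (4), (5) are comparatively mechanical once the definitions are unwound.
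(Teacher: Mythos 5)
Your treatments of (1), (2), (4) and (5) follow essentially the same route as the paper: decompose $\overline{A}=\sum a_k\overline{A}_k$ as in the definition of w-ampleness and reduce each claim to the individual pieces. Two small caveats there. In (2) you cannot restrict the individual $\overline{A}_k$ directly, since $Y\subset\Supp(A_k)$ may hold even though $Y\not\subset\Supp(A)$; the paper first replaces each $\overline{A}_k$ by $\overline{A}_k+\widehat{(\phi_k)}$ with $Y\not\subset\Supp(A_k+(\phi_k))$ and then uses Lemma~\ref{lem:Q-lin_indep}(2) together with part (1) to compare $\overline{A}|_Y$ with the twisted restriction. In (4) the actual content is not geometric openness of ampleness but the persistence of the arithmetic generation property $H^0(mB)=\aSpan{K}{\aHz(m\overline{B})}$ under small perturbation, which the paper imports from \cite[Proposition~5.3(5)]{IkomaRem}; your appeal to ``Serre-type vanishing uniform on a compact neighborhood'' asserts rather than proves this, so you should cite that result.

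The genuine problem is (3), which you flag as the main obstacle and for which you build an elaborate argument (reduction to $\ord_{Y'}$ on a modification via Lemma~\ref{lem:abhyankar}(2), Lemma~\ref{lem:atarimae}, and an unspecified transversality step) aimed at producing a section with a prescribed \emph{exact positive} order of vanishing along $\nu$ — you even write the target as $\nu_X(A+(\phi))=\nu(\mathcal{V})$, although no base condition $\mathcal{V}$ appears in (3). This is a misreading of the relation $>_{\nu}$. By Definition~\ref{defn:Adelic_with_Base_Conditions4}, $\overline{A}+\widehat{(\phi)}$ is identified with the pair $(\overline{A}+\widehat{(\phi)};0)$, so $\overline{A}+\widehat{(\phi)}>_{\nu}0$ means precisely that $\overline{A}+\widehat{(\phi)}$ is strictly effective and $\nu_X(A+(\phi))=\nu(0)=0$; that is, the effective divisor $A+(\phi)$ must simply avoid the center $c_X(\nu)$. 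Since $\overline{A}$ is w-ample one has $\aSBs(\overline{A})=\emptyset$, so a strictly small $\RR$-section $\phi$ with $c_X(\nu)\notin\Supp(A+(\phi))$ exists, and then $\nu_X(A+(\phi))=0$ by Lemma~\ref{lem:valuation_of_divisors}(2); this is the paper's one-line proof. Your stronger statement is not what is claimed, your argument for it is incomplete at the key ``exact order'' step, and the reduction via Lemma~\ref{lem:abhyankar}(2) would in any case only apply to divisorial valuations, whereas (3) is asserted for arbitrary $\nu\in\DV(\Rat(X))$.
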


\begin{proof}
(1): By definition (see Notation and terminology~5), we can write
\[
 \overline{A}=\sum_{k=1}^la_k\overline{A}_k
\]
with $l\geq 1$, $a_k>0$, and adelic Cartier divisors $\overline{A}_k$ on $X$ such that, for each $k$, $A_k$ is ample and $H^0(mA_k)=\aSpan{K}{\aHz(m\overline{A}_k)}$ for every $m\gg 1$.
We write $\phi=\phi_1^{\otimes e_1}\otimes\dots\otimes\phi_r^{\otimes e_r}$ with $\phi_1,\dots,\phi_r\in\Rat(X)$ and positive numbers $e_1,\dots,e_r$.
Then
\begin{align*}
 \overline{A}+\widehat{(\phi)} &=\sum_{k=1}^la_k\overline{A}_k+\sum_{j=1}^re_j\widehat{(\phi_j)} \\
 &=\sum_{j=1}^re_j\left(b_j\overline{A}_1+\widehat{(\phi_j)}\right)+\left(a_1-\sum_{j=1}^re_jb_j\right)\overline{A}_1+\sum_{k=2}^la_k\overline{A}_k
\end{align*}
is w-ample for every positive rational numbers $b_1,\dots,b_r$ with $\sum_{j=1}^re_jb_j\leq a_1$.

(2): Assume that $\overline{A}$ is an adelic Cartier divisor on $X$ such that $A$ is ample, such that $H^0(mA)=\aSpan{K}{\aHz(m\overline{A})}$ for every $m\gg 1$, and such that $Y\not\subset\Supp(A)$.
For each $m\geq 1$, we have a diagram
\[
\xymatrix{\aSpan{K}{\aHz(m\overline{A}|_Y)} \ar[r] & H^0(mA|_Y) \\ \aSpan{K}{\aHz(m\overline{A})} \ar[r] \ar[u] & H^0(mA). \ar[u]
}
\]
For every $m\gg 1$, $\aSpan{K}{\aHz(m\overline{A})}=H^0(mA)$ and $H^0(mA)\to H^0(mA|_Y)$ is surjective, so that we can obtain $\aSpan{K}{\aHz(m\overline{A}|_Y)}=H^0(mA|_Y)$ for every $m\gg 1$.

In general, a w-ample adelic $\RR$-Cartier divisor $\overline{A}$ is a positive $\RR$-linear combination $\sum_{k=1}^la_k\overline{A}_k$ such that $A_k$ is an ample Cartier divisor on $X$ and such that $H^0(mA_k)=\aSpan{K}{\aHz(m\overline{A}_k)}$ for every $m\gg 1$.
For each $k$, we take a $\phi_k\in\Rat(X)^{\times}$ such that $Y\not\subset\Supp(A_k+(\phi_k))$, and set $\overline{A}':=\overline{A}+\sum_{k=1}^la_k\widehat{(\phi_k)}$.
By the above arguments,
\[
 \overline{A}'|_Y=\sum_{k=1}^la_k(\overline{A}_k+\widehat{(\phi_k)})|_Y
\]
is a w-ample adelic $\RR$-Cartier divisor on $Y$.
By Lemma~\ref{lem:Q-lin_indep}(2), there exist $\psi_1,\dots\psi_r\in\Rat(X)^{\times}$ and $b_1,\dots,b_r\in\RR$ such that $\overline{A}-\overline{A}'=\sum_{i=1}^r b_i\widehat{(\psi_i)}$, and such that $Y\not\subset\Supp((\psi_i))$ for every $i$.
Hence
\[
 \overline{A}|_Y=\overline{A}'|_Y+\sum_{i=1}^r b_i\widehat{(\psi_i|_Y)}
\]
is w-ample by the assertion (1).

(\ref{enum:w-ample_Y-big}): There exists a $\phi\in\aHzq{\RR}(\overline{A})$ such that $c_{X}(\nu)\notin\Supp(A+(\phi))$; thus $\nu_X(A+(\phi))=0$.

(\ref{enum:w-ample_is_open1}): Write $\overline{A}=\sum_{k=1}^la_k\overline{A}_k$ as above.
Without loss of generality, one can assume that $\overline{D}_i\in\aDiv(X)$ for every $i$ and that $l=0$.
By \cite[Proposition~5.3(5)]{IkomaRem}, one finds a positive rational number $\varepsilon'>0$ such that, for every $i$, $A_1\pm\varepsilon'D_i$ is ample and $H^0(m(A_1\pm\varepsilon'D_i))=\aSpan{K}{\aHz(m(\overline{A}_1\pm\varepsilon'\overline{D}_i))}$ for every sufficiently divisible $m$.
Then
\[
 \overline{A}+\sum_{i=1}^m\varepsilon_i\overline{D}_i=\sum_{i=1}^m\frac{|\varepsilon_i|}{\varepsilon'}\left(\overline{A}_1+\sgn(\varepsilon_i)\varepsilon'\overline{D}_i\right)+\left(a_1-\sum_{i=1}^m\frac{|\varepsilon_i|}{\varepsilon'}\right)\overline{A}_1+\sum_{k=2}^la_k\overline{A}_k
\]
is w-ample for every real numbers $\varepsilon_i$ with $\sum_{i=1}^m|\varepsilon_i|\leq\varepsilon' a_1$.

The assertion (\ref{enum:w-ampleQ}) results from definition and the assertion (\ref{enum:w-ample_is_open1}) above.
\end{proof}

\begin{theorem}\label{thm:ample}
Let $\pi:X\to\Spec(K)$ be a normal projective $K$-variety.
\begin{enumerate}
\item\label{enum:rel_nef_and_w-ample} If $\overline{A}\in\aDiv_{\RR}(X)$ is relatively nef and w-ample, then $\overline{A}$ is ample.
\item If $\overline{A}\in\aDiv_{\RR}(X)$ is ample and $\overline{N}\in\aDiv_{\RR}(X)$ is nef, then $\overline{A}+\overline{N}$ is also ample.
\item For an $\overline{A}\in\aDiv_{\RR}(X)$, the following are equivalent.
\begin{enumerate}
\item $\overline{A}$ is ample.
\item $A$ is ample, $\overline{A}$ is relatively nef, and $\inf_{x\in X(\overline{K})}h_{\overline{A}}(x)>0$.
\item $A$ is ample and there exists an $\varepsilon>0$ such that $\overline{A}-\pi^*\overline{N}$ is nef for every $\overline{N}\in\aDiv_{\RR}(\Spec(K))$ with $0<\adeg\left(\overline{N}\right)\leq\varepsilon$.
\item $A$ is ample and $\overline{A}-\pi^*\overline{N}$ is nef for an $\overline{N}\in\aDiv_{\RR}(\Spec(K))$ with $\overline{N}>0$.
\end{enumerate}
\item Let $\overline{A}$ be a relatively nef adelic $\QQ$-Cartier divisor on $X$.
Then $\overline{A}$ is ample if and only if $\overline{A}$ is w-ample.
\item Let $\overline{A}_1,\dots,\overline{A}_l$ be relatively nef and w-ample adelic $\QQ$-Cartier divisors on $X$.
Then $\overline{N}+\alpha_1\overline{A}_1+\dots+\alpha_l\overline{A}_l$ is w-ample for every positive real numbers $\alpha_1,\dots,\alpha_l$ and for every nef adelic $\RR$-Cartier divisor $\overline{N}$ such that $N$ is contained in the rational $\RR$-subspace spanned by $A_1,\dots,A_l$.
\end{enumerate}
\end{theorem}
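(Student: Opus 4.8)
The plan is to establish the five assertions in the order (3), (1), (2), (4), (5), using Lemma~\ref{lem:w-ample}, elementary manipulations of the height function and the arithmetic intersection pairing, and — as the essential external input — Zhang's foundational results on arithmetic ampleness: the positivity of the essential minimum of an arithmetically ample adelic Cartier divisor, and the at-least-linear growth of its successive minima.

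For (3) the only substantial implication is (a) $\Rightarrow$ (b), which is precisely Zhang's theorem: positivity of the top arithmetic self-intersection on every closed subvariety forces $\inf_{x\in X(\overline{K})}h_{\overline{A}}(x)>0$. The others are routine: for (b) $\Rightarrow$ (c) put $\varepsilon=\inf h_{\overline{A}}$ and observe that, for $\overline{N}$ with $0<\adeg(\overline{N})\le\varepsilon$, the divisor $\overline{A}-\pi^{*}\overline{N}$ has underlying divisor the ample $A$, has Green functions differing from those of $\overline{A}$ by constants (still semipositive), and has $h_{\overline{A}-\pi^{*}\overline{N}}(x)=h_{\overline{A}}(x)-\adeg(\overline{N})\ge 0$, so it is nef; (c) $\Rightarrow$ (d) is trivial; and for (d) $\Rightarrow$ (a) write $\overline{A}=(\overline{A}-\pi^{*}\overline{N})+\pi^{*}\overline{N}$ and, on a subvariety $Y$ of dimension $d$, expand the $(d+1)$-fold self-intersection, noting that every term containing $\pi^{*}\overline{N}$ twice vanishes; this leaves $\adeg((\overline{A}|_{Y})^{\cdot(d+1)})=\adeg(((\overline{A}-\pi^{*}\overline{N})|_{Y})^{\cdot(d+1)})+(d+1)\deg_{A}(Y)\,\adeg(\overline{N})$, which is $>0$ since the first term is $\ge 0$ by nefness and $\deg_{A}(Y),\adeg(\overline{N})>0$, while relative nefness is clear. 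Assertion (2) is then immediate: $A+N$ is ample, $\overline{A}+\overline{N}$ is relatively nef, and $\inf h_{\overline{A}+\overline{N}}\ge\inf h_{\overline{A}}+\inf h_{\overline{N}}>0$ by (3) and the nefness of $\overline{N}$, so (3)(b $\Rightarrow$ a) applies.

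For (1), Lemma~\ref{lem:w-ample}(1),(2) and the $\sim_{\RR}$-invariance of heights and intersection numbers reduce everything to showing that a relatively nef, w-ample $\overline{A}$ has $\inf_{x\in X(\overline{K})}h_{\overline{A}}(x)>0$; ampleness then follows from (3)(b $\Rightarrow$ a). Write $\overline{A}=\sum_{i=1}^{l}a_{i}\overline{A}_{i}$ with $a_{i}>0$, $A_{i}$ ample, and $H^{0}(mA_{i})=\langle\aHz(m\overline{A}_{i})\rangle_{K}$ for $m\gg 1$; fix one large $m$ for which in addition every $mA_{i}$ is globally generated. Given $x\in X(\overline{K})$, choose for each $i$ a strictly small section $\phi_{i}\in\aHz(m\overline{A}_{i})$ not vanishing at $x$ (possible because these sections span $H^{0}(mA_{i})$ and $mA_{i}$ is globally generated); then, setting $\phi=\prod_{i}\phi_{i}^{a_{i}}$, the divisor $m\overline{A}+\widehat{(\phi)}=\sum_{i}a_{i}(m\overline{A}_{i}+\widehat{(\phi_{i})})$ is strictly effective and $x$ lies off its support, so $h_{\overline{A}}(x)=\tfrac1m h_{m\overline{A}+\widehat{(\phi)}}(x)$ is bounded below by a positive constant determined only by the finitely many positive numbers $\essinf_{X_{\infty}^{\mathrm{an}}}g_{\infty}^{m\overline{A}_{i}+\widehat{(\phi_{i})}}$, hence independent of $x$.

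For (4), the implication $\Leftarrow$ is (1); for $\Rightarrow$, clearing denominators reduces to an adelic Cartier divisor $\overline{B}=m_{0}\overline{A}$ that is relatively nef with $B$ ample and, by (3)(a $\Rightarrow$ b), $\inf h_{\overline{B}}>0$, hence arithmetically ample; Zhang's theorem on the successive minima of $H^{0}(k\overline{B})$ then yields, for $k\gg 1$, a basis of $H^{0}(kB)$ consisting of strictly small sections, i.e. $H^{0}(kB)=\langle\aHz(k\overline{B})\rangle_{K}$, so $\overline{B}$ is a ``good'' adelic Cartier divisor and $\overline{A}=\tfrac1{m_{0}}\overline{B}$ is w-ample. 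Finally, for (5): by (4) each $\overline{A}_{i}$ is ample, so $\sum_{i}\alpha_{i}\overline{A}_{i}$ is ample by (2) and scaling and $\overline{B}:=\overline{N}+\sum_{i}\alpha_{i}\overline{A}_{i}$ is ample (and relatively nef) by (2); to exhibit $\overline{B}$ as a positive $\RR$-combination of good adelic Cartier divisors I would use the span hypothesis: $B=N+\sum_{i}\alpha_{i}A_{i}$ lies in the finite-dimensional $V:=\langle A_{1},\dots,A_{l}\rangle_{\RR}$, which is spanned by the ample classes $A_{i}$, so for small $\varepsilon>0$ one can write $B=\varepsilon\sum_{i}A_{i}+\sum_{j}\gamma_{j}C_{j}$ with $\gamma_{j}>0$ and $C_{j}$ ample $\QQ$-Cartier divisors in $V$; equipping each $C_{j}$ with relatively nef $\QQ$-model Green functions gives ample (hence, by (4), w-ample) $\QQ$-Cartier divisors $\overline{C}_{j}$, and $\overline{E}:=\overline{B}-\varepsilon\sum_{i}\overline{A}_{i}-\sum_{j}\gamma_{j}\overline{C}_{j}$ has underlying Cartier divisor $0$; since $\overline{B}-\varepsilon\sum_{i}\overline{A}_{i}=\overline{N}+\sum_{i}(\alpha_{i}-\varepsilon)\overline{A}_{i}$ is itself ample, its semipositive metrics can, after passing to a common model and possibly re-choosing $\varepsilon$ and the $C_{j}$, be matched by $\sum_{j}\gamma_{j}g_{v}^{\overline{C}_{j}}$ up to an arbitrarily small sup-norm error at the finitely many relevant places, so that $\overline{E}$ becomes a perturbation of the kind covered by Lemma~\ref{lem:w-ample}(4), and absorbing it into $\varepsilon\overline{A}_{1}$ by that lemma shows $\overline{B}$ is w-ample. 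The main obstacle is this last step of (5) — the controlled decomposition of the semipositive metric of $\overline{B}$ into semipositive metrics on a family of ample Cartier divisors whose underlying classes sum to $B$ — together, upstream, with the reliance on Zhang's deep ampleness theorems and the repeated passage between $\RR$- and $\QQ$-coefficients without destroying arithmetic positivity.
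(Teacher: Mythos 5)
Your handling of (1)--(3) is essentially correct, though organized differently from the paper: for (1) the paper never establishes the uniform bound $\inf_{x}h_{\overline{A}}(x)>0$; it only checks $h_{\overline{A}}(x)>0$ pointwise (so $\overline{A}$ is nef) and then obtains $\adeg((\overline{A}|_Y)^{\cdot(\dim Y+1)})=\avol(\overline{A}|_Y)>0$ directly from Lemma~\ref{lem:w-ample}(2), verifying the definition of ampleness without appeal to (3). Your uniform bound, exploiting the finiteness of the sets $\aHz(m\overline{A}_i)$, is a valid alternative, as is deducing (2) from (3)(b)$\Rightarrow$(a) rather than by the binomial expansion. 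Two caveats in (3)(a)$\Rightarrow$(b): you must also prove that $A$ is ample (the paper gets this from $\avol(\overline{A}|_Y)>0$ for all $Y$ plus the geometric Nakai--Moishezon criterion), and the statement you call ``Zhang's theorem'' is actually proved in the paper by induction on dimension (following Yuan--Zhang), using bigness of $\overline{A}$ to produce $\phi$ with $\overline{A}+\widehat{(\phi)}>0$ and restricting to $\Supp(A+(\phi))$; citing it as a black box for adelic $\RR$-Cartier divisors is not automatic.

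The genuine gaps are in (4) and (5). In (4), Zhang's arithmetic Nakai--Moishezon criterion is a statement about (hermitian line bundles on) a fixed projective $O_K$-model, whereas you apply it directly to the adelic divisor $m_0\overline{A}$. The missing step, which the paper carries out, is to use (3)(c) to find $\varepsilon>0$ with $\overline{A}-\pi^*(0,\sum_{v\notin U}\varepsilon[v])$ still ample, and then an $O_K$-model $(\mathscr{X}_{\varepsilon},\mathscr{A}_{\varepsilon})$ with $\mathscr{A}_{\varepsilon}$ relatively nef whose associated adelic divisor is squeezed between this perturbation and $\overline{A}$; Zhang's criterion is applied to that model, and w-ampleness of $\overline{A}$ follows. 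In (5), the obstacle you flag is real, and your proposed decomposition of the semipositive metric of $\overline{B}$ into model metrics on ample $\QQ$-classes is both incomplete and unnecessary. The point you miss is that an adelic $\QQ$-Cartier divisor only requires the \emph{underlying geometric divisor} to be rational; the Green functions are unconstrained. So one simply chooses real numbers $\beta_i$ with $0<\beta_i<\alpha_i$ such that $N+\sum_i\beta_iA_i\in\Div_{\QQ}(X)$ (possible by the span hypothesis); then $\overline{N}+\sum_i\beta_i\overline{A}_i$ is a relatively nef, ample adelic $\QQ$-Cartier divisor (by (1) and (2)), hence w-ample by (4), and adding $\sum_i(\alpha_i-\beta_i)\overline{A}_i$ preserves w-ampleness. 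No matching of metrics, and no appeal to Lemma~\ref{lem:w-ample}(\ref{enum:w-ample_is_open1}), is needed.
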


\begin{proof}
(1): Let $x\in X(\overline{K})$.
There is a $\phi\in\Rat(X)^{\times}\otimes_{\ZZ}\RR$ such that $\overline{A}+\widehat{(\phi)}$ can be written as $\sum_{k=1}^la_k\overline{A}_k$ with positive real numbers $a_1,\dots,a_l$ and adelic Cartier divisors $\overline{A}_1,\dots,\overline{A}_l$ such that $A_k$ are ample, such that, for each $k$, $H^0(mA_k)=\aSpan{K}{\aHz(m\overline{A}_k)}$ for every $m\gg 1$, and such that $x\notin\bigcup_k\Supp(A_k)$.
Thus
\[
 h_{\overline{A}}(x)=h_{\overline{A}+\widehat{(\phi)}}(x)=\sum_{k=1}^la_kh_{\overline{A}_k}(x)>0,
\]
and $\overline{A}$ is nef.

By Lemma~\ref{lem:w-ample}(\ref{enum:w-ample_restriction}), given any closed subvariety $Y$ of $X$, $\overline{A}|_Y$ is nef and w-ample.
Hence $\adeg\left((\overline{A}|_Y)^{\cdot(\dim Y+1)}\right)=\avol(\overline{A}|_Y)>0$.

(2): For every closed subvariety $Y$ of $X$, we have
\begin{align*}
 &\adeg\left((\overline{A}|_Y+\overline{N}|_Y)^{\cdot(\dim Y+1)}\right) \\
 &\qquad\qquad =\sum_{i=0}^{\dim Y+1}\binom{\dim Y+1}{i}\adeg\left((\overline{A}|_Y)^{\cdot(\dim Y+1-i)}\cdot(\overline{N}|_Y)^{\cdot i}\right).
\end{align*}
Since $\adeg\left((\overline{A}|_Y)^{\cdot(\dim Y+1-i)}\cdot(\overline{N}|_Y)^{\cdot i}\right)\geq 0$ for every $i$ and $\adeg\left((\overline{A})^{\cdot(\dim Y+1)}\right)>0$, we conclude that $\adeg\left((\overline{A}|_Y+\overline{N}|_Y)^{\cdot(\dim Y+1)}\right)>0$.

(3): The implication (c) $\Rightarrow$ (d) are obvious.

(a) $\Rightarrow$ (b): Obviously, $\overline{A}$ is nef.
So, for every closed subvariety $Y$ of $X$, we have $\avol(\overline{A}|_Y)=\adeg\left((\overline{A}|_Y)^{\cdot(\dim Y+1)}\right)>0$.
This implies that $A|_Y$ is big for every closed variety $Y$ of $X$.
Hence, by the Nakai--Moishezon criterion, $A$ is ample.

We are going to show $\inf_{x\in X(\overline{K})}h_{\overline{A}}(x)>0$ by induction on dimension (see \cite[Proof of Lemma~1.3]{Yuan_Zhang13}).
We can assume that $\dim X$ is positive.
Since $\avol(\overline{A})=\adeg\left(\overline{A}^{\cdot(\dim X+1)}\right)>0$, $\overline{A}$ is big.
Thus there is a $\phi\in\Rat(X)^{\times}\otimes_{\ZZ}\RR$ such that $\overline{A}+\widehat{(\phi)}>0$.

For every $x\in X(\overline{K})$ with $x\notin\Supp(A+(\phi))$,
\[
 \adeg\left(\overline{A}|_x\right)=\adeg\left((\overline{A}+\widehat{(\phi)})|_x\right)\geq\frac{[\kappa(x):\QQ]}{2}\essinf_{p\in X_{\infty}^{\rm an}}g_{\infty}^{\overline{A}+\widehat{(\phi)}}(p)>0,
\]
where $\kappa(x)$ denotes the residue field of the image of $x$.
Set $Z:=\Supp(A+(\phi))$ endowed with the reduced induced scheme structure.
By the induction hypothesis,
\[
 \inf_{x\in X(\overline{K})}h_{\overline{A}}(x)\geq\min\left\{\frac{[K:\QQ]}{2}\essinf_{p\in X_{\infty}^{\rm an}}g_{\infty}^{\overline{A}+\widehat{(\phi)}}(p),\inf_{x\in Z(\overline{K})}h_{\overline{A}}(x)\right\}>0.
\]

(b) $\Rightarrow$ (c): Set $\varepsilon:=\inf_{x\in X(\overline{K})}h_{\overline{A}}(x)>0$, and let $\overline{N}\in\aDiv_{\RR}(\Spec(K))$ such that $\adeg\left(\overline{N}\right)\leq\varepsilon$.
For every $x\in X(\overline{K})$, we have
\[
 h_{\overline{A}-\pi^*\overline{N}}(x)\geq h_{\overline{A}}(x)-\varepsilon\geq 0.
\]
So $\overline{A}-\pi^*\overline{N}$ is nef.

(d) $\Rightarrow$ (a): Both $\pi^*\overline{N}$ and $\overline{A}$ are nef.
For every closed subvariety $Y$, we have
\begin{align*}
 &\adeg\left((\overline{A}|_Y)^{\cdot(\dim Y+1)}\right) \\
 &\qquad =\sum_{i=0}^{\dim Y+1}\binom{\dim Y+1}{i}\adeg\left((\overline{A}|_Y-\pi^*\overline{N}|_Y)^{\cdot(\dim Y+1-i)}\cdot(\pi^*\overline{N}|_Y)^{\cdot i}\right) \\
 &\qquad \geq(\dim Y+1)\adeg\left((\overline{A}|_Y-\pi^*\overline{N}|_Y)^{\cdot\dim Y}\cdot(\pi^*\overline{N}|_Y)\right) \\
 &\qquad =(\dim Y+1)\deg\left((A|_Y)^{\cdot\dim Y}\right)\cdot\adeg\left(\overline{N}\right)>0.
\end{align*}

(4): The ``if'' part is nothing but the assertion (1), so we are going to show the ``only if'' part.
Let $U$ be an open subset of $\Spec(O_K)$ over which a model of definition for $\overline{A}$ exists.
By the assertion (3) above, one can find an $\varepsilon>0$ such that
\[
 \overline{A}-\pi^*\left(0,\sum_{v\notin U}\varepsilon[v]\right)
\]
is ample.
By definition, there exists an $O_K$-model $(\mathscr{X}_{\varepsilon},\mathscr{A}_{\varepsilon})$ of $(X,A)$ such that $(\mathscr{X}_{\varepsilon},\mathscr{A}_{\varepsilon})|_U$ is a model of definition for $\overline{A}$, $\mathscr{A}_{\varepsilon}$ is a relatively nef $\QQ$-Cartier divisor on $\mathscr{X}_{\varepsilon}$, and
\begin{equation}
 \overline{A}\geq\left(\mathscr{A}_{\varepsilon},g_{\infty}^{\overline{A}}\right)\geq\overline{A}-\pi^*\left(0,\sum_{v\notin U}\varepsilon[v]\right).
\end{equation}
Therefore, $\left(\mathscr{A}_{\varepsilon},g_{\infty}^{\overline{A}}\right)$ is also ample.
By the arithmetic Nakai--Moishezon criterion \cite[Theorems~(3.5) and (4.2)]{ZhangVar}, $\left(\mathscr{A}_{\varepsilon},g_{\infty}^{\overline{A}}\right)$ is w-ample, and so is $\overline{A}$.

(5): There exist $\beta_1,\dots,\beta_l$ such that $0<\beta_i<\alpha_i$ for every $i$ and $N+\beta_1A_1+\dots+\beta_lA_l$ is rational.
By the assertions (1), (2), and (4) above, $\overline{N}+\beta_1\overline{A}_1+\dots+\beta_l\overline{A}_l$ is w-ample, and so is
\[
 \overline{N}+\sum_{i=1}^l\alpha_i\overline{A}_i=\left(\overline{N}+\sum_{j=1}^l\beta_j\overline{A}_j\right)+\sum_{k=1}^l(\alpha_k-\beta_k)\overline{A}_k.
\]
\end{proof}

\begin{remark}
In \cite[Remark~3.20]{BMPS12}, Burgos Gil, Moriwaki, Philippon, and Sombra proposed a question whether an ample adelic $\RR$-Cartier divisor $\overline{D}$ on $X$ is w-ample or not.
This question is known to have positive answer in the following cases.
\begin{enumerate}
\item $\overline{D}$ is an ample adelic $\QQ$-Cartier divisor (see Theorem~\ref{thm:ample}(4)).
\item $X$ has dimension one (see Corollary~\ref{cor:aNM_for_curves} below).
\item $\overline{D}$ is a toric metrized $\RR$-Cartier divisor on a projective toric variety $X$ (see \cite[Corollary~6.3(2)]{BMPS12}).
\end{enumerate}
\end{remark}

\subsection{Arithmetic base loci}\label{subsec:arith_base_loci}

\begin{definition}\label{defn:base_locus}
Let $X$ be a normal projective variety over a field.
Recall that the \emph{augmented stable base locus} of an $\RR$-Weil divisor $D$ is defined as
\begin{equation}
 \Bsp(D):=\bigcap_{\text{$A$: ample}}\SBs(D-A),
\end{equation}
where $\SBs(D-A)$ denotes the real stable base locus of $D-A$ and the intersection is taken over all the ample $\RR$-Cartier divisors $A$ on $X$. (see \cite[section~3.5]{BCHM10} and \cite[section~1]{Ein_Laz_Mus_Nak_Pop06} for detail).

Suppose that $X$ is defined over $K$, and let $\mathcal{V}\in\VDiv_{\RR}(X)$.
The $\RR$-linear map $[\cdot]:\WDiv_{\RR}(X)\to\VDiv_{\RR}(X)$ admits a natural retraction $\Weil_X:\VDiv_{\RR}(X)\to\WDiv_{\RR}(X)$ defined by
\begin{equation}
 \mathcal{V}\mapsto\sum_{\dim(\mathcal{O}_{X,c_X(\nu)})=1}\nu(\mathcal{V})\overline{\{c_X(\nu)\}}.
\end{equation}

Let $?=\text{ss}$ or s.
In view of Lemma~\ref{lem:valuation_of_divisors}(2), we define the \emph{real stable base locus} of a pair $(\overline{D};\mathcal{V})\in\aBVDiv_{\RR,\RR}(X)$ as
\begin{equation}
 \aSBss{?}(\overline{D};\mathcal{V}):=\bigcap_{\phi\in\aHzsq{?}{\RR}(\overline{D};\mathcal{V})}\Supp\left(D+(\phi)-\Weil_X(\mathcal{V}_+)\right),
\end{equation}
and the \emph{augmented stable base locus} of $(\overline{D};[\Xi])$ as
\begin{equation}
 \aBsp(\overline{D};\mathcal{V}):=\bigcap_{\text{$\overline{A}$: w-ample}}\aSBs(\overline{D}-\overline{A};\mathcal{V}),
\end{equation}
where the intersection is taken over all the w-ample adelic $\RR$-Cartier divisors $\overline{A}$ on $X$ (see Notation and terminology~5).

It follows from definition that all of these are Zariski closed subsets of $X$, that
\begin{equation}
 \aSBss{?}(\overline{D};\mathcal{V})=\aSBss{?}(\overline{D};\Weil_X(\mathcal{V}_+))\quad\text{and}\quad\aBsp(\overline{D};\mathcal{V})=\aBsp(\overline{D};\Weil_X(\mathcal{V}_+)),
\end{equation}
and that
\begin{equation}\label{eqn:asbs_basic}
 \aSBs(\overline{D}_1+\overline{D}_2;\mathcal{V}_1+\mathcal{V}_2)\subset\aSBs(\overline{D}_1;\mathcal{V}_1)\cup\aSBss{\rm s}(\overline{D}_2;\mathcal{V}_2)
\end{equation}
holds for every $(\overline{D}_1;\mathcal{V}_1),(\overline{D}_2;\mathcal{V}_2)\in\aBVDiv_{\RR,\RR}(X)$ with $\mathcal{V}_1\geq 0$ and $\mathcal{V}_2\geq 0$.

If $\overline{A}\in\aDiv_{\RR}(X)$ is w-ample, then $\aBsp(\overline{A})=\emptyset$ (see also Proposition~\ref{prop:w-ample}(1)) and, if $\overline{E}\in\aDiv_{\RR}(X)$ is effective, then $\aSBss{\rm s}(\overline{E};[E])=\emptyset$.
\end{definition}

\begin{proposition}\label{prop:real_rat}
If $(\overline{D};[\Xi])\in\aBWDiv_{\QQ,\QQ}(X)$, then
\[
 \aSBs(\overline{D};[\Xi])=\bigcap_{\phi\in\aHzsq{?}{\QQ}(\overline{D};[\Xi])}\Supp(D+(\phi)-\Xi_+).
\]
\end{proposition}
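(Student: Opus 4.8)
The plan is to prove both inclusions, after first normalizing the base condition. I would begin by reducing to $\Xi=\Xi_+\geq 0$: by Remark~\ref{rem:general_rule1}(1), $\aHzsq{?}{\KK}(\overline{D};[\Xi])=\aHzsq{?}{\KK}(\overline{D};[\Xi_+])$ for $?\in\{\mathrm{s},\mathrm{ss}\}$ and any $\KK$, and by Definition~\ref{defn:base_locus} with $\Weil_X\circ[\cdot]=\id$ one has $\aSBss{?}(\overline{D};[\Xi])=\aSBss{?}(\overline{D};\Weil_X([\Xi]_+))=\aSBss{?}(\overline{D};[\Xi_+])$, while $\Xi_+$ is exactly the Weil divisor occurring on the right-hand side; so we may assume $\Xi\geq 0$, and then $\aHzsq{?}{\KK}(\overline{D};[\Xi])=\aHzsq{?}{\KK}(\overline{D})\cap H^0_{\KK}(D-\Xi)$. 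The inclusion
\[
\aSBss{?}(\overline{D};[\Xi])\subset\bigcap_{\phi\in\aHzsq{?}{\QQ}(\overline{D};[\Xi])}\Supp(D+(\phi)-\Xi)
\]
is then immediate, since $\aHzsq{?}{\QQ}(\overline{D};[\Xi])\subset\aHzsq{?}{\RR}(\overline{D};[\Xi])$ and $\aSBss{?}(\overline{D};[\Xi])$ is by definition the intersection over the larger index set $\aHzsq{?}{\RR}(\overline{D};[\Xi])$.

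For the reverse inclusion I would argue by contraposition. Suppose $\phi\in\aHzsq{?}{\RR}(\overline{D};[\Xi])$ satisfies $x\notin\Supp(D+(\phi)-\Xi)$; I want a rational $\psi\in\aHzsq{?}{\QQ}(\overline{D};[\Xi])$ with $x\notin\Supp(D+(\psi)-\Xi)$. Write $\phi=\phi_1^{\otimes a_1}\otimes\dots\otimes\phi_l^{\otimes a_l}$ with $\phi_i\in\Rat(X)^{\times}$ and, by Lemma~\ref{lem:Q-lin_indep}(2), with $a_1,\dots,a_l$ $\QQ$-linearly independent, so that $\Supp((\phi))=\bigcup_i\Supp((\phi_i))$. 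For $\underline{b}\in\RR^l$ set $\psi(\underline{b}):=\phi_1^{\otimes b_1}\otimes\dots\otimes\phi_l^{\otimes b_l}$, so $D+(\psi(\underline{b}))=D+\sum_ib_i(\phi_i)$ and $g_v^{\overline{D}+\widehat{(\psi(\underline{b}))}}=g_v^{\overline{D}+\widehat{(\phi)}}+\sum_i(b_i-a_i)g_v^{\widehat{(\phi_i)}}$. Fix a $U$-model of definition common to $\overline{D}$ and the $\widehat{(\phi_i)}$ and put $S:=\{\infty\}\cup(M_K\setminus U)$. The conditions ``$\psi(\underline{b})\in\aHzsq{?}{\RR}(\overline{D};[\Xi])$ and $x\notin\Supp(D+(\psi(\underline{b}))-\Xi)$'' decompose into: (a) nonnegativity of $\ord_Z(D+(\psi(\underline{b}))-\Xi)$ for the finitely many prime divisors $Z$ in $\Supp(D)\cup\Supp(\Xi)\cup\bigcup_i\Supp((\phi_i))$, with equality for those passing through $x$; (b) nonnegativity of the orders of $\mathscr{D}_U+\sum_ib_i(\phi_i)_{\mathscr{X}_U}$ along the finitely many vertical prime divisors of the model over $U$ (which, by the theory of model Green functions in \cite{MoriwakiAdelic}, is equivalent to $g_v^{\overline{D}+\widehat{(\psi(\underline{b}))}}\geq 0$ for every $v\in M_K\cap U$); and (c) $g_v^{\overline{D}+\widehat{(\psi(\underline{b}))}}\geq 0$ on $X_v^{\mathrm{an}}$ for $v\in S$, with strict $\essinf$ at $\infty$ when $?=\mathrm{ss}$. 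Conditions (a) and (b) are finitely many linear (in)equalities with rational coefficients that hold at $\underline{b}=\underline{a}$, so they cut out a nonempty rational polyhedron $P\subset\RR^l$, which contains rational points arbitrarily close to $\underline{a}$.

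The main obstacle is ensuring that the analytic conditions (c) persist for a rational $\underline{b}\in P$ near $\underline{a}$: the region they define is convex but not polyhedral, and the perturbing term $\sum_i(b_i-a_i)g_v^{\widehat{(\phi_i)}}$ is unbounded on $X_v^{\mathrm{an}}$. The point I would exploit is that once $\underline{b}\in P$ one already has $D+(\psi(\underline{b}))\geq\Xi\geq 0$, so $g_v^{\overline{D}+\widehat{(\psi(\underline{b}))}}$ is an \emph{effective} Green function; near $\bigcup_i\Supp((\phi_i))$ the divisorial singularities of $\sum_i(b_i-a_i)g_v^{\widehat{(\phi_i)}}$ are dominated by (indeed cancelled against) those of $g_v^{\overline{D}+\widehat{(\phi)}}$, keeping $g_v^{\overline{D}+\widehat{(\psi(\underline{b}))}}$ bounded below there, while on the compact complement of a fixed neighborhood of $\bigcup_i\Supp((\phi_i))$ the perturbing term is $O(|\underline{b}-\underline{a}|)$ in supremum norm and $g_v^{\overline{D}+\widehat{(\phi)}}\geq 0$. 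Combining this with the strict slack $\essinf_{X_\infty^{\mathrm{an}}}g_\infty^{\overline{D}+\widehat{(\phi)}}>0$ at $\infty$ in the $\mathrm{ss}$ case (and reducing $\mathrm{s}$ to $\mathrm{ss}$, or moving $\underline{b}$ slightly into the convex region along $P$), together with a uniformity argument over the finitely many $v\in S$, one obtains $\varepsilon>0$ such that every rational $\underline{b}\in P$ with $|\underline{b}-\underline{a}|<\varepsilon$ also satisfies (c); then $\psi:=\psi(\underline{b})$ is as required. Alternatively, (c) can be handled by invoking the analogous real-versus-rational comparison of stable base loci for adelic $\RR$-Cartier divisors without base conditions together with the classical equality $\SBs_{\RR}=\SBs_{\QQ}$ for $\QQ$-Weil divisors applied to the factor $H^0(D-\Xi)$, the two perturbations being compatible because each is governed by a rational polyhedron through the given exponent vector.
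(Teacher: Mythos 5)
Your reduction to $\Xi\geq 0$, the use of Lemma~\ref{lem:Q-lin_indep}(2) to make the exponents $\QQ$-linearly independent, and the treatment of the archimedean place via concavity and continuity of $\essinf g_{\infty}$ all run parallel to the paper's proof. The gap is in your handling of the finitely many finite places $v\in M_K\setminus U$ in condition (c). There the requirement is $g_v^{\overline{D}+\widehat{(\psi(\underline{b}))}}\geq 0$ on all of $X_v^{\rm an}$, and, unlike at $v=\infty$, there is no slack to spend: strict effectivity of $\overline{D}+\widehat{(\phi)}$ gives $\essinf g_{\infty}^{\overline{D}+\widehat{(\phi)}}>0$, but at a finite place one only has $g_v^{\overline{D}+\widehat{(\phi)}}\geq 0$, possibly with equality somewhere. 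Your argument on the compact complement of a neighbourhood of $\bigcup_i\Supp((\phi_i))$ therefore yields only $g_v^{\overline{D}+\widehat{(\psi(\underline{b}))}}\geq -O(|\underline{b}-\underline{a}|)$, not $\geq 0$; and the assertion that $D+(\psi(\underline{b}))\geq\Xi\geq 0$ makes the Green function ``effective'' conflates effectivity of the divisor with nonnegativity of the Green function (effectivity of the divisor gives $\essinf g_v>-\infty$ at every place and $g_v\geq 0$ only at all but finitely many places; see Notation and terminology~4). The set of exponent vectors $\underline{b}$ satisfying the condition at such a $v$ is a closed convex set, not a rational polyhedron, and its intersection with $P$ near $\underline{a}$ may reduce to $\{\underline{a}\}$, so the $\varepsilon$ you claim need not exist. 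Your fallback (quoting a real-versus-rational comparison without base conditions together with $\SBs_{\RR}=\SBs_{\QQ}$) does not repair this: the two comparisons would produce two different rational witnesses, whereas you need a single $\psi$ meeting the adelic and the divisorial conditions simultaneously, and in any case the adelic comparison would have to be proved by facing exactly the same difficulty.

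The paper avoids the problem by never perturbing at finite places at all. It uses continuity of $\essinf g_{\infty}$ over the relative interior of the rational polytope to find a rational point $D_f$ with $D_f\geq\Xi$, $x\notin\Supp(D_f-\Xi)$ and $\essinf g_{\infty}^{\overline{D}_f}>0$, and then repairs all finite places at once by an integrality input: $\aHzf(m\overline{D}_f)$ is a full-rank lattice in $H^0(mD_f)$, so $p\cdot 1\in\aHzf(m\overline{D}_f)$ for some integer $p\geq 1$. Twisting by $\widehat{(p)}$ adds $-\log|p|_v^2\geq 0$ at every finite place, which restores nonnegativity of the finitely many bad Green functions, at the cost of $2\log p$ at infinity, and that cost is absorbed by the strict archimedean slack after scaling. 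Some global ingredient of this kind at the finite places is indispensable, and your proposal is missing it.
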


\begin{proof}
The inclusion $\subset$ is clear.
We can assume $(\overline{D};[\Xi])\in\aBWDiv(X)$ and $\Xi\geq 0$.
Suppose that $x\notin\aSBs(\overline{D};[\Xi])$, so that there are $\phi_1,\dots,\phi_r\in\Rat(X)^{\times}$ and $e_1,\dots,e_r\in\RR$ such that
\[
 \overline{D}+\sum_{i=1}^re_i\widehat{(\phi_i)}>0,\quad D+\sum_{i=1}^re_i(\phi_i)\geq\Xi,\quad\text{and}\quad x\notin\Supp\left(D+\sum_{i=1}^re_i(\phi_i)-\Xi\right).
\]
If $(e_1,\dots,e_r)\in\QQ^r$, then we have nothing to show, so, by the same arguments as in Lemma~\ref{lem:Q-lin_indep}(2), we may assume that $e_1,\dots,e_r$ are $\QQ$-linearly independent.

We denote by $\overline{D}_a$ the adelic $\RR$-Cartier divisor $\overline{D}+\sum_{i=1}^ra_i\widehat{(\phi_i)}$ for $a=(a_1,\dots,a_r)\in\RR^r$.
Let $V$ be the rational $\RR$-subspace of $\WDiv_{\RR}(X)$ generated by the components of $D$, $\Xi$, and $(\phi_i)$'s.
Let $W$ be the rational $\RR$-subspace of $V$ generated by $(\phi_i)$'s.
Then
\begin{equation}
 P:=\left\{D'\in V\,:\,\text{$D'-D\in W$, $D'\geq\Xi$, and $x\notin\Supp\left(D'-\Xi\right)$}\right\}
\end{equation}
is a convex rational polytope containing $D_e$ for $e:=(e_1,\dots,e_r)$ in its relative interior.
By the following claim and $\essinf_{x\in X_{\infty}^{\rm an}}g_{\infty}^{\overline{D}_e}(x)>0$, one finds a rational point $f=(f_1,\dots,f_r)$ such that $D_f\in P$ and $\essinf_{x\in X_{\infty}^{\rm an}}g_{\infty}^{\overline{D}_f}(x)>0$.

\begin{claim}
The function
\[
 P\to\RR, \quad D'\mapsto\essinf_{x\in X_{\infty}^{\rm an}}g_{\infty}^{\overline{D}'}(x),
\]
is continuous over the relative interior of $P$.
\end{claim}

\begin{proof}
For $D',D''\in P$ and $0\leq\lambda\leq 1$, we have
\[
 \essinf_{x\in X_{\infty}^{\rm an}}g_{\infty}^{(1-\lambda)\overline{D}'+\lambda\overline{D}''}(x)\leq(1-\lambda)\cdot\essinf_{x\in X_{\infty}^{\rm an}}g_{\infty}^{\overline{D}'}(x)+\lambda\cdot\essinf_{x\in X_{\infty}^{\rm an}}g_{\infty}^{\overline{D}''}(x).
\]
So, by \cite[Theorem~6.3.4]{DudleyBook}, the function is continuous over the relative interior of $P$.
\end{proof}

Let $m\geq 1$ be an integer such that $mD_f\in\Div(X)$.
Since $\aHzf(m\overline{D}_f)$ is a full-rank lattice in $H^0(mD_f)$ (see Notation and terminology~4), there exists an integer $p\geq 1$ such that $p\cdot 1\in\aHzf(m\overline{D}_f)$.
So,
\[
 m\overline{D}_f+\widehat{(p)}>0,\quad mD_f+(p)\geq m\Xi,\quad\text{and}\quad x\notin\Supp\left(mD_f+(p)-m\Xi\right).
\]
\end{proof}

\begin{lemma}\label{lem:absp}
Let $(\overline{D};[\Xi])\in\aBWDiv_{\RR,\RR}(X)$.
For any w-ample adelic $\RR$-Cartier divisors $\overline{A}_1,\dots\overline{A}_l$, there exists an $\alpha>0$ such that
\[
 \aBsp(\overline{D};[\Xi])=\aSBs\left(\overline{D}-\sum_{k=1}^l\alpha_k\overline{A}_k;[\Xi]\right)
\]
for every $\alpha_k$ with $0<\alpha_k\leq\alpha$.
\end{lemma}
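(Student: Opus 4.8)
The plan is to transplant the classical argument that the augmented base locus $\Bsp(D)$ is computed by $\SBs(D-A)$ for any sufficiently small ample $A$: the intersection defining $\aBsp$ is monotone in the size of the w-ample divisor subtracted, so by Noetherianity it stabilizes along a shrinking sequence of such subtractions. Throughout I write $\overline{A}(\alpha):=\sum_{k=1}^{l}\alpha_{k}\overline{A}_{k}$ for $\alpha=(\alpha_{1},\dots,\alpha_{l})\in\RR_{>0}^{l}$; this is w-ample, and whenever $0<\alpha_{k}\leq\beta_{k}$ for all $k$ with $\alpha\neq\beta$, the difference $\overline{A}(\beta)-\overline{A}(\alpha)=\sum_{k}(\beta_{k}-\alpha_{k})\overline{A}_{k}$ is again w-ample, being a positive $\RR$-linear combination of a nonempty subfamily of the $\overline{A}_{k}$. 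Since $\aSBs$ and $\aBsp$ depend only on $\Weil_{X}(\mathcal{V}_{+})$ (Definition~\ref{defn:base_locus}), I may assume $\Xi\geq 0$, hence $[\Xi]\geq 0$.

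The preliminary input is that $\aSBss{\rm s}(\overline{B})=\emptyset$ for every w-ample adelic $\RR$-Cartier divisor $\overline{B}$. Indeed, writing $\overline{B}=\sum_{k}b_{k}\overline{B}_{k}$ with $B_{k}$ ample and $H^{0}(mB_{k})=\langle\aHz(m\overline{B}_{k})\rangle_{K}$ for $m\gg 1$, global generation of $mB_{k}$ furnishes, for any $x\in X$, a section $s_{k}\in\aHz(m\overline{B}_{k})$ with $x\notin\Supp(mB_{k}+(s_{k}))$; then $\phi:=\prod_{k}s_{k}^{b_{k}/m}$ satisfies $\overline{B}+\widehat{(\phi)}>0$ and $x\notin\Supp(B+(\phi))$, so $x\notin\aSBss{\rm s}(\overline{B})$. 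Using this together with the inclusion (\ref{eqn:asbs_basic}) applied to the decomposition $\overline{D}-\overline{A}(\alpha)=(\overline{D}-\overline{A}(\beta))+(\overline{A}(\beta)-\overline{A}(\alpha))$, with base conditions $[\Xi]\geq 0$ and $0$, I obtain the monotonicity estimate
\[
 \aSBs(\overline{D}-\overline{A}(\alpha);[\Xi])\ \subseteq\ \aSBs(\overline{D}-\overline{A}(\beta);[\Xi])\qquad(0<\alpha_{k}\leq\beta_{k}).
\]

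Hence the family $\{\aSBs(\overline{D}-\overline{A}(\alpha);[\Xi])\}_{\alpha\in\RR_{>0}^{l}}$ is directed downward as $\alpha\to 0$, so by Noetherianity of the Zariski topology its total intersection $Z$ equals a finite subintersection; letting $\alpha^{\ast}$ be the coordinatewise minimum of the finitely many parameters occurring there, monotonicity gives $Z=\aSBs(\overline{D}-\overline{A}(\alpha^{\ast});[\Xi])$, and then $Z=\aSBs(\overline{D}-\overline{A}(\alpha);[\Xi])$ for every $\alpha$ with $0<\alpha_{k}\leq\alpha^{\ast}_{k}$; set $\alpha:=\min_{k}\alpha^{\ast}_{k}>0$. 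It remains to identify $Z$ with $\aBsp(\overline{D};[\Xi])=\bigcap_{\overline{A}\ \mathrm{w\text{-}ample}}\aSBs(\overline{D}-\overline{A};[\Xi])$. The inclusion $\aBsp(\overline{D};[\Xi])\subseteq Z$ is immediate since each $\overline{A}(\alpha)$ is w-ample. For the reverse, given any w-ample $\overline{A}$, Lemma~\ref{lem:w-ample}(\ref{enum:w-ample_is_open1}) yields $\varepsilon>0$ with $\overline{A}-\overline{A}(\varepsilon,\dots,\varepsilon)$ w-ample; applying (\ref{eqn:asbs_basic}) to $\overline{D}-\overline{A}(\varepsilon,\dots,\varepsilon)=(\overline{D}-\overline{A})+(\overline{A}-\overline{A}(\varepsilon,\dots,\varepsilon))$ together with the preliminary input gives $Z\subseteq\aSBs(\overline{D}-\overline{A}(\varepsilon,\dots,\varepsilon);[\Xi])\subseteq\aSBs(\overline{D}-\overline{A};[\Xi])$, so $Z\subseteq\aBsp(\overline{D};[\Xi])$. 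Combining, $\aBsp(\overline{D};[\Xi])=Z=\aSBs(\overline{D}-\sum_{k}\alpha_{k}\overline{A}_{k};[\Xi])$ for all $0<\alpha_{k}\leq\alpha$.

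The main obstacle is the modest but essential bookkeeping in the monotonicity/stabilization step: one must check that $\overline{A}(\beta)-\overline{A}(\alpha)$ remains w-ample, so that its $\aSBss{\rm s}$ is empty, even when some coordinates of $\beta-\alpha$ vanish, and then pass cleanly from the vector of thresholds $\alpha^{\ast}$ to the single scalar $\alpha$ required by the statement. The genuinely arithmetic ingredient, which distinguishes this from the mere-bigness situation, is the vanishing $\aSBss{\rm s}(\overline{B})=\emptyset$ for w-ample $\overline{B}$, which relies precisely on the defining property $H^{0}(mB_{k})=\langle\aHz(m\overline{B}_{k})\rangle_{K}$.
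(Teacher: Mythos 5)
Your proof is correct and follows essentially the same route as the paper's: Noetherianity of the Zariski topology to reduce to finitely many w-ample perturbations, the openness of w-ampleness (Lemma~\ref{lem:w-ample}(\ref{enum:w-ample_is_open1})), and the subadditivity relation (\ref{eqn:asbs_basic}) combined with the emptiness of the stable base locus of a w-ample adelic $\RR$-Cartier divisor. The only difference is organizational---the paper applies Noetherianity directly to the defining intersection of $\aBsp(\overline{D};[\Xi])$ to extract finitely many $\overline{B}_j$ and then shrinks them by $\sum_k\alpha_k\overline{A}_k$, whereas you first stabilize within the family $\{\overline{A}(\alpha)\}$ and verify its cofinality afterwards---which does not change the substance.
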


\begin{proof}
Since $X$ is a Noetherian topological space, one finds w-ample adelic $\RR$-Cartier divisors $\overline{B}_1,\dots,\overline{B}_m$ such that
\begin{equation}\label{eqn:w-ample_absp}
 \aBsp(\overline{D};[\Xi])=\bigcap_{j=1}^m\aSBs(\overline{D}-\overline{B}_j;[\Xi]).
\end{equation}
By Lemma~\ref{lem:w-ample}(\ref{enum:w-ample_is_open1}), there exists an $\alpha>0$ such that
\[
 \overline{B}_j-\sum_{k=1}^l\alpha_k\overline{A}_k
\]
are w-ample for all $j$ and all $\alpha_k$ with $0<\alpha_k\leq\alpha$.
So, by (\ref{eqn:asbs_basic}) and (\ref{eqn:w-ample_absp}),
\begin{align*}
 \aBsp(\overline{D};[\Xi]) &=\bigcap_{j=1}^m\aSBs\left(\overline{D}-\sum_{k=1}^l\alpha_k\overline{A}_k-\left(\overline{B}_j-\sum_{k=1}^l\alpha_k\overline{A}_k\right);[\Xi]\right) \\
 &\supset\aSBs\left(\overline{D}-\sum_{k=1}^l\alpha_k\overline{A}_k;[\Xi]\right) \\
 &\supset\aBsp(\overline{D};[\Xi])
\end{align*}
for every $\alpha_k$ with $0<\alpha_k\leq\alpha$.
This completes the proof.
\end{proof}

The following is the main purpose of this subsection.

\begin{proposition}\label{prop:w-ample}
Let $\overline{A}\in\aDiv_{\RR}(X)$.
\begin{enumerate}
\item $\overline{A}$ is w-ample if and only if $\aBsp(\overline{A})=\emptyset$.
\item If $\overline{A}\in\aDiv(X)$, then $\overline{A}$ is w-ample if and only if $A$ is ample and $H^0(mA)=\aSpan{K}{\aHz(m\overline{A})}$ for every $m\gg 1$.
\item\label{enum:w-ample_is_open2} Let $\overline{D}_1,\dots,\overline{D}_m\in\aDiv_{\RR}(X)$, $v_1,\dots,v_l\in M_K\cup\{\infty\}$, and $\varphi_1\in C^0_{v_1}(X),\dots,\varphi_l\in C^0_{v_l}(X)$.
Let $E_1,\dots,E_n\in\Div_{\RR}(X)$ be effective $\RR$-Cartier divisors on $X$.
If $\overline{A}$ is w-ample, then there exists an $\varepsilon>0$ such that
\[
 \aBsp\left(\overline{A}+\sum_{i=1}^m\varepsilon_i\overline{D}_i+\sum_{k=1}^l(0,\varphi_k[v_k]);\sum_{j=1}^n\delta_j[E_j]\right)=\emptyset
\]
for every $\varepsilon_i$, $\delta_j$, and $\varphi_k$ with $|\varepsilon_i|\leq\varepsilon$, $0\leq\delta_j\leq\varepsilon$, and $\|\varphi_k\|_{\sup}\leq\varepsilon$, respectively.
\end{enumerate}
\end{proposition}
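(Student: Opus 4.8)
The plan is to prove (2) first, then deduce (1) from it, and finally obtain (\ref{enum:w-ample_is_open2}) from (1) together with the openness of weak ampleness. For (2), the ``if'' direction is exactly the definition of weak ampleness (Notation and terminology~5) read with $l=1$ and $\overline{A}_1=\overline{A}$. For ``only if'', I would start from a weakly ample expression $\overline{A}=\sum_{i=1}^la_i\overline{A}_i$ in which $A=\sum a_iA_i$ happens to be integral; since $A$ then lies in the rational subspace of $\Div_\RR(X)$ spanned by the $A_i$, one may rearrange so that the coefficients are rational, and a standard argument on section rings of ample divisors---multiplying strictly small sections of the $\overline{A}_i$ and using eventual surjectivity of the multiplication maps---yields $H^0(mA)=\aSpan{K}{\aHz(m\overline{A})}$ for $m\gg1$; the required input is in \cite{IkomaRem}.

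For (1), the implication from weak ampleness to $\aBsp(\overline{A})=\emptyset$ was already recorded in Definition~\ref{defn:base_locus}, and is re-derived here: given $x\in X$ one splits $\overline{A}$ into two weakly ample pieces and, using Lemma~\ref{lem:atarimae} and the global generation of the $\overline{A}_i$, produces a strictly small section of one piece not vanishing at $x$. The converse is the heart of the statement. Assuming $\aBsp(\overline{A})=\emptyset$, I would fix a weakly ample adelic $\QQ$-Cartier divisor $\overline{C}_0$ (Lemma~\ref{lem:w-ample}(\ref{enum:w-ampleQ})) and apply Lemma~\ref{lem:absp} with $l=1$ to obtain a rational $\alpha>0$ with $\aSBs(\overline{A}-\alpha\overline{C}_0)=\aBsp(\overline{A})=\emptyset$. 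It then suffices to prove the claim: if $\aSBs(\overline{B})=\emptyset$ and $\overline{C}$ is weakly ample, then $\overline{B}+\overline{C}$ is weakly ample (apply it with $\overline{B}=\overline{A}-\alpha\overline{C}_0$ and $\overline{C}=\alpha\overline{C}_0$).

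To prove the claim, one uses that $X$ is Noetherian to find finitely many $\phi_1,\dots,\phi_N$ with $\overline{B}+\widehat{(\phi_j)}>0$ and $\bigcap_j\Supp(B+(\phi_j))=\emptyset$. For any curve $\Gamma$ there is a $j$ with $\Gamma\not\subset\Supp(B+(\phi_j))$, whence $B\cdot\Gamma=(B+(\phi_j))\cdot\Gamma\geq0$; so $B$ is nef and $B+C$ is ample. Moreover each $B+(\phi_j)$ is effective and numerically equivalent to the nef divisor $B$, hence nef, so near the locus where $B+(\phi_j)$ is trivial the divisor $\overline{B}+\overline{C}+\widehat{(\phi_j)}$ is weakly ample; patching these local descriptions against $\overline{C}=\sum c_k\overline{C}_k$ and invoking (2) should present $\overline{B}+\overline{C}$ as weakly ample. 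I expect this claim to be the main obstacle: the numerical part (nef-ness of $B$, ampleness of $B+C$) is routine, but passing from the mere emptiness of a stable base locus to an explicit weakly ample expression---in particular controlling the arithmetic global generation $H^0(m(B+C))=\aSpan{K}{\aHz(m(\overline{B}+\overline{C}))}$---is the delicate local-to-global step, and is precisely where (2) and the ampleness results of \cite{IkomaRem} are needed.

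Finally, for (\ref{enum:w-ample_is_open2}), I would equip each $E_j$ with strictly positive Green functions to obtain $\overline{E}_j\in\aDiv_\RR(X)$ and adjoin $-\overline{E}_1,\dots,-\overline{E}_n$ to the list of perturbing divisors in Lemma~\ref{lem:w-ample}(\ref{enum:w-ample_is_open1}), getting $\varepsilon>0$ such that
\[
 \overline{A}':=\overline{A}+\sum_{i=1}^m\varepsilon_i\overline{D}_i+\sum_{k=1}^l(0,\varphi_k[v_k])-\sum_{j=1}^n\delta_j\overline{E}_j
\]
is weakly ample whenever $|\varepsilon_i|,\|\varphi_k\|_{\sup},\delta_j\leq\varepsilon$ with $\delta_j\geq0$. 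The pair in question then equals $\overline{A}'$ plus $\bigl(\sum_j\delta_j\overline{E}_j;\sum_j\delta_j[E_j]\bigr)$, which is effective by (\ref{eqn:general_rule1}). Since adding an effective pair only enlarges the set of strictly small sections, and since $\Weil_X$ of the (effective) positive part $\sum_j\delta_j[E_j]$ cancels exactly the extra divisor $\sum_j\delta_jE_j$, one checks that $\aSBs(\overline{A}'-\overline{C};0)\subset\aSBs\bigl(\bigl(\overline{A}+\cdots\bigr)-\overline{C};\sum_j\delta_j[E_j]\bigr)^{\complement}$-free---more precisely $\aSBs$ of the bigger pair is contained in $\aSBs(\overline{A}'-\overline{C})$ for every weakly ample $\overline{C}$---so that $\aBsp\bigl(\overline{A}+\cdots;\sum_j\delta_j[E_j]\bigr)\subset\aBsp(\overline{A}')$, which is empty by (1).
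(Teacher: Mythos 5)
Your treatment of part (3) is essentially the paper's argument: endow the $E_j$ with nonnegative Green functions, absorb the base condition via $\aSBss{\rm s}\bigl(\sum_j\delta_j\overline{E}_j;\sum_j\delta_j[E_j]\bigr)=\emptyset$ and the subadditivity (\ref{eqn:asbs_basic}), and conclude from Lemma~\ref{lem:w-ample}(\ref{enum:w-ample_is_open1}). The easy directions of (1) and (2) are also fine. The problem is that the converse of (1) — the heart of the proposition — is reduced to a ``claim'' that you then do not prove. The sketch you give (find $\phi_1,\dots,\phi_N$ with $\bigcap_j\Supp(B+(\phi_j))=\emptyset$, deduce $B$ nef, then ``patch local descriptions'') cannot work as stated: w-ampleness is by definition a \emph{global} positive linear combination of adelic Cartier divisors with $H^0(mA_i)=\aSpan{K}{\aHz(m\overline{A}_i)}$, and there is no meaningful sense in which a divisor is ``weakly ample near the locus where $B+(\phi_j)$ is trivial.'' The actual mechanism in the paper is: first use \cite[Lemma~5.2(1)]{IkomaRem} to write $\overline{A}$ as a \emph{real} linear combination of good adelic Cartier divisors, then use Lemma~\ref{lem:absp} to produce a w-ample $\overline{A}'$ with $\overline{A}-\overline{A}'$ \emph{rational} and $\aSBs(\overline{A}-\overline{A}')=\emptyset$; Proposition~\ref{prop:real_rat} then upgrades the vanishing of the real stable base locus to base-point-freeness by genuine rational sections, and \cite[Lemma~5.2(2)]{IkomaRem} closes the argument. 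Your reduction misses the rationality step entirely: $\overline{A}-\alpha\overline{C}_0$ is in general only an adelic $\RR$-Cartier divisor even when $\alpha$ and $\overline{C}_0$ are rational, so Proposition~\ref{prop:real_rat} (which applies only to rational pairs) is not available for your $\overline{B}$.

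Your proof of (2) has a related gap. From a w-ample expression $\overline{A}=\sum_ia_i\overline{A}_i$ with $A$ integral you claim one may ``rearrange so that the coefficients are rational.'' Replacing the real $a_i$ by rational numbers changes the Green functions $\sum_ia_ig_v^{\overline{A}_i}$, hence changes $\overline{A}$ itself; the fact that $A$ lies in the rational span of the $A_i$ says nothing about the adelic structure. This is precisely why products of strictly small sections of the $\overline{A}_i$ do not directly produce elements of $\aHz(m\overline{A})$: the required exponents $ma_i$ are irrational. The paper avoids this by deducing (2) \emph{from} (1) (so the order of your plan would have to be reversed), again via Proposition~\ref{prop:real_rat} together with \cite[Proposition~5.3(3)]{IkomaRem}. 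As it stands, both (1) ``if'' and (2) ``only if'' rest on unestablished steps.
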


\begin{proof}
(1): If $\overline{A}$ is w-ample, then $\aBsp(\overline{A})=\aSBs(\overline{A}-\overline{A})=\emptyset$.
So we show the converse.
By \cite[Lemma~5.2(1)]{IkomaRem}, $\overline{A}$ is an $\RR$-linear combination of adelic Cartier divisors $\overline{B}$ on $X$ such that $B$ is ample and $H^0(mB)=\aSpan{K}{\aHz(m\overline{B})}$ for every $m\gg 1$.
By Lemma~\ref{lem:absp}, one finds a w-ample adelic $\RR$-Cartier divisor $\overline{A}'$ such that $\overline{A}-\overline{A}'$ is rational and such that $\aSBs\left(\overline{A}-\overline{A}'\right)=\emptyset$.

Write $\overline{A}'=\sum_{k=1}^la_k\overline{A}_k'$ with positive real numbers $a_1,\dots,a_l$ and adelic Cartier divisors $\overline{A}_1',\dots,\overline{A}_l'$ such that, for each $k$, $A_k'$ is ample and $H^0(mA_k')=\aSpan{K}{\aHz(m\overline{A}_k')}$ for every $m\gg 1$.
By Proposition~\ref{prop:real_rat} and \cite[Lemma~5.2(2)]{IkomaRem},
\[
 \overline{A}=(\overline{A}-\overline{A}')+\sum_{k=1}^la_k\overline{A}_k'=\left(b_1\overline{A}_1'+(\overline{A}-\overline{A}')\right)+(a_1-b_1)\overline{A}_1'+\sum_{k=2}^la_k\overline{A}_k'
\]
is w-ample, where $b_1$ is a rational number with $0<b_1<a_1$.

(2): The ``if'' part is clear by definition.
Assume that $\overline{A}\in\aDiv(X)$ is w-ample (see Notation and terminology~5); namely, $A$ is ample and $\aBsp(\overline{A})=\emptyset$.
Hence, by Proposition~\ref{prop:real_rat}, and \cite[Proposition~5.3(3)]{IkomaRem}, we have $H^0(mA)=\aSpan{K}{\aHz(m\overline{A})}$ for every $m\gg 1$.

(3): We endow each $E_j$ with $E_j$-Green functions such that $\overline{E}_j\geq 0$.
By the relation (\ref{eqn:asbs_basic}), we have
\begin{align*}
 &\aBsp\left(\overline{A}+\sum_{i=1}^m\varepsilon_i\overline{D}_i+\sum_{k=1}^l(0,\varphi_k[v_k]);\sum_{j=1}^n\delta_j[E_j]\right)\\
 \subset &\aBsp\left(\overline{A}+\sum_{i=1}^m\varepsilon_i\overline{D}_i-\sum_{k=1}^l\|\varphi_k\|_{\sup}(0,[v_k])-\sum_{j=1}^n\delta_j\overline{E}_j\right)\cup\aSBss{\rm s}\left(\sum_{j=1}^n\delta_j\overline{E}_j;\sum_{j=1}^n\delta_j[E_j]\right) \\
 = &\aBsp\left(\overline{A}+\sum_{i=1}^m\varepsilon_i\overline{D}_i-\sum_{k=1}^l\|\varphi_k\|_{\sup}(0,[v_k])-\sum_{j=1}^n\delta_j\overline{E}_j\right).
\end{align*}
So the assertion results from Lemma~\ref{lem:w-ample}(\ref{enum:w-ample_is_open1}).
\end{proof}

\begin{lemma}\label{lem:aaugbs}
Let $\mu:X'\to X$ be a birational morphism of normal projective $K$-varieties, and let $\Exc(\mu)$ be the exceptional locus.
\begin{enumerate}
\item For every $(\overline{D};[\Xi])\in\aBWDiv_{\RR,\RR}(X)$, one has
\[
 \aBsp(\overline{D};[\Xi])=\aBsp(\overline{D};[\Xi]_+)=\Bsp(D-\Xi_+)\cup\aSBs(\overline{D};[\Xi]).
\]
\item For every $D\in\Div_{\RR}(X)$, one has
\[
 \Bsp(\mu^*D)=\mu^{-1}\Bsp(D)\cup\Exc(\mu).
\]
\item For every $(\overline{D};[E])\in\aBDiv_{\RR,\RR}(X)$, one has
\[
 \aBsp(\mu^*\overline{D};[\mu^*E])=\mu^{-1}\aBsp(\overline{D};[E])\cup\Exc(\mu).
\]
\end{enumerate}
\end{lemma}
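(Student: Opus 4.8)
The plan is to establish the three assertions in order, deducing (3) from (1) and (2). For (1), the equality $\aBsp(\overline{D};[\Xi])=\aBsp(\overline{D};[\Xi]_+)$ is immediate from Definition~\ref{defn:base_locus}, where $\aSBs$ and $\aBsp$ were set up to see a base condition only through $\Weil_X$ of its positive part, and $\Weil_X([\Xi]_+)=\Xi_+$. For the main equality I prove two inclusions. One half of ``$\supseteq$'' is $\aSBs(\overline{D};[\Xi])\subseteq\aBsp(\overline{D};[\Xi])$: writing $\overline{D}=(\overline{D}-\overline{A})+\overline{A}$ for a w-ample $\overline{A}$ and using the subadditivity (\ref{eqn:asbs_basic}) together with $\aSBss{\rm s}(\overline{A})=\emptyset$ (a quick consequence of the definition of w-ampleness, since the strictly small sections of a large multiple of a w-ample Cartier divisor already span its full, basepoint-free space of global sections) gives $\aSBs(\overline{D};[\Xi])\subseteq\aSBs(\overline{D}-\overline{A};[\Xi])$ for every w-ample $\overline{A}$, hence $\aSBs(\overline{D};[\Xi])\subseteq\aBsp(\overline{D};[\Xi])$. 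The other half is $\Bsp(D-\Xi_+)\subseteq\aBsp(\overline{D};[\Xi])$: fixing a w-ample adelic Cartier divisor $\overline{A}_0$ (so $A_0$ is ample) and using Lemma~\ref{lem:absp}, a point $x\notin\aBsp(\overline{D};[\Xi])$ lies off $\aSBs(\overline{D}-\eta\overline{A}_0;[\Xi])$ for all small $\eta>0$; a strictly small section realising this yields an effective $\RR$-divisor $\sim_{\RR}(D-\eta A_0)-\Xi_+$ missing $x$, so $x\notin\SBs(D-\Xi_+-\eta A_0)\supseteq\Bsp(D-\Xi_+)$.

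The reverse inclusion $\aBsp(\overline{D};[\Xi])\subseteq\Bsp(D-\Xi_+)\cup\aSBs(\overline{D};[\Xi])$ is the substantive point. Given $x\notin\Bsp(D-\Xi_+)\cup\aSBs(\overline{D};[\Xi])$ I must produce a w-ample $\overline{A}$ with $x\notin\aSBs(\overline{D}-\overline{A};[\Xi])$. From $x\notin\aSBs(\overline{D};[\Xi])$ I get $\phi_0$ with $\overline{D}+\widehat{(\phi_0)}>0$, $D+(\phi_0)\geq\Xi_+$ and $x\notin\Supp(D+(\phi_0)-\Xi_+)$; from $x\notin\Bsp(D-\Xi_+)$ and the standard description of the augmented base locus I get an ample $\RR$-Cartier divisor $A_1$, a w-ample adelic structure $\overline{A}_1$ on it, and $\psi_1$ with $D-\Xi_+-A_1+(\psi_1)\geq 0$ and $x\notin\Supp$. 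Convex-combining with weights $1-t$ and $t$ and setting $\phi:=(1-t)\phi_0+t\psi_1$, $\overline{A}:=t\overline{A}_1$, the divisorial condition $D-tA_1+(\phi)\geq\Xi_+$ with $x$ off the support holds for every $t\in(0,1)$, while $\overline{D}-\overline{A}+\widehat{(\phi)}=(1-t)\bigl(\overline{D}+\widehat{(\phi_0)}\bigr)+t\bigl(\overline{D}+\widehat{(\psi_1)}-\overline{A}_1\bigr)$, whose first summand is effective with strictly positive archimedean infimum. The hard part, which I expect to be the main obstacle, is arranging that the second summand --- $t$ times a fixed adelic divisor --- does not spoil effectivity at the finitely many places where $\psi_1$ behaves badly; this is where I would use the remaining freedom (choosing $A_1$ via $\Bsp(D-\Xi_+)=\SBs(D-\Xi_+-\eta A_1)$ for all small $\eta$, and tuning the w-ample structure $\overline{A}_1$) together with a correction absorbed by the openness statements Lemma~\ref{lem:w-ample}(\ref{enum:w-ample_is_open1}) and Proposition~\ref{prop:w-ample}(\ref{enum:w-ample_is_open2}). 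Degenerate cases (one side, hence both, equal to $X$ for lack of any relevant strictly small sections) are immediate.

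For (2) --- a purely geometric statement --- the inclusion $\supseteq$ comes from two observations: every component $Z$ of $\Exc(\mu)$ is covered by $\mu$-contracted curves $C$, and for any ample $A'$ on $X'$ the divisor $\mu^*D-A'$ has negative degree on $C$, forcing $C$ and hence $Z$ into $\SBs(\mu^*D-A')\subseteq\Bsp(\mu^*D)$; while $\mu^{-1}\Bsp(D)\subseteq\Bsp(\mu^*D)$ follows by pushing an effective representative down through $\mu$ (using $\mu_*\mathcal{O}_{X'}=\mathcal{O}_X$). For $\subseteq$, given $x'\notin\mu^{-1}\Bsp(D)\cup\Exc(\mu)$ one pulls back an effective $E\sim_{\RR}D-A$ ($A$ ample) missing $\mu(x')$ and splits $\mu^*A\sim_{\RR}A''+E''$ with $A''$ ample on $X'$ and $E''$ effective missing $x'$ --- possible because $\Bsp(\mu^*A)=\Exc(\mu)\not\ni x'$ --- so $\mu^*D-A''\sim_{\RR}\mu^*E+E''$ is effective and misses $x'$. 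I would cite \cite{Ein_Laz_Mus_Nak_Pop06} and \cite{BCHM10} for these facts, in particular $\Bsp(\mu^*A)=\Exc(\mu)$ for ample $A$.

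Finally, (3) follows formally from (1) and (2). By (1) applied to $(\mu^*\overline{D};[\mu^*E])\in\aBWDiv_{\RR,\RR}(X')$, $\aBsp(\mu^*\overline{D};[\mu^*E])=\Bsp(\mu^*D-(\mu^*E)_+)\cup\aSBs(\mu^*\overline{D};[\mu^*E])$. Writing $E=E_+-E_-$ with $E_\pm$ effective Cartier without common component, one checks $(\mu^*E)_+=\mu^*E_+-F$ with $F\geq 0$ and $\Supp F\subseteq\Exc(\mu)$, so $\mu^*D-(\mu^*E)_+=\mu^*(D-E_+)+F$ and, by (2) together with $\Supp F\subseteq\Exc(\mu)$, $\Bsp(\mu^*D-(\mu^*E)_+)=\mu^{-1}\Bsp(D-E_+)\cup\Exc(\mu)$. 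For the remaining term, Lemma~\ref{lem:birat_small_sections}(2) identifies the strictly small sections of $(\mu^*\overline{D};[\mu^*E])$ with those of $(\overline{D};[E])$ via pull-back, and unwinding the supports --- the exceptional components of $\mu^*$ of any $\RR$-Cartier divisor lying inside $\Exc(\mu)$ --- gives $\mu^{-1}\aSBs(\overline{D};[E])\subseteq\aSBs(\mu^*\overline{D};[\mu^*E])\subseteq\mu^{-1}\aSBs(\overline{D};[E])\cup\Exc(\mu)$. Combining these with (1) applied to $(\overline{D};[E])$ yields the claimed equality; the only delicate point here is the bookkeeping of exceptional divisors, all of which is absorbed into $\Exc(\mu)$.
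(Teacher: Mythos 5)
Your overall route coincides with the paper's: part (1) is proved by convex-combining a strictly small adelic section (from $x\notin\aSBs(\overline{D};[\Xi])$) with a geometric section (from $x\notin\Bsp(D-\Xi_+)$), part (2) is delegated to the literature, and part (3) is deduced formally from (1), (2) and Lemma~\ref{lem:birat_small_sections}(2). The easy inclusions and part (3) are fine; you are in fact more careful than the paper about the discrepancy between $(\mu^*E)_+$ and $\mu^*(E_+)$, which is harmless because it is supported on $\Exc(\mu)$.

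The one genuine gap is precisely the step you flag as ``the main obstacle'' in (1), and the tools you name there do not close it. The summand $t\bigl(\overline{D}+\widehat{(\psi_1)}-\overline{A}_1\bigr)$ has Green functions that may be negative by fixed amounts $c_v$ at finitely many finite places, so it contributes $-tc_v$ there; the other summand $(1-t)\bigl(\overline{D}+\widehat{(\phi_0)}\bigr)$ is only $\geq 0$ (not strictly positive) at finite places, so no choice of small $t$ repairs this. Replacing $\overline{A}_1$ by $\overline{A}_1-\sum_v c_v(0,[v])$ and invoking Lemma~\ref{lem:w-ample}(\ref{enum:w-ample_is_open1}) or Proposition~\ref{prop:w-ample}(\ref{enum:w-ample_is_open2}) is circular: those statements permit perturbations only below a threshold $\varepsilon$ determined by $\overline{A}_1$, whereas the required $c_v$ are dictated by $\psi_1$ and $\overline{D}$. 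The paper's device is different: because $D-A_1+(\psi_1)\geq 0$, the Green functions of $\overline{D}-\overline{A}_1+\widehat{(\psi_1)}$ are automatically bounded below at every place and non-negative at all but finitely many (the remark under ``(effective)'' in Notation and terminology~4); one then twists the section by $\widehat{(p)}$ for a suitable $p\in K^{\times}\otimes_{\ZZ}\RR$, which raises the Green functions by prescribed constants at the finitely many bad finite places and pays for this at the archimedean place, where the cost is $O(t)$ and is absorbed by $(1-t)\essinf_{x\in X_{\infty}^{\rm an}}g_{\infty}^{\overline{D}+\widehat{(\phi_0)}}(x)>0$. You need to insert this (or an equivalent) absorption argument to complete part (1).
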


\begin{proof}
(1): We assume $\Xi\geq 0$ and show the second equality.
The inclusion $\supset$ is clear by definition.
If $x\notin\Bsp(D-\Xi)\cup\aSBs(\overline{D};[\Xi])$, then there are an w-ample adelic $\RR$-Cartier divisor $\overline{A}$ on $X$, a $\phi\in H^0_{\RR}(D-A-\Xi)\setminus\{0\}$, and a $\psi\in\aHzq{\RR}(\overline{D};[\Xi])\setminus\{0\}$ such that
\[
 x\notin\Supp(D-A+(\phi)-\Xi)\cup\Supp(D+(\psi)-\Xi).
\]
Since $D-A+(\phi)\geq 0$, the Green functions $g_v^{\overline{D}-\overline{A}}-\log|\phi|_v^2$ are bounded from below for all $v\in M_K\cup\{\infty\}$, and are non-negative for all but finitely many $v\in M_K\cup\{\infty\}$.
So one finds a sufficiently small rational number $\lambda$ and a $p\in K^{\times}\otimes_{\ZZ}\RR$ such that
\[
 \overline{D}-\lambda\overline{A}+\left(\lambda\widehat{(\phi)}+(1-\lambda)\widehat{(\psi)}+\widehat{(p)}\right)>0.
\]
Therefore, $x\notin\aSBs(\overline{D}-\lambda\overline{A};[\Xi])$.

The assertion (2) is nothing but \cite[Proposition~2.3]{Bou_Bro_Pac13} (which is valid over arbitrary fields).

(3): By the assertions (1) and (2),
\begin{align*}
 \aBsp(\mu^*\overline{D};[\mu^*E]) &=\Bsp(\mu^*(D-E))\cup\aSBs(\mu^*\overline{D};[\mu^*E]) \\
 &=\mu^{-1}\Bsp(D-E)\cup\Exc(\mu)\cup\mu^{-1}\aSBs(\overline{D};[E]) \\
 &=\mu^{-1}\left(\Bsp(D-E)\cup\aSBs(\overline{D};[E])\right)\cup\Exc(\mu) \\
 &=\mu^{-1}\aBsp(\overline{D};[E])\cup\Exc(\mu).
\end{align*}
\end{proof}

\subsection{Positivity of pairs}\label{subsec:pairs}

In this subsection, we introduce several positivity notions of pairs, and prove the openness of the big cones of pairs (see Theorem~\ref{thm:abbig_cone}).

\begin{definition}\label{defn:positivity}
Let $X$ be a normal projective $K$-variety, let $(\overline{D};\mathcal{V})\in\aBVDiv_{\RR,\RR}(X)$, and let $\KK$ be either $\RR$, $\QQ$, or $\ZZ$.
We define positivity notions for pairs as follows.
\begin{description}
\item[(big)] We say that $(\overline{D};\mathcal{V})$ is \emph{big} if there exists a w-ample adelic $\RR$-Cartier divisor $\overline{A}$ such that $(\overline{D}-\overline{A};\mathcal{V})>0$ (see Notation and terminology~5 and Definition~\ref{defn:Adelic_with_Base_Conditions4}).
We denote by
\[
 \aBVBig_{\KK,\KK'}(X)\supset\aBDBig_{\KK,\KK'}(X)\supset\aBWBig_{\KK,\KK'}(X)\supset\aBBig_{\KK,\KK'}(X)
\]
the cone of all the big pairs in
\[
 \aBVDiv_{\KK,\KK'}(X)\supset\aBDDiv_{\KK,\KK'}(X)\supset\aBWDiv_{\KK,\KK'}(X)\supset\aBDiv_{\KK,\KK'}(X),
\]
respectively (see Definition~\ref{defn:Adelic_with_Base_Conditions4}).
\item[(pseudo-effective)] We say that $(\overline{D};\mathcal{V})$ is \emph{pseudo-effective} if $(\overline{D}+\overline{A};\mathcal{V})$ is big for every big adelic $\RR$-Cartier divisor $\overline{A}$.
We write
\[
 (\overline{D}_1;\mathcal{V}_1)\preceq(\overline{D}_2;\mathcal{V}_2)
\]
if $(\overline{D}_2-\overline{D}_1;\mathcal{V}_2-\mathcal{V}_1)$ is pseudo-effective.
\end{description}
It is clear that the above positivity notions are compatible with addition: for example, if $(\overline{D}_1;\mathcal{V}_1)$ is big and $(\overline{D}_2;\mathcal{V}_2)\geq 0$, then $(\overline{D}_1+\overline{D}_2;\mathcal{V}_1+\mathcal{V}_2)$ is also big.

We define the \emph{arithmetic volume} of $(\overline{D};\mathcal{V})$ as
\begin{equation}\label{eqn:defavolbase}
 \avol(\overline{D};\mathcal{V}):=\limsup_{\substack{m\in\NN, \\ m\to+\infty}}\frac{\log\sharp\aHz(m\overline{D};m\mathcal{V})}{m^{\dim X+1}/(\dim X+1)!}.
\end{equation}
\end{definition}

\begin{definition}\label{defn:Y-positivity}
Let $X$ be a normal projective $K$-variety, let $\nu_0\in\DV(\Rat(X))$, and let $(\overline{D};\mathcal{V})\in\aBVDiv_{\RR,\RR}(X)$.
\begin{description}
\item[($\nu_0$-big)] A pair $(\overline{D};\mathcal{V})$ is called \emph{$\nu_0$-big} if there exists a w-ample adelic $\RR$-Cartier divisor $\overline{A}$ such that $(\overline{D}-\overline{A};\mathcal{V})>_{\nu_0}0$.
Let $\KK$, $\KK'$ be either $\ZZ$, $\QQ$, or $\RR$.
We denote by $\aBVBig_{\KK,\KK'}(X|\nu_0)$, etc., the cone of all the $\nu_0$-big pairs in $\aBVDiv_{\KK,\KK'}(X)$, etc..
\item[($\nu_0$-pseudo-effective)] We say that $(\overline{D};\mathcal{V})$ is \emph{$\nu_0$-pseudo-effective} if $(\overline{D}+\overline{A};\mathcal{V})$ is $\nu_0$-big for every $\nu_0$-big adelic $\RR$-Cartier divisor $\overline{A}$, and write
\[
 (\overline{D}_1;\mathcal{V}_1)\preceq_{\nu_0}(\overline{D}_2;\mathcal{V}_2)
\]
if $(\overline{D}_2-\overline{D}_1;\mathcal{V}_2-\mathcal{V}_1)$ is $\nu_0$-pseudo-effective.
\end{description}
\end{definition}

\begin{remark}\label{rem:ad_hoc}
\begin{enumerate}
\item The cones $\aBVBig_{\RR,\RR}(X|\nu)$, etc., are not open in $\aBVDiv_{\RR,\RR}(X)$, etc. (see also Lemma~\ref{lem:abbig_char}(\ref{enum:Y-bigness}) and Theorem~\ref{thm:abbig_cone}(2)).
For instance, even if $\overline{D}$ is $\nu$-big, $(\overline{D};-r[\nu])$ is not $\nu$-big for every $r>0$.
\item If $(\overline{D};\mathcal{V})\in\aBVBig_{\RR,\RR}(X)$, then $\aHzq{\QQ}(\overline{D};\mathcal{V})\neq\{0\}$.
In fact, by Lemma~\ref{lem:w-ample}(\ref{enum:w-ampleQ}), there exists a w-ample adelic $\QQ$-Cartier divisor $\overline{A}'$ such that $(\overline{D};\mathcal{V})>\overline{A}'$; hence $\aHzq{\QQ}(\overline{D};\mathcal{V})\supset\aHzq{\QQ}(\overline{A}')\neq\{0\}$.
\end{enumerate}
\end{remark}

\begin{lemma}\label{lem:abbig_char}
Let $(\overline{D};\mathcal{V})\in\aBVDiv_{\RR,\RR}(X)$, and let $\nu\in\DV(\Rat(X))$.
\begin{enumerate}
\item\label{enum:Y-bigness} The following are equivalent.
\begin{enumerate}
\item $(\overline{D};\mathcal{V})$ is $\nu$-big.
\item $\nu(\mathcal{V})\geq 0$ and $(\overline{D};\mathcal{V}_+)$ is $\nu$-big.
\end{enumerate}
Moreover, if $\mathcal{V}\in\WDiv_{\RR}(X)$ and $\dim\mathcal{O}_{X,c_X(\nu)}=1$, then the following is also equivalent.
\begin{enumerate}
\item[(c)] $\nu(\mathcal{V})\geq 0$ and $c_X(\nu)\notin\aBsp(\overline{D};\mathcal{V})$.
\end{enumerate}
\item The following are equivalent.
\begin{enumerate}
\item $(\overline{D};\mathcal{V})$ is pseudo-effective.
\item If $(\overline{D}';\mathcal{V}')$ is big, then so is $(\overline{D}+\overline{D}';\mathcal{V}+\mathcal{V}')$.
\end{enumerate}
\item The following are equivalent.
\begin{enumerate}
\item $(\overline{D};\mathcal{V})$ is $\nu$-pseudo-effective.
\item If $(\overline{D}';\mathcal{V}')$ is $\nu$-big, then so is $(\overline{D}+\overline{D}';\mathcal{V}+\mathcal{V}')$.
\end{enumerate}
\end{enumerate}
\end{lemma}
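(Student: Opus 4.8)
The plan is to treat the three equivalences separately, each by unwinding the relevant definitions together with the elementary rules recorded in (\ref{eqn:general_rule2}) and Lemma~\ref{lem:valuation_of_divisors}(1); the only input not internal to this section will appear in part~(2).

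For part~(1), the equivalence of (a) and (b) runs as follows. If $(\overline{D};\mathcal{V})$ is $\nu$-big with witness a w-ample $\overline{A}$, so that $(\overline{D}-\overline{A};\mathcal{V})>_{\nu}0$, then $D-A\geq 0$ gives $\nu_X(D-A)\geq 0$ by Lemma~\ref{lem:valuation_of_divisors}(1), whence $\nu(\mathcal{V})=\nu_X(D-A)\geq 0$; and since $\nu'(\mathcal{V}_+)=\max\{\nu'(\mathcal{V}),0\}\leq\nu'_X(D-A)$ for every $\nu'$ while $\nu(\mathcal{V}_+)=\nu(\mathcal{V})=\nu_X(D-A)$, the same $\overline{A}$ shows $(\overline{D}-\overline{A};\mathcal{V}_+)>_{\nu}0$, which is (b). Conversely, assuming (b), write $(\overline{D}-\overline{A};\mathcal{V})=(\overline{D}-\overline{A};\mathcal{V}_+)+(0;-\mathcal{V}_-)$; here $\mathcal{V}_-\geq 0$ and $\nu(\mathcal{V}_-)=0$ (because $\nu(\mathcal{V})\geq 0$), so $(0;-\mathcal{V}_-)\geq_{\nu}0$ by (\ref{eqn:general_rule2}), and a sum of a strictly $\nu$-effective and a $\nu$-effective pair is strictly $\nu$-effective by inspection of Definition~\ref{defn:Adelic_with_Base_Conditions4}; this gives (a).

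For the additional clause of part~(1), under $\mathcal{V}=[\Xi]\in\WDiv_{\RR}(X)$ and $\dim\mathcal{O}_{X,c_X(\nu)}=1$ the centre $c_X(\nu)$ is the generic point of a prime Weil divisor $Z$ with $\nu=\ord_Z$, and $\Weil_X(\mathcal{V}_+)=\Xi_+$ with $\ord_Z(\Xi_+)=\nu(\mathcal{V}_+)$. Since (b) and (c) both include ``$\nu(\mathcal{V})\geq 0$'', it remains to see that $(\overline{D};\mathcal{V}_+)$ is $\nu$-big if and only if $c_X(\nu)\notin\aBsp(\overline{D};\mathcal{V})=\aBsp(\overline{D};\mathcal{V}_+)$. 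If $\overline{A}$ is w-ample with $(\overline{D}-\overline{A};\mathcal{V}_+)>_{\nu}0$, then $\ord_Z(D-A-\Xi_+)=0$, so $\phi=1$ shows $c_X(\nu)\notin\aSBs(\overline{D}-\overline{A};\mathcal{V})\supset\aBsp(\overline{D};\mathcal{V})$. Conversely, $c_X(\nu)\notin\aBsp(\overline{D};\mathcal{V})$ furnishes, by the definitions of $\aBsp$ and $\aSBs$, a w-ample $\overline{A}$ and a $\phi\in\aHzq{\RR}(\overline{D}-\overline{A};\mathcal{V})$ with $\ord_Z\bigl(D-A+(\phi)-\Xi_+\bigr)=0$; then $(\overline{D}+\widehat{(\phi)}-\overline{A};\mathcal{V}_+)>_{\nu}0$, and $\overline{A}-\widehat{(\phi)}$ is again w-ample by Lemma~\ref{lem:w-ample}(1), so $(\overline{D};\mathcal{V}_+)$ is $\nu$-big. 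The delicate point here is keeping straight the passage between $\mathcal{V}$ and $\mathcal{V}_+$ and between base conditions and Weil divisors via $\Weil_X$, which is exactly what the hypothesis $\dim\mathcal{O}_{X,c_X(\nu)}=1$ controls.

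Parts~(2) and (3) share a shape: in each, (b) is a priori stronger-looking than (a), because it tests against all (resp.\ all $\nu$-)big pairs, not merely those of the form $(\overline{A};0)$. Part~(3) is entirely formal: (b)$\Rightarrow$(a) by applying (b) to $(\overline{A};0)$, which is $\nu$-big whenever $\overline{A}$ is; and for (a)$\Rightarrow$(b), given a $\nu$-big $(\overline{D}';\mathcal{V}')$ with w-ample witness $\overline{A}'$, the divisor $\overline{A}'$ is itself $\nu$-big because $\aBsp(\overline{A}')=\emptyset$ by Proposition~\ref{prop:w-ample}(1), so $c_X(\nu)\notin\aSBs(\overline{A}'-\overline{B})$ for some w-ample $\overline{B}$, and absorbing the resulting $\widehat{(\phi)}$ into $\overline{B}$ via Lemma~\ref{lem:w-ample}(1) exhibits $(\overline{A}';0)$ as $\nu$-big; then $(\overline{D}+\overline{D}';\mathcal{V}+\mathcal{V}')=(\overline{D}+\overline{A}';\mathcal{V})+(\overline{D}'-\overline{A}';\mathcal{V}')$ is a sum of a $\nu$-big pair (by (a)) and one that is $\geq_{\nu}0$, hence $\nu$-big. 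Part~(2) is identical in structure, except that one now needs the two standard facts that a w-ample adelic $\RR$-Cartier divisor is big and that, conversely, every big adelic $\RR$-Cartier divisor is $\RR$-linearly equivalent to a sum of a w-ample divisor and a strictly effective divisor (so that $(\overline{A};0)$ is a big pair precisely when $\overline{A}$ is a big adelic $\RR$-Cartier divisor); see \cite{MoriwakiAdelic} and \cite{IkomaRem}. This characterization of big adelic $\RR$-Cartier divisors is the only ingredient drawn from outside the present section, and I expect it — rather than any of the formal manipulations above, or the bookkeeping in part~(1)(c) — to be the real content of the proof of part~(2).
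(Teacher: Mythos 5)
Your proof is correct and follows essentially the same route as the paper's: the paper likewise derives (1)(a)$\Leftrightarrow$(b) from the remark after the definition of $\nu_0$-effectivity together with $(0;-\mathcal{V}_-)\geq_{\nu}0$, treats (b)$\Leftrightarrow$(c) as immediate from the definition of $\aBsp$, and proves (2) and (3) by writing $(\overline{D}+\overline{D}';\mathcal{V}+\mathcal{V}')$ as a ($\nu$-)big pair plus a ($\nu$-)effective remainder after splitting off a w-ample witness (using Lemma~\ref{lem:w-ample}(\ref{enum:w-ample_Y-big}) to absorb the section $\widehat{(\phi)}$). Your version merely fills in details the paper leaves implicit, so no further comment is needed.
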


\begin{proof}
(\ref{enum:Y-bigness}): The implication (a) $\Rightarrow$ (b) results from a remark after (\ref{eqn:nu0-effective}), and the converse results from $(0;-\mathcal{V}_-)\geq_{\nu}0$.
The equivalence (b) $\Leftrightarrow$ (c) is obvious by definition of $\aBsp(\overline{D};\mathcal{V})$.

(3): The implication (b) $\Rightarrow$ (a) is clear.
Given a $\nu$-big pair $(\overline{D}';\mathcal{V}')$, one finds a w-ample adelic $\RR$-Cartier divisor $\overline{A}$ on $X$ such that $(\overline{D}'-2\overline{A};\mathcal{V}')>_{\nu}0$.
Since $(\overline{D};\mathcal{V})$ is $\nu$-pseudo-effective, $(\overline{D}+\overline{A};\mathcal{V})$ is $\nu$-big, so there is a $\phi\in\aHzq{\RR}(\overline{D}+\overline{A};\mathcal{V})\setminus\{0\}$ such that $(\overline{D}+\overline{A}+\widehat{(\phi)};\mathcal{V})>_{\nu}0$ (see Lemma~\ref{lem:w-ample}(\ref{enum:w-ample_Y-big})).
Hence $(\overline{D}+\overline{D}';\mathcal{V}+\mathcal{V}')>_{\nu}\overline{A}-\widehat{(\phi)}$.

By the same arguments, one can show the assertion (2).
\end{proof}

\begin{lemma}\label{lem:birat_abhyankar}
Let $\mu:X'\to X$ be a birational morphism of normal projective varieties.
\begin{enumerate}
\item $(\overline{D};\mathcal{V})\in\aBVDiv_{\RR,\RR}(X)$ is big (respectively, pseudo-effective) if and only if so is $\mu_*^{-1}(\overline{D};\mathcal{V})$.
\item If $\nu\in\DV(\Rat(X))$ and $\dim(\mathcal{O}_{X,c_X(\nu)})=1$, then $(\overline{D};\mathcal{V})\in\aBVDiv_{\RR,\RR}(X)$ is $\nu$-big ($\nu$-pseudo-effective) if and only if so is $\mu_*^{-1}(\overline{D};\mathcal{V})$.
\end{enumerate}
\end{lemma}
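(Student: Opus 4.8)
The plan is to reduce both parts to Lemma~\ref{lem:birat_small_sections} together with standard birational‐invariance properties of plain adelic $\RR$-Cartier divisors. First one identifies $\DV(\Rat(X))$ with $\DV(\Rat(X'))$ via the isomorphism ${\mu^*}^{-1}$ (legitimate since $\mu$ is birational); under this identification $\mathcal{V}^{\mu}$ corresponds to $\mathcal{V}$, and by Remark~\ref{rem:center_and_valuation} one has $\nu_{X'}(\mu^*D)=\nu_X(D)$ for all $D\in\Div_{\RR}(X)$ and all $\nu$. Hence by Lemma~\ref{lem:birat_small_sections} a pair on $X$ is effective (resp. strictly effective, $\nu$-effective, strictly $\nu$-effective) exactly when its $\mu_*^{-1}$-image is. I record one further preliminary: $(\overline{D};\mathcal{V})$ is big if and only if there exists a \emph{big} adelic $\RR$-Cartier divisor $\overline{B}$ with $(\overline{D}-\overline{B};\mathcal{V})>0$. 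A w-ample divisor is big, which gives one implication; for the other, write $\overline{B}\sim_{\RR}\overline{A}_0+\overline{E}_0$ with $\overline{A}_0$ w-ample and $\overline{E}_0$ strictly effective (arithmetic Kodaira lemma), note that adding a strictly effective divisor to a $>0$ pair keeps it a $>0$ pair, and use that pair-bigness is invariant under $\overline{D}\mapsto\overline{D}+\widehat{(\phi)}$, which is immediate from Lemma~\ref{lem:w-ample}(1). The same reformulation, and the same $\widehat{(\phi)}$-invariance, hold for pseudo-effectivity.

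For part~(1), suppose first $(\overline{D};\mathcal{V})$ is big on $X$ and pick a big $\overline{B}$ on $X$ with $(\overline{D}-\overline{B};\mathcal{V})>0$. Then $\mu^*\overline{B}$ is big on $X'$, since $\avol(\mu^*\overline{B})=\avol(\overline{B})>0$ by Lemma~\ref{lem:birat_small_sections}, and $(\mu^*\overline{D}-\mu^*\overline{B};\mathcal{V}^{\mu})>0$ by the first paragraph, so the reformulation applied on $X'$ shows $\mu_*^{-1}(\overline{D};\mathcal{V})$ is big. Conversely, suppose $\mu_*^{-1}(\overline{D};\mathcal{V})$ is big on $X'$, say $(\mu^*\overline{D}-\overline{B}';\mathcal{V}^{\mu})>0$ with $\overline{B}'$ big on $X'$. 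Fix a w-ample adelic $\RR$-Cartier divisor $\overline{H}$ on $X$; by openness of the big cone, $\overline{B}'-\varepsilon\mu^*\overline{H}$ is still big on $X'$ for $0<\varepsilon\ll 1$, hence (as any big divisor becomes strictly effective after an $\RR$-linear shift) there is $\phi\in\Rat(X)^{\times}\otimes_{\ZZ}\RR$ with $\mu^*(\varepsilon\overline{H})+\widehat{(\phi)}\leq\overline{B}'$. Since Green functions and principal divisors pull back, $\mu^*(\overline{D}-\varepsilon\overline{H}-\widehat{(\phi)})\geq\mu^*\overline{D}-\overline{B}'$, so $(\mu^*(\overline{D}-\varepsilon\overline{H}-\widehat{(\phi)});\mathcal{V}^{\mu})>0$; by Lemma~\ref{lem:birat_small_sections} this descends to $(\overline{D}-\varepsilon\overline{H}-\widehat{(\phi)};\mathcal{V})>0$ on $X$, whence $(\overline{D}-\widehat{(\phi)};\mathcal{V})$ is big, and so is $(\overline{D};\mathcal{V})$ by the $\widehat{(\phi)}$-invariance. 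The two pseudo-effectivity assertions follow formally from the bigness ones through the characterization in Lemma~\ref{lem:abbig_char}(2) and the same comparison of big adelic divisors across $\mu$.

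For part~(2) the hypothesis $\dim(\mathcal{O}_{X,c_X(\nu)})=1$ forces $\mu$ to be an isomorphism near the generic point of $\overline{\{c_X(\nu)\}}$; thus $c_{X'}(\nu)$ again has codimension one (so $c_{X'}(\nu)\notin\Exc(\mu)$), $\nu$ stays divisorial on $X'$, and $\nu_{X'}(\mu^*D)=\nu_X(D)$. Running the argument of part~(1) with $>$ replaced by $>_{\nu}$ and invoking the $\nu$-effectivity statement of Lemma~\ref{lem:birat_small_sections} handles the $\nu$-big and $\nu$-pseudo-effective cases; the only extra point is the normalization $\nu_X(D)=\nu(\mathcal{V})$ built into $\nu$-effectivity, which is preserved because $\nu_{X'}(\mu^*D)=\nu_X(D)$ and because, in the Kodaira steps, one may keep $c_X(\nu)$ (resp. $c_{X'}(\nu)$) off the support of the strictly effective pieces — for instance by using Lemma~\ref{lem:w-ample}(\ref{enum:w-ample_Y-big}) to move the w-ample witnesses away from the given center, or, when $\mathcal{V}$ is a Weil divisor, by invoking Lemma~\ref{lem:abbig_char}(\ref{enum:Y-bigness}) together with Lemma~\ref{lem:aaugbs}(3).

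I expect the converse direction of part~(1) to be the crux: a big (let alone w-ample) witness on $X'$ need not be the pullback of anything from $X$, because $\mu$ may contract subvarieties, so one cannot simply push it down; the device of replacing the witness by $\mu^*(\varepsilon\overline{H})+\widehat{(\phi)}$, legitimated by the openness of the big cone and the $\RR$-linear–equivalence shift, is what resolves this. The remaining care is bookkeeping: checking that all the inequalities produced stay compatible with the rigid base condition $\mathcal{V}$, which is precisely where the codimension-one hypothesis is needed for the $\nu$-graded statements in part~(2).
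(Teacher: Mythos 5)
Your argument is correct and is essentially the paper's own proof: both directions hinge on transferring the positivity witness across $\mu$ by perturbing with a small pullback of a w-ample divisor from $X$ plus an $\RR$-linear shift, descending via Lemma~\ref{lem:birat_small_sections}, and using the codimension-one hypothesis to guarantee $c_{X'}(\nu)\notin\Exc(\mu)$ in part~(2). The only (cosmetic) difference is that you route the forward direction through big witnesses and an arithmetic Kodaira lemma, where the paper instead observes directly that $\mu^*\overline{A}$ is ($\nu$-)big because $\aBsp(\mu^*\overline{A})=\Exc(\mu)$ (Lemmas~\ref{lem:aaugbs}(3) and \ref{lem:abbig_char}(\ref{enum:Y-bigness})).
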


\begin{proof}
We show the assertion (2).

(2): Let $\overline{A}$ be a w-ample adelic $\RR$-Cartier divisor on $X$ such that $(\overline{D};\mathcal{V})>_{\nu}\overline{A}$.
Then $(\mu^*\overline{D};\mathcal{V}^{\mu})>_{\nu}\mu^*\overline{A}$ and $\mu^*\overline{A}$ is $\nu$-big, since $c_X(\nu)\notin\Exc(\mu)=\aBsp(\mu^*\overline{A})$ (see Lemmas~\ref{lem:aaugbs}(3) and \ref{lem:abbig_char}(1)).
Thus $(\mu^*\overline{D};\mathcal{V}^{\mu})$ is $\nu$-big.

Conversely, if $\overline{A}'$ is a w-ample adelic $\RR$-Cartier divisor on $X'$ such that $(\mu^*\overline{D};\mathcal{V}^{\mu})>\overline{A}'$, then there exists a w-ample adelic $\RR$-Cartier divisor $\overline{A}$ on $X$ such that $\overline{A}'>_{\nu}\mu^*\overline{A}$ (see Lemma~\ref{lem:w-ample}(\ref{enum:w-ample_Y-big})).
So $(\mu^*(\overline{D}-\overline{A});\mathcal{V}^{\mu})>_{\nu}0$ and $(\overline{D};\mathcal{V})$ is $\nu$-big (see Lemma~\ref{lem:birat_small_sections}(1)).

Next, if $(\overline{D};\mathcal{V})$ is $\nu$-pseudo-effective and $\overline{A}$ is $\nu$-big, then $(\overline{D}+\varepsilon\overline{A};\mathcal{V})$ is $\nu$-big for every $\varepsilon>0$.
So $\mu^*\overline{A}$ is $\nu$-big and $(\mu^*\overline{D}+\varepsilon\mu^*\overline{A};\mathcal{V}^{\mu})$ is $\nu$-big for every $\varepsilon>0$.

Conversely, if $\overline{A}$ is a w-ample adelic $\RR$-Cartier divisor on $X$, then $\mu^*\overline{A}$ is $\nu$-big.
So, $\mu_*^{-1}(\overline{D}+\varepsilon\overline{A};\mathcal{V})$ is $\nu$-big for every $\varepsilon>0$ and, so is $(\overline{D}+\varepsilon\overline{A};\mathcal{V})$.
\end{proof}

The main purpose of this subsection is the following.

\begin{theorem}\label{thm:abbig_cone}
Let $(\overline{D};\mathcal{V})\in\aBVDiv_{\RR,\RR}(X)$.
Let $\overline{D}_1,\dots,\overline{D}_m\in\aDiv_{\RR}(X)$, $\mathcal{V}_1,\dots,\mathcal{V}_n\in\VDiv_{\RR}(X)$, $v_1,\dots,v_l\in M_K\cup\{\infty\}$, and $\varphi_1\in C^0_{v_1}(X),\dots,\varphi_l\in C^0_{v_l}(X)$.
\begin{enumerate}
\item If $(\overline{D};\mathcal{V})\in\aBVBig_{\RR,\RR}(X)$, then there exists an $\varepsilon>0$ such that
\[
 \left(\overline{D}+\sum_{i=1}^m\varepsilon_i\overline{D}_i+\sum_{k=1}^l(0,\varphi_k[v_k]);\mathcal{V}+\sum_{j=1}^n\delta_j\mathcal{V}_j\right)\in\aBVBig_{\RR,\RR}(X)
\]
for every $\varepsilon_i$, $\delta_j$, and $\varphi_k$ with $|\varepsilon_i|\leq\varepsilon$, $|\delta_j|\leq\varepsilon$, and $\|\varphi_k\|_{\sup}\leq\varepsilon$, respectively.
In particular, $\aBVBig_{\RR,\RR}(X)$, etc., are open cones in $\aBVDiv_{\RR,\RR}(X)$, etc..
\item Let $\nu_0\in\DV(\Rat(X))$ such that
\[
 c_X(\nu_0)\notin\bigcup_{j=1}^n\Supp_X(\mathcal{V}_j).
\]
\begin{enumerate}
\item If $(\overline{D};\mathcal{V})\in\aBVBig_{\RR,\RR}(X|\nu_0)$, then there exists an $\varepsilon>0$ such that
\[
 \left(\overline{D}+\sum_{i=1}^m\varepsilon_i\overline{D}_i+\sum_{k=1}^l(0,\varphi_k[v_k]);\mathcal{V}+\sum_{j=1}^n\delta_j\mathcal{V}_j\right)\in\aBVBig_{\RR,\RR}(X|\nu_0)
\]
for every $\varepsilon_i$, $\delta_j$, and $\varphi_k$ with $|\varepsilon_i|\leq\varepsilon$, $|\delta_j|\leq\varepsilon$, and $\|\varphi_k\|_{\sup}\leq\varepsilon$, respectively.
\item Suppose that $c_X(\nu_0)\notin\Supp_X\left(\mathcal{V}-\nu_0(\mathcal{V})[\nu_0]\right)$.
If $(\overline{D};\mathcal{V})\in\aBVBig_{\RR,\RR}(X|\nu_0)$, then there exists an $\varepsilon>0$ such that
\[
 \left(\overline{D}+\sum_{i=1}^m\varepsilon_i\overline{D}_i+\sum_{k=1}^l(0,\varphi_k[v_k]);\mathcal{V}+\delta_0[\nu_0]+\sum_{j=1}^n\delta_j\mathcal{V}_j\right)\in\aBVBig_{\RR,\RR}(X|\nu_0)
\]
for every $\varepsilon_i$, $\delta_j$, and $\varphi_k$ such that $|\varepsilon_i|\leq\varepsilon$, $|\delta_j|\leq\varepsilon$ for $j=0,1,\dots,n$, $\delta_0\geq -\nu_0(\mathcal{V})$, and $\|\varphi_k\|_{\sup}\leq\varepsilon$, respectively.
\end{enumerate}
In particular, if $\dim\mathcal{O}_{X,c_X(\nu_0)}=1$, then $\aBWBig_{\RR,\RR}(X|\nu_0)$ is an open cone in
\[
 \left\{(\overline{D};[\Xi])\in\aBWDiv_{\RR,\RR}(X)\,:\,\nu_0([\Xi])\geq 0\right\}.
\]
\end{enumerate}
\end{theorem}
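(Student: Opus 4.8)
The plan is to treat all the statements with one bookkeeping device. Starting from a strictly effective (resp.\ strictly $\nu_0$-effective) witness of bigness, I would absorb the base-condition perturbations $\sum_j\delta_j\mathcal{V}_j$ (and, in (2)(b), the $\delta_0[\nu_0]$-perturbation) into auxiliary effective adelic divisors supplied by Lemma~\ref{lem:vanish_base_cond}, absorb the Cartier and Green-function perturbations into the weakly ample summand by the openness of weak ampleness (Lemma~\ref{lem:w-ample}(\ref{enum:w-ample_is_open1})), and use the strictly effective pair $((0,\varepsilon[\infty]);0)$—which is harmless for every normalized valuation—as the remaining slack. Write $\overline{D}^{\flat}:=\overline{D}+\sum_i\varepsilon_i\overline{D}_i+\sum_k(0,\varphi_k[v_k])$. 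For Part~(1), fix a w-ample $\overline{A}$ with $(\overline{D}-\overline{A};\mathcal{V})>0$; for each $j$ apply Lemma~\ref{lem:vanish_base_cond}(1) to $\sum_{\nu}|\nu(\mathcal{V}_j)|[\nu]$ to get an adelic Cartier divisor $\overline{C}_j>0$ with $\nu_X(C_j)\geq|\nu(\mathcal{V}_j)|$ for all $\nu$, and set $\overline{C}:=\sum_j\overline{C}_j$, so that $(\overline{C};\sum_j t_j\mathcal{V}_j)>0$ whenever $|t_j|\leq1$. By Lemma~\ref{lem:w-ample}(\ref{enum:w-ample_is_open1}), applied with the fixed extra perturbation direction $-\overline{C}$ and the extra Green-function direction $-1$ at the place $\infty$, there is $\varepsilon>0$ such that
\[
 \overline{B}:=\overline{A}+\textstyle\sum_i\varepsilon_i\overline{D}_i-\varepsilon\overline{C}+\sum_k(0,\varphi_k[v_k])-(0,\varepsilon[\infty])
\]
is w-ample for all $|\varepsilon_i|\leq\varepsilon$, $\|\varphi_k\|_{\sup}\leq\varepsilon$. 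Then for $|\delta_j|\leq\varepsilon$ one has the identity of pairs
\[
 \bigl(\overline{D}^{\flat}-\overline{B};\,\mathcal{V}+\textstyle\sum_j\delta_j\mathcal{V}_j\bigr)=(\overline{D}-\overline{A};\mathcal{V})+\bigl(\varepsilon\overline{C};\textstyle\sum_j\delta_j\mathcal{V}_j\bigr)+\bigl((0,\varepsilon[\infty]);0\bigr),
\]
whose first summand is $>0$ and whose other two are $\geq0$ (rescale $(\overline{C};\sum_j(\delta_j/\varepsilon)\mathcal{V}_j)>0$ by $\varepsilon$); hence the left side is $>0$ and, $\overline{B}$ being w-ample, the perturbed pair is big.

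For Part~(2)(a), the hypothesis $c_X(\nu_0)\notin\bigcup_j\Supp_X(\mathcal{V}_j)$ forces $\nu_0(\mathcal{V}_j)=0$, so $\nu_0$ of the perturbed base condition equals $\nu_0(\mathcal{V})$. I repeat the above with a w-ample $\overline{A}$ satisfying $(\overline{D}-\overline{A};\mathcal{V})>_{\nu_0}0$, but replacing Lemma~\ref{lem:vanish_base_cond}(1) by Lemma~\ref{lem:vanish_base_cond}(3) applied to $\sum_{\nu}|\nu(\mathcal{V}_j)|[\nu]$; its hypotheses hold since $\nu_0(\sum_\nu|\nu(\mathcal{V}_j)|[\nu])=|\nu_0(\mathcal{V}_j)|=0$ and $\Supp_X(\sum_\nu|\nu(\mathcal{V}_j)|[\nu])=\Supp_X(\mathcal{V}_j)\not\ni c_X(\nu_0)$, so one gets $(\overline{C}_j;\sum_\nu|\nu(\mathcal{V}_j)|[\nu])>_{\nu_0}0$, in particular $\nu_{0,X}(C_j)=0$. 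Since $\nu_{0,X}$ of $\varepsilon C$ and of $(0,\varepsilon[\infty])$ vanish, the three summands of the displayed identity become $>_{\nu_0}0$, $\geq_{\nu_0}0$, $\geq_{\nu_0}0$, so the perturbed pair is $\nu_0$-big.

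For Part~(2)(b), set $a_0:=\nu_0(\mathcal{V})\geq0$ and pick, by Lemma~\ref{lem:vanish_base_cond}(2), an adelic $\QQ$-Cartier $\overline{A}_{\nu_0}$ with $(\overline{A}_{\nu_0};[\nu_0])>_{\nu_0}0$, so $(s\overline{A}_{\nu_0};s[\nu_0])\geq_{\nu_0}0$ for $s\geq0$. For $\delta_0\geq0$ I simply adjoin the summand $(\delta_0\overline{A}_{\nu_0};\delta_0[\nu_0])$ to the identity of Part~(2)(a) and add $-\delta_0\overline{A}_{\nu_0}$ to $\overline{B}$; as $|\delta_0|\leq\varepsilon$, weak ampleness of $\overline{B}$ survives by Lemma~\ref{lem:w-ample}(\ref{enum:w-ample_is_open1}). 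The case $\delta_0<0$—which can occur only when $a_0>0$, and then $-a_0\leq\delta_0<0$—is the main obstacle, because the required witness must be strictly $\nu_0$-effective of $\nu_0$-valuation $a_0+\delta_0<a_0$, so one cannot retain $(\overline{D}-\overline{A};\mathcal{V})$ as the $\nu_0$-tight summand. Here I would build the witness from scratch: the perturbed base condition $\mathcal{V}^{\flat}:=\mathcal{V}+\delta_0[\nu_0]+\sum_j\delta_j\mathcal{V}_j$ satisfies $\nu_0(\mathcal{V}^{\flat})=a_0+\delta_0\geq0$ and $c_X(\nu_0)\notin\Supp_X\bigl((\mathcal{V}^{\flat})_+-(a_0+\delta_0)[\nu_0]\bigr)$—this last point is precisely where the extra hypothesis $c_X(\nu_0)\notin\Supp_X(\mathcal{V}-a_0[\nu_0])$ is used, together with that of (2)—so Lemma~\ref{lem:vanish_base_cond}(3) gives a strictly $\nu_0$-effective $\overline{R}$ with $(\overline{R};\mathcal{V}^{\flat})>_{\nu_0}0$ whose divisor $R$ meets $c_X(\nu_0)$ only through the $[\nu_0]$-component. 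Writing $(\overline{D}^{\flat}-\overline{B};\mathcal{V}^{\flat})=(\overline{R};\mathcal{V}^{\flat})+(\overline{D}^{\flat}-\overline{R}-\overline{B};0)$, it remains to choose $\overline{B}$ w-ample with $\overline{D}^{\flat}-\overline{R}-\overline{B}$ effective of $\nu_0$-valuation $0$; such a $\overline{B}$ exists because $(\overline{D};\mathcal{V})$ is $\nu_0$-big, which forces $\overline{D}^{\flat}-\overline{R}$ to dominate a w-ample adelic $\RR$-Cartier divisor of $\nu_0$-valuation $0$ (use the auxiliary divisors $\overline{A}_\nu$, $\nu\neq\nu_0$, from the proof of Lemma~\ref{lem:vanish_base_cond}(3) to keep supports away from $c_X(\nu_0)$, and absorb the remaining perturbations exactly as in Part~(1)). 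This yields $(\overline{D}^{\flat};\mathcal{V}^{\flat})\in\aBVBig_{\RR,\RR}(X|\nu_0)$, and is the step I expect to require the most care.

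Finally, for the ``in particular'': when $\dim\mathcal{O}_{X,c_X(\nu_0)}=1$ and $Z:=\overline{\{c_X(\nu_0)\}}$ is the corresponding prime divisor, any element of $\{(\overline{D}';[\Xi']):\nu_0([\Xi'])\geq0\}$ near a given $(\overline{D};[\Xi])$ can be written as $\bigl(\overline{D}^{\flat};[\Xi]+\delta_0[\nu_0]+\sum_j\delta_j[\Xi_j]\bigr)$ with finitely many prime Weil divisors $\Xi_j\neq Z$, with $\delta_0=\ord_Z(\Xi'-\Xi)\geq-\ord_Z(\Xi)=-\nu_0([\Xi])$, and with $c_X(\nu_0)$ in none of $\Supp_X([\Xi_j])$ or $\Supp_X([\Xi]-\nu_0([\Xi])[\nu_0])$; thus the openness follows from Part~(2)(b).
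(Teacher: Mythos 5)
Parts (1) and (2)(a) of your proposal are correct and take essentially the paper's route: absorb the base-condition perturbations into auxiliary effective adelic divisors supplied by Lemma~\ref{lem:vanish_base_cond}(1) (resp.\ Lemma~\ref{lem:vanish_base_cond}(3), whose hypotheses hold because $c_X(\nu_0)\notin\Supp_X(\mathcal{V}_j)$ forces $\nu_0(\mathcal{V}_j)=0$), and absorb the Cartier and Green-function perturbations into the w-ample summand via Lemma~\ref{lem:w-ample}(\ref{enum:w-ample_is_open1}). The same is true of your treatment of $\delta_0\geq 0$ in (2)(b) and of the deduction of the final ``in particular''.

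The gap is in (2)(b) for $\delta_0<0$, exactly the step you flag. You take $\overline{R}$ with $(\overline{R};\mathcal{V}^{\flat})>_{\nu_0}0$ from Lemma~\ref{lem:vanish_base_cond}(3) and then assert that some w-ample $\overline{B}$ makes $\overline{D}^{\flat}-\overline{R}-\overline{B}$ strictly effective of $\nu_0$-valuation $0$, ``because $(\overline{D};\mathcal{V})$ is $\nu_0$-big''. This does not follow. The divisor $\overline{R}$ produced by Lemma~\ref{lem:vanish_base_cond}(3) is a positive combination of the auxiliary divisors $\overline{A}_{\nu}$ and must satisfy $\nu_X(R)\geq\nu(\mathcal{V}^{\flat})$ for \emph{every} $\nu$; it is therefore a large divisor, comparable in size to $\mathcal{V}$ and entirely unrelated to $\overline{D}$. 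What $\nu_0$-bigness of $(\overline{D};\mathcal{V})$ gives you is a single strictly effective $\overline{F}=\overline{D}-\overline{A}$ (up to $\sim_{\RR}$) with $\nu_X(F)\geq\nu(\mathcal{V})$ and $\nu_{0,X}(F)=\nu_0(\mathcal{V})$; since $F$ and $R$ are two unrelated effective divisors each carrying the full base condition, $F-R$ need not be effective, and the claim that $\overline{D}^{\flat}-\overline{R}$ dominates a w-ample divisor is essentially the bigness statement you are trying to prove for the base condition $[R]$ — the argument is circular. The real difficulty is that the given witness attains $\nu_{0,X}(F)=\nu_0(\mathcal{V})$ with \emph{equality}, and for $\delta_0<0$ you must produce a witness whose multiplicity at $c_X(\nu_0)$ is strictly smaller; no single application of Lemma~\ref{lem:vanish_base_cond} accomplishes this.

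The paper closes this case by a homogeneity trick (Claim~\ref{clm:abbig_cone2-2}): apply the already-established perturbation statement in the directions $\overline{D}$ and $\mathcal{V}^{\circ}:=\mathcal{V}-\nu_0(\mathcal{V})[\nu_0]$ (admissible precisely because of the extra hypothesis $c_X(\nu_0)\notin\Supp_X(\mathcal{V}^{\circ})$ in (2)(b)), obtaining that
\[
 (\overline{D}+\delta\overline{D};\mathcal{V}+\delta\mathcal{V}^{\circ})=(1+\delta)\Bigl(\overline{D};\mathcal{V}-\tfrac{\delta\nu_0(\mathcal{V})}{1+\delta}[\nu_0]\Bigr)
\]
is $\nu_0$-big for all small $\delta\geq 0$, and then dividing by $1+\delta$. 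This yields the $\nu_0$-bigness of $(\overline{D};\mathcal{V}-\delta_0[\nu_0])$ for small $\delta_0>0$ without ever constructing a new effective witness, and the general case follows by combining it with your (correct) $\delta_0\geq 0$ argument. You should replace your direct construction by this scaling argument.
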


\begin{proof}
We show the assertion (2) only.
Similar arguments also implies the assertion (1).

(2)(a): We can assume that $\mathcal{V}_1,\dots,\mathcal{V}_n$ are all effective.
By Lemma~\ref{lem:vanish_base_cond}(3), there exists an adelic Cartier divisor $\overline{E}_j$ such that
\[
 (\overline{E}_j;\mathcal{V}_j)\geq_{\nu_0} 0.
\]
Therefore,
\begin{align*}
 &\left(\overline{D}+\sum_{i=1}^m\varepsilon_i\overline{D}_i+\sum_{k=1}^l(0,\varphi_k[v_k]);\mathcal{V}+\sum_{j=1}^n\delta_j\mathcal{V}_j\right) \\
 &\qquad\qquad\qquad \geq_{\nu_0}\left(\overline{D}+\sum_{i=1}^m\varepsilon_i\overline{D}_i-\sum_{\delta_j\geq 0}\delta_j\overline{E}_j-\sum_{k=1}^l\|\varphi_k\|_{\sup}(0,[v_k]);\mathcal{V}\right),
\end{align*}
so we can assume $l=n=0$.
We choose a w-ample adelic $\RR$-Cartier divisor $\overline{A}$ such that $(\overline{D};\mathcal{V})>_{\nu_0}\overline{A}$.
By Lemma~\ref{lem:w-ample}(\ref{enum:w-ample_is_open1}), there is an $\varepsilon>0$ such that $\overline{A}+\sum_{i=1}^m\varepsilon_i\overline{D}_i$ is w-ample for every $\varepsilon_i$ with $|\varepsilon_i|\leq\varepsilon$.
Since
\[
 \left(\overline{D}+\sum_{i=1}^m\varepsilon_i\overline{D}_i;\mathcal{V}\right)>_{\nu_0}\overline{A}+\sum_{i=1}^m\varepsilon_i\overline{D}_i
\]
and the right-hand side is w-ample, we have the required assertion.

(b): By Lemma~\ref{lem:vanish_base_cond}(3) and the same arguments as above, we have

\begin{claim}\label{clm:abbig_cone2-1}
There exists a $\gamma>0$ such that
\[
 \left(\overline{D}+\sum_{i=1}^m\varepsilon_i\overline{D}_i+\sum_{k=1}^l(0,\varphi_k[v_k]);\mathcal{V}+\delta_0[\nu_0]+\sum_{j=1}^n\delta_j\mathcal{V}_j\right)\in\aBVBig_{\RR,\RR}(X|\nu_0)
\]
for every $\varepsilon_i$, $\delta_j$, and $\varphi_k$ with $|\varepsilon_i|\leq\gamma$, $0\leq\delta_0\leq\gamma$, $|\delta_j|\leq\gamma$ for $j=1,\dots,n$, and $\|\varphi_k\|_{\sup}\leq\gamma$.
\end{claim}

Next, we show

\begin{claim}\label{clm:abbig_cone2-2}
Suppose that $\nu_0(\mathcal{V})>0$.
There exists a $\gamma$ such that $0<\gamma<\nu_0(\mathcal{V})$ and
\[
 \left(\overline{D};\mathcal{V}-\delta_0[\nu_0]\right)\in\aBVBig_{\RR,\RR}(X|\nu_0)
\]
for every $\delta_0$ with $0\leq\delta_0\leq\gamma$.
\end{claim}

\begin{proof}[Proof of Claim~\ref{clm:abbig_cone2-2}]
Put $\mathcal{V}^{\circ}:=\mathcal{V}-\nu_0(\mathcal{V})[\nu_0]$.
By Claim~\ref{clm:abbig_cone2-1}, there exists a $\gamma_0>0$ such that
\[
 \left(\overline{D}+\delta\overline{D};\mathcal{V}+\delta\mathcal{V}^{\circ}\right)\in\aBVBig_{\RR,\RR}(X|\nu_0)
\]
for every $\delta$ with $0\leq\delta\leq\gamma_0$.
So
\[
 \left(\overline{D};\mathcal{V}-\frac{\delta\nu_0(\mathcal{V})}{1+\delta}[\nu_0]\right)\in\aBVBig_{\RR,\RR}(X|\nu_0)
\]
for every $\delta$ with $0\leq\delta\leq\gamma_0$.
Thus the assertion holds for $\gamma:=\frac{\gamma_0}{1+\gamma_0}\nu_0(\mathcal{V})$.
\end{proof}

If $\nu_0(\mathcal{V})=0$, then the assertion is nothing but Claim~\ref{clm:abbig_cone2-1}, so that we can assume $\nu_0(\mathcal{V})>0$.
By Claims~\ref{clm:abbig_cone2-1} and \ref{clm:abbig_cone2-2}, there exists a $\gamma$ with $0<\gamma<\nu_0(\mathcal{V})$ such that
\[
 \left(\overline{D}+\sum_{i=1}^m\varepsilon_i\overline{D}_i+\sum_{k=1}^l(0,\varphi_k[v_k]);\mathcal{V}+\delta_0[\nu_0]+\sum_{j=1}^n\delta_j\mathcal{V}_j\right)\in\aBVBig_{\RR,\RR}(X|\nu_0)
\]
and
\[
 \left(\overline{D};\mathcal{V}-\delta_0[\nu_0]\right)\in\aBVBig_{\RR,\RR}(X|\nu_0)
\]
for every $\varepsilon_i$, $\delta_j$, and $\varphi_k$ with $|\varepsilon_i|\leq\gamma$, $0\leq\delta_0\leq\gamma$, $|\delta_j|\leq\gamma$ for $j=1,\dots,n$, and $\|\varphi_k\|_{\sup}\leq\gamma$.
So the assertion holds for $\varepsilon:=\gamma/2$.
\end{proof}

\subsection{Theory of Okounkov bodies}\label{subsec:Boucksom_Chen}

Let $X$ be a normal, geometrically irreducible, and projective $K$-variety, and let $D$ be an $\RR$-Cartier divisor on $X$.
A graded linear series
\[
 V_{\sbullet}\subset\bigoplus_{m\geq 0}H^0(mD)
\]
is said to \emph{contain an ample series} if $V_m\neq \{0\}$ for every $m\gg 1$ and there exists an ample $\RR$-Cartier divisor $A$ such that
\begin{itemize}
\item $A\leq D$ and
\item $H^0(mA)\subset V_m$ for every sufficiently divisible $m$
\end{itemize}
(see \cite[Definition~1.1]{Boucksom_Chen} and \cite[page 1388]{MoriwakiBase}).
Note that, here, one can change $A$ with an ample $\QQ$-Cartier divisor $A'\leq A$ having the same properties (see Lemma~\ref{lem:w-ample}(\ref{enum:w-ampleQ})).

Let $\bm{\nu}:\Rat(X)^{\times}\to\Lambda_{\bm{\nu}}$ be a valuation of $\Rat(X)$ with rational rank $\dim X$ (see section \ref{subsec:Base_Cond}).
Denote by $\pr_1:\ZZ\times\Lambda_{\bm{\nu}}\to\ZZ$ and $\pr_2:\ZZ\times\Lambda_{\bm{\nu}}\to\Lambda_{\bm{\nu}}$ the first and the second projection, respectively.
We endow the $\RR$-vector space $\Lambda_{\bm{\nu}}\otimes_{\ZZ}\RR$ with the Lebesgue measure $\vol$ normalized by the lattice $\Lambda_{\bm{\nu}}$.

\begin{definition}
Let $(\overline{D};\mathcal{V})\in\aBVBig_{\RR,\RR}(X)$ such that $\mathcal{V}\geq 0$.
For $\KK=\text{a blank}$, $\QQ$, or $\RR$, we define
\begin{align}
 &H^0_{\KK}(D;\mathcal{V}) \\
 &\quad :=\left\{\phi\in H^0_{\KK}(D)\setminus\{0\}\,:\,\text{$\nu_X(D+(\phi))\geq\nu(\mathcal{V})$, $\forall\nu\in\DV(\Rat(X))$}\right\}\cup\{0\} \nonumber
\end{align}
as a subset of $H^0_{\KK}(D)$.
Set
\begin{align}
 &S_{\bm{\nu}}(D;\mathcal{V}):=\left\{(m,\bm{\nu}(\phi))\,:\,\phi\in H^0(mD;m\mathcal{V}),\,m\geq 1\right\},\\
 &S_{\bm{\nu}}(D;\mathcal{V})_m:=\pr_2\left(S_{\bm{\nu}}(D;\mathcal{V})\cap\pr_1^{-1}(m)\right)
\end{align}
for each $m\geq 1$, and
\begin{equation}
 \Delta_{\bm{\nu}}(D;\mathcal{V}):=\overline{\bigcup_{m\geq 1}\frac{1}{m}S_{\bm{\nu}}(D;\mathcal{V})_m}.
\end{equation}

For each $m\geq 1$, we define an $\RR$-filtration on $H^0(mD;m\mathcal{V})\subset H^0(mD)$ by
\begin{equation}
 F^{mt}(m\overline{D};m\mathcal{V}):=\langle\aHz(m(\overline{D}-(0,2t[\infty]));m\mathcal{V})\rangle_K
\end{equation}
for $t\in\RR$ and set
\begin{equation}
 R^t(\overline{D};\mathcal{V}):=\bigoplus_{m\geq 0}F^{mt}(m\overline{D};m\mathcal{V}).
\end{equation}
Moreover, we set
\begin{align}
 &S_{\bm{\nu}}^t(\overline{D};\mathcal{V}):=\left\{(m,\bm{\nu}(\phi))\,:\,\phi\in F^{mt}(m\overline{D};m\mathcal{V})\setminus\{0\},\,m\geq 1\right\}, \\
 &S_{\bm{\nu}}^t(\overline{D};\mathcal{V})_m:=\pr_2\left(S_{\bm{\nu}}^t(\overline{D};\mathcal{V})\cap\pr_1^{-1}(m)\right)
\end{align}
for each $m\geq 1$, and
\begin{equation}
 \Delta_{\bm{\nu}}^t(\overline{D};\mathcal{V}):=\overline{\bigcup_{m\geq 1}\frac{1}{m}S_{\bm{\nu}}^t(\overline{D};\mathcal{V})_m}.
\end{equation}

We define the \emph{concave transform} $G_{\bm{\nu}}^{(\overline{D};\mathcal{V})}:\Delta_{\bm{\nu}}(D;\mathcal{V})\to\RR\cup\{-\infty\}$ as
\begin{equation}
 G_{\bm{\nu}}^{(\overline{D};\mathcal{V})}(w):=\sup\left\{t\in\RR\,:\,w\in\Delta_{\bm{\nu}}^t(\overline{D};\mathcal{V})\right\}
\end{equation}
for $w\in\Delta(D;\mathcal{V})$, and define the \emph{arithmetic Okounkov body} of $(\overline{D};\mathcal{V})$ as
\begin{equation}
 \aDelta_{\bm{\nu}}(\overline{D};\mathcal{V}):=\left\{(w,t)\in(\Lambda_{\bm{\nu}}\otimes_{\ZZ}\RR)\times\RR_{\geq 0}\,:\,0\leq t\leq G_{\bm{\nu}}^{(\overline{D};\mathcal{V})}(w)\right\}.
\end{equation}
\end{definition}

The theory of Boucksom--Chen \cite[section~2]{Boucksom_Chen} was generalized to the case of normed graded linear series that contain ample series and belong to arithmetic $\RR$-Cartier divisors over generically smooth, normal, and projective arithmetic varieties in \cite[section~1]{MoriwakiBase}.
By using the same arguments, we can easily generalize it to the case of adelically normed graded linear series that contain ample series and belong to adelic $\RR$-Cartier divisors over normal projective algebraic varieties.
(Here, the adelic norms are induced from the supremum norms).

We apply this theory to our graded linear series
\[
 \bigoplus_{m\geq 0}H^0(mD;m\mathcal{V})\subset\bigoplus_{m\geq 0} H^0(mD)
\]
endowed with the subspace adelic norms induced from $\overline{D}$, and obtain the following (see \cite[Theorem~2.8]{Boucksom_Chen}).

\begin{theorem}\label{thm:Boucksom_Chen}
\begin{enumerate}
\item For a $(\overline{D};\mathcal{V})\in\aBVBig_{\RR,\RR}(X)$, one has
\begin{align*}
 \avol(\overline{D};\mathcal{V}) &=\lim_{\substack{m\in\NN, \\ m\to+\infty}}\frac{\log\sharp\aHz(m\overline{D};m\mathcal{V})}{m^{\dim X+1}/(\dim X+1)!} \\
 &=(\dim X+1)![K:\QQ]\cdot\vol\left(\aDelta_{\bm{\nu}}(\overline{D};\mathcal{V})\right).
\end{align*}
\item For $a\in\RR_{>0}$ and $(\overline{D};\mathcal{V})\in\aBVBig_{\RR,\RR}(X)$, one has
\[
 \avol(a\overline{D};a\mathcal{V})=a^{\dim X+1}\avol(\overline{D};\mathcal{V}).
\]
\item If $(\overline{D};\mathcal{V}),(\overline{D}';\mathcal{V}')\in\aBVBig_{\RR,\RR}(X)$, then
\begin{align*}
 &\avol(\overline{D}+\overline{D}';\mathcal{V}+\mathcal{V}')^{1/(\dim X+1)} \\
 &\qquad\qquad \geq\avol(\overline{D};\mathcal{V})^{1/(\dim X+1)}+\avol(\overline{D}';\mathcal{V}')^{1/(\dim X+1)}.
\end{align*}
\end{enumerate}
\end{theorem}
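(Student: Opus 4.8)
The plan is to identify $V_{\sbullet}:=\bigoplus_{m\geq 0}H^0(mD;m\mathcal{V})$, equipped with the subspace adelic norms induced by $\overline{D}$ and with the $\RR$-filtration $\{F^{mt}(m\overline{D};m\mathcal{V})\}_{t\in\RR}$, as an adelically normed graded linear series to which the theory of \cite[section~2]{Boucksom_Chen} (in the form generalized in \cite[section~1]{MoriwakiBase} and sketched just above) applies, and then to read off (1)--(3). Before that I would reduce to $\mathcal{V}\geq 0$: by Remark~\ref{rem:general_rule1}(1) one has $\aHz(m\overline{D};m\mathcal{V})=\aHz(m\overline{D};m\mathcal{V}_+)$ and $H^0(mD;m\mathcal{V})=H^0(mD;m\mathcal{V}_+)$ for all $m$, so both sides of (1) depend only on $\mathcal{V}_+$ and bigness of $(\overline{D};\mathcal{V})$ is equivalent to bigness of $(\overline{D};\mathcal{V}_+)$.

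The first substantive step is to check that $V_{\sbullet}$ contains an ample series. Since $(\overline{D};\mathcal{V})$ is big, there is a w-ample $\overline{A}$ with $(\overline{D}-\overline{A};\mathcal{V})>0$; by Lemma~\ref{lem:w-ample}(\ref{enum:w-ampleQ}) we may assume $\overline{A}$ is a w-ample adelic $\QQ$-Cartier divisor, so that $A$ is an ample $\QQ$-Cartier divisor with $A\leq D$. For every sufficiently divisible $m$ and every $\phi\in H^0(mA)$ we have $mD+(\phi)=m(D-A)+(mA+(\phi))\geq 0$ and, for every $\nu\in\DV(\Rat(X))$, $\nu_X(mD+(\phi))=m\,\nu_X(D-A)+\nu_X(mA+(\phi))\geq m\,\nu(\mathcal{V})$ by Lemma~\ref{lem:valuation_of_divisors}(1); hence $H^0(mA)\subseteq V_m$, and together with the standard verification that $V_m\neq\{0\}$ for $m\gg 1$ this shows $V_{\sbullet}$ contains an ample series. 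On the arithmetic side, for small $t_0>0$ the pair $(\overline{D}-(0,2t_0[\infty]);\mathcal{V})$ is again big by Theorem~\ref{thm:abbig_cone}(1), so $\aHzq{\QQ}(\overline{D}-(0,2t_0[\infty]);\mathcal{V})\neq\{0\}$ by Remark~\ref{rem:ad_hoc}(2); thus the filtration $F^{mt}$ is nontrivial, supplying the arithmetic ample-series condition. The norms in question are the adelic subspace norms coming from the continuous Green functions of $\overline{D}$, hence submultiplicative and of the type required.

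Granting these verifications, part (1) is exactly \cite[Theorem~2.8]{Boucksom_Chen} in its adelic formulation: the $\limsup$ defining $\avol(\overline{D};\mathcal{V})$ is a genuine limit, the concave transform $G_{\bm{\nu}}^{(\overline{D};\mathcal{V})}$ is concave on the body $\Delta_{\bm{\nu}}(D;\mathcal{V})$, which has nonempty interior, and the limit equals $(\dim X+1)!\,[K:\QQ]\cdot\vol\bigl(\aDelta_{\bm{\nu}}(\overline{D};\mathcal{V})\bigr)$ for the Lebesgue measure on $(\Lambda_{\bm{\nu}}\otimes_{\ZZ}\RR)\times\RR$ normalized by $\Lambda_{\bm{\nu}}\times\ZZ$. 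Part (2) follows from (1) and the homothety $\aDelta_{\bm{\nu}}(a\overline{D};a\mathcal{V})=a\cdot\aDelta_{\bm{\nu}}(\overline{D};\mathcal{V})$, or directly by rescaling the defining limit along multiples. Part (3) follows from (1) and Brunn--Minkowski: after reducing to $\mathcal{V},\mathcal{V}'\geq 0$, multiplication of strictly small sections and the additivity $\bm{\nu}(\phi\phi')=\bm{\nu}(\phi)+\bm{\nu}(\phi')$ (using Remark~\ref{rem:general_rule1}(2)) give $\aDelta_{\bm{\nu}}(\overline{D};\mathcal{V})+\aDelta_{\bm{\nu}}(\overline{D}';\mathcal{V}')\subseteq\aDelta_{\bm{\nu}}(\overline{D}+\overline{D}';\mathcal{V}+\mathcal{V}')$, and $\vol(\,\cdot\,)^{1/(\dim X+1)}$ is superadditive on convex bodies.

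The main obstacle is not a new idea but the verification that the machinery of \cite{Boucksom_Chen,MoriwakiBase} --- built for (normed) graded linear series of arithmetic $\RR$-Cartier divisors on arithmetic varieties with their supremum norms --- transfers verbatim once the norms are replaced by adelic norms assembled from the place-by-place Green functions of $\overline{D}$, and in particular that the filtration $\{F^{mt}(m\overline{D};m\mathcal{V})\}$ agrees with the one used there, so that subadditivity of filtration indices and Fekete's lemma still apply. I expect this to be routine but to require care precisely at the point where a single archimedean norm is replaced by the full adelic norm.
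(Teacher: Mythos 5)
Your proposal follows the paper's own route for (1) and (3): the paper also obtains these by applying the Boucksom--Chen theory, in the form generalized to adelically normed graded linear series, to $\bigoplus_{m\geq 0}H^0(mD;m\mathcal{V})$ with the subspace adelic norms induced from $\overline{D}$, and your explicit verification that this series contains an ample series (via a w-ample $\QQ$-divisor $\overline{A}\leq(\overline{D};\mathcal{V})$ and Lemma~\ref{lem:valuation_of_divisors}(1)) and that the filtration is nontrivial for small $t_0$ is exactly the implicit content of the paragraph preceding the theorem. Your derivation of (3) from the inclusion $\aDelta_{\bm{\nu}}(\overline{D};\mathcal{V})+\aDelta_{\bm{\nu}}(\overline{D}';\mathcal{V}')\subseteq\aDelta_{\bm{\nu}}(\overline{D}+\overline{D}';\mathcal{V}+\mathcal{V}')$ together with Brunn--Minkowski is the standard one and matches the paper.

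The one step that does not go through as written is part (2) for irrational $a$, which is the only part the paper proves in detail. Neither of your two suggested routes is formal there: ``rescaling the defining limit along multiples'' identifies $\aHz(m(a\overline{D});m(a\mathcal{V}))$ with $\aHz((ma)\overline{D};(ma)\mathcal{V})$ only when $ma$ runs through integers, i.e.\ for $a\in\QQ_{>0}$; and the asserted homothety $\aDelta_{\bm{\nu}}(a\overline{D};a\mathcal{V})=a\cdot\aDelta_{\bm{\nu}}(\overline{D};\mathcal{V})$ is not a definition-chase for irrational $a$, since the body on the left is built from rational functions $\phi$ satisfying the $\RR$-divisor condition $maD+(\phi)\geq 0$, and these are in no natural bijection with (rescaled) sections of integer multiples of $\overline{D}$. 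The paper closes this by a sandwich: choose rationals $b_i\leq a\leq c_i$ with $c_i-b_i\to 0$; bigness of $(\overline{D};\mathcal{V})$ supplies, for $b<c$, a nonzero strictly small section of a multiple of $((c-b)\overline{D};(c-b)\mathcal{V})$, and multiplication by it (Remark~\ref{rem:general_rule1}(2)) yields $\avol(b_i\overline{D};b_i\mathcal{V})\leq\avol(a\overline{D};a\mathcal{V})\leq\avol(c_i\overline{D};c_i\mathcal{V})$, so the rational case gives the real one in the limit. With that supplement (or an actual proof of the homothety, which ultimately reduces to the same sandwich) your argument is complete.
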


\begin{proof}
We only show the assertion (2).
If $a$ is a positive integer, then the assertion (2) results from the assertion (1).
Hence it also holds for every $a\in\QQ_{>0}$.
In general, we take two sequences of positive rational numbers $(b_i)_{i=1}^{\infty}$ and $(c_i)_{i=1}^{\infty}$ such that $b_i\leq a\leq c_i$ and $c_i-b_i\to 0$ as $i\to\infty$.
Then
\begin{align*}
 b_i^{\dim X+1}\avol(\overline{D};\mathcal{V}) &=\avol(b_i\overline{D};b_i\mathcal{V}) \\
 &\leq\avol(a\overline{D};a\mathcal{V})\leq\avol(c_i\overline{D};c_i\mathcal{V})=c_i^{\dim X+1}\avol(\overline{D};\mathcal{V}).
\end{align*}
By taking $i\to\infty$, we have the assertion.
\end{proof}

Theorems~\ref{thm:Boucksom_Chen} and \ref{thm:abbig_cone} combined with the standard argument (see \cite[Theorem~6.3.4]{DudleyBook})  imply the following.

\begin{corollary}\label{cor:cont}
The function
\[
 \avol:\aBVBig_{\RR,\RR}(X)\to\RR
\]
is continuous, that is, if $(\overline{D};\mathcal{V})\in\aBVBig_{\RR,\RR}(X)$, $\overline{D}_1,\dots,\overline{D}_m\in\aDiv_{\RR}(X)$, $\mathcal{V}_1,\dots,\mathcal{V}_n\in\VDiv_{\RR}(X)$, $v_1,\dots,v_l\in M_K\cup\{\infty\}$, and $\varphi_1\in C^0_{v_1}(X),\dots,\varphi_l\in C^0_{v_l}(X)$, then
\[
 \lim_{\varepsilon_i,\delta_j,\|\varphi_k\|_{\sup}\to 0}\avol\left(\overline{D}+\sum_{i=1}^m\varepsilon_i\overline{D}_i+\sum_{k=1}^l(0,\varphi_k[v_k]);\mathcal{V}+\sum_{j=1}^n\delta_j\mathcal{V}_j\right)=\avol(\overline{D};\mathcal{V}).
\]
(Here the base conditions may not be effective.)
\end{corollary}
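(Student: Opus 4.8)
The plan is to deduce the continuity of $\avol$ from Theorems~\ref{thm:abbig_cone} and~\ref{thm:Boucksom_Chen} by the standard principle that a finite concave function on a nonempty open convex subset of a finite-dimensional real vector space is continuous (\cite[Theorem~6.3.4]{DudleyBook}). I would begin by recording two easy facts. First, $\avol$ is monotone for the relation $\leq$: if $(\overline{D}_1;\mathcal{V}_1)\leq(\overline{D}_2;\mathcal{V}_2)$, then for every $m\geq 1$ adding the effective pair $(m(\overline{D}_2-\overline{D}_1);m(\mathcal{V}_2-\mathcal{V}_1))$ shows $\aHz(m\overline{D}_1;m\mathcal{V}_1)\subseteq\aHz(m\overline{D}_2;m\mathcal{V}_2)$, hence $\avol(\overline{D}_1;\mathcal{V}_1)\leq\avol(\overline{D}_2;\mathcal{V}_2)$. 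Second, combining this with Remark~\ref{rem:general_rule1}(1) gives $0<\avol(\overline{D};\mathcal{V})\leq\avol(\overline{D})<+\infty$ for every big pair, so that $\avol$ is everywhere finite on the big cone.

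Next I would eliminate the Green-function perturbations. For $\varphi_k\in C^0_{v_k}(X)$ with $t_k:=\|\varphi_k\|_{\sup}$ one has $-t_k\leq\varphi_k\leq t_k$, so the pair $(\overline{D}+\sum_i\varepsilon_i\overline{D}_i+\sum_k(0,\varphi_k[v_k]);\,\mathcal{V}+\sum_j\delta_j\mathcal{V}_j)$ is sandwiched, for the two choices of sign, between $(\overline{D}+\sum_i\varepsilon_i\overline{D}_i\mp\sum_k t_k(0,[v_k]);\,\mathcal{V}+\sum_j\delta_j\mathcal{V}_j)$. By the monotonicity above it is therefore enough to prove that the restriction of $\avol$ to the finite-dimensional subspace $W\subseteq\aBVDiv_{\RR,\RR}(X)$ spanned by $(\overline{D};\mathcal{V})$, by the $(\overline{D}_i;0)$, by the $(0;\mathcal{V}_j)$, and by the $(0,[v_k])$ $(k=1,\dots,l)$ is continuous at $(\overline{D};\mathcal{V})$: letting all the parameters tend to $0$ and invoking the sandwich then yields the asserted limit. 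By Theorem~\ref{thm:abbig_cone}(1) there is an $\varepsilon_0>0$ such that the open box $\Omega$ consisting of the points $(\overline{D};\mathcal{V})+\sum_i\varepsilon_i(\overline{D}_i;0)+\sum_j\delta_j(0;\mathcal{V}_j)+\sum_k s_k(0,[v_k])$ with $|\varepsilon_i|,|\delta_j|,|s_k|<\varepsilon_0$ is contained in $\aBVBig_{\RR,\RR}(X)\cap W$; thus $\Omega$ is a nonempty open convex subset of the finite-dimensional space $W$ on which $\avol$ is finite.

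To finish I would use the Brunn--Minkowski-type properties of $\avol$. By Theorem~\ref{thm:Boucksom_Chen}(2) the function $\avol(\,\cdot\,)^{1/(\dim X+1)}$ is positively homogeneous of degree one, and by Theorem~\ref{thm:Boucksom_Chen}(3) it is superadditive on $\aBVBig_{\RR,\RR}(X)$; together these give, for $u,v\in\Omega$ and $0\leq\lambda\leq 1$,
\[
 \avol\big(\lambda u+(1-\lambda)v\big)^{\frac{1}{\dim X+1}}\geq\avol(\lambda u)^{\frac{1}{\dim X+1}}+\avol\big((1-\lambda)v\big)^{\frac{1}{\dim X+1}}=\lambda\,\avol(u)^{\frac{1}{\dim X+1}}+(1-\lambda)\,\avol(v)^{\frac{1}{\dim X+1}},
\]
so $\avol(\,\cdot\,)^{1/(\dim X+1)}$ is a finite concave function on $\Omega$. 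By \cite[Theorem~6.3.4]{DudleyBook} it is continuous on $\Omega$, hence so is $\avol$; in particular $\avol$ is continuous at $(\overline{D};\mathcal{V})$, which is what is needed. I do not expect a genuine obstacle, since the two substantive inputs — the openness of the big cone (Theorem~\ref{thm:abbig_cone}) and the concavity inequality for $\avol^{1/(\dim X+1)}$ (Theorem~\ref{thm:Boucksom_Chen}) — are already in hand; the only points requiring a little care are the reduction of the infinite-dimensional perturbations $(0,\varphi_k[v_k])$ to the finitely many directions $(0,[v_k])$ via the monotonicity sandwich, and the verification that $\avol$ is finite on $\Omega$.
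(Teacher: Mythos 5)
Your argument is correct and is exactly the route the paper intends: the paper proves this corollary by simply invoking Theorem~\ref{thm:abbig_cone} (openness of the big cone), Theorem~\ref{thm:Boucksom_Chen} (homogeneity and the Brunn--Minkowski inequality, giving concavity of $\avol^{1/(\dim X+1)}$), and the continuity of finite concave functions on open convex sets \cite[Theorem~6.3.4]{DudleyBook}. Your write-up merely fills in the standard details (monotonicity, the sandwich reducing $(0,\varphi_k[v_k])$ to the directions $(0,[v_k])$, finiteness on the open box), all of which are sound.
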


\section{Approximation of pairs}\label{sec:Approx_pairs}

\subsection{Construction of adelic metrics}\label{subsec:Adelic_Metrics}

Let $X$ be a normal projective $K$-variety, and let
\[
 \overline{D}_1=\left(D_1,\sum_{v\in M_K\cup\{\infty\}}g_v^{\overline{D}_i}\right),\,\dots,\,\overline{D}_r=\left(D_r,\sum_{v\in M_K\cup\{\infty\}}g_v^{\overline{D}_i}\right)
\]
be adelic Cartier divisors on $X$ such that $D_1,\dots,D_r$ are all effective.
This subsection is devoted to constructing (after blowing up) a coefficient-wise minimum of $\overline{D}_1,\dots,\overline{D}_r$ (see Definition~\ref{defn:minimum_adelic_divisors}).
Several special cases are already treated by many authors \cite{MoriwakiAdelic,Chen11,Yuan_Zhang12,IkomaCon,IkomaRem}.
Put
\begin{equation}\label{eqn:normalized_blowup_along_I}
 \mathcal{I}:=\sum_{i=1}^r\mathcal{O}_X(-D_i).
\end{equation}
Let $\varphi:Y\to X$ be a birational and projective morphism such that $Y$ is normal and $\mathcal{I}\mathcal{O}_Y$ is Cartier.
Let $M$ be an effective Cartier divisor on $Y$ such that $\mathcal{O}_Y(-M)=\mathcal{I}\mathcal{O}_Y$, and let
\begin{equation}
 g_v^{\overline{M}}(x):=\min_{1\leq i\leq r}\left\{g_v^{\overline{D}_i}(\varphi_v^{\rm an}(x))\right\}
\end{equation}
for $v\in M_K\cup\{\infty\}$ and $x\in Y_v^{\rm an}$.

\begin{lemma}\label{lem:endow_with_metric}
For every $v\in M_K\cup\{\infty\}$, $g_v^{\overline{M}}$ is a $M$-Green function on $Y_v^{\rm an}$.
\end{lemma}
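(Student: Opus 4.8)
The plan is to reduce everything to a local statement on $Y_v^{\rm an}$, after first recording how the blow-up $\varphi$ affects the divisors $D_i$. Since $\mathcal{O}_Y(-M)=\mathcal{I}\mathcal{O}_Y=\sum_{i=1}^r\mathcal{O}_X(-D_i)\mathcal{O}_Y=\sum_{i=1}^r\mathcal{O}_Y(-\varphi^*D_i)$, each ideal sheaf $\mathcal{O}_Y(-\varphi^*D_i)$ is contained in $\mathcal{O}_Y(-M)$, so (using that $Y$ is normal) $E_i:=\varphi^*D_i-M$ is an effective Cartier divisor on $Y$ and $\varphi^*D_i=M+E_i$. Tensoring the displayed identity with $\mathcal{O}_Y(M)$ gives $\sum_{i=1}^r\mathcal{O}_Y(-E_i)=\mathcal{O}_Y$, whence $\bigcap_{i=1}^r\CSupp(E_i)=\emptyset$; that is, every point of $Y$ lies off $\CSupp(E_i)$ for at least one index $i$. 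I also use that $\varphi^{-1}\CSupp(D_i)=\CSupp(\varphi^*D_i)=\CSupp(M)\cup\CSupp(E_i)$ (local equations pull back to local equations and non-units to non-units under the local homomorphisms $\varphi^{\#}$), so the formula defining $g_v^{\overline{M}}$ is literally defined exactly on $Y_v^{\rm an}\setminus\bigl(\CSupp(M)\cup\bigcup_i\CSupp(E_i)\bigr)_v^{\rm an}$, and the real content of the lemma is that it extends continuously across the extra locus $\bigcup_i\CSupp(E_i)\setminus\CSupp(M)$ to all of $(Y\setminus\CSupp(M))_v^{\rm an}$, with the Green-function extension property.

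Next I would fix $x_0\in Y_v^{\rm an}$, let $y_0\in Y$ be the point below it, choose a local equation $h$ of $M$ at $y_0$ and, for each $i$, a local equation $e_i$ of $E_i$ at $y_0$ (a unit when $y_0\notin\CSupp(E_i)$). Then $he_i$ is a local equation for $\varphi^*D_i$, so, because $\overline{D}_i$ is an adelic Cartier divisor, $g_v^{\overline{D}_i}\circ\varphi_v^{\rm an}+\log|he_i|_v^2$ extends to a continuous function $\Phi_i$ on a neighborhood $W$ of $x_0$ common to all $i$. On the locus where the defining formula for $g_v^{\overline M}$ applies we then have
\[
 g_v^{\overline{M}}+\log|h|_v^2=\min_{1\le i\le r}\bigl(\Phi_i-\log|e_i|_v^2\bigr).
\]
Split the indices into $I_0:=\{i:y_0\notin\CSupp(E_i)\}$, which is non-empty by the first paragraph, and its complement $J_0$. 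For $i\in I_0$ the function $\Phi_i-\log|e_i|_v^2$ is continuous on $W$ (shrinking $W$ so that $|e_i|_v$ stays bounded away from $0$ and $\infty$), so $\min_{i\in I_0}(\Phi_i-\log|e_i|_v^2)$ is continuous on $W$; call its value at $x_0$ the number $c$. For $i\in J_0$ one has $|e_i|_v(x_0)=0$, so $-\log|e_i|_v^2\to+\infty$ at $x_0$; shrinking $W$ to a smaller neighborhood $W'$ I may assume $\Phi_i-\log|e_i|_v^2>c+1\ge\min_{i\in I_0}(\Phi_i-\log|e_i|_v^2)$ on $W'\setminus\CSupp(E_i)_v^{\rm an}$ for every $i\in J_0$. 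Hence on $W'$ minus the $J_0$-loci the $J_0$-terms never attain the minimum, and $g_v^{\overline{M}}+\log|h|_v^2=\min_{i\in I_0}(\Phi_i-\log|e_i|_v^2)$ there.

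This yields both halves of the claim simultaneously. Taking $f_{x_0}=h$, the identity $g_v^{\overline{M}}+\log|f_{x_0}|_v^2=\min_{i\in I_0}(\Phi_i-\log|e_i|_v^2)$ on $W'$ is exactly the required continuous extension around $x_0$, so the Green-function property holds. Since $-\log|h|_v^2$ is continuous on $W'\setminus\CSupp(M)_v^{\rm an}$, the same identity exhibits $g_v^{\overline{M}}=-\log|h|_v^2+\min_{i\in I_0}(\Phi_i-\log|e_i|_v^2)$ as a continuous function on $W'\setminus\CSupp(M)_v^{\rm an}$; as $x_0$ was arbitrary and continuous extensions are unique, $g_v^{\overline{M}}$ is a well-defined continuous function on $(Y\setminus\CSupp(M))_v^{\rm an}$. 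For $v=\infty$, invariance under complex conjugation is inherited from the $g_\infty^{\overline{D}_i}$ together with the conjugation-equivariance of $\varphi_\infty^{\rm an}$, since a minimum of conjugation-invariant functions is again conjugation-invariant. The step I expect to demand the most care is the local bookkeeping in the second paragraph: one must control all $r$ terms on a single common neighborhood, verify that the $J_0$-terms genuinely blow up (which rests on the two support identities $\varphi^*D_i=M+E_i$ and $\bigcap_i\CSupp(E_i)=\emptyset$), and confirm that the candidate extension $\min_{i\in I_0}(\Phi_i-\log|e_i|_v^2)$ is independent of the auxiliary choices and agrees with the original formula on the overlap; the rest is formal.
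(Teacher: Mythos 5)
Your proof is correct. The paper itself gives no argument here beyond citing \cite[Proposition~4.7]{IkomaRem} ``by the same arguments,'' and what you have written out is exactly that standard argument: the decomposition $\varphi^*D_i=M+E_i$ with $E_i$ effective Cartier, the observation that $\sum_i\mathcal{O}_Y(-E_i)=\mathcal{O}_Y$ forces $\bigcap_i\CSupp(E_i)=\emptyset$, and the local splitting of indices into those where $e_i$ is a unit (whose minimum is continuous) and those where $-\log|e_i|_v^2\to+\infty$ (which drop out of the minimum near the point). The one step you use implicitly but correctly is that $|e_i|_v\to 0$ in the analytic topology at points of $\CSupp(E_i)_v^{\rm an}$, which holds since $e_i$ lies in the maximal ideal at $\pi_v(x_0)$ and $x\mapsto|e_i|_v(x)$ is continuous on $Y_v^{\rm an}$; with that, the argument is complete.
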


\begin{proof}
This follows from the same arguments as in \cite[Proposition~4.7]{IkomaRem}.
\end{proof}

Choose a nonempty open subset $U$ of $\Spec(O_K)$ over which models of definition for $\overline{D}_1,\dots,\overline{D}_r$ exist.
Given any $\varepsilon>0$, there exist $O_K$-models $(\mathscr{X}_{\varepsilon},\mathscr{D}_{i,\varepsilon})$ of $(X,D_i)$ such that $\mathscr{X}_{\varepsilon}$ is normal, $\mathscr{D}_{i,\varepsilon}$ are $\QQ$-Cartier divisors on $\mathscr{X}_{\varepsilon}$, $(\mathscr{X}_{\varepsilon},\mathscr{D}_{i,\varepsilon})|_U$ gives a $U$-model of definition for $\overline{D}_i$, and
\begin{equation}\label{eqn:const_approx_D_i}
 \|g_v^{(\mathscr{X}_{\varepsilon},\mathscr{D}_{i,\varepsilon})}-g_v^{\overline{D}_i}\|_{\sup}\leq\varepsilon
\end{equation}
for every $v\in\Spec(O_K)\setminus U$.
Let $\mathscr{F}\in\Div_{\QQ}(\mathscr{X}_{\varepsilon})$ be a suitable positive combination of vertical fibers on $\mathscr{X}_{\varepsilon}$ such that
\[
 \mathscr{D}_{1,\varepsilon}+\mathscr{F},\dots,\mathscr{D}_{r,\varepsilon}+\mathscr{F}
\]
are all effective.
Let $n\geq 1$ be an integer such that
\[
 n(\mathscr{D}_{1,\varepsilon}+\mathscr{F}),\,\dots,\,n(\mathscr{D}_{r,\varepsilon}+\mathscr{F})
\]
belong to $\Div(\mathscr{X}_{\varepsilon})$, and put
\begin{equation}\label{eqn:defn_Jen}
 \mathcal{J}_{\varepsilon,n}:=\sum_{i=1}^r\mathcal{O}_{\mathscr{X}_{\varepsilon}}(-n(\mathscr{D}_{i,\varepsilon}+\mathscr{F})).
\end{equation}
The restriction of $\mathcal{J}_{\varepsilon,n}$ to the generic fiber is given by $\mathcal{J}_{\varepsilon,n}|_X=\sum_{i=1}^r\mathcal{O}_X(-nD_i)$.

\begin{lemma}\label{lem:integral_closure}
For every $k\geq 1$, we have
\[
 \left(\sum_{i=1}^r\mathcal{O}_X(-kD_i)\right)\mathcal{O}_Y=\left(\sum_{i=1}^r\mathcal{O}_X(-D_i)\right)^k\mathcal{O}_Y.
\]
\end{lemma}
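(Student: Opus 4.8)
The plan is to establish the two inclusions of ideal sheaves on $Y$ separately; writing $\mathcal{I}_k:=\sum_{i=1}^r\mathcal{O}_X(-kD_i)$, the claim reads $\mathcal{I}_k\mathcal{O}_Y=(\mathcal{I}\mathcal{O}_Y)^k$. Both statements are local on $Y$, so throughout I would fix a point $y\in Y$, set $x:=\varphi(y)$, and choose local equations $f_1,\dots,f_r\in\mathcal{O}_{X,x}$ of $D_1,\dots,D_r$; these lie in $\mathcal{O}_{X,x}$, not merely in $\Rat(X)^{\times}$, because the $D_i$ are effective. Then $\mathcal{I}_x=(f_1,\dots,f_r)$ and $(\mathcal{I}_k)_x=(f_1^k,\dots,f_r^k)$, and, denoting still by $f_i$ the image of $f_i$ in $\mathcal{O}_{Y,y}$ under the local homomorphism, $(\mathcal{I}\mathcal{O}_Y)_y=(f_1,\dots,f_r)\mathcal{O}_{Y,y}$ and $(\mathcal{I}_k\mathcal{O}_Y)_y=(f_1^k,\dots,f_r^k)\mathcal{O}_{Y,y}$.

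For the inclusion $\mathcal{I}_k\mathcal{O}_Y\subseteq(\mathcal{I}\mathcal{O}_Y)^k$ I would argue already on $X$: since $\mathcal{O}_X(-D_i)\subseteq\mathcal{I}$ and $\mathcal{O}_X(-kD_i)=\mathcal{O}_X(-D_i)^k$ as ideal sheaves, one gets $\mathcal{I}_k=\sum_i\mathcal{O}_X(-D_i)^k\subseteq\mathcal{I}^k$; applying $(-)\mathcal{O}_Y$ and using that $\mathcal{I}^k\mathcal{O}_Y=(\mathcal{I}\mathcal{O}_Y)^k$ (formation of powers of an ideal commutes with generating the pull-back ideal) yields the claim. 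This direction uses nothing about $Y$ beyond that $\varphi$ is a morphism.

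For the reverse inclusion I would invoke the hypothesis that $\mathcal{I}\mathcal{O}_Y$ is Cartier, hence invertible. The key point — and the only step requiring care — is that in the local \emph{domain} $\mathcal{O}_{Y,y}$ the finitely generated ideal $(f_1,\dots,f_r)$ is principal, and a principal ideal of a local domain generated by finitely many elements is already generated by one of them: if $(f_1,\dots,f_r)=(g)$ with $g\neq 0$ (nonzero since the $f_i$ are), write $g=\sum_i c_if_i$ and $f_i=d_ig$; then $(1-\sum_i c_id_i)g=0$ forces $\sum_i c_id_i=1$, so some $c_jd_j$ is a unit, whence $d_j$ is a unit and $f_j$ generates $(g)$. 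Therefore $(\mathcal{I}\mathcal{O}_Y)_y=(f_j)$ for a suitable index $j$, so $\bigl((\mathcal{I}\mathcal{O}_Y)_y\bigr)^k=(f_j^k)$; since $f_j^k\in(\mathcal{I}_k)_x$, its image lies in $(\mathcal{I}_k\mathcal{O}_Y)_y$, and localizing at every $y$ gives $(\mathcal{I}\mathcal{O}_Y)^k\subseteq\mathcal{I}_k\mathcal{O}_Y$.

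The main obstacle is thus concentrated in the ``one generator suffices'' step: it is what forces us to use that $Y$ is a variety, so that $\mathcal{O}_{Y,y}$ is a domain, and it is precisely where the Cartier hypothesis on $\mathcal{I}\mathcal{O}_Y$ enters; everything else is routine manipulation of ideal sheaves. (Conceptually this is the several-divisor analogue of the standard fact that, on a blow-up along which an ideal becomes locally principal, the powers of the pulled-back ideal are compatibly generated by $k$-th powers of local generators.)
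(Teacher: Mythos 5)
Your proof is correct, and it takes a genuinely different route from the paper's. The paper reduces to the affine case and shows that $J_k=(f_1^k,\dots,f_r^k)$ is a reduction of $I^k=(f_1,\dots,f_r)^k$ via the identity $I^{kr}=J_k\cdot I^{k(r-1)}$, so that $\overline{J_k}=\overline{(I^k)}$ by \cite[Corollary~1.2.5]{Swanson_Huneke06}; it then uses persistence of integral closure together with the fact that the invertible ideal $I^k\mathcal{O}_Y$ on the \emph{normal} variety $Y$ is integrally closed to conclude $J_k\mathcal{O}_Y=I^k\mathcal{O}_Y$. You instead exploit local principality directly: since $(\mathcal{I}\mathcal{O}_Y)_y=(f_1,\dots,f_r)\mathcal{O}_{Y,y}$ is invertible and $\mathcal{O}_{Y,y}$ is a local domain, the identity $1=\sum_i c_id_i$ forces some $f_j$ to be a local generator, whence $(f_j^k)$ generates $(\mathcal{I}\mathcal{O}_Y)^k_y$ and lies in $(\mathcal{I}_k\mathcal{O}_Y)_y$; the other inclusion is formal. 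Your argument is entirely elementary (no integral closure theory) and in fact needs only that $Y$ is integral and $\mathcal{I}\mathcal{O}_Y$ is invertible, not that $Y$ is normal; what the paper's machinery buys in exchange is the stronger statement $\overline{J_k}=\overline{(I^k)}$ on $X$ itself, which is what lets the author also identify the normalized blowups $\Bl_{I^k}(X)$ and $\Bl_{J_k}(X)$ and hence deduce (as in Remark~\ref{rem:minimum_adelic_div}) that the blowup for $k\overline{D}_i$ factors through the one for $\overline{D}_i$ — a fact your local argument does not by itself provide.
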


\begin{proof}
We can assume that $X$ is affine and that each of $D_i$ is principal with defining equation $f_i$.

\begin{claim}\label{clm:integral_closure}
Set $I:=(f_1,\dots,f_r)$ and $J_k:=(f_1^k,\dots,f_r^k)$.
As ideals, we have
\[
 I^{kr}=J_k\cdot I^{k(r-1)}.
\]
\end{claim}

\begin{proof}[Proof of Claim~\ref{clm:integral_closure}]
Given any non-negative integers $a_1,\dots,a_r$ with $a_1+\dots+a_r=kr$, there exists at least one $j$ with $a_j\geq k$.
So, $f_1^{a_1}\cdots f_j^{a_j-k}\cdots f_r^{a_r}\in I^{k(r-1)}$.
\end{proof}

By \cite[Corollary~1.2.5]{Swanson_Huneke06}, the above claim implies an equality
\begin{equation}\label{eqn:reduction}
 \overline{J_k}=\overline{(I^k)}
\end{equation}
between the integral closures of ideals.
(We refer to \cite[section~1]{Swanson_Huneke06} for the theory of integral closures.)
So by \cite[Propositions~5.2.4 and 8.1.7]{Swanson_Huneke06} the normalization of the blowup $\Bl_{I^k}(X)$ is the same as the normalization of the blowup $\Bl_{J_k}(X)$.
By persistence of integral closures \cite[Remark~1.1.3(7)]{Swanson_Huneke06} and \cite[Proposition~1.5.2]{Swanson_Huneke06}, we have
\[
 I^k\mathcal{O}_Y\subset\overline{(I^k)}\mathcal{O}_Y\subset\overline{I^k\mathcal{O}_Y}=I^k\mathcal{O}_Y
\]
and, similarly, $J_k\mathcal{O}_Y=\overline{J_k}\mathcal{O}_Y$.
Hence the assertion results from (\ref{eqn:reduction}).
\end{proof}

By Lemma~\ref{lem:integral_closure}, $\varphi$ factorizes through the normalized blowup of $X$ along $\mathcal{J}_{\varepsilon,n}|_X$.
So there exists a birational and projective $O_K$-morphism
\begin{equation}
 \varphi_{\varepsilon}:\mathscr{Y}_{\varepsilon}\to\mathscr{X}_{\varepsilon}
\end{equation}
of projective arithmetic varieties such that $\mathscr{Y}_{\varepsilon}$ is a normal $O_K$-model of $Y$, $\varphi_{\varepsilon}$ extends $\varphi$, and $\mathcal{J}_{\varepsilon,n}\mathcal{O}_{\mathscr{Y}_{\varepsilon}}$ is Cartier.
If we set $\mathscr{M}_{\varepsilon}$ as a $\QQ$-Cartier divisor on $\mathscr{Y}_{\varepsilon}$ such that
\begin{equation}\label{eqn:defn_Me}
 \mathcal{O}_{\mathscr{Y}_{\varepsilon}}(-n(\mathscr{M}_{\varepsilon}+\varphi_{\varepsilon}^*\mathscr{F}))=\mathcal{J}_{\varepsilon,n}\mathcal{O}_{\mathscr{Y}_{\varepsilon}},
\end{equation}
then $(\mathscr{Y}_{\varepsilon},\mathscr{M}_{\varepsilon})$ is an $O_K$-model of $(Y,M)$.

\begin{lemma}\label{lem:construction_adelic_metrics}
\begin{enumerate}
\item For $v\in M_K$ and $x\in Y_v^{\rm an}$,
\[
 g_v^{(\mathscr{Y}_{\varepsilon},\mathscr{M}_{\varepsilon})}(x)=\min_{1\leq i\leq r}\left\{g_v^{(\mathscr{X}_{\varepsilon},\mathscr{D}_{i,\varepsilon})}(\varphi_v^{\rm an}(x))\right\}.
\]
\item For $v\in M_K\cap U$, $g_v^{(\mathscr{Y}_{\varepsilon},\mathscr{M}_{\varepsilon})}=g_v^{\overline{M}}$.
\item For $v\in M_K\setminus U$, $\|g_v^{(\mathscr{Y}_{\varepsilon},\mathscr{M}_{\varepsilon})}-g_v^{\overline{M}}\|_{\sup}\leq\varepsilon$.
\end{enumerate}
\end{lemma}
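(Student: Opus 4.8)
The plan is to prove part (1) by a direct local computation with local equations and reduction maps, and then to deduce (2) and (3) from (1) by comparing with the models of definition of $\overline{D}_1,\dots,\overline{D}_r$ over $U$ and with the approximations (\ref{eqn:const_approx_D_i}) outside $U$. Fix $v\in M_K$ and a point $x\in Y_v^{\rm an}$ lying over the locus where all the Green functions involved are finite (the full identity then extends by continuity, since by Lemma~\ref{lem:endow_with_metric} both sides are $M$-Green functions). Put $y:=r_v^{\mathscr{Y}_\varepsilon}(x)$ and $z:=r_v^{\mathscr{X}_\varepsilon}(\varphi_v^{\rm an}(x))$; by compatibility of the reduction maps with the $O_K$-morphism $\varphi_\varepsilon$ one has $z=\varphi_\varepsilon(y)$, so $\varphi_\varepsilon$ induces a local homomorphism $\mathcal{O}_{\mathscr{X}_\varepsilon,z}\to\mathcal{O}_{\mathscr{Y}_\varepsilon,y}$. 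Choose local equations $a_i\in\mathcal{O}_{\mathscr{X}_\varepsilon,z}$ of the effective integral Cartier divisors $n(\mathscr{D}_{i,\varepsilon}+\mathscr{F})$ and a local equation $c_0$ of $n\mathscr{F}$ around $z$; then $a_i/c_0$ is a local equation of $n\mathscr{D}_{i,\varepsilon}$, so $g_v^{(\mathscr{X}_\varepsilon,\mathscr{D}_{i,\varepsilon})}(\varphi_v^{\rm an}(x))=-\tfrac1n\log|(a_i/c_0)(\varphi_v^{\rm an}(x))|^2$. By (\ref{eqn:defn_Jen}) and (\ref{eqn:defn_Me}), if $b$ is a generator of the invertible stalk $\mathcal{J}_{\varepsilon,n}\mathcal{O}_{\mathscr{Y}_\varepsilon,y}=(\varphi_\varepsilon^*a_1,\dots,\varphi_\varepsilon^*a_r)$, then $b/\varphi_\varepsilon^*c_0$ is a local equation of $n\mathscr{M}_\varepsilon$ around $y$, so $g_v^{(\mathscr{Y}_\varepsilon,\mathscr{M}_\varepsilon)}(x)=-\tfrac1n\log\bigl(|b(x)|^2/|\varphi_\varepsilon^*c_0(x)|^2\bigr)$. (That $\varphi_\varepsilon$ factors through the normalized blow-up of $\mathcal{J}_{\varepsilon,n}$, hence that $\mathcal{J}_{\varepsilon,n}\mathcal{O}_{\mathscr{Y}_\varepsilon}$ really is invertible, is exactly what Lemma~\ref{lem:integral_closure} and the construction of $\mathscr{Y}_\varepsilon$ supply.)

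The decisive point is the identity $|b(x)|=\max_{1\le i\le r}|\varphi_\varepsilon^*a_i(x)|$. This follows from the basic property of the reduction map that every element of $\mathcal{O}_{\mathscr{Y}_\varepsilon,y}$ has absolute value $\le 1$ at $x$ (and absolute value $1$ if it is a unit): writing $b=\sum_i\lambda_i\varphi_\varepsilon^*a_i$ and $\varphi_\varepsilon^*a_i=\mu_i b$ with $\lambda_i,\mu_i\in\mathcal{O}_{\mathscr{Y}_\varepsilon,y}$, the ultrametric inequality gives $|b(x)|\le\max_i|\varphi_\varepsilon^*a_i(x)|\le|b(x)|$. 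Combining this with the functoriality relations $|\varphi_\varepsilon^*a_i(x)|=|a_i(\varphi_v^{\rm an}(x))|$ and $|\varphi_\varepsilon^*c_0(x)|=|c_0(\varphi_v^{\rm an}(x))|$, the auxiliary fiber divisor $\mathscr{F}$ cancels, and
\[
 g_v^{(\mathscr{Y}_\varepsilon,\mathscr{M}_\varepsilon)}(x)=-\tfrac1n\log\max_i\bigl|(a_i/c_0)(\varphi_v^{\rm an}(x))\bigr|^2=\min_i\Bigl(-\tfrac1n\log\bigl|(a_i/c_0)(\varphi_v^{\rm an}(x))\bigr|^2\Bigr)=\min_i g_v^{(\mathscr{X}_\varepsilon,\mathscr{D}_{i,\varepsilon})}(\varphi_v^{\rm an}(x)),
\]
which is (1).

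Part (2) is then immediate: for $v\in M_K\cap U$ the model $(\mathscr{X}_\varepsilon,\mathscr{D}_{i,\varepsilon})$ is a model of definition for $\overline{D}_i$, so $g_v^{(\mathscr{X}_\varepsilon,\mathscr{D}_{i,\varepsilon})}=g_v^{\overline{D}_i}$, and (1) together with the defining formula $g_v^{\overline{M}}=\min_i\bigl(g_v^{\overline{D}_i}\circ\varphi_v^{\rm an}\bigr)$ gives $g_v^{(\mathscr{Y}_\varepsilon,\mathscr{M}_\varepsilon)}=g_v^{\overline{M}}$. For part (3), for $v\in M_K\setminus U$ the estimate (\ref{eqn:const_approx_D_i}) gives $\|g_v^{(\mathscr{X}_\varepsilon,\mathscr{D}_{i,\varepsilon})}-g_v^{\overline{D}_i}\|_{\sup}\le\varepsilon$ for each $i$, and then (1), the elementary bound $|\min_i s_i-\min_i t_i|\le\max_i|s_i-t_i|$, and again the definition of $g_v^{\overline{M}}$ yield $\|g_v^{(\mathscr{Y}_\varepsilon,\mathscr{M}_\varepsilon)}-g_v^{\overline{M}}\|_{\sup}\le\varepsilon$. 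I expect the one genuinely delicate step to be the local identity $|b(x)|=\max_i|\varphi_\varepsilon^*a_i(x)|$ in (1) — that is, unwinding the reduction map so that invertibility of the pulled-back ideal on the \emph{normal} model $\mathscr{Y}_\varepsilon$ converts a coefficient-wise minimum of model metrics into the model metric of $\mathscr{M}_\varepsilon$; everything else is bookkeeping with local equations, the sign and normalization conventions for Green functions, and the harmless cancellation of $\mathscr{F}$.
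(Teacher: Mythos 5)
Your proof is correct and is essentially the paper's own argument: the paper likewise computes $g_v^{(\mathscr{Y}_\varepsilon,n\mathscr{M}_\varepsilon)}(x)$ as $-\log\max\{|f|_v^2(x):f\in(\mathcal{J}_{\varepsilon,n}\mathcal{O}_{\mathscr{Y}_\varepsilon})_{y}\}$, identifies this (via invertibility of the pulled-back ideal and the ultrametric inequality, exactly your $|b(x)|=\max_i|\varphi_\varepsilon^*a_i(x)|$ step) with $\max_i|a_i(\varphi_v^{\rm an}(x))|$, cancels the contribution of $\mathscr{F}$, and deduces (2) and (3) from (1) together with (\ref{eqn:const_approx_D_i}). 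Your version merely makes the generator $b$ and the elementary bound $|\min_i s_i-\min_i t_i|\leq\max_i|s_i-t_i|$ explicit.
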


\begin{proof}
For each $x\in (Y\setminus\Supp(M))_v^{\rm an}$, we have
\begin{align*}
 &g_v^{(\mathscr{Y}_{\varepsilon},n\mathscr{M}_{\varepsilon})}(x) \\
 &\quad =-\log\max\left\{|f|_v^2(x)\,:\,f\in\mathcal{O}_{\mathscr{Y}_{\varepsilon}}(-n\mathscr{M}_{\varepsilon})_{r^{\mathscr{X}_{\varepsilon}}_v(x)}\right\} \\
 &\quad =-\log\max\left\{|h|_v^2(\varphi_v^{\rm an}(x))\,:\,h\in\mathcal{J}_{\varepsilon,n,r^{\mathscr{X}_{\varepsilon}}_v(\varphi_v^{\rm an}(x))}\right\}-g_v^{(\mathscr{X}_{\varepsilon},n\mathscr{F})}(\varphi_v^{\rm an}(x)) \\
 &\quad =\min_{1\leq i\leq r}\left\{g_v^{(\mathscr{X}_{\varepsilon},n\mathscr{D}_{i,\varepsilon})}(\varphi_v^{\rm an}(x))\right\}.
\end{align*}
The assertions (2) and (3) result from the assertion (1) and the relation (\ref{eqn:const_approx_D_i}).
\end{proof}

As a consequence of Lemma~\ref{lem:construction_adelic_metrics}, we can make the following definition.

\begin{definition}\label{defn:minimum_adelic_divisors}
The couple
\[
 \left(M,\sum_{v\in M_K\cup\{\infty\}}g_v^{\overline{M}}[v]\right)
\]
is an adelic Cartier divisor on $Y$.
We denote it by
\[
 \min_{1\leq i\leq r}^{\varphi}\left\{\overline{D}_i\right\}=\left(\min_{1\leq i\leq r}^{\varphi}\left\{D_i\right\},\sum_{v\in M_K\cup\{\infty\}}\min_{1\leq i\leq r}^{\varphi}\left\{g_v^{\overline{D}_i}\right\}[v]\right).
\]
\end{definition}

\begin{remark}\label{rem:minimum_adelic_div}
\begin{enumerate}
\item If $\overline{D}_i\geq 0$ for every $i$, then $\min_i^{\varphi}\left\{\overline{D}_i\right\}\geq 0$.
\item Let $\varphi':Y'\to X$ be another birational and projective morphism such that $Y'$ is normal and $\varphi'$ factorizes as $\varphi'=\psi\circ\varphi$.
Then
\[
 \min_{1\leq i\leq r}^{\varphi'}\left\{\overline{D}_i\right\}=\psi^*\min_{1\leq i\leq r}^{\varphi}\left\{\overline{D}_i\right\}.
\]
\item For each integer $k\geq 1$, let
\[
 \mathcal{J}_k:=\sum_{i=1}^r\mathcal{O}_X(-kD_i)
\]
and let $\varphi_k:Y_k\to X$ be the normalized blowup along $\mathcal{J}_k$.
Then, by Lemma~\ref{lem:integral_closure}, $\varphi_k$ factorizes through $\varphi_1$, and
\[
 \min_{1\leq i\leq r}^{\varphi_k}\left\{k\overline{D}_i\right\}=k\min_{1\leq i\leq r}^{\varphi_k}\left\{\overline{D}_i\right\}.
\]
In particular, we can define the minimum adelic $\QQ$-Cartier divisor $\min_{1\leq i\leq r}^{\varphi}\left\{\overline{D}_i\right\}$ for every $\overline{D}_1,\dots,\overline{D}_r\in\aDiv_{\QQ}(X)$ such that $D_1,\dots,D_r$ are effective.
\end{enumerate}
\end{remark}

\begin{lemma}\label{lem:minimum_of_valuation}
We keep the same notation as above.
For every $\nu\in\DV(\Rat(X))$, one has
\[
 \nu_Y\left(\min_{1\leq i\leq r}^{\varphi}\left\{D_i\right\}\right)=\min_{1\leq i\leq r}\left\{\nu_X(D_i)\right\}.
\]
\end{lemma}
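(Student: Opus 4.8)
The plan is to reduce the identity to a purely local computation at the center $c_Y(\nu)$ of $\nu$ on $Y$. Write $M := \min_{1\le i\le r}^{\varphi}\{D_i\}$, so that by Definition~\ref{defn:minimum_adelic_divisors} the ideal sheaf $\mathcal{O}_Y(-M)$ equals $\mathcal{I}\mathcal{O}_Y$ with $\mathcal{I} = \sum_{i=1}^r\mathcal{O}_X(-D_i)$. First I would invoke Remark~\ref{rem:center_and_valuation} to get $\varphi(c_Y(\nu)) = c_X(\nu)$; then a local equation $f_i\in\mathcal{O}_{X,c_X(\nu)}$ defining $D_i$ pulls back (under $\varphi^{\#}$, via the identification $\Rat(Y) = \Rat(X)$) to an element of $\mathcal{O}_{Y,c_Y(\nu)}$, which I will still denote $f_i$. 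Since $\mathcal{I}\mathcal{O}_Y$ is locally generated by the images of the $f_i$, localizing at $c_Y(\nu)$ gives $(f_1,\dots,f_r)\mathcal{O}_{Y,c_Y(\nu)} = g\,\mathcal{O}_{Y,c_Y(\nu)}$, where $g$ is a local equation defining $M$ around $c_Y(\nu)$. By the definition of the valuation of an effective Cartier divisor, $\nu_X(D_i) = \nu(f_i)$ and $\nu_Y(M) = \nu(g)$, so it remains to show $\nu(g) = \min_i\nu(f_i)$.

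For this I would use that $\mathcal{O}_{Y,c_Y(\nu)}\subset O_\nu$ (definition of center), so $\nu(h)\ge 0$ for every $h\in\mathcal{O}_{Y,c_Y(\nu)}$. Writing $g = \sum_i h_if_i$ with $h_i\in\mathcal{O}_{Y,c_Y(\nu)}$ gives $\nu(g)\ge\min_i\{\nu(h_i)+\nu(f_i)\}\ge\min_i\nu(f_i)$; writing each $f_i = a_ig$ with $a_i\in\mathcal{O}_{Y,c_Y(\nu)}$ gives $\nu(f_i)\ge\nu(g)$, hence $\min_i\nu(f_i)\ge\nu(g)$. The two inequalities force $\nu(g) = \min_i\nu(f_i)$, which is the claim. (Alternatively the upper bound $\nu_Y(M)\le\nu_X(D_i)$ follows from $M\le\varphi^*D_i$, itself a consequence of $\mathcal{I}\mathcal{O}_Y\supset\mathcal{O}_X(-D_i)\mathcal{O}_Y$, together with Lemma~\ref{lem:valuation_of_divisors}(1) and Remark~\ref{rem:center_and_valuation}; but the local argument yields both bounds simultaneously.)

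Finally, the case where $D_1,\dots,D_r$ are merely $\QQ$-Cartier will follow from the integral case by clearing denominators, using the homogeneity recorded in Remark~\ref{rem:minimum_adelic_div}(3) and the $\RR$-linearity of $\nu_X$ and $\nu_Y$. I do not expect a real obstacle in this argument; the only point needing care is the local identification at $c_Y(\nu)$ of $\mathcal{I}\mathcal{O}_Y$ with the ideal $(f_1,\dots,f_r)$ and with $(g)$, and the observation that $\varphi^{\#}$ carries $\mathcal{O}_{X,c_X(\nu)}$ into $\mathcal{O}_{Y,c_Y(\nu)}$, which is exactly what makes the relevant valuation-ring inclusions available.
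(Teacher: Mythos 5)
Your proof is correct and follows essentially the same route as the paper's: reduce to a local computation at $c_Y(\nu)$, express the local generator $g$ of $\mathcal{I}\mathcal{O}_Y$ as $\sum_i h_if_i$ to get $\nu(g)\geq\min_i\nu(f_i)$, and use that each $f_i$ lies in the principal ideal $(g)$ for the reverse inequality. The final paragraph on the $\QQ$-Cartier case is not needed for the lemma as stated (the $D_i$ are integral Cartier divisors in this setting), but it does no harm.
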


\begin{proof}
Note that, for any effective Cartier divisor $D$ on $X$ with defining ideal sheaf $\mathcal{O}_X(-D)$, one has
\begin{equation}
 \nu_X(D)=\min\left\{\nu(\phi)\,:\,\phi\in\mathcal{O}_{X}(-D)_{c_X(\nu)}\setminus\{0\}\right\}.
\end{equation}
In fact, we take a local equation $f$ defining $D$ around $c_X(\nu)$.
Each $\phi\in\mathcal{O}_{X}(-D)_{c_X(\nu)}\setminus\{0\}$ can be written as $fg$ with $g\in\mathcal{O}_{X,c_X(\nu)}\setminus\{0\}$.
So
\[
 \nu(\phi)=\nu(fg)=\nu(f)+\nu(g)\geq \nu(f).
\]

Let $D':=\min_{1\leq i\leq r}^{\varphi}\left\{D_i\right\}$, let
\[
 \mathcal{I}:=\sum_{i=1}^r\mathcal{O}_X(-D_i)
\]
as in (\ref{eqn:normalized_blowup_along_I}), and let $f_i$ be a local equation defining $D_i$ around $c_X(\nu)$.
Since $\mathcal{O}_Y(-D')=\mathcal{I}\mathcal{O}_Y$, any element in $\mathcal{O}_{Y}(-D')_{c_X(\nu)}\setminus\{0\}$ can be written as
\begin{equation}\label{eqn:minimum_of_valuation1}
 f_1g_1+\dots+f_rg_r
\end{equation}
with $g_i\in\mathcal{O}_{Y,c_Y(\nu)}$.
We remove zeros in (\ref{eqn:minimum_of_valuation1}), and assume that any partial sum of (\ref{eqn:minimum_of_valuation1}) is nonzero.
Then
\[
 \nu(f_1g_1+\dots+f_rg_r)\geq\min_{g_i\neq 0}\left\{\nu(f_i)\right\}.
\]
So, we conclude.
\end{proof}

\begin{lemma}
We keep the same notations as above.
Let $\overline{D}_0$ be another adelic Cartier divisor on $X$ such that $D_0$ is effective and let $\mathcal{I}_0:=\sum_{i=1}^r\mathcal{O}_X(-D_i-D_0)$.
Then $\mathcal{I}_0\mathcal{O}_Y$ is Cartier and
\[
 \min_{1\leq i\leq r}^{\varphi}\left\{\overline{D}_i+\overline{D}_0\right\}=\min_{1\leq i\leq r}^{\varphi}\left\{\overline{D}_i\right\}+\varphi^*\overline{D}_0.
\]
\end{lemma}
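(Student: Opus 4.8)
The plan is to reduce everything to the identity of ideal sheaves $\mathcal{I}_0=\mathcal{O}_X(-D_0)\cdot\mathcal{I}$, where $\mathcal{I}=\sum_{i=1}^r\mathcal{O}_X(-D_i)$ as in (\ref{eqn:normalized_blowup_along_I}). To establish this I would argue locally: if $f_0$ is a local equation defining $D_0$ and $f_i$ is a local equation defining $D_i$ around a point (all exist since $D_0$ and the $D_i$ are effective Cartier divisors), then $\mathcal{O}_X(-D_i-D_0)$ is locally generated by $f_0f_i$, so $\mathcal{I}_0$ is locally $(f_0f_1,\dots,f_0f_r)=f_0\cdot(f_1,\dots,f_r)$, which is the local form of $\mathcal{O}_X(-D_0)\cdot\mathcal{I}$. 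Gluing these local identities gives $\mathcal{I}_0=\mathcal{O}_X(-D_0)\cdot\mathcal{I}$.

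Next I would push this to $Y$. Since $\varphi$ is birational, $\varphi(Y)$ is not contained in $\Supp(D_0)$, so $\varphi^*D_0$ is a well-defined effective Cartier divisor on $Y$, and $\mathcal{O}_X(-D_0)\mathcal{O}_Y=\mathcal{O}_Y(-\varphi^*D_0)$ is invertible; moreover $\mathcal{I}\mathcal{O}_Y=\mathcal{O}_Y(-M)$ by the choice of $M$. Using that extension of a product of ideal sheaves is the product of the extensions,
\[
 \mathcal{I}_0\mathcal{O}_Y=\bigl(\mathcal{O}_X(-D_0)\mathcal{O}_Y\bigr)\cdot\bigl(\mathcal{I}\mathcal{O}_Y\bigr)=\mathcal{O}_Y(-M-\varphi^*D_0),
\]
which is invertible; hence $\mathcal{I}_0\mathcal{O}_Y$ is Cartier. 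In particular $\min_{1\leq i\leq r}^{\varphi}\{\overline{D}_i+\overline{D}_0\}$ is defined, since the $D_i+D_0$ are effective and $\mathcal{I}_0\mathcal{O}_Y$ is Cartier, and its underlying Cartier divisor $M'$ is the unique effective Cartier divisor on the integral scheme $Y$ with $\mathcal{O}_Y(-M')=\mathcal{I}_0\mathcal{O}_Y$, namely $M'=M+\varphi^*D_0=\min_{1\leq i\leq r}^{\varphi}\{D_i\}+\varphi^*D_0$.

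Finally, for the Green functions I would use $g_v^{\overline{D}_i+\overline{D}_0}=g_v^{\overline{D}_i}+g_v^{\overline{D}_0}$ and the fact that the summand $g_v^{\overline{D}_0}\circ\varphi_v^{\rm an}$ is independent of $i$, so that for $v\in M_K\cup\{\infty\}$ and $x\in Y_v^{\rm an}$,
\[
 \min_{1\leq i\leq r}^{\varphi}\bigl\{g_v^{\overline{D}_i+\overline{D}_0}\bigr\}(x)=\min_{1\leq i\leq r}\bigl\{g_v^{\overline{D}_i}(\varphi_v^{\rm an}(x))\bigr\}+g_v^{\overline{D}_0}(\varphi_v^{\rm an}(x))=\min_{1\leq i\leq r}^{\varphi}\bigl\{g_v^{\overline{D}_i}\bigr\}(x)+g_v^{\varphi^*\overline{D}_0}(x),
\]
the last equality being the definition of the pull-back of $\overline{D}_0$. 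By Lemma~\ref{lem:endow_with_metric}, $\min_i^{\varphi}\{g_v^{\overline{D}_i}\}$ and $\min_i^{\varphi}\{g_v^{\overline{D}_i+\overline{D}_0}\}$ are Green functions for $M$ and $M'$ respectively, while $g_v^{\varphi^*\overline{D}_0}$ is a $\varphi^*D_0$-Green function, so the pointwise identity above, together with the identity $M'=M+\varphi^*D_0$ of the previous paragraph, is precisely the asserted equality in $\aDiv_{\QQ}(Y)$. Since every step is either a local computation or an unwinding of definitions, I do not expect a genuine obstacle; the only point demanding care is the local verification $\mathcal{I}_0=\mathcal{O}_X(-D_0)\cdot\mathcal{I}$, which relies essentially on all divisors involved being effective so that the relevant sheaves are honest ideal sheaves.
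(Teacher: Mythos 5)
Your proposal is correct and follows essentially the same route as the paper: the paper's entire proof is the observation that $\bigoplus_i\mathcal{O}_X(-D_i-D_0)=\bigl(\bigoplus_i\mathcal{O}_X(-D_i)\bigr)\otimes_{\mathcal{O}_X}\mathcal{O}_X(-D_0)$, hence $\mathcal{I}_0=\mathcal{I}\mathcal{O}_X(-D_0)$, from which the assertion is declared to follow. You merely verify this key identity by local equations instead of by the tensor-product identity, and then spell out the deduction on $Y$ and for the Green functions that the paper leaves implicit.
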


\begin{proof}
Since
\[
 \bigoplus_{1\leq i\leq r}\mathcal{O}_X(-D_i-D_0)=\left(\bigoplus_{1\leq i\leq r}\mathcal{O}_X(-D_i)\right)\otimes_{\mathcal{O}_X}\mathcal{O}_X(-D_0),
\]
we have $\mathcal{I}_0=\mathcal{I}\mathcal{O}_X(-D_0)$.
It infers the assertion.
\end{proof}

Let $(\overline{D};\mathcal{V})\in\aBVDiv_{\ZZ,\RR}(X)$ such that $\aHzq{\QQ}(\overline{D};\mathcal{V})\neq\{0\}$.
For each integer $m\geq 1$ and $\phi\in\aHz(m\overline{D};m\mathcal{V})\setminus\{0\}$, we put
\[
 \lambda_{\phi}:=\essinf_{x\in X_{\infty}^{\rm an}}\log|\phi|(x)\exp\left(\frac{1}{2}g_{\infty}^{m\overline{D}}(x)\right),
\]
and consider the minimum adelic Cartier divisor of the finite family
\begin{equation}
 \left\{m\overline{D}+\widehat{(\phi)}-(0,2\lambda_{\phi}[\infty])\,:\,\phi\in\aHz(m\overline{D};m\mathcal{V})\setminus\{0\}\right\}.
\end{equation}
Let $\varphi_m:X_m\to X$ be the normalized blowup along
\[
 \mathcal{I}_m:=\sum_{\phi\in\aHz(m\overline{D};m\mathcal{V})\setminus\{0\}}\mathcal{O}_X(-(mD+(\phi))),
\]
and set
\begin{align}\label{eqn:defn_Mm}
 &\overline{M}(m\overline{D};m\mathcal{V}) \\
 &\qquad\quad :=\varphi_m^*(m\overline{D})-\min_{\phi\in\aHz(m\overline{D};m\mathcal{V})\setminus\{0\}}^{\varphi_m}\left\{m\overline{D}+\widehat{(\phi)}-(0,2\lambda_{\phi}[\infty])\right\}. \nonumber
\end{align}

\begin{lemma}\label{lem:approx_Zariski}
\begin{enumerate}
\item For each $m\geq 1$ with $\aHz(m\overline{D};m\mathcal{V})\neq\{0\}$, the morphism
\[
 \aSpan{K}{\aHz(m\overline{D};m\mathcal{V})}\otimes_K\mathcal{O}_{X_m}\to\mathcal{O}_{X_m}(M(m\overline{D};m\mathcal{V}))
\]
is surjective,
\[
 \aHz(\overline{M}(m\overline{D};m\mathcal{V}))\overset{\varphi_m^*}{=}\aHz(\varphi_m^*(m\overline{D});(m\mathcal{V})^{\varphi_m}),
\]
and $\overline{M}(m\overline{D};m\mathcal{V})\leq(\varphi_m^*(m\overline{D});(m\mathcal{V})^{\varphi_m})$.
\item If $(\overline{D};\mathcal{V})$ is big, then $(\varphi_m:X_m\to X,\overline{M}(m\overline{D};m\mathcal{V})/m)\in\aTheta(\overline{D};\mathcal{V})$ for every sufficiently divisible $m$.
\end{enumerate}
\end{lemma}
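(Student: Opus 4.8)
The plan is to extract all three parts of the statement from the coefficient-wise-minimum construction of the previous subsection, together with the valuation bookkeeping of Lemma~\ref{lem:minimum_of_valuation} and the birational invariance of strictly small sections (Lemma~\ref{lem:birat_small_sections}).

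For (1), write $\overline{N}:=\min^{\varphi_m}_{\phi}\{m\overline{D}+\widehat{(\phi)}-(0,2\lambda_{\phi}[\infty])\}$, the minimum being over $\phi\in\aHz(m\overline{D};m\mathcal{V})\setminus\{0\}$, so that $\overline{M}(m\overline{D};m\mathcal{V})=\varphi_m^*(m\overline{D})-\overline{N}$ by (\ref{eqn:defn_Mm}). First I would identify the underlying sheaf: since $\mathcal{O}_{X_m}(-N)=\mathcal{I}_m\mathcal{O}_{X_m}$ by construction of the normalized blowup,
\[
 \mathcal{O}_{X_m}(M(m\overline{D};m\mathcal{V}))=\varphi_m^*\mathcal{O}_X(mD)\otimes\mathcal{I}_m\mathcal{O}_{X_m}=\Bigl(\textstyle\sum_{\phi}\phi\,\mathcal{O}_X\Bigr)\mathcal{O}_{X_m},
\]
using $\mathcal{O}_X(mD)\otimes\mathcal{O}_X(-(mD+(\phi)))\cong\mathcal{O}_X(-(\phi))=\phi\,\mathcal{O}_X$. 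The right-hand side is exactly the $\mathcal{O}_{X_m}$-submodule of $\varphi_m^*\mathcal{O}_X(mD)$ generated by the $\varphi_m^*\phi$, so the evaluation map $\langle\aHz(m\overline{D};m\mathcal{V})\rangle_K\otimes_K\mathcal{O}_{X_m}\to\mathcal{O}_{X_m}(M(m\overline{D};m\mathcal{V}))$ is surjective. For $\overline{M}(m\overline{D};m\mathcal{V})\leq(\varphi_m^*(m\overline{D});(m\mathcal{V})^{\varphi_m})$: the adelic-divisor part $\varphi_m^*(m\overline{D})-\overline{M}(m\overline{D};m\mathcal{V})=\overline{N}\geq 0$ is Remark~\ref{rem:minimum_adelic_div}(1) (each $m\overline{D}+\widehat{(\phi)}-(0,2\lambda_{\phi}[\infty])$ is effective by the choice of $\lambda_{\phi}$); the base-condition part follows from Lemma~\ref{lem:minimum_of_valuation}, which gives $\nu_{X_m}(N)=\min_{\phi}\nu_X(mD+(\phi))\geq m\,\nu(\mathcal{V})=\nu((m\mathcal{V})^{\varphi_m})$ for every $\nu$.

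It remains, for (1), to prove $\aHz(\overline{M}(m\overline{D};m\mathcal{V}))\overset{\varphi_m^*}{=}\aHz(\varphi_m^*(m\overline{D});(m\mathcal{V})^{\varphi_m})$. The inclusion ``$\subseteq$'' follows by adding the effective pair above to $(\overline{M}(m\overline{D};m\mathcal{V})+\widehat{(\psi)};0)$. For ``$\supseteq$'', Lemma~\ref{lem:birat_small_sections}(1) lets one write any $0\ne\psi\in\aHz(\varphi_m^*(m\overline{D});(m\mathcal{V})^{\varphi_m})$ as $\psi=\varphi_m^*\phi$ with $\phi\in\aHz(m\overline{D};m\mathcal{V})\setminus\{0\}$, and since $\overline{N}$ is $\leq$ the pull-back of its $\phi$-th term,
\[
 \overline{M}(m\overline{D};m\mathcal{V})+\widehat{(\varphi_m^*\phi)}=\varphi_m^*\bigl(m\overline{D}+\widehat{(\phi)}\bigr)-\overline{N}\ \geq\ (0,2\lambda_{\phi}[\infty])\ >\ 0.
\]
Turning to (2): $\varphi_m$ is a modification by construction; dividing the inequality of (1) by $m$ gives $(\varphi_m^*\overline{D}-\overline{M}(m\overline{D};m\mathcal{V})/m;\mathcal{V}^{\varphi_m})\geq 0$, which is in particular pseudo-effective (an effective pair is pseudo-effective); and $\overline{M}(m\overline{D};m\mathcal{V})/m$ is relatively nef, because $M(m\overline{D};m\mathcal{V})$ is base-point-free by the surjectivity just shown and the Green functions of $\overline{M}(m\overline{D};m\mathcal{V})$ are maxima of the functions $\log|\varphi_m^*\phi|_v^2$ (shifted by the constants $2\lambda_{\phi}$ at $v=\infty$), i.e.\ quotient metrics attached to a globally generating family, hence semipositive. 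It is nef because every $x\in X_m(\overline{K})$ avoids $\Supp(M(m\overline{D};m\mathcal{V})+(\varphi_m^*\phi))$ for some $\phi$, so the displayed bound gives $h_{\overline{M}(m\overline{D};m\mathcal{V})}(x)=h_{\overline{M}(m\overline{D};m\mathcal{V})+\widehat{(\varphi_m^*\phi)}}(x)\geq 0$.

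\textbf{The main obstacle is the bigness of $\overline{M}(m\overline{D};m\mathcal{V})/m$ for $m$ sufficiently divisible.} Here my plan is: using Lemma~\ref{lem:w-ample}(\ref{enum:w-ampleQ}) on a w-ample divisor witnessing the bigness of $(\overline{D};\mathcal{V})$, pick a w-ample adelic $\QQ$-Cartier divisor $\overline{A}$ with $(\overline{D}-\overline{A};\mathcal{V})>0$; for $m$ divisible enough $m\overline{A}$ is integral and w-ample, $mA$ is base-point-free, and $\langle\aHz(m\overline{A})\rangle_K=H^0(mA)$. Since $(m\overline{D}-m\overline{A};m\mathcal{V})\geq 0$, Remark~\ref{rem:general_rule1}(2) gives $\aHz(m\overline{A})\subseteq\aHz(m\overline{D};m\mathcal{V})$, so $\mathcal{I}_m\supseteq\mathcal{O}_X(-(mD-mA))$ (the sum of the $\mathcal{O}_X(-(mA+(\phi)))$ over $\phi\in\aHz(m\overline{A})$ being $\mathcal{O}_X$, these divisors having empty common support); hence $M(m\overline{D};m\mathcal{V})\geq\varphi_m^*(mA)$, and a comparison of the Fubini--Study metrics above upgrades this to $\overline{M}(m\overline{D};m\mathcal{V})\geq m\,\varphi_m^*\overline{A}_m$ for a suitable semipositive metric $\overline{A}_m$ on $A$. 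Fixing $\phi_0\in\aHz(m\overline{A})$ with $\lambda_{\phi_0}>0$ and combining with $\overline{M}(m\overline{D};m\mathcal{V})+\widehat{(\varphi_m^*\phi_0)}\geq(0,2\lambda_{\phi_0}[\infty])$ yields
\[
 \overline{M}(m\overline{D};m\mathcal{V})+\tfrac12\widehat{(\varphi_m^*\phi_0)}\ \geq\ \tfrac m2\,\varphi_m^*\bigl(\overline{A}_m+(0,\tfrac{2\lambda_{\phi_0}}{m}[\infty])\bigr),
\]
whose right-hand side is $\tfrac m2$ times the pull-back of an ample adelic $\QQ$-Cartier divisor (Theorem~\ref{thm:ample}), hence big; by monotonicity and $\sim_{\RR}$-invariance of $\avol$, $\overline{M}(m\overline{D};m\mathcal{V})$ and hence $\overline{M}(m\overline{D};m\mathcal{V})/m$ is big. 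Equivalently one may invoke the Fujita-type approximation $m^{-(\dim X+1)}\avol(\overline{M}(m\overline{D};m\mathcal{V}))\to\avol(\overline{D};\mathcal{V})>0$ of \cite{Chen11,IkomaCon}, which transfers to pairs through Lemma~\ref{lem:birat_small_sections}. I expect the metric estimate $\overline{M}(m\overline{D};m\mathcal{V})\geq m\,\varphi_m^*\overline{A}_m$ and the bookkeeping with the constants $\lambda_{\phi}$ to be the only genuinely delicate points, along the lines of \cite[\S4]{IkomaRem}.
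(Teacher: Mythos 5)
Your proposal is correct and takes essentially the same route as the paper's proof, which delegates most of the work to the template of \cite{IkomaRem}: the surjectivity comes from identifying $\mathcal{O}_{X_m}(M(m\overline{D};m\mathcal{V}))$ with the sheaf generated by the $\varphi_m^*\phi$, the inequality $\overline{M}(m\overline{D};m\mathcal{V})\leq(\varphi_m^*(m\overline{D});(m\mathcal{V})^{\varphi_m})$ from Lemma~\ref{lem:minimum_of_valuation} together with the effectivity of each term of the minimum, and the equality of section sets from the bound $\overline{M}(m\overline{D};m\mathcal{V})+\widehat{(\varphi_m^*\phi)}\geq(0,2\lambda_{\phi}[\infty])>0$ combined with Lemma~\ref{lem:birat_small_sections}. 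Your spelled-out verification that $\overline{M}(m\overline{D};m\mathcal{V})/m$ is nef and, for sufficiently divisible $m$, big via a w-ample $\QQ$-divisor $\overline{A}$ with $(\overline{D}-\overline{A};\mathcal{V})>0$ is exactly the argument the paper leaves as ``obvious''; only the alternative appeal to the Fujita-type convergence should be dropped, since that convergence (Theorem~\ref{thm:approx1}) is itself deduced from this lemma.
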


\begin{proof}
This is a version for pairs of \cite[Proposition~4.7]{IkomaRem}.

(1): Since the homomorphism $\aSpan{K}{\aHz(m\overline{D};m\mathcal{V})}\otimes_K\mathcal{O}_{X}(-mD)\to\mathcal{I}_m$ is surjective, so is
\[
 \iota:\aSpan{K}{\aHz(\varphi_m^*(m\overline{D});(m\mathcal{V})^{\varphi_m})}\otimes_K\mathcal{O}_{X_m}\to\mathcal{O}_{X_m}(M(m\overline{D};m\mathcal{V})).
\]
For each $s\in\aHz(\overline{M}(m\overline{D};m\mathcal{V}))\subset\aHz(\varphi_m^*(m\overline{D});(m\mathcal{V})^{\varphi_m})$, we have $\iota(s)=s$, so $\aHz(\overline{M}(m\overline{D};m\mathcal{V}))$ is contained in the image of $\aHz(\varphi_m^*(m\overline{D});(m\mathcal{V})^{\varphi_m})$ via $\iota$.
By \cite[Claim~4.9]{IkomaRem}, we have
\[
 \aHz(\overline{M}(m\overline{D};m\mathcal{V}))=\iota\left(\aHz(\varphi_m^*(m\overline{D});(m\mathcal{V})^{\varphi_m})\right).
\]

(2): By Lemma~\ref{lem:minimum_of_valuation} and the definition (\ref{eqn:defn_Mm}), we have
\[
 (\varphi_m^*(m\overline{D});(m\mathcal{V})^{\varphi_m})\geq\overline{M}(m\overline{D};m\mathcal{V}).
\]
The rest of the assertion is obvious.
\end{proof}

\subsection{Arithmetic positive intersection numbers}\label{subsec:aPosIntNum}

\begin{definition}
Let $X$ be a normal projective $K$-variety.
An \emph{approximation} of a pair $(\overline{D};\mathcal{V})\in\aBVBig_{\RR,\RR}(X)$ is a couple $(\mu:X'\to X,\overline{M})$ of a projective and birational morphism $\mu:X'\to X$ of normal projective varieties and an $\overline{M}\in\aNef_{\RR}(X')\cap\aBigCone_{\RR}(X')$ such that $\overline{M}\preceq\mu_*^{-1}(\overline{D};\mathcal{V})$.
We denote by $\aTheta(\overline{D};\mathcal{V})$ the set of all the approximations of $(\overline{D};\mathcal{V})$, and set
\[
 \aTheta_{\rm amp}(\overline{D};\mathcal{V}):=\left\{(\mu,\overline{M})\,:\,\text{$\overline{M}\in\aNef_{\QQ}(X)$ is ample and $\overline{M}\leq\mu_*^{-1}(\overline{D};\mathcal{V})$}\right\}.
\]
\begin{description}
\item[(admissible)] Let $U$ be a nonempty open subset of $\Spec(O_K)$, and let $\delta>0$.
Let $\aTheta_{U,\delta}(\overline{D})$ be the set of all the normal $O_K$-models $(\mathscr{X},\mathscr{D})$ of $(X,D)$ such that
\begin{itemize}
\item $g_v^{\overline{D}}-\delta\leq g_v^{(\mathscr{X},\mathscr{D})}\leq g_v^{\overline{D}}$ for all $v\in M_K\setminus U$ and
\item $g_v^{(\mathscr{X},\mathscr{D})}=g_v^{\overline{D}}$ for all but finitely many $v\in M_K\cap U$.
\end{itemize}
Put
\[
 \aTheta_{\rm mod}(\overline{D}):=\bigcup_{\substack{U\subset\Spec(O_K), \\ \delta>0}}\aTheta_{U,\delta}(\overline{D}),
\]
where $U$ runs over all the nonempty open subsets of $\Spec(O_K)$.
Given an $(\mathscr{X},\mathscr{D})\in\aTheta_{\rm mod}(\overline{D})$, we set $\overline{\mathscr{D}}:=(\mathscr{D},g_{\infty}^{\overline{D}})$.

An \emph{admissible} approximation of $(\overline{\mathscr{D}};\mathcal{V})$ is a couple $(\widetilde{\mu}:\mathscr{X}'\to\mathscr{X},\overline{\mathscr{M}})$ of a projective and birational $O_K$-morphism $\widetilde{\mu}:\mathscr{X}'\to\mathscr{X}$ of normal projective arithmetic varieties and a nef and big arithmetic $\RR$-Cartier divisor $\overline{\mathscr{M}}$ on $\mathscr{X}'$ such that $\mathscr{X}_K'$ is smooth, such that $\widetilde{\mu}^*\overline{\mathscr{D}}-\overline{\mathscr{M}}$ is an effective arithmetic $\QQ$-Cartier divisor on $\mathscr{X}'$, and such that $\nu_{\mathscr{X}_K'}(\widetilde{\mu}^*\mathscr{D}_K-\mathscr{M}_K)\geq \nu(\mathcal{V})$ for every $\nu\in\DV(\Rat(X))$.
We denote by $\aTheta_{\rm ad}(\overline{\mathscr{D}};\mathcal{V})$ the set of all the admissible approximations of $(\overline{\mathscr{D}};\mathcal{V})$.

Given two admissible approximations $(\widetilde{\mu}_1:\mathscr{X}_1'\to\mathscr{X},\overline{\mathscr{M}}_1)$ and $(\widetilde{\mu}_2:\mathscr{X}_2'\to\mathscr{X},\overline{\mathscr{M}}_2)$ of $(\overline{\mathscr{D}};\mathcal{V})$, we write
\[
 (\widetilde{\mu}_1:\mathscr{X}_1'\to\mathscr{X},\overline{\mathscr{M}}_1)\leq (\widetilde{\mu}_2:\mathscr{X}_2'\to\mathscr{X},\overline{\mathscr{M}}_2)
\]
if there exists a birational morphism $\widetilde{\mu}:\mathscr{X}'\to\mathscr{X}$ of normal projective arithmetic varieties such that $\widetilde{\mu}$ can be factorized into $\mathscr{X}'\xrightarrow{\widetilde{\mu}_1'}\mathscr{X}_1'\xrightarrow{\widetilde{\mu}_1}\mathscr{X}$ and $\mathscr{X}'\xrightarrow{\widetilde{\mu}_2'}\mathscr{X}_2'\xrightarrow{\widetilde{\mu}_2}\mathscr{X}$, respectively, and
\[
 \widetilde{\mu}_1^{\prime*}\overline{\mathscr{M}}_1^{\rm ad}\leq\widetilde{\mu}_2^{\prime*}\overline{\mathscr{M}}_2^{\rm ad}
\]
holds.
The set $\aTheta_{\rm ad}(\overline{\mathscr{D}};\mathcal{V})$ is partially ordered with respect to this order.
(See \cite[section~3]{IkomaCon}.)
\end{description}
\end{definition}

The approximations of arithmetic $\RR$-Cartier divisors is already treated in \cite[section~3]{IkomaCon}.
By using the approximation theorem (see for example \cite[Theorem~4.1.3]{MoriwakiAdelic}), we can easily reduce our case to the case of \cite[section~3]{IkomaCon} (see Proposition~\ref{prop:approx_by_models} below).

\begin{proposition}\label{prop:approx_by_models}
Let $(\overline{D};\mathcal{V})\in\aBVBig_{\RR,\RR}(X)$, and let $U$ be a nonempty open subset of $\Spec(O_K)$ over which a model of definition for $\overline{D}$ exists.
\begin{enumerate}
\item Given any $\delta>0$, $\aTheta_{U,\delta}(\overline{D})$ is nonempty.
\item For each $(\mathscr{X},\mathscr{D})\in\aTheta_{\rm mod}(\overline{D})$, the ordered set $\aTheta_{\rm ad}(\overline{\mathscr{D}};\mathcal{V})$ is filtered.
\item Let $(\varphi:X'\to X,\overline{M})\in\aTheta(\overline{D};\mathcal{V})$ and let $U'$ be a nonempty open subset of $\Spec(O_K)$ over which a model of definition for $\overline{M}$ exists.
Given any $\delta$ with $0<\delta<1$, there exists an ample adelic $\QQ$-Cartier divisor $\overline{H}$ on $X'$ such that
\begin{itemize}
\item $\overline{H}-(1-\delta)\overline{M}$ is nef and w-ample,
\item $(\overline{D}-\overline{H};\mathcal{V})$ is big and strictly effective, and
\item $\overline{H}$ has a $U'$-model of definition.
\end{itemize}
\item Let $(\mathscr{X},\mathscr{D})\in\aTheta_{\rm mod}(\overline{D})$, let $(\varphi:X'\to X,\overline{M})\in\aTheta(\overline{\mathscr{D}}^{\rm ad};\mathcal{V})$, and let $U'$ be a nonempty open subset of $\Spec(O_K)$ over which a model of definition for $\overline{M}$ exists.
Given any $\delta$ with $0<\delta<1$, there exists a $(\widetilde{\varphi}:\mathscr{X}'\to\mathscr{X},\overline{\mathscr{H}})\in\aTheta_{\rm ad}(\overline{\mathscr{D}};\mathcal{V})$ such that
\begin{itemize}
\item $\widetilde{\varphi}:\mathscr{X}'\to\mathscr{X}$ is a birational and projective morphism of normal $O_K$-models extending $\varphi$,
\item $\overline{\mathscr{H}}^{\rm ad}$ is ample on $X'$, $\mathscr{H}$ is relatively ample on $\mathscr{X}'$, and
\item $(1-\delta)\overline{M}-\delta\sum_{v\in M_K\setminus U'}(0,[v])\preceq\overline{\mathscr{H}}^{\rm ad}$.
\end{itemize}
\end{enumerate}
\end{proposition}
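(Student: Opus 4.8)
The plan is to derive all five assertions from their counterparts for arithmetic $\RR$-Cartier divisors over normal $O_K$-models, established in \cite[section~3]{IkomaCon}; the only new feature is the base condition $\mathcal{V}$, and this turns out to be harmless. By Remark~\ref{rem:general_rule1}(1) one may assume $\mathcal{V}\geq 0$ throughout, and $\mathcal{V}$ imposes constraints only on the generic fibre $\mathscr{X}_K=X$. By Remark~\ref{rem:center_and_valuation} and Lemmas~\ref{lem:birat_small_sections}, \ref{lem:birat_abhyankar} and \ref{lem:minimum_of_valuation}, these constraints --- in particular the admissibility inequality $\nu_{\mathscr{X}_K'}(\widetilde{\mu}^*\mathscr{D}_K-\mathscr{M}_K)\geq\nu(\mathcal{V})$ for all $\nu\in\DV(\Rat(X))$, and its analogue $\nu_X(\varphi^*D-H)\geq\nu(\mathcal{V})$ on $X'$ --- are stable under the birational modifications, push-downs and minimum constructions used in the proof, so the possibly non-divisorial valuations occurring in $\mathcal{V}$ pose no obstruction.

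For (1) I would apply the approximation theorem \cite[Theorem~4.1.3]{MoriwakiAdelic} to the Green functions of $\overline{D}$: fixing a $U$-model of definition for $\overline{D}$, it yields a normal $O_K$-model $(\mathscr{X},\mathscr{D})$ of $(X,D)$ extending it, with $\|g_v^{(\mathscr{X},\mathscr{D})}-g_v^{\overline{D}}\|_{\sup}\leq\delta/2$ for $v\in M_K\setminus U$ and $g_v^{(\mathscr{X},\mathscr{D})}=g_v^{\overline{D}}$ for all but finitely many $v\in M_K\cap U$. Subtracting from $\mathscr{D}$ a sufficiently small positive $\QQ$-combination of the vertical fibres over $M_K\setminus U$ lowers each $g_v^{(\mathscr{X},\mathscr{D})}$, $v\in M_K\setminus U$, by an amount at most $\delta/2$; afterwards $g_v^{\overline{D}}-\delta\leq g_v^{(\mathscr{X},\mathscr{D})}\leq g_v^{\overline{D}}$ for all such $v$, so $(\mathscr{X},\mathscr{D})\in\aTheta_{U,\delta}(\overline{D})$.

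For (2), given two admissible approximations of $(\overline{\mathscr{D}};\mathcal{V})$ I would pull them back to a common normal $O_K$-model $\widetilde{\mu}:\mathscr{X}'\to\mathscr{X}$. This reduces filteredness to producing, for finitely many nef and big arithmetic $\RR$-Cartier divisors $\overline{\mathscr{M}}_i$ on $\mathscr{X}'$ with $\widetilde{\mu}^*\overline{\mathscr{D}}-\overline{\mathscr{M}}_i$ effective and satisfying the $\mathcal{V}$-condition on $\mathscr{X}_K'$, a single such divisor (after a further blow-up) dominating all of them. This is the arithmetic-model version of the minimum construction of section~\ref{subsec:Adelic_Metrics}: since $\widetilde{\mu}^*\overline{\mathscr{D}}-\overline{\mathscr{M}}_i\geq 0$, for $m\gg 1$ the small sections of $m\overline{\mathscr{M}}_i$ lie in $\aHz(m\widetilde{\mu}^*\overline{\mathscr{D}};(m\mathcal{V})^{\widetilde{\mu}})$, and applying Lemma~\ref{lem:approx_Zariski} to the span of the small sections coming from the $\overline{\mathscr{M}}_i$ produces such an $\overline{\mathscr{M}}$ --- nef and big, dominated by the pull-back of $\widetilde{\mu}^*\overline{\mathscr{D}}$, dominating the pull-backs of the $\overline{\mathscr{M}}_i$, and respecting the $\mathcal{V}$-condition by Lemma~\ref{lem:minimum_of_valuation}. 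Together with (1) this is the argument of \cite[section~3]{IkomaCon}.

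For (3)--(5) I would pass, via (1), to an $O_K$-model of $\overline{D}$, reducing the statements to those of \cite[section~3]{IkomaCon} for arithmetic $\RR$-Cartier divisors, now carrying the base condition $\mathcal{V}^{\varphi}$ (resp.\ $\mathcal{V}$) along. The divisor $\overline{H}$ (resp.\ $\overline{\mathscr{H}}$) is then built exactly as there: essentially one slightly shrinks $\overline{M}$ and adds a sufficiently small relatively nef and w-ample $\QQ$-Cartier divisor admitting a $U'$-model of definition, so that $\overline{H}-(1-\delta)\overline{M}$ is nef and w-ample and, by Theorem~\ref{thm:ample}(1)--(2), $\overline{H}$ is ample; since $(\varphi^*\overline{D}-(1-\delta)\overline{M};\mathcal{V}^{\varphi})=(\varphi^*\overline{D}-\overline{M};\mathcal{V}^{\varphi})+\delta\overline{M}$ is big (pseudo-effective plus big), Theorem~\ref{thm:abbig_cone} shows that $(\varphi^*\overline{D}-\overline{H};\mathcal{V}^{\varphi})$ stays big when the perturbation is small, and a further twist by a $\widehat{(\phi)}$ with $\phi\in\aHzq{\QQ}(\varphi^*\overline{D}-\overline{H};\mathcal{V}^{\varphi})$ together with a small shift at $\infty$ (which keeps $\overline{H}$ ample by Theorem~\ref{thm:ample}(3)) makes it strictly effective; relative ampleness of $\mathscr{H}$ in (5) comes from the arithmetic Nakai--Moishezon criterion as in the proof of Theorem~\ref{thm:ample}(4). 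The step I expect to be most delicate is exactly this bookkeeping of the base condition across the many birational modifications and perturbations in (2)--(5) --- always working with $\mathcal{V}_+$, its pull-backs $\mathcal{V}^{\varphi}$, $\mathcal{V}^{\widetilde{\mu}}$ and the generic-fibre inequalities $\nu_X(\,\cdot\,)\geq\nu(\mathcal{V})$ --- but by the stability statements of the first paragraph it introduces no essential difficulty beyond the divisor-only case of \cite[section~3]{IkomaCon}.
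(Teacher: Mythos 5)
Your overall route is the paper's: part (1) is exactly the approximation theorem \cite[Theorem~4.1.3]{MoriwakiAdelic}, and parts (3) and (4) are handled as you describe --- perturb $\overline{M}$ by small nef and w-ample divisors inside a fixed rational subspace to make the result rational and ample (Theorem~\ref{thm:ample}), keep bigness of $(\overline{D}-\overline{H}'';\mathcal{V})$ by writing it as (big)$+$(pseudo-effective) and invoking the openness of the big cone, and finish with a twist by some $\widehat{(\phi)}$ with $\phi\in\aHzq{\QQ}(\overline{D}-\overline{H}'';\mathcal{V})$. (Minor point: the proposition has four assertions, not five.)

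There is, however, a genuine gap in your argument for (2). After pulling back to a common model you propose to produce the common refinement by applying Lemma~\ref{lem:approx_Zariski} to the span of the \emph{small sections of the $m\overline{\mathscr{M}}_i$}, and you claim the output dominates the pull-backs of the $\overline{\mathscr{M}}_i$. It does not: the section-based construction of Lemma~\ref{lem:approx_Zariski} always sits \emph{below} the divisor whose sections you feed into it, and a nef and big arithmetic $\RR$-Cartier divisor is in general not recovered from above by its strictly small sections (its augmented base locus can be nonempty; Fujita approximation only gives convergence of volumes, not a divisor inequality $\geq$). So your $\overline{\mathscr{M}}$ satisfies $\overline{\mathscr{M}}\leq\widetilde{\mu}^*\overline{\mathscr{D}}$ but there is no reason it satisfies $\overline{\mathscr{M}}\geq\overline{\mathscr{M}}_i$, which is what filteredness requires. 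The paper instead applies the minimum construction to the effective \emph{complements} $\overline{\mathscr{F}}_i:=\widetilde{\varphi}_1^*\overline{\mathscr{D}}-\overline{\mathscr{M}}_i$: one blows up the sum of the ideal sheaves $\mathcal{O}(-n\mathscr{F}_1)+\mathcal{O}(-n\mathscr{F}_2)$, forms $\overline{\mathscr{F}}:=\min\{\overline{\mathscr{F}}_1,\overline{\mathscr{F}}_2\}$ as in \cite[Proposition~3.2]{IkomaCon}, and sets $\overline{\mathscr{M}}:=\widetilde{\varphi}^*\overline{\mathscr{D}}-\overline{\mathscr{F}}$. Since $\overline{\mathscr{F}}\leq\overline{\mathscr{F}}_i$, the required domination $\overline{\mathscr{M}}\geq\widetilde{\psi}^*\overline{\mathscr{M}}_i$ is immediate; the nontrivial inputs are the nefness of this $\overline{\mathscr{M}}$, which is \cite[Lemma~3.4]{IkomaCon} (not Lemma~\ref{lem:approx_Zariski}), and the preservation of the base condition, which is Lemma~\ref{lem:minimum_of_valuation} applied to the $\overline{\mathscr{F}}_i$. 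You should replace your section-theoretic step by this complement-minimum argument.
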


\begin{proof}
The assertion (1) is nothing but \cite[Theorem~4.1.3]{MoriwakiAdelic}.

(2): Let $(\widetilde{\varphi}_1:\mathscr{X}_1'\to\mathscr{X},\overline{\mathscr{M}}_1)$ and $(\widetilde{\varphi}_2:\mathscr{X}_2'\to\mathscr{X},\overline{\mathscr{M}}_2)$ in $\aTheta_{\rm ad}(\overline{\mathscr{D}};\mathcal{V})$.
By taking a modification dominating both $\widetilde{\varphi}_1$ and $\widetilde{\varphi}_2$, one can assume $\widetilde{\varphi}_1=\widetilde{\varphi}_2$ and $\mathscr{X}_1'=\mathscr{X}_2'$.
Let
\[
 \overline{\mathscr{F}}_i:=\widetilde{\varphi}_1^*\overline{\mathscr{D}}-\overline{\mathscr{M}}_i
\]
for $i=1,2$.
Let $n\geq 1$ be an integer such that both of $n\mathscr{F}_1$ and $n\mathscr{F}_2$ belong to $\Div(\mathscr{X}_1')$, let
\[
 \mathcal{J}_n:=\mathcal{O}_{\mathscr{X}_1'}(-n\mathscr{F}_1)+\mathcal{O}_{\mathscr{X}_1'}(-n\mathscr{F}_2),
\]
and let $\widetilde{\psi}_1:\mathscr{X}'\to\mathscr{X}_1'$ be the normalized blowup along $\mathcal{J}_n$.
Put
\[
 \overline{\mathscr{F}}:=\min^{\widetilde{\psi}_1}\left\{\overline{\mathscr{F}}_1,\overline{\mathscr{F}}_2\right\}
\]
as in \cite[Proposition~3.2]{IkomaCon}, $\widetilde{\varphi}:=\widetilde{\psi}_1\circ\widetilde{\varphi}_1$, and $\overline{\mathscr{M}}:=\widetilde{\varphi}^*\overline{\mathscr{D}}-\overline{\mathscr{F}}$.
Then, by Lemma~\ref{lem:minimum_of_valuation} and \cite[Lemma~3.4]{IkomaCon}, $(\widetilde{\varphi}:\mathscr{X}'\to\mathscr{X},\overline{\mathscr{M}})\in\aTheta_{\rm ad}(\overline{\mathscr{D}};\mathcal{V})$ and $(\widetilde{\varphi},\overline{\mathscr{M}})\geq(\widetilde{\varphi}_i,\overline{\mathscr{M}}_i)$ for $i=1,2$.

(3): By use of Theorem~\ref{thm:abbig_cone}(1), one finds a nef and w-ample adelic $\RR$-Cartier divisor $\overline{A}$ such that $(\overline{D}-\overline{A};\mathcal{V})$ is still big, and such that $\overline{A}$ has a $U'$-model of definition (see \cite[Lemma~2.6]{IkomaRem}).
Fix nef and w-ample adelic Cartier divisors $\overline{A}_1,\dots\overline{A}_l$ such that $A_1,\dots,A_l$ form a basis for a rational $\RR$-subspace of $\Div_{\RR}(X)$ containing both $A$ and $M$, and such that $\overline{A}_1,\dots,\overline{A}_l$ have $U'$-models of definition.
By using Theorem~\ref{thm:abbig_cone}(1) again, one can find an $\alpha>0$ such that
\begin{equation}
 (\overline{D}-\overline{A};\mathcal{V})-\alpha_1\overline{A}_1-\dots-\alpha_l\overline{A}_l
\end{equation}
is big for every $\alpha_k$ with $0\leq\alpha_k\leq\alpha$.

Put $\overline{H}':=\delta\overline{A}+(1-\delta)\overline{M}$, which is ample by Theorem~\ref{thm:ample}(2), and choose real numbers $\beta_1,\dots,\beta_l$ such that $0\leq\beta_k\leq\alpha\delta$ and such that
\[
 \overline{H}'':=\overline{H}'+\beta_1\overline{A}_1+\dots+\beta_l\overline{A}_l
\]
is rational.
By Lemma~\ref{lem:abbig_char}(2),
\begin{align*}
 &(\overline{D}-\overline{H}'';\mathcal{V}) \\
 &\qquad =\left(\delta(\overline{D}-\overline{A};\mathcal{V})-\beta_1\overline{A}_1-\dots-\beta_l\overline{A}_l\right)+(1-\delta)(\overline{D}-\overline{M};\mathcal{V})
\end{align*}
is big, so that we can take a $\phi\in\aHzq{\QQ}(\overline{D}-\overline{H}'';\mathcal{V})\setminus\{0\}$ (see Remark~\ref{rem:ad_hoc}(2)).
Then $\overline{H}:=\overline{H}''-\widehat{(\phi)}$ has the required properties.

(4): By the assertion (3), we can find a $(\varphi:X'\to X,\overline{H})\in\aTheta_{\rm amp}(\overline{\mathscr{D}}^{\rm ad};\mathcal{V})$ such that
\begin{itemize}
\item $(1-\delta)\overline{M}\preceq\overline{H}$,
\item $\varphi_*^{-1}(\overline{\mathscr{D}}^{\rm ad};\mathcal{V})-\overline{H}$ is big, and
\item $\overline{H}$ has a $U'$-model of definition.
\end{itemize}
By reducing $\delta$ if necessary, we can assume that $\overline{H}-\delta\sum_{v\in M_K\setminus U'}(0,[v])$ is ample and that $\varphi_*^{-1}(\overline{\mathscr{D}};\mathcal{V})-\overline{H}-\delta\sum_{v\in M_K\setminus U}(0,[v])$ is big (see Theorems~\ref{thm:ample} and \ref{thm:abbig_cone}).
We can find a normal $O_K$-model $(\mathscr{X}',\mathscr{H}')$ of $(X',H)$ such that
\begin{itemize}
\item there is a birational and projective morphism $\widetilde{\varphi}:\mathscr{X}'\to\mathscr{X}$ extending $\varphi$,
\item $\mathscr{H}'$ is relatively nef,
\item $\overline{H}-\delta\sum_{v\in M_K\setminus U'}(0,[v])\leq\overline{\mathscr{H}}^{\prime{\rm ad}}\leq\overline{H}$, and
\item $\varphi_*^{-1}(\overline{\mathscr{D}}^{\rm ad};\mathcal{V})-\overline{\mathscr{H}}^{\prime\rm ad}$ is big,
\end{itemize}
where $\overline{\mathscr{H}}':=(\mathscr{H}',g_{\infty}^{\overline{H}})$.

We fix ample arithmetic Cartier divisors $\overline{\mathscr{A}}_1,\dots\overline{\mathscr{A}}_l$ such that $\mathscr{A}_1,\dots,\mathscr{A}_l$ are relatively ample and form a basis for a rational $\RR$-subspace of $\Div_{\RR}(\mathscr{X}')$ containing both $\widetilde{\varphi}^*\mathscr{D}$ and $\mathscr{H}'$.
We choose non-negative real numbers $\beta_1,\dots,\beta_l$ such that
\[
 \varphi_*^{-1}(\overline{\mathscr{D}}^{\rm ad};\mathcal{V})-\overline{\mathscr{H}}^{\prime{\rm ad}}-\beta_1\overline{\mathscr{A}}_1^{\rm ad}-\dots-\beta_l\overline{\mathscr{A}}_l^{\rm ad}
\]
is still big and such that $\widetilde{\varphi}^*\mathscr{D}-\mathscr{H}'-\beta_1\mathscr{A}_1-\dots-\beta_l\mathscr{A}_l$ belongs to $\Div_{\QQ}(\mathscr{X}')$.

Take a $\phi\in\aHzq{\QQ}(\widetilde{\varphi}^*\overline{\mathscr{D}}^{\rm ad}-\overline{\mathscr{H}}^{\prime{\rm ad}}-\beta_1\overline{\mathscr{A}}_1^{\rm ad}-\dots-\beta_l\overline{\mathscr{A}}_l^{\rm ad};\mathcal{V}^{\varphi})\setminus\{0\}$ (see Remark~\ref{rem:ad_hoc}(2)), and set
\[
 \overline{\mathscr{H}}:=\overline{\mathscr{H}}'+\beta_1\overline{\mathscr{A}}_1+\dots+\beta_l\overline{\mathscr{A}}_l-\widehat{(\phi)}.
\]
Then $(\widetilde{\varphi}:\mathscr{X}'\to\mathscr{X},\overline{\mathscr{H}})\in\aTheta_{\rm ad}(\overline{\mathscr{D}};\mathcal{V})$ has the required properties.
\end{proof}

\begin{definition}
Let $n$ be an integer such that $0\leq n\leq \dim X+1$, let $(\overline{D}_1;\mathcal{V}_1),\dots,(\overline{D}_n;\mathcal{V}_n)\in\aBVBig_{\RR,\RR}(X)$, and let $\overline{D}_{n+1},\dots,\overline{D}_{\dim X+1}\in\aNef_{\RR}(X)\cap\aBigCone_{\RR}(X)$.
We define
\begin{align}\label{eqn:aPosInt_first_version}
 &\langle(\overline{D}_1;\mathcal{V}_1)\cdots (\overline{D}_n;\mathcal{V}_n)\rangle\cdot\overline{D}_{n+1}\cdots\overline{D}_{\dim X+1}\\
 &\qquad\qquad :=\sup_{(\mu,\overline{M}_i)\in\aTheta(\overline{D}_i;\mathcal{V}_i)}\adeg\left(\overline{M}_1\cdots\overline{M}_n\cdot\mu^*\overline{D}_{n+1}\cdots\mu^*\overline{D}_{\dim X+1}\right),\nonumber
\end{align}
which is a positive real number.
\end{definition}

\begin{remark}\label{rem:aPosInt_prop}
Under the notations as above, one can easily see the following.
\begin{enumerate}
\item The map $\aBVBig_{\RR,\RR}(X)^{\times n}\times(\aNef_{\RR}(X)\cap\aBigCone_{\RR}(X))^{\times(\dim X+1-n)}\to\RR_{>0}$,
\begin{align*}
 &((\overline{D}_1;\mathcal{V}_1),\dots, (\overline{D}_n;\mathcal{V}_n);\overline{D}_{n+1},\dots,\overline{D}_{\dim X+1}) \\
 &\qquad\qquad\qquad \mapsto\langle(\overline{D}_1;\mathcal{V}_1)\cdots (\overline{D}_n;\mathcal{V}_n)\rangle\cdot\overline{D}_{n+1}\cdots\overline{D}_{\dim X+1},
\end{align*}
is symmetric in $(\overline{D}_1;\mathcal{V}_1),\dots,(\overline{D}_n;\mathcal{V}_n)$ (respectively, in $\overline{D}_{n+1},\dots,\overline{D}_{\dim X+1}$).
\item If $(\overline{D}_1';\mathcal{V}_1')\in\aBVBig_{\RR,\RR}(X)$ satisfies $(\overline{D}_1;\mathcal{V}_1)\preceq (\overline{D}_1';\mathcal{V}_1')$, then
\begin{align*}
 &\langle(\overline{D}_1;\mathcal{V}_1)\cdots (\overline{D}_n;\mathcal{V}_n)\rangle\cdot\overline{D}_{n+1}\cdots\overline{D}_{\dim X+1}\\
 &\qquad\qquad\qquad \leq\langle(\overline{D}_1';\mathcal{V}_1')\cdot(\overline{D}_2;\mathcal{V}_2)\cdots (\overline{D}_n;\mathcal{V}_n)\rangle\cdot\overline{D}_{n+1}\cdots\overline{D}_{\dim X+1}.
\end{align*}
\item\label{num:BC_homog} For every $a\in \RR_{>0}$,
\begin{align*}
 &\langle(a\overline{D}_1;a\mathcal{V}_1)\cdot(\overline{D}_2;\mathcal{V}_2)\cdots (\overline{D}_n;\mathcal{V}_n)\rangle\cdot\overline{D}_{n+1}\cdots\overline{D}_{\dim X+1}\\
 &\qquad\qquad\qquad =a\langle(\overline{D}_1;\mathcal{V}_1)\cdots (\overline{D}_n;\mathcal{V}_n)\rangle\cdot\overline{D}_{n+1}\cdots\overline{D}_{\dim X+1}.
\end{align*}
\item If $(\overline{D}_1';\mathcal{V}_1')\in\aBVBig_{\RR,\RR}(X)$, then
\begin{align*}
 &\langle(\overline{D}_1+\overline{D}_1';\mathcal{V}_1+\mathcal{V}_1')\cdot(\overline{D}_2;\mathcal{V}_2)\cdots (\overline{D}_n;\mathcal{V}_n)\rangle\cdot\overline{D}_{n+1}\cdots\overline{D}_{\dim X+1}\\
 &\qquad\qquad\qquad \geq\langle(\overline{D}_1;\mathcal{V}_1)\cdots (\overline{D}_n;\mathcal{V}_n)\rangle\cdot\overline{D}_{n+1}\cdots\overline{D}_{\dim X+1}\\
 &\qquad\qquad\qquad\qquad\qquad +\langle(\overline{D}_1;\mathcal{V}_1)\cdots (\overline{D}_n;\mathcal{V}_n)\rangle\cdot\overline{D}_{n+1}\cdots\overline{D}_{\dim X+1}.
\end{align*}
Note that the base conditions may not be effective.
\end{enumerate}
\end{remark}

\begin{proposition}\label{prop:aPosInt_continuous}
We use the notations as above.
Let $\overline{E}_{ab}\in\aDiv_{\RR}(X)$, $\mathcal{W}_{cd}\in\VDiv_{\RR}(X)$, $v_{ef}\in M_K\cup\{\infty\}$, and $\varphi_{ef}\in C^0_{v_{ef}}(X)$.
One has
\begin{align*}
 &\lim_{\varepsilon_{ab},\delta_{cd},\|\varphi_{ef}\|_{\sup}\to 0}\Biggl\langle\Biggl(\overline{D}_1+\sum_{b=1}^{p_1}\varepsilon_{1b}\overline{E}_{1b}+\sum_{f=1}^{r_1}(0,\varphi_{1f}[v_{1f}]);\mathcal{V}_1+\sum_{d=1}^{q_1}\delta_{1d}\mathcal{W}_{1d}\Biggr) \\
 &\qquad \cdots \Biggl(\overline{D}_n+\sum_{b=1}^{p_n}\varepsilon_{nb}\overline{E}_{nb}+\sum_{f=1}^{r_n}(0,\varphi_{nf}[v_{nf}]);\mathcal{V}_n+\sum_{d=1}^{q_n}\delta_{nd}\mathcal{W}_{nd}\Biggr)\Biggr\rangle \cdot\overline{D}_{n+1}\cdots\overline{D}_{\dim X+1} \\
 &\qquad\qquad\qquad\qquad\qquad\qquad\qquad =\langle(\overline{D}_1;\mathcal{V}_1)\cdots (\overline{D}_n;\mathcal{V}_n)\rangle\cdot\overline{D}_{n+1}\cdots\overline{D}_{\dim X+1},
\end{align*}
where the base conditions may not be effective.
\end{proposition}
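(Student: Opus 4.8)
The plan is to establish the equality by proving the two one-sided estimates $\liminf\geq$ and $\limsup\leq$ for the displayed quantity, in both cases squeezing the perturbed pairs between the scalar multiples $(1\pm t)$ of the unperturbed data, $t\in(0,1)$, and then letting $t\to 0$. The only tools needed are the multilinearity and positivity of $\adeg$ (Notation and terminology~6), the monotonicity, homogeneity and superadditivity of the arithmetic positive intersection number (Remark~\ref{rem:aPosInt_prop}), the openness of the big cone of pairs (Theorem~\ref{thm:abbig_cone}(1)), and the fact that bigness and pseudo-effectivity of pairs are preserved by pull-back along birational morphisms (Lemma~\ref{lem:birat_abhyankar}(1)). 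Throughout, write $(\overline{D}_i^{\bm{\varepsilon}};\mathcal{V}_i^{\bm{\delta}})$ for the pair appearing in the $i$-th slot of the left-hand side; by Theorem~\ref{thm:abbig_cone}(1) it belongs to $\aBVBig_{\RR,\RR}(X)$ once the perturbation parameters are small, so the quantity on the left is defined.

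For the lower estimate, fix $\eta>0$ and choose, for each $i=1,\dots,n$, an approximation $(\mu,\overline{M}_i)\in\aTheta(\overline{D}_i;\mathcal{V}_i)$ over a common modification $\mu\colon X'\to X$ with
\[
 \adeg\bigl(\overline{M}_1\cdots\overline{M}_n\cdot\mu^*\overline{D}_{n+1}\cdots\mu^*\overline{D}_{\dim X+1}\bigr)\geq\langle(\overline{D}_1;\mathcal{V}_1)\cdots(\overline{D}_n;\mathcal{V}_n)\rangle\cdot\overline{D}_{n+1}\cdots\overline{D}_{\dim X+1}-\eta.
\]
Fix $t\in(0,1)$. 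Since $\overline{M}_i$ is nef and big, the pair $\mu_*^{-1}(\overline{D}_i;\mathcal{V}_i)-(1-t)\overline{M}_i$ equals $\bigl(\mu^*\overline{D}_i-\overline{M}_i;\mathcal{V}_i^\mu\bigr)+t\overline{M}_i$, a pseudo-effective pair plus a big one, hence it is big. Applying Theorem~\ref{thm:abbig_cone}(1) to this big pair and to the pulled-back perturbation directions $\mu^*\overline{E}_{ib}$, $(0,\varphi_{if}\circ\mu^{\mathrm{an}}[v_{if}])$, $\mathcal{W}_{id}^\mu$ gives an $\varepsilon_0>0$ so that, whenever the perturbation parameters are at most $\varepsilon_0$, the pair $\mu_*^{-1}(\overline{D}_i^{\bm{\varepsilon}};\mathcal{V}_i^{\bm{\delta}})-(1-t)\overline{M}_i$ is still big, so $(1-t)\overline{M}_i\preceq\mu_*^{-1}(\overline{D}_i^{\bm{\varepsilon}};\mathcal{V}_i^{\bm{\delta}})$ and $(\mu,(1-t)\overline{M}_i)\in\aTheta(\overline{D}_i^{\bm{\varepsilon}};\mathcal{V}_i^{\bm{\delta}})$. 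Pulling the factors $(1-t)$ out of $\adeg$ by multilinearity, the left-hand side of the proposition is at least $(1-t)^n\adeg(\overline{M}_1\cdots\overline{M}_n\cdot\mu^*\overline{D}_{n+1}\cdots\mu^*\overline{D}_{\dim X+1})$; letting the perturbation tend to $0$, then $t\to0$, then $\eta\to0$, we obtain $\liminf\geq$ the right-hand side.

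For the upper estimate, fix $t\in(0,1)$ and let $(\mu',\overline{M}_i')$, $i=1,\dots,n$, be any approximations of the perturbed pairs over a common $\mu'\colon X''\to X$. The point is that the identity
\[
 (1+t)\,{\mu'}_*^{-1}(\overline{D}_i;\mathcal{V}_i)-{\mu'}_*^{-1}\bigl(\overline{D}_i^{\bm{\varepsilon}};\mathcal{V}_i^{\bm{\delta}}\bigr)={\mu'}_*^{-1}\Bigl(t(\overline{D}_i;\mathcal{V}_i)-\sum_{b}\varepsilon_{ib}\overline{E}_{ib}-\sum_{f}(0,\varphi_{if}[v_{if}])-\sum_{d}\delta_{id}\mathcal{W}_{id}\Bigr)
\]
exhibits the difference as the pull-back along $\mu'$ of a small perturbation of the \emph{fixed} big pair $t(\overline{D}_i;\mathcal{V}_i)$ on $X$. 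Hence Theorem~\ref{thm:abbig_cone}(1) yields an $\varepsilon_0=\varepsilon_0(t)$, independent of $\mu'$ and of the $\overline{M}_i'$, such that for perturbations at most $\varepsilon_0$ that pair on $X$ is big, and therefore so is its pull-back by Lemma~\ref{lem:birat_abhyankar}(1); thus ${\mu'}_*^{-1}(\overline{D}_i^{\bm{\varepsilon}};\mathcal{V}_i^{\bm{\delta}})\preceq(1+t)\,{\mu'}_*^{-1}(\overline{D}_i;\mathcal{V}_i)$. Combining with $\overline{M}_i'\preceq{\mu'}_*^{-1}(\overline{D}_i^{\bm{\varepsilon}};\mathcal{V}_i^{\bm{\delta}})$ gives $\tfrac{1}{1+t}\overline{M}_i'\preceq{\mu'}_*^{-1}(\overline{D}_i;\mathcal{V}_i)$, i.e.\ $(\mu',\tfrac{1}{1+t}\overline{M}_i')\in\aTheta(\overline{D}_i;\mathcal{V}_i)$, whence $\adeg(\overline{M}_1'\cdots\overline{M}_n'\cdot{\mu'}^*\overline{D}_{n+1}\cdots{\mu'}^*\overline{D}_{\dim X+1})\leq(1+t)^n\langle(\overline{D}_1;\mathcal{V}_1)\cdots(\overline{D}_n;\mathcal{V}_n)\rangle\cdot\overline{D}_{n+1}\cdots\overline{D}_{\dim X+1}$. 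Taking the supremum over all such $(\mu',\overline{M}_i')$ and then $t\to0$ gives $\limsup\leq$ the right-hand side, and the two estimates together prove the proposition.

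The step I expect to be the crux is arranging the squeezing so that it respects the base conditions. The perturbations $\sum_d\delta_{id}\mathcal{W}_{id}$ are not comparable to scalar multiples of $\mathcal{V}_i$ in the order $\preceq$---they may have negative coefficients along valuations outside $\Supp_X(\mathcal{V}_i)$---so one cannot sandwich at the level of inequalities between base conditions, and must instead route the argument through Theorem~\ref{thm:abbig_cone}(1), which absorbs base-condition perturbations directly into bigness. The second point requiring care is the uniformity of $\varepsilon_0$ in the upper estimate: a priori the near-optimal approximation $(\mu',\overline{M}_i')$ depends on the perturbation, but pushing the comparison down to the fixed big pair $t(\overline{D}_i;\mathcal{V}_i)$ on $X$ (as in the displayed identity) makes $\varepsilon_0$ depend only on $t$ and the fixed directions. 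Everything else---extracting scalars from $\adeg$, passing to suprema, and the functoriality invoked above---is routine.
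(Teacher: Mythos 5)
Your proof is correct and follows essentially the same route as the paper: the paper also derives, from the openness of the big cone (Theorem~\ref{thm:abbig_cone}(1)), the uniform sandwich $(1-\gamma)(\overline{D}_i;\mathcal{V}_i)\preceq(\overline{D}_i;\mathcal{V}_i)+(\text{perturbation})\preceq(1+\gamma)(\overline{D}_i;\mathcal{V}_i)$ for perturbations bounded by $\gamma_0\gamma$, and then concludes by the monotonicity and homogeneity of the positive intersection number (Remark~\ref{rem:aPosInt_prop}(2),(3)). You merely unfold those two properties at the level of approximations instead of citing them, and your identification of the key point --- that the base-condition perturbations must be absorbed via bigness rather than compared termwise --- matches the paper's argument exactly.
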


\begin{proof}
By Remark~\ref{rem:aPosInt_prop}(2), we can assume $r_1=\dots=r_n=0$.
By Theorem~\ref{thm:abbig_cone}(1), there exists a $\gamma_0>0$ such that
\[
 (\overline{D}_i;\mathcal{V}_i)+\Biggl(\sum_{b=1}^{p_i}\varepsilon_{ib}\overline{E}_{ib};\sum_{d=1}^{q_i}\delta_{id}\mathcal{W}_{id}\Biggr)\in\aBVBig_{\RR,\RR}(X)
\]
for $i=1,\dots,n$, $\varepsilon_{ab}$, and $\delta_{cd}$ with $\max\{|\varepsilon_{ab}|,|\delta_{cd}|\}\leq\gamma_0$.
So,
\[
 (1-\gamma)(\overline{D}_i;\mathcal{V}_i)\preceq (\overline{D}_i;\mathcal{V}_i)+\Biggl(\sum_{b=1}^{p_i}\varepsilon_{ib}\overline{E}_{ib};\sum_{d=1}^{q_i}\delta_{id}\mathcal{W}_{id}\Biggr)\preceq (1+\gamma)(\overline{D}_i;\mathcal{V}_i)
\]
for every $\varepsilon_{ab},\delta_{cd}$ with $\max\{|\varepsilon_{ab}|,|\delta_{cd}|\}\leq\gamma_0\gamma$.

By Remark~\ref{rem:aPosInt_prop}(2),(3), we have
\begin{align*}
 &(1-\gamma)^n\langle(\overline{D}_1;\mathcal{V}_1)\cdots (\overline{D}_n;\mathcal{V}_n)\rangle\cdot\overline{D}_{n+1}\cdots\overline{D}_{\dim X+1} \\
 &\quad \leq \Biggl\langle\Biggl(\overline{D}_1+\sum_{b=1}^{p_1}\varepsilon_{1b}\overline{E}_{1b}+\sum_{f=1}^{r_1}(0,\varphi_{1f}[v_{1f}]);\mathcal{V}_1+\sum_{d=1}^{q_1}\delta_{1d}\mathcal{W}_{1d}\Biggr) \\
 &\qquad \cdots \Biggl(\overline{D}_n+\sum_{b=1}^{p_n}\varepsilon_{nb}\overline{E}_{nb}+\sum_{f=1}^{r_n}(0,\varphi_{nf}[v_{nf}]);\mathcal{V}_n+\sum_{d=1}^{q_n}\delta_{nd}\mathcal{W}_{nd}\Biggr)\Biggr\rangle \cdot\overline{D}_{n+1}\cdots\overline{D}_{\dim X+1} \\
 &\quad \leq (1+\gamma)^n\langle(\overline{D}_1;\mathcal{V}_1)\cdots (\overline{D}_n;\mathcal{V}_n)\rangle\cdot\overline{D}_{n+1}\cdots\overline{D}_{\dim X+1} 
\end{align*}
for every $\varepsilon_{ab},\delta_{cd}$ with $\max\{|\varepsilon_{ab}|,|\delta_{cd}|\}\leq\gamma_0\gamma$.
Hence the middle converges to $\langle(\overline{D}_1;\mathcal{V}_1)\cdots (\overline{D}_n;\mathcal{V}_n)\rangle\cdot\overline{D}_{n+1}\cdots\overline{D}_{\dim X+1}$ as $\max\{|\varepsilon_{ab}|,|\delta_{cd}|\}\to 0$.
\end{proof}

\begin{proposition}\label{prop:aPosInt_additive}
Let $n$ be an integer with $0\leq n\leq\dim X+1$, let $(\overline{D}_1;\mathcal{V}_1),\dots,(\overline{D}_n;\mathcal{V}_n)\in\aBVBig_{\RR,\RR}(X)$, and let $\overline{D}_{n+1},\dots,\overline{D}_{\dim X+1}\in\aNef_{\RR}(X)\cap\aBigCone_{\RR}(X)$.
\begin{enumerate}
\item Let $U$ be a nonempty open subset of $\Spec(O_K)$ over which a model of definition for $\overline{D}_1,\dots,\overline{D}_n$ exist, and let $\delta>0$.
One has
\begin{align*}
 &\langle(\overline{D}_1;\mathcal{V}_1)\cdots (\overline{D}_n;\mathcal{V}_n)\rangle\cdot\overline{D}_{n+1}\cdots\overline{D}_{\dim X+1} \\
 = &\sup_{(\mu,\overline{M}_i)\in\aTheta_{\rm amp}(\overline{D}_i,\mathcal{V}_i)}\adeg\left(\overline{M}_1\cdots\overline{M}_n\cdot\mu^*\overline{D}_{n+1}\cdots\mu^*\overline{D}_{\dim X+1}\right) \\
 = &\sup_{(\mathscr{X},\mathscr{D}_i)\in\aTheta_{U,\delta}(\overline{D}_i)}\sup_{(\widetilde{\mu},\overline{\mathscr{M}}_i)\in\aTheta_{\rm ad}(\overline{\mathscr{D}}_i^{\rm ad};\mathcal{V}_i)}\adeg\left(\overline{\mathscr{M}}_1^{\rm ad}\cdots\overline{\mathscr{M}}_n^{\rm ad}\cdot\widetilde{\mu}^*\overline{D}_{n+1}\cdots\widetilde{\mu}^*\overline{D}_{\dim X+1}\right).
\end{align*}
\item For $\overline{D}_{n+1}^{(1)},\overline{D}_{n+1}^{(2)}\in\aNef_{\RR}(X)\cap\aBigCone_{\RR}(X)$ one has
\begin{align*}
 &\langle(\overline{D}_1;\mathcal{V}_1)\cdots (\overline{D}_n;\mathcal{V}_n)\rangle\cdot(\overline{D}_{n+1}^{(1)}+\overline{D}_{n+1}^{(2)})\cdot\overline{D}_{n+2}\cdots\overline{D}_{\dim X+1} \\
 &\qquad\quad =\langle(\overline{D}_1;\mathcal{V}_1)\cdots (\overline{D}_n;\mathcal{V}_n)\rangle\cdot\overline{D}_{n+1}^{(1)}\cdots\overline{D}_{\dim X+1} \\
 &\qquad\qquad\qquad\qquad\qquad\quad +\langle(\overline{D}_1;\mathcal{V}_1)\cdots (\overline{D}_n;\mathcal{V}_n)\rangle\cdot\overline{D}_{n+1}^{(2)}\cdots\overline{D}_{\dim X+1}.
\end{align*}
\item  Let $e_1,\dots,e_r$ be integers such that $e_1+\dots+e_r=n$.
Then
\begin{align*}
 &\langle(\overline{D}_1;\mathcal{V}_1)^{\cdot e_1}\cdots (\overline{D}_r;\mathcal{V}_r)^{\cdot e_r}\rangle\cdot\overline{D}_{n+1}\cdots\overline{D}_{\dim X+1} \\
 &\qquad\quad =\sup_{(\mu,\overline{M}_i)\in\aTheta(\overline{D}_i;\mathcal{V}_i)}\adeg\left(\overline{M}_1^{\cdot e_1}\cdots\overline{M}_r^{\cdot e_r}\cdot\mu^*\overline{D}_{n+1}\cdots\mu^*\overline{D}_{\dim X+1}\right),
\end{align*}
where $\langle(\overline{D}_1;\mathcal{V}_1)^{\cdot e_1}\cdots (\overline{D}_r;\mathcal{V}_r)^{\cdot e_r}\rangle\cdot\overline{D}_{n+1}\cdots\overline{D}_{\dim X+1}$ is an abbreviation for
\[
 \langle\overbrace{(\overline{D}_1;\mathcal{V}_1)\cdots(\overline{D}_1;\mathcal{V}_1)}^{e_1}\cdots \overbrace{(\overline{D}_r;\mathcal{V}_r)\cdots(\overline{D}_r;\mathcal{V}_r)}^{e_r}\rangle\cdot\overline{D}_{n+1}\cdots\overline{D}_{\dim X+1}.
\]
\end{enumerate}
\end{proposition}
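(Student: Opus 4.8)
The plan is to deduce all three assertions from the cofinality and filteredness results of Proposition~\ref{prop:approx_by_models}, together with one elementary observation: if in an arithmetic intersection number $\adeg(\overline{N}_1\cdots\overline{N}_{\dim X+1})$ all but one factor are nef (and big) and we replace one factor $\overline{N}_{i}$ by a larger adelic $\RR$\nobreakdash-Cartier divisor $\overline{N}_i'$ with $\overline{N}_i'-\overline{N}_i$ \emph{effective}, then the intersection number does not decrease, since by multilinearity of $\adeg$ (Notation and terminology~6) the difference equals $\adeg$ of $\dim X$ nef divisors times one effective (hence pseudo\nobreakdash-effective) divisor, which is nonnegative by property~(iii) there. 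I will use this ``one factor at a time'' monotonicity repeatedly.

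For assertion~(1), the inclusions $\aTheta_{\rm amp}(\overline{D}_i;\mathcal{V}_i)\subset\aTheta(\overline{D}_i;\mathcal{V}_i)$ give ``$\geq$'' for the first supremum, so it suffices to produce, from an arbitrary $(\mu\colon X'\to X,\overline{M}_i)$ on a common modification and a given $\delta\in(0,1)$, a family $(\mu,\overline{H}_i)\in\aTheta_{\rm amp}(\overline{D}_i;\mathcal{V}_i)$ with intersection number at least $(1-\delta)^n$ times that of $(\mu,\overline{M}_i)$; this is exactly Proposition~\ref{prop:approx_by_models}(3), which yields ample $\overline{H}_i$ on $X'$ with $\overline{H}_i\leq\mu_*^{-1}(\overline{D}_i;\mathcal{V}_i)$ and $\overline{H}_i-(1-\delta)\overline{M}_i$ nef and w\nobreakdash-ample, so that expanding $\adeg\bigl(\overline{H}_1\cdots\overline{H}_n\cdot\mu^*\overline{D}_{n+1}\cdots\mu^*\overline{D}_{\dim X+1}\bigr)$ along $\overline{H}_i=(1-\delta)\overline{M}_i+(\overline{H}_i-(1-\delta)\overline{M}_i)$ produces a sum of intersection numbers of nef and big divisors, all nonnegative, with leading term $(1-\delta)^n\adeg(\overline{M}_1\cdots\overline{M}_n\cdot\mu^*\overline{D}_{n+1}\cdots\mu^*\overline{D}_{\dim X+1})$; letting $\delta\to0$ and taking suprema gives the first equality. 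The second equality is obtained the same way: Proposition~\ref{prop:approx_by_models}(4) lets me replace an ample approximation on $X'$ by an admissible approximation over a model $(\mathscr{X},\mathscr{D}_i)\in\aTheta_{U,\delta}(\overline{D}_i)$ up to an error $\delta\sum_{v\notin U'}(0,[v])$ and a factor $(1-\delta)$, and conversely, since $\overline{\mathscr{D}}_i^{\rm ad}\leq\overline{D}_i$ forces $\mu_*^{-1}(\overline{\mathscr{D}}_i^{\rm ad};\mathcal{V}_i)\leq\mu_*^{-1}(\overline{D}_i;\mathcal{V}_i)$, any admissible approximation over such a model lies in $\aTheta(\overline{D}_i;\mathcal{V}_i)$, so its contribution is at most the left\nobreakdash-hand side; one also uses that the arithmetic intersection numbers on models agree with those computed adelically, and that the $\overline{\mathscr{D}}_i^{\rm ad}$ converge to $\overline{D}_i$ as $\delta\to0$ and $U\nearrow\Spec(O_K)$.

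For assertion~(2) the inequality ``$\leq$'' is trivial since a supremum of a sum is at most the sum of the suprema. For ``$\geq$'', fix $\epsilon>0$ and, by assertion~(1), choose admissible approximations over models that are $\epsilon$\nobreakdash-close to the two suprema on the right; passing to a common modification and using that $\aTheta_{\rm ad}(\overline{\mathscr{D}}_i^{\rm ad};\mathcal{V}_i)$ is filtered (Proposition~\ref{prop:approx_by_models}(2)), dominate both families by a single family $(\widetilde{\mu},\overline{\mathscr{M}}_i)$ with each $\overline{\mathscr{M}}_i^{\rm ad}$ at least the pulled\nobreakdash-back members of both families. Replacing the old members by $\overline{\mathscr{M}}_i^{\rm ad}$ one index at a time and invoking the monotonicity above (each step changes the product by $\adeg$ of $\dim X$ nef divisors times one effective divisor, so does not decrease it), the intersection number of $(\widetilde{\mu},\overline{\mathscr{M}}_i)$ against $\widetilde{\mu}^*\overline{D}_{n+1}^{(1)}\cdots$ and against $\widetilde{\mu}^*\overline{D}_{n+1}^{(2)}\cdots$ is simultaneously at least the respective supremum minus $\epsilon$; multilinearity of $\adeg$ then gives $\adeg\bigl(\overline{\mathscr{M}}_1^{\rm ad}\cdots\overline{\mathscr{M}}_n^{\rm ad}\cdot\widetilde{\mu}^*(\overline{D}_{n+1}^{(1)}+\overline{D}_{n+1}^{(2)})\cdot\widetilde{\mu}^*\overline{D}_{n+2}\cdots\bigr)\geq(\text{right-hand side})-2\epsilon$, and $\epsilon\to0$ finishes. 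Assertion~(3) uses the same device: the diagonal family (the same approximation of $(\overline{D}_i;\mathcal{V}_i)$ used for all $e_i$ copies) is among the families allowed in the defining supremum, so ``$\leq$'' holds; for ``$\geq$'', dominate an arbitrary tuple of independent approximations of $(\overline{D}_i;\mathcal{V}_i)$ by a single admissible one and replace the $e_i$ independent copies by it one at a time, each replacement not decreasing the intersection number by the same monotonicity.

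The main obstacle is bookkeeping rather than conceptual, and it lies in assertion~(1): matching the adelic approximations with the admissible approximations over $O_K$\nobreakdash-models while controlling, uniformly for all $n$ divisors at once, the three independent parameters — the open set $U$, the tolerance $\delta$ on the finite places outside $U$, and the Fujita\nobreakdash-type loss $(1-\delta)$ on the nef part. Once Proposition~\ref{prop:approx_by_models} is available this reduces to routine estimates, and assertions~(2) and~(3) then follow formally from the monotonicity observation, multilinearity of $\adeg$, and the filteredness of the system of admissible approximations.
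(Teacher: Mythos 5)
Your proposal is correct and follows essentially the same route as the paper: reduce, via the continuity of positive intersection numbers and Proposition~\ref{prop:approx_by_models}, to admissible approximations over $O_K$-models, then use the filteredness of $\aTheta_{\rm ad}(\overline{\mathscr{D}}_i;\mathcal{V}_i)$ together with the monotonicity of $\adeg$ (one pseudo-effective factor against $\dim X$ nef factors) to collapse independent suprema into a single one. The only difference is cosmetic: you spell out the first equality of (1) via Proposition~\ref{prop:approx_by_models}(3) and the $(1-\delta)^n$ expansion, a detail the paper leaves implicit, and you should just note that the choice of model $(\mathscr{X},\mathscr{D}_i)\in\aTheta_{U,\delta}(\overline{D}_i)$ in (2) can be made once, uniformly for both nef directions, before invoking filteredness.
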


\begin{proof}
(1): We show the second equality.
The inequality $\geq$ is clear.
By Proposition~\ref{prop:aPosInt_continuous}, there exist $(\mathscr{X},\mathscr{D}_1)\in\aTheta_{U,\delta}(\overline{D}_1),\dots,(\mathscr{X},\mathscr{D}_n)\in\aTheta_{U,\delta}(\overline{D}_n)$ such that
\begin{align*}
 &\langle(\overline{D}_1;\mathcal{V}_1)\cdots (\overline{D}_n;\mathcal{V}_n)\rangle\cdot\overline{D}_{n+1}\cdots\overline{D}_{\dim X+1} \\
 &\qquad\qquad\qquad \leq\langle(\overline{\mathscr{D}}_1^{\rm ad};\mathcal{V}_1)\cdots (\overline{\mathscr{D}}_n^{\rm ad};\mathcal{V}_n)\rangle\cdot\overline{D}_{n+1}\cdots\overline{D}_{\dim X+1}+\varepsilon.
\end{align*}
Take $(\mu,\overline{M}_1)\in\aTheta(\overline{\mathscr{D}}_1^{\rm ad};\mathcal{V}_1),\dots,(\mu,\overline{M}_n)\in\aTheta(\overline{\mathscr{D}}_n^{\rm ad};\mathcal{V}_n)$ such that
\begin{align*}
 &\langle(\overline{\mathscr{D}}_1^{\rm ad};\mathcal{V}_1)\cdots (\overline{\mathscr{D}}_n^{\rm ad};\mathcal{V}_n)\rangle\cdot\overline{D}_{n+1}\cdots\overline{D}_{\dim X+1} \\
 &\qquad\qquad\qquad \leq\adeg\left(\overline{M}_1\cdots\overline{M}_n\cdot\mu^*\overline{D}_{n+1}\cdots\mu^*\overline{D}_{\dim X+1}\right)+\varepsilon.
\end{align*}
Let $U'$ be a nonempty open subset of $\Spec(O_K)$ over which models of definition for $\overline{M}_1,\dots,\overline{M}_n$ exist.
We can choose a sufficiently small $\delta>0$ such that
\begin{align*}
 &\adeg\left(\overline{M}_1\cdots\overline{M}_r\cdot\mu^*\overline{D}_{n+1}\cdots\mu^*\overline{D}_{\dim X+1}\right) \\
 &\qquad \leq\adeg\BIGl(\BIGl((1-\delta)\overline{M}_1-\delta\sum_{v\in M_K\setminus U'}(0,[v])\BIGr)\cdots\BIGl((1-\delta)\overline{M}_r-\delta\sum_{v\in M_K\setminus U'}(0,[v])\BIGr) \\
 &\qquad\qquad\qquad\qquad\qquad\qquad\qquad\qquad\qquad\qquad \cdot\mu^*\overline{D}_{n+1}\cdots\mu^*\overline{D}_{\dim X+1}\BIGr)+\varepsilon.
\end{align*}
All in all, we can find, by Proposition~\ref{prop:approx_by_models}, $(\widetilde{\mu},\overline{\mathscr{H}}_1)\in\aTheta_{\rm ad}(\overline{\mathscr{D}}_1;\mathcal{V}_1),\dots,(\widetilde{\mu},\overline{\mathscr{H}}_n)\in\aTheta_{\rm ad}(\overline{\mathscr{D}}_n;\mathcal{V}_n)$ such that
\begin{align*}
 &\langle(\overline{D}_1;\mathcal{V}_1)\cdots (\overline{D}_n;\mathcal{V}_n)\rangle\cdot\overline{D}_{n+1}\cdots\overline{D}_{\dim X+1} \\
 &\qquad \leq\adeg\left(\overline{M}_1\cdots\overline{M}_r\cdot\mu^*\overline{D}_{n+1}\cdots\mu^*\overline{D}_{\dim X+1}\right)+2\varepsilon \\
 &\qquad \leq\adeg\left((\overline{\mathscr{H}}_1^{\rm ad})\cdots(\overline{\mathscr{H}}_n^{\rm ad})\cdot\widetilde{\mu}^*\overline{D}_{n+1}\cdots\widetilde{\mu}^*\overline{D}_{\dim X+1}\right)+3\varepsilon \\
 &\qquad \leq\sup_{(\widetilde{\mu},\overline{\mathscr{M}}_i)\in\aTheta_{\rm ad}(\overline{\mathscr{D}}_i^{\rm ad};\mathcal{V}_i)}\adeg\left(\overline{\mathscr{M}}_1^{\rm ad}\cdots\overline{\mathscr{M}}_n^{\rm ad}\cdot\widetilde{\mu}^*\overline{D}_{n+1}\cdots\widetilde{\mu}^*\overline{D}_{\dim X+1}\right)+3\varepsilon.
\end{align*}
So we conclude the proof.

(2): The inequality $\leq$ is clear.
Let $U$ be a nonempty open subset of $\Spec(O_K)$ over which models of definition for $\overline{D}_1,\dots,\overline{D}_n$ exist.
Given any $\varepsilon>0$, there exists a $\delta>0$ such that
\begin{align*}
 &\langle(\overline{D}_1;\mathcal{V}_1)\cdots (\overline{D}_n;\mathcal{V}_n)\rangle\cdot\overline{D}_{n+1}^{(i)}\cdots\overline{D}_{\dim X+1} \\
 &\qquad\quad \leq \left\langle\left(\overline{D}_1-\delta\sum_{v\in M_K\setminus U}(0,[v]);\mathcal{V}_1\right)\cdots \left(\overline{D}_n-\delta\sum_{v\in M_K\setminus U}(0,[v]);\mathcal{V}_n\right)\right\rangle \\
 &\qquad\qquad\qquad\qquad\qquad\qquad\qquad\qquad\qquad\qquad\qquad \cdot\overline{D}_{n+1}^{(i)}\cdots\overline{D}_{\dim X+1}+\varepsilon.
\end{align*}
for $i=1,2$ (see Proposition~\ref{prop:aPosInt_continuous}).
By Proposition~\ref{prop:approx_by_models}(1) and Remark~\ref{rem:aPosInt_prop}(2), there exist $(\mathscr{X},\mathscr{D}_1)\in\aTheta_{U,\delta}(\overline{D}_1),\dots,(\mathscr{X},\mathscr{D}_n)\in\aTheta_{U,\delta}(\overline{D}_n)$ such that
\begin{align*}
 &\langle(\overline{D}_1;\mathcal{V}_1)\cdots (\overline{D}_n;\mathcal{V}_n)\rangle\cdot\overline{D}_{n+1}^{(i)}\cdots\overline{D}_{\dim X+1} \\
 &\qquad\qquad\qquad \leq\langle(\overline{\mathscr{D}}_1^{\rm ad};\mathcal{V}_1)\cdots (\overline{\mathscr{D}}_n^{\rm ad};\mathcal{V}_n)\rangle\cdot\overline{D}_{n+1}^{(i)}\cdots\overline{D}_{\dim X+1}+\varepsilon
\end{align*}
for $i=1,2$.
Hence, by Proposition~\ref{prop:approx_by_models}(2),(4),
\begin{align*}
 &\langle(\overline{D}_1;\mathcal{V}_1)\cdots (\overline{D}_n;\mathcal{V}_n)\rangle\cdot\overline{D}_{n+1}^{(1)}\cdots\overline{D}_{\dim X+1} \\
 &\qquad\qquad\qquad\qquad\qquad +\langle(\overline{D}_1;\mathcal{V}_1)\cdots (\overline{D}_n;\mathcal{V}_n)\rangle\cdot\overline{D}_{n+1}^{(2)}\cdots\overline{D}_{\dim X+1} \\
 &\qquad\qquad \leq\langle(\overline{\mathscr{D}}_1^{\rm ad};\mathcal{V}_1)\cdots (\overline{\mathscr{D}}_n^{\rm ad};\mathcal{V}_n)\rangle\cdot\overline{D}_{n+1}^{(1)}\cdots\overline{D}_{\dim X+1} \\
 &\qquad\qquad\qquad\qquad\qquad\qquad\qquad +\langle(\overline{\mathscr{D}}_1^{\rm ad};\mathcal{V}_1)\cdots (\overline{\mathscr{D}}_n^{\rm ad};\mathcal{V}_n)\rangle\cdot\overline{D}_{n+1}^{(2)}\cdots\overline{D}_{\dim X+1}+2\varepsilon \\
 &\qquad\qquad =\langle(\overline{\mathscr{D}}_1^{\rm ad};\mathcal{V}_1)\cdots (\overline{\mathscr{D}}_n^{\rm ad};\mathcal{V}_n)\rangle\cdot(\overline{D}_{n+1}^{(1)}+\overline{D}_{n+1}^{(2)})\cdots\overline{D}_{\dim X+1}+2\varepsilon \\
 &\qquad\qquad \leq\langle(\overline{D}_1;\mathcal{V}_1)\cdots (\overline{D}_n;\mathcal{V}_n)\rangle\cdot(\overline{D}_{n+1}^{(1)}+\overline{D}_{n+1}^{(2)})\cdots\overline{D}_{\dim X+1}+2\varepsilon
\end{align*}
for every $\varepsilon>0$.

(3): The inequality $\geq$ is clear.
Let $U$ be a nonempty open subset of $\Spec(O_K)$ over which models of definition for $\overline{D}_1,\dots,\overline{D}_r$ exist.
Given any $\varepsilon>0$, there exists a $\delta>0$ such that
\begin{align*}
 &\langle(\overline{D}_1;\mathcal{V}_1)^{\cdot e_1}\cdots (\overline{D}_r;\mathcal{V}_r)^{\cdot e_r}\rangle\cdot\overline{D}_{n+1}\cdots\overline{D}_{\dim X+1} \\
 &\qquad \leq \left\langle\left(\overline{D}_1-\delta\sum_{v\in M_K\setminus U}(0,[v]);\mathcal{V}_1\right)^{\cdot e_r}\cdots \left(\overline{D}_r-\delta\sum_{v\in M_K\setminus U}(0,[v]);\mathcal{V}_r\right)^{\cdot e_r}\right\rangle \\
 &\qquad\qquad\qquad\qquad\qquad\qquad\qquad\qquad\qquad\qquad\qquad \cdot\overline{D}_{n+1}\cdots\overline{D}_{\dim X+1}+\varepsilon.
\end{align*}
(see Proposition~\ref{prop:aPosInt_continuous}).
By Proposition~\ref{prop:approx_by_models}(1) and Remark~\ref{rem:aPosInt_prop}(2), there exist $(\mathscr{X},\mathscr{D}_1)\in\aTheta_{U,\delta}(\overline{D}_1),\dots,(\mathscr{X},\mathscr{D}_n)\in\aTheta_{U,\delta}(\overline{D}_n)$ such that
\begin{align*}
 &\langle(\overline{D}_1;\mathcal{V}_1)^{\cdot e_1}\cdots (\overline{D}_r;\mathcal{V}_r)^{\cdot e_r}\rangle\cdot\overline{D}_{n+1}\cdots\overline{D}_{\dim X+1} \\
 &\qquad\qquad\qquad \leq\langle(\overline{\mathscr{D}}_1^{\rm ad};\mathcal{V}_1)^{\cdot e_1}\cdots (\overline{\mathscr{D}}_r^{\rm ad};\mathcal{V}_r)^{\cdot e_r}\rangle\cdot\overline{D}_{n+1}\cdots\overline{D}_{\dim X+1}+\varepsilon.
\end{align*}
Hence, by Proposition~\ref{prop:approx_by_models}(2),(4),
\begin{align*}
 &\langle(\overline{D}_1;\mathcal{V}_1)^{\cdot e_1}\cdots (\overline{D}_r;\mathcal{V}_r)^{\cdot e_r}\rangle\cdot\overline{D}_{n+1}\cdots\overline{D}_{\dim X+1} \\
 &\qquad \leq\langle(\overline{\mathscr{D}}_1^{\rm ad};\mathcal{V}_1)^{\cdot e_1}\cdots (\overline{\mathscr{D}}_r^{\rm ad};\mathcal{V}_r)^{\cdot e_r}\rangle\cdot\overline{D}_{n+1}\cdots\overline{D}_{\dim X+1}+\varepsilon \\
 &\qquad =\sup_{(\widetilde{\mu},\overline{\mathscr{M}}_i)\in\aTheta_{\rm ad}(\overline{\mathscr{D}}_i;\mathcal{V}_i)}\adeg\left((\overline{\mathscr{M}}_1^{\rm ad})^{\cdot e_1}\cdots(\overline{\mathscr{M}}_r^{\rm ad})^{\cdot e_r}\cdot\widetilde{\mu}^*\overline{D}_{n+1}\cdots\widetilde{\mu}^*\overline{D}_{\dim X+1}\right)+\varepsilon \\
 &\qquad \leq\sup_{(\mu,\overline{M}_i)\in\aTheta(\overline{D}_i;\mathcal{V}_i)}\adeg\left(\overline{M}_1^{\cdot e_1}\cdots\overline{M}_r^{\cdot e_r}\cdot\mu^*\overline{D}_{n+1}\cdots\mu^*\overline{D}_{\dim X+1}\right)+\varepsilon
\end{align*}
for every $\varepsilon>0$.
\end{proof}

\begin{definition}\label{defn:aPosInt}
By Proposition~\ref{prop:aPosInt_additive}(2), we can extend the map (\ref{eqn:aPosInt_first_version}) to $\aBVBig_{\RR,\RR}(X)^{\times n}\times\aInt_{\RR}(X)^{\times(\dim X+1-n)} \to \RR$,
\begin{align*}
 &((\overline{D}_1;\mathcal{V}_1),\dots, (\overline{D}_n;\mathcal{V}_n);\overline{D}_{n+1},\dots,\overline{D}_{\dim X+1}) \\
 &\qquad\qquad\qquad\qquad\qquad \mapsto\langle(\overline{D}_1;\mathcal{V}_1)\cdots (\overline{D}_n;\mathcal{V}_n)\rangle\cdot\overline{D}_{n+1}\cdots\overline{D}_{\dim X+1}.
\end{align*}

If $n=\dim X$, then, by using the same arguments as in \cite[Proposition~3.10(3)]{IkomaCon}, we have a map $\aBVBig_{\RR,\RR}(X)^{\times \dim X}\times\aDiv_{\RR}(X) \to \RR$,
\begin{align*}
 &((\overline{D}_1;\mathcal{V}_1),\dots, (\overline{D}_{\dim X};\mathcal{V}_{\dim X});\overline{D}_{\dim X+1}) \\
 &\qquad\qquad\qquad\qquad\qquad \mapsto\langle(\overline{D}_1;\mathcal{V}_1)\cdots (\overline{D}_{\dim X};\mathcal{V}_{\dim X})\rangle\cdot\overline{D}_{\dim X+1}.
\end{align*}
\end{definition}

\begin{theorem}\label{thm:approx1}
For every $(\overline{D};\mathcal{V})\in\aBVBig_{\RR,\RR}(X)$, one has
\[
 \avol(\overline{D};\mathcal{V})=\langle(\overline{D};\mathcal{V})^{\cdot(\dim X+1)}\rangle.
\]
\end{theorem}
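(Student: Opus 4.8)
The plan is to prove the two inequalities separately, in the spirit of the arithmetic Fujita-type approximation of \cite[section~3]{IkomaCon}, the new feature being that the graded linear series is cut down by the base condition.

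First I would treat the easy inequality $\langle(\overline{D};\mathcal{V})^{\cdot(\dim X+1)}\rangle\le\avol(\overline{D};\mathcal{V})$. The preliminary observation is that $\avol$ is monotone on big pairs: if $(\overline{D}_1;\mathcal{V}_1)\preceq(\overline{D}_2;\mathcal{V}_2)$ with both big, then fixing a big pair $(\overline{A};\mathcal{W})$ and a small $\varepsilon>0$ one writes $(\overline{D}_2;\mathcal{V}_2)+\varepsilon(\overline{A};\mathcal{W})=(\overline{D}_1;\mathcal{V}_1)+\bigl[(\overline{D}_2-\overline{D}_1;\mathcal{V}_2-\mathcal{V}_1)+\varepsilon(\overline{A};\mathcal{W})\bigr]$ with the bracketed pair big, so Theorem~\ref{thm:Boucksom_Chen}(3) gives $\avol(\overline{D}_2+\varepsilon\overline{A};\mathcal{V}_2+\varepsilon\mathcal{W})^{1/(\dim X+1)}\ge\avol(\overline{D}_1;\mathcal{V}_1)^{1/(\dim X+1)}$, and letting $\varepsilon\to0$ with Corollary~\ref{cor:cont} yields $\avol(\overline{D}_2;\mathcal{V}_2)\ge\avol(\overline{D}_1;\mathcal{V}_1)$. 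Now for $(\mu:X'\to X,\overline{M})\in\aTheta(\overline{D};\mathcal{V})$ the divisor $\overline{M}$ is nef, so $\adeg(\overline{M}^{\cdot(\dim X+1)})=\avol(\overline{M})$ by Notation and terminology~6(ii); since $\overline{M}\preceq\mu_*^{-1}(\overline{D};\mathcal{V})$, monotonicity gives $\avol(\overline{M})\le\avol(\mu_*^{-1}(\overline{D};\mathcal{V}))=\avol(\overline{D};\mathcal{V})$, where the last equality holds because Lemma~\ref{lem:birat_small_sections}(1) identifies $\aHz(m\overline{D};m\mathcal{V})$ with $\aHz(m\mu_*^{-1}(\overline{D};\mathcal{V}))$ for all $m$. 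Taking the supremum over $\aTheta(\overline{D};\mathcal{V})$, which by Proposition~\ref{prop:aPosInt_additive}(3) equals $\langle(\overline{D};\mathcal{V})^{\cdot(\dim X+1)}\rangle$, proves this inequality.

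For the reverse inequality $\avol(\overline{D};\mathcal{V})\le\langle(\overline{D};\mathcal{V})^{\cdot(\dim X+1)}\rangle$ I would, after replacing $\mathcal{V}$ by $\mathcal{V}_+$ (which affects neither side, by Remark~\ref{rem:general_rule1}(1)), fix a valuation $\bm{\nu}$ of $\Rat(X)$ of rational rank $\dim X$. For every sufficiently divisible $m$, Lemma~\ref{lem:approx_Zariski} supplies an approximation $(\varphi_m:X_m\to X,\overline{M}_m)\in\aTheta(\overline{D};\mathcal{V})$ with $\overline{M}_m:=\overline{M}(m\overline{D};m\mathcal{V})/m$, and moreover $\aHz(\overline{M}(m\overline{D};m\mathcal{V}))\overset{\varphi_m^*}{=}\aHz(\varphi_m^*(m\overline{D});(m\mathcal{V})^{\varphi_m})$ and $\overline{M}(m\overline{D};m\mathcal{V})\le(\varphi_m^*(m\overline{D});(m\mathcal{V})^{\varphi_m})$. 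Since $\overline{M}(m\overline{D};m\mathcal{V})$ is nef and big, Theorem~\ref{thm:Boucksom_Chen}(1) computes $\adeg(\overline{M}_m^{\cdot(\dim X+1)})=\avol(\overline{M}_m)=(\dim X+1)![K:\QQ]\cdot\vol(\aDelta_{\bm{\nu}}(\overline{M}_m))$, while $\avol(\overline{D};\mathcal{V})=(\dim X+1)![K:\QQ]\cdot\vol(\aDelta_{\bm{\nu}}(\overline{D};\mathcal{V}))$. As $\langle(\overline{D};\mathcal{V})^{\cdot(\dim X+1)}\rangle\ge\adeg(\overline{M}_m^{\cdot(\dim X+1)})$ for each such $m$, it suffices to prove $\vol(\aDelta_{\bm{\nu}}(\overline{M}_m))\to\vol(\aDelta_{\bm{\nu}}(\overline{D};\mathcal{V}))$ as $m\to\infty$ along sufficiently divisible integers. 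The inclusion $\aDelta_{\bm{\nu}}(\overline{M}_m)\subset\aDelta_{\bm{\nu}}(\overline{D};\mathcal{V})$ comes from passing to multiples $j\overline{M}(m\overline{D};m\mathcal{V})\le(\varphi_m^*(jm\overline{D});(jm\mathcal{V})^{\varphi_m})$, using Lemma~\ref{lem:birat_small_sections} and the invariance of $\bm{\nu}$ under pull-back of rational functions to get inclusions of the corresponding archimedean-filtered pieces; the reverse exhaustion comes from Lemma~\ref{lem:approx_Zariski}(1), which says $\aHz(\overline{M}(m\overline{D};m\mathcal{V}))$ generates $\mathcal{O}_{X_m}(M(m\overline{D};m\mathcal{V}))$ and has the same image under $\bm{\nu}$ as the level-$m$ small sections of $(\overline{D};\mathcal{V})$, so that $m\cdot\aDelta_{\bm{\nu}}(\overline{M}_m)$ contains the level-$m$ finite-stage approximation of $\aDelta_{\bm{\nu}}(\overline{D};\mathcal{V})$. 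Letting $m\to\infty$ then concludes.

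The main obstacle is precisely this last convergence --- an \emph{arithmetic Fujita approximation} at the level of the finite-stage arithmetic Okounkov bodies. It rests on a Khovanskii--Zariski-type semigroup estimate: one must show that the semigroup of Okounkov data (valuation value together with archimedean filtration value) of $\overline{M}(m\overline{D};m\mathcal{V})$ generated in degree one already recovers, after rescaling by $m$, the degree-$m$ data of $(\overline{D};\mathcal{V})$; the archimedean bookkeeping is handled exactly as in \cite[section~3]{IkomaCon} and \cite[Proposition~4.7]{IkomaRem}, via the $\min^{\varphi}$-construction of section~\ref{subsec:Adelic_Metrics} and the shifts $2\lambda_{\phi}[\infty]$ built into (\ref{eqn:defn_Mm}). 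The genuinely new point is only that the base condition replaces the full graded linear series by the graded linear subseries $\bigoplus_m H^0(mD;m\mathcal{V})$, which still contains an ample series by the very definition of bigness of a pair, so that the Boucksom--Chen theory of section~\ref{subsec:Boucksom_Chen} applies without change once the three properties in Lemma~\ref{lem:approx_Zariski} are in hand.
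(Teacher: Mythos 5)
Your proposal follows essentially the same route as the paper: both directions rest on Lemma~\ref{lem:approx_Zariski} together with the Boucksom--Chen/Okounkov-body machinery of Theorem~\ref{thm:Boucksom_Chen}, the paper phrasing the hard inequality via section counts of $\aHz(mm_2\overline{D};mm_2\mathcal{V})=\aHz(\overline{M}(mm_2\overline{D};mm_2\mathcal{V}))$ where you phrase it via volumes of arithmetic Okounkov bodies, which is equivalent by Theorem~\ref{thm:Boucksom_Chen}(1). One point to repair: Lemma~\ref{lem:approx_Zariski} is stated only for $(\overline{D};\mathcal{V})\in\aBVDiv_{\ZZ,\RR}(X)$, so ``sufficiently divisible $m$'' is of no help when $D$ is a genuine $\RR$-Cartier divisor; the paper first proves the inequality for integral $\overline{D}$ and then passes to arbitrary $\RR$-coefficients by homogeneity and continuity (Theorem~\ref{thm:Boucksom_Chen}(2), Proposition~\ref{prop:aPosInt_continuous}, Corollary~\ref{cor:cont}), a reduction you should state explicitly.
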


\begin{proof}
The inequality $\geq$ results from Corollary~\ref{cor:cont}.
First, we assume $(\overline{D};\mathcal{V})\in\aBVBig_{\ZZ,\RR}(X)$.
For any $\varepsilon>0$, there exists an $m_1\geq 1$ such that
\begin{equation}
 \avol(\overline{D};\mathcal{V})\leq\frac{\log\sharp\aHz(m\overline{D};m\mathcal{V})}{m^{\dim X+1}/(\dim X+1)!}+\varepsilon
\end{equation}
for every $m\geq m_1$ (see Theorem~\ref{thm:Boucksom_Chen}).

By Lemma~\ref{lem:approx_Zariski}(2), there exists an $m_2\geq m_1$ such that
\[
 (\varphi_{m_2m},\overline{M}(m_2m\overline{D};m_2m\mathcal{V})/(m_2m))\in\aTheta(\overline{D};\mathcal{V})
\]
for every $m\geq 1$.
By Lemma~\ref{lem:approx_Zariski}(1) and Theorem~\ref{thm:Boucksom_Chen},
\begin{equation}
 \frac{\log\sharp\aHz(mm_2\overline{D};mm_2\mathcal{V})}{(mm_2)^{\dim X+1}/(\dim X+1)!}=\frac{\log\sharp\aHz(\overline{M}(mm_2\overline{D};mm_2\mathcal{V}))}{(mm_2)^{\dim X+1}/(\dim X+1)!}
\end{equation}
for every $m\geq 1$ and
\begin{equation}
 \frac{\log\sharp\aHz(\overline{M}(mm_2\overline{D};mm_2\mathcal{V}))}{(mm_2)^{\dim X+1}/(\dim X+1)!}\leq\frac{\avol(\overline{M}(m_2\overline{D};m_2\mathcal{V}))}{m_2^{\dim X+1}}+\varepsilon
\end{equation}
for every $m\gg 1$.

All in all, we have
\[
 \avol(\overline{D};\mathcal{V})\leq\frac{\avol(\overline{M}(m_2\overline{D};m_2\mathcal{V}))}{m_2^{\dim X+1}}+2\varepsilon\leq\langle(\overline{D};\mathcal{V})^{\cdot(\dim X+1)}\rangle+2\varepsilon
\]
for every $\varepsilon>0$.

In general, by homogeneity and continuity (see Proposition~\ref{prop:aPosInt_continuous} and Theorem~\ref{thm:Boucksom_Chen}), the assertion holds for every $(\overline{D};\mathcal{V})\in\aBVBig_{\RR,\RR}(X)$.
\end{proof}

\section{Differentiability of the arithmetic volumes}\label{sec:Diff}

\subsection{Proof of Theorem~A}\label{subsec:Main}

We recall the arithmetic Siu inequality of Yuan and its consequence (see \cite[Theorem~2.2]{Yuan07} and \cite[Proposition~6.1]{IkomaCon}).

\begin{proposition}\label{prop:Yuan}
\begin{enumerate}
\item Let $\overline{M},\overline{N}$ be nef adelic $\RR$-Cartier divisors on $X$.
Then
\[
 \avol(\overline{M}-\overline{N})\geq\adeg\left(\overline{M}^{\cdot(\dim X+1)}\right)-(\dim X+1)\adeg\left(\overline{M}^{\cdot\dim X}\cdot\overline{N}\right).
\]
\item Let $\overline{M},\overline{D}'\in\aDiv_{\RR}(X)$.
Suppose that $\overline{M}$ is nef, and there exists a nef and big adelic $\RR$-Cartier divisor $\overline{A}$ such that $\overline{A}\pm\overline{D}'$ is nef and $\overline{A}-\overline{M}$ is pseudo-effective.
Then for all $r\in\RR$
\begin{align*}
 &\avol(\overline{M}+r\overline{D}')-\avol(\overline{M}) \\
 &\qquad\qquad\quad \geq (\dim X+1)\adeg(\overline{M}^{\cdot\dim X}\cdot\overline{D}')\cdot r-C(|r|)\avol(\overline{A})\cdot r^2,
\end{align*}
where $C(|r|):=2\dim X(\dim X+1)(1+|r|)^{\dim X-1}$.
\end{enumerate}
\end{proposition}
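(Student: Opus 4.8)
The statement is recalled from \cite{Yuan07} and \cite{IkomaCon}, so the plan is to reduce (1) to Yuan's arithmetic Siu inequality and to deduce (2) from (1) by an elementary expansion of the intersection pairing.

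\emph{Part (1).} We may assume the right-hand side is positive, for otherwise there is nothing to prove since $\avol(\overline{M}-\overline{N})\geq 0$; it is then part of the conclusion that $\overline{M}-\overline{N}$ is big. The inequality is Yuan's theorem, originally established for nef arithmetic $\QQ$-Cartier divisors on a normal model with smooth generic fibre. I would first reduce to that setting. By the continuity of $\avol$ on the nef cone (Theorem~\ref{thm:Boucksom_Chen}, Corollary~\ref{cor:cont}) together with the multilinearity and continuity of $\adeg$ (Notation and terminology~6), it suffices to prove the inequality for nef adelic $\QQ$-Cartier divisors $\overline{M},\overline{N}$; then, via the approximation theorem \cite[Theorem~4.1.3]{MoriwakiAdelic}, I would approximate $\overline{M}$ and $\overline{N}$ by nef arithmetic $\QQ$-Cartier divisors on a common normal $O_K$-model $\mathscr{X}$, which after a generic resolution we may take with $\mathscr{X}_K$ smooth. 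At that point \cite[Theorem~2.2]{Yuan07} applies directly, and passing to the limit first over models and then over the $\QQ$-approximations, using again the continuity of $\avol$ and of $\adeg$, yields the claim.

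\emph{Part (2).} Put $d:=\dim X$ and treat $r>0$ and $r<0$ separately, the case $r=0$ being trivial. For $r>0$ set $\overline{B}:=\overline{A}+\overline{D}'$ and $\overline{P}_r:=\overline{M}+r\overline{B}$; then $\overline{P}_r$ is nef (a sum of the nef divisors $\overline{M}$ and $r\overline{B}$), $r\overline{A}$ is nef, and $\overline{M}+r\overline{D}'=\overline{P}_r-r\overline{A}$, so (1) gives
\[
 \avol(\overline{M}+r\overline{D}')\geq\adeg\bigl(\overline{P}_r^{\cdot(d+1)}\bigr)-(d+1)\adeg\bigl(\overline{P}_r^{\cdot d}\cdot r\overline{A}\bigr).
\]
Expanding both terms as polynomials in $r$ by multilinearity of $\adeg$, the constant term equals $\adeg(\overline{M}^{\cdot(d+1)})=\avol(\overline{M})$ by Notation and terminology~6(ii), the coefficient of $r$ equals $(d+1)\adeg(\overline{M}^{\cdot d}\cdot\overline{B})-(d+1)\adeg(\overline{M}^{\cdot d}\cdot\overline{A})=(d+1)\adeg(\overline{M}^{\cdot d}\cdot\overline{D}')$, and every remaining monomial has degree at least $2$ in $r$ and is an intersection number of nef adelic $\RR$-Cartier divisors drawn from $\overline{M},\overline{A},\overline{B}$. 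For $r<0$ one argues identically with $\overline{B}$ replaced by $\overline{A}-\overline{D}'$, which is nef by hypothesis, and with $r\overline{A}$ replaced by $|r|\overline{A}$; a short computation shows the constant and linear coefficients come out exactly the same.

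\emph{Estimating the remainder.} It remains to bound the degree-$\geq 2$ part $R(r)$ from below by $-C(|r|)\avol(\overline{A})\,r^2$. Here I would use the two positivity hypotheses: $\overline{A}-\overline{M}$ pseudo-effective gives $\overline{M}\preceq\overline{A}$, and $\overline{A}\pm\overline{D}'\in\aNef_{\RR}(X)$ gives $\overline{A}\pm\overline{D}'\preceq 2\overline{A}$. By monotonicity of the intersection pairing against nef classes and non-negativity of intersection numbers of nef divisors (Notation and terminology~6(iii)), each monomial $\adeg(\overline{M}^{\cdot a}\cdot(\overline{A}\pm\overline{D}')^{\cdot b}\cdot\overline{A}^{\cdot c})$ with $a+b+c=d+1$ satisfies $0\leq\adeg(\cdots)\leq 2^{b}\avol(\overline{A})$. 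Summing the binomial coefficients appearing in the two expansions then yields $|R(r)|\leq C(|r|)\avol(\overline{A})\,r^2$ with $C(|r|)=2d(d+1)(1+|r|)^{d-1}$, which is the asserted bound. The main obstacle lies entirely in part (1): Yuan's Siu inequality is the deep input, and the genuine work is the reduction of the adelic $\RR$-Cartier statement to Yuan's model setting via continuity and the approximation theorem; part (2) is then bookkeeping with the multilinear, monotone pairing.
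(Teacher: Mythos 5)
The paper does not actually prove this proposition: it is recalled from \cite[Theorem~2.2]{Yuan07} and \cite[Proposition~6.1]{IkomaCon}, so there is no internal argument to compare against. Your reconstruction follows exactly the route of those references: reduce (1) to Yuan's inequality on models via the approximation theorem and the continuity of $\avol$ and $\adeg$ (be aware that approximating a nef adelic divisor by \emph{nef} model divisors requires the usual twist by small vertical and archimedean terms, as in Theorem~\ref{thm:ample}(4) and Proposition~\ref{prop:approx_by_models}), and then obtain (2) by writing $\overline{M}+r\overline{D}'=(\overline{M}+|r|(\overline{A}\pm\overline{D}'))-|r|\overline{A}$, applying (1), and expanding multilinearly. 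Your identification of the constant and linear coefficients is correct, and the bound $0\leq\adeg(\overline{M}^{\cdot a}\cdot(\overline{A}\pm\overline{D}')^{\cdot b}\cdot\overline{A}^{\cdot c})\leq 2^{b}\avol(\overline{A})$ is justified by property (iii) of $\adeg$ together with $\overline{M}\preceq\overline{A}$ and $\overline{A}\pm\overline{D}'\preceq 2\overline{A}$.

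The one step that does not go through as written is the final bookkeeping. Writing $d=\dim X$ and $\overline{B}:=\overline{A}\pm\overline{D}'$, discarding the non-negative sum $\sum_{i\geq 2}\binom{d+1}{i}|r|^{i}\adeg(\overline{M}^{\cdot(d+1-i)}\cdot\overline{B}^{\cdot i})$ and bounding the subtracted sum term by term gives
\[
 (d+1)\sum_{i=1}^{d}\binom{d}{i}|r|^{i+1}2^{i}\avol(\overline{A})=(d+1)\,\frac{(1+2|r|)^{d}-1}{|r|}\,|r|^{2}\avol(\overline{A}),
\]
and $\bigl((1+2|r|)^{d}-1\bigr)/|r|$ exceeds $2d(1+|r|)^{d-1}$ as soon as $d\geq 3$ and $r\neq 0$ (for $d=3$, $|r|=1$ one gets $26$ versus $24$). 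So ``summing the binomial coefficients'' yields a constant of the shape $2d(d+1)(1+2|r|)^{d-1}$, not the stated $C(|r|)=2d(d+1)(1+|r|)^{d-1}$; to recover the exact constant you would need a sharper estimate than $\adeg(\overline{M}^{\cdot(d-i)}\cdot\overline{B}^{\cdot i}\cdot\overline{A})\leq 2^{i}\avol(\overline{A})$ for $i\geq 3$. This is harmless for every use of the proposition in the paper---the proof of Theorem~\ref{thm:diff_along_arith} only invokes it for $|r|\leq 1$ with the fixed constant $C=2^{\dim X}\dim X(\dim X+1)$, and any bound quadratic in $r$ with a constant depending polynomially on $|r|$ would do---but as a proof of the proposition as literally stated, the remainder estimate needs to be either sharpened or the constant relaxed.
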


The main purpose of this paper is the following.

\begin{theorem}\label{thm:diff_along_arith}
Let $(\overline{D};\mathcal{V})\in\aBVDiv_{\RR,\RR}(X)$ and $\overline{D}'\in\aDiv_{\RR}(X)$.
If $(\overline{D};\mathcal{V})$ is big, then the function
\[
 \RR\ni r\mapsto\avol(\overline{D}+r\overline{D}';\mathcal{V})\in\RR
\]
is two-sided differentiable at $r=0$, and
\[
 \lim_{r\to 0}\frac{\avol(\overline{D}+r\overline{D}';\mathcal{V})-\avol(\overline{D};\mathcal{V})}{r}=(\dim X+1)\langle(\overline{D};\mathcal{V})^{\cdot\dim X}\rangle\cdot\overline{D}'.
\]
\end{theorem}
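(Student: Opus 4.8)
The plan is to combine the concavity of the arithmetic volume (from the Brunn--Minkowski inequality, Theorem~\ref{thm:Boucksom_Chen}) with Yuan's arithmetic Siu inequality (Proposition~\ref{prop:Yuan}) evaluated along a Fujita-type approximating net of $(\overline{D};\mathcal{V})$.

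\emph{Reduction via concavity.} By Theorem~\ref{thm:abbig_cone}(1), the set $I:=\{r\in\RR\,:\,(\overline{D}+r\overline{D}';\mathcal{V})\in\aBVBig_{\RR,\RR}(X)\}$ is an open interval containing $0$, and by Theorem~\ref{thm:Boucksom_Chen}(2),(3) the function $\phi(r):=\avol(\overline{D}+r\overline{D}';\mathcal{V})^{1/(\dim X+1)}$ is concave and strictly positive on $I$: indeed $(\overline{D}+(\lambda r_1+(1-\lambda)r_2)\overline{D}';\mathcal{V})$ is the convex combination, inside $\aBVBig_{\RR,\RR}(X)$, of $\lambda(\overline{D}+r_1\overline{D}';\mathcal{V})$ and $(1-\lambda)(\overline{D}+r_2\overline{D}';\mathcal{V})$. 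Hence $r\mapsto\avol(\overline{D}+r\overline{D}';\mathcal{V})=\phi(r)^{\dim X+1}$ has one-sided derivatives $v_\pm$ at $0$ with $v_+\leq v_-$. Writing $L:=(\dim X+1)\langle(\overline{D};\mathcal{V})^{\cdot\dim X}\rangle\cdot\overline{D}'$, it therefore suffices to prove
\[
 \liminf_{r\to 0^+}\frac{\avol(\overline{D}+r\overline{D}';\mathcal{V})-\avol(\overline{D};\mathcal{V})}{r}\geq L,\qquad
 \limsup_{r\to 0^-}\frac{\avol(\overline{D}+r\overline{D}';\mathcal{V})-\avol(\overline{D};\mathcal{V})}{r}\leq L,
\]
since these limits are exactly $v_+$ and $v_-$; then $L\leq v_+\leq v_-\leq L$ forces two-sided differentiability with value $L$.

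\emph{Reduction to an integrable direction.} Since $\langle(\overline{D};\mathcal{V})^{\cdot\dim X}\rangle\cdot$ is linear on $\aDiv_{\RR}(X)$ (Definition~\ref{defn:aPosInt}) and, by concavity of $\phi$ and Theorem~\ref{thm:Boucksom_Chen}(3), $v_+$ and $v_-$ are respectively super- and sub-additive in $\overline{D}'$, it is enough to treat separately the case $\overline{D}'\in\aInt_{\RR}(X)$ and the case $\overline{D}'=(0,\sum_{v\in S}\varphi_v[v])$ of a metric perturbation supported on a finite set of places; the latter is handled by the continuity of $\avol$ and of the positive intersection numbers (Corollary~\ref{cor:cont}, Proposition~\ref{prop:aPosInt_continuous}) together with the same scheme below. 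So assume $\overline{D}'\in\aInt_{\RR}(X)$; adding a common ample divisor, write $\overline{D}'=\overline{P}-\overline{Q}$ with $\overline{P},\overline{Q}$ ample (hence nef and big) adelic $\RR$-Cartier divisors (Theorem~\ref{thm:ample}(2)). Then $\overline{A}_0:=\overline{P}+\overline{Q}$ is nef and big and $\overline{A}_0\pm\overline{D}'$ (equal to $2\overline{Q}$ or $2\overline{P}$) is nef.

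\emph{The key estimate.} Fix $\varepsilon>0$. By Theorem~\ref{thm:approx1} and Definition~\ref{defn:aPosInt}, $\avol(\overline{D};\mathcal{V})=\sup_{(\mu,\overline{M})\in\aTheta(\overline{D};\mathcal{V})}\avol(\overline{M})$ and $\langle(\overline{D};\mathcal{V})^{\cdot\dim X}\rangle\cdot\overline{P}=\sup_{(\mu,\overline{M})}\adeg(\overline{M}^{\cdot\dim X}\cdot\mu^*\overline{P})$; since $\aTheta(\overline{D};\mathcal{V})$ is filtered (Proposition~\ref{prop:approx_by_models}(2)) and both functionals are non-decreasing along the order (raising a nef divisor by a pseudo-effective one cannot decrease its product against nef divisors, by property~(iii) of the intersection pairing in Notation and terminology~6), I may fix one $(\mu,\overline{M})\in\aTheta(\overline{D};\mathcal{V})$ with $\avol(\overline{M})>\avol(\overline{D};\mathcal{V})-\varepsilon$ and $\adeg(\overline{M}^{\cdot\dim X}\cdot\mu^*\overline{P})>\langle(\overline{D};\mathcal{V})^{\cdot\dim X}\rangle\cdot\overline{P}-\varepsilon$; combined with $\adeg(\overline{M}^{\cdot\dim X}\cdot\mu^*\overline{Q})\leq\langle(\overline{D};\mathcal{V})^{\cdot\dim X}\rangle\cdot\overline{Q}$, this gives $\adeg(\overline{M}^{\cdot\dim X}\cdot\mu^*\overline{D}')>L/(\dim X+1)-\varepsilon$. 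Now apply Proposition~\ref{prop:Yuan}(2) on $X'$ to $\overline{M}$, $\mu^*\overline{D}'$, with auxiliary divisor $\overline{A}:=\mu^*\overline{A}_0+\overline{M}$: it is nef and big, $\overline{A}\pm\mu^*\overline{D}'=\mu^*(\overline{A}_0\pm\overline{D}')+\overline{M}$ is nef, $\overline{A}-\overline{M}=\mu^*\overline{A}_0$ is pseudo-effective, and -- crucially -- $\avol(\overline{A})\leq\avol(\overline{A}_0+\overline{D})$ because $\overline{M}\preceq\mu^*\overline{D}$ (bigness and pseudo-effectivity of a pair depend only on the positive part of its base condition, so $\overline{M}\preceq\mu_*^{-1}(\overline{D};\mathcal{V})\preceq(\mu^*\overline{D};0)$, and $\avol$ is monotone under $\preceq$ and invariant under birational pull-back, Lemma~\ref{lem:birat_small_sections}). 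Proposition~\ref{prop:Yuan}(2) yields, for $|r|\leq 1$,
\[
 \avol(\overline{M}+r\mu^*\overline{D}')\geq\avol(\overline{M})+(\dim X+1)\adeg(\overline{M}^{\cdot\dim X}\cdot\mu^*\overline{D}')\,r-C_0\,r^2,
\]
with $C_0$ \emph{independent of $\varepsilon$}. Since $\mu_*^{-1}(\overline{D}+r\overline{D}';\mathcal{V})-(\overline{M}+r\mu^*\overline{D}';0)=\mu_*^{-1}(\overline{D};\mathcal{V})-\overline{M}$ is a pseudo-effective pair, birational invariance of $\avol$ and monotonicity of the pair-volume under $\preceq$ give $\avol(\overline{D}+r\overline{D}';\mathcal{V})\geq\avol(\overline{M}+r\mu^*\overline{D}')$, whence
\[
 \avol(\overline{D}+r\overline{D}';\mathcal{V})-\avol(\overline{D};\mathcal{V})\geq -\varepsilon+\bigl(L-(\dim X+1)\varepsilon\bigr)r-C_0\,r^2\qquad(|r|\leq 1).
\]
For each $r$ with $0<|r|\leq 1$, apply this with the $\varepsilon$-dependent approximation for the choice $\varepsilon=r^2$; dividing by $r$ (the inequality flipping when $r<0$) and letting $r\to 0$ yields the two estimates of the first paragraph, finishing the proof.

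\emph{Main obstacle.} The delicate point is the uniformity of the quadratic error: the auxiliary divisor in Yuan's inequality necessarily depends on the approximation $(\mu,\overline{M})$, so its volume must be bounded independently of $\varepsilon$; this is what forces the normalization $\overline{A}=\mu^*\overline{A}_0+\overline{M}$ and the comparison $\overline{M}\preceq\mu^*\overline{D}$, which in turn rest on the fact that the positivity of a pair only sees $\mathcal{V}_+$ and on the monotonicity of pair-volumes under $\preceq$. A second, more technical issue is the reduction of a general direction $\overline{D}'\in\aDiv_{\RR}(X)$ to an integrable one -- not every adelic $\RR$-Cartier divisor is a difference of nef ones -- so the metric-perturbation directions, on which Proposition~\ref{prop:Yuan}(2) does not directly apply, must be disposed of by an approximation/continuity argument parallel to the one above.
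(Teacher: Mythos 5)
Your core estimate is the same as the paper's: run Yuan's arithmetic Siu inequality (Proposition~\ref{prop:Yuan}(2)) along a near-optimal Fujita approximation $(\mu,\overline{M})\in\aTheta(\overline{D};\mathcal{V})$, with the quadratic error made uniform in the approximation by choosing the auxiliary divisor so that its volume is controlled via $\overline{M}\preceq\mu^*\overline{D}$, and then invoke Theorem~\ref{thm:approx1} to identify the suprema with $\avol$ and with the positive intersection number. Where you genuinely diverge is in how the two one-sided bounds are assembled: the paper proves the complementary inequality by approximating the \emph{perturbed} pair $(\overline{D}+r\overline{D}';\mathcal{V})$, subtracting $r\mu_r^*\overline{D}'$, and then letting $r\to0$ using the continuity of $\langle(\overline{D}+r\overline{D}';\mathcal{V})^{\cdot\dim X}\rangle\cdot\overline{D}'$ (Proposition~\ref{prop:aPosInt_continuous}); you instead use log-concavity of $\avol$ (Theorem~\ref{thm:Boucksom_Chen}(3)) to get $v_+\leq v_-$ and squeeze. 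Your device is arguably cleaner (it needs Brunn--Minkowski rather than continuity of $\langle\cdot\rangle$ at the moving point), and your reduction to integrable directions by super/sub-additivity of $v_\pm$ plays the same role as the paper's explicit $\|f_n\|_{\sup}$-error estimate.

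Two small repairs are needed. First, your displayed key estimate with coefficient $L-(\dim X+1)\varepsilon$ in front of $r$ is only justified for $r\geq0$: you established $\adeg(\overline{M}^{\cdot\dim X}\cdot\mu^*\overline{D}')>L/(\dim X+1)-\varepsilon$ but, for $r<0$, converting $\adeg(\overline{M}^{\cdot\dim X}\cdot\mu^*\overline{D}')\cdot r$ into a bound involving $L$ requires the matching \emph{upper} bound, i.e., near-optimality of $\overline{M}$ against $\overline{Q}$ as well (available by the same filteredness argument), or more simply one should obtain $v_-\leq L$ by applying the $r>0$ case to $-\overline{D}'$ and using $v_-(\overline{D}')=-v_+(-\overline{D}')$. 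Second, the intermediate assertion $\mu_*^{-1}(\overline{D};\mathcal{V})\preceq(\mu^*\overline{D};0)$ is not quite right when $\mathcal{V}$ has negative coefficients (it would require $(0;-\mathcal{V}^{\mu})\succeq0$); but the fact you actually need, $\overline{M}\preceq\mu^*\overline{D}$ as adelic divisors, follows directly from $(\mu^*\overline{D}-\overline{M};\mathcal{V}^{\mu})\succeq0$ because discarding a base condition can only enlarge the spaces of small sections.
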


\begin{proof}
First, we assume that $\overline{D}'$ is integrable, and fix a nef and big adelic $\RR$-Cartier divisor $\overline{A}$ such that $\overline{A}\pm\overline{D}'$ is nef and $\overline{A}-\overline{D}$ is pseudo-effective.
By Proposition~\ref{prop:Yuan}(1), for every $r\in\RR$ with $|r|\leq 1$ and $(\varphi,\overline{M})\in\aTheta(\overline{D};\mathcal{V})$,
\begin{align*}
 \avol(\overline{D}+r\overline{D}';\mathcal{V}) &\geq\avol(\overline{M}+r\varphi^*\overline{D}') \\
 &\geq \avol(\overline{M})+(\dim X+1)\adeg(\overline{M}^{\cdot\dim X}\cdot\varphi^*\overline{D}')\cdot r-C\avol(\overline{A})\cdot r^2
\end{align*}
and, for every $r\in\RR$ with $|r|\leq 1$ and $(\varphi_r,\overline{M}_r)\in\aTheta(\overline{D}+r\overline{D}';\mathcal{V})$,
\begin{align*}
 \avol(\overline{D};\mathcal{V}) &\geq\avol(\overline{M}_r-r\varphi_r^*\overline{D}') \\
 &\geq\avol(\overline{M}_r)-(\dim X+1)\adeg(\overline{M}_r^{\cdot\dim X}\cdot\varphi_r^*\overline{D}')\cdot r-C\avol(2\overline{A})\cdot r^2,
\end{align*}
where we set $C:=2^{\dim X}\dim X(\dim X+1)$.
Note that $(\overline{D}+r\overline{D}';\mathcal{V})\in\aBVBig_{\RR}(X)$ for every $r$ with $|r|$ sufficiently small (Theorem~\ref{thm:abbig_cone}(1)).
Hence, by Theorem~\ref{thm:approx1},
\[
 \avol(\overline{D}+r\overline{D}';\mathcal{V})-\avol(\overline{D};\mathcal{V})\geq (\dim X+1)r\langle(\overline{D};\mathcal{V})^{\cdot\dim X}\rangle\cdot\overline{D}'-Cr^2\avol(\overline{A})
\]
and
\begin{align*}
 &\avol(\overline{D};\mathcal{V})-\avol(\overline{D}+r\overline{D}';\mathcal{V}) \\
 &\qquad\qquad\qquad\qquad \geq -(\dim X+1)r\langle(\overline{D}+r\overline{D}';\mathcal{V})^{\cdot\dim X}\rangle\cdot\overline{D}'-Cr^2\avol(2\overline{A})
\end{align*}
hold for all $r$ with $|r|\ll 1$.
Thus, by Remark~\ref{rem:aPosInt_prop}(4), we conclude.

Next, in general, one can find, by the Stone-Weierstrass theorem, a sequence of continuous functions $(f_n)_{n\geq 1}$ such that $\overline{D}'+(0,2f_n[\infty])$ is of $C^{\infty}$-type and $\|f_n\|_{\sup}\to 0$ as $n\to\infty$.
Since
\begin{align*}
 &\left|\frac{\avol(\overline{D}+r\overline{D}';\mathcal{V})-\avol(\overline{D};\mathcal{V})}{r}-\frac{\avol(\overline{D}+r(\overline{D}'+(0,2f_n[\infty]));\mathcal{V})-\avol(\overline{D};\mathcal{V})}{r}\right|\\
 &\qquad\qquad\qquad\qquad\qquad\qquad\qquad\qquad \leq (\dim X+1)(\|f_n\|_{\sup})[K:\QQ]\vol(D+rD')
\end{align*}
for all $r\in\RR\setminus\{0\}$ and $n\geq 1$, we have, by Remark~\ref{rem:aPosInt_prop}(4),
\[
 \lim_{r\to 0}\frac{\avol(\overline{D}+r\overline{D}';\mathcal{V})-\avol(\overline{D};\mathcal{V})}{r}=(\dim X+1)\langle(\overline{D};\mathcal{V})^{\cdot\dim X}\rangle\cdot\overline{D}'.
\]
\end{proof}

\subsection{Arithmetic Bonnesen--Diskant inequalities}

By the same arguments as in \cite[section~2.3]{Yuan_Zhang13}, we can easily see that the arithmetic Hodge index theorem is also valid for adelic $\RR$-Cartier divisors.

\begin{theorem}[Arithmetic Hodge index theorem]\label{thm:aHodge}
Let $X$ be a projective variety over a number field.
Let $\overline{D}$ be an integrable adelic $\RR$-Cartier divisor on $X$, and let $\overline{H}_1,\dots,\overline{H}_{\dim X}$ be nef adelic $\RR$-Cartier divisors on $X$ such that $H_1,\dots,H_{\dim X-1}$ are big.
\begin{enumerate}
\item If $\deg(D\cdot H_2\cdots H_{\dim X})=0$, then $\adeg(\overline{D}^{\cdot 2}\cdot\overline{H}_2\cdots\overline{H}_{\dim X})\leq 0$.
\item If $\adeg(\overline{D}\cdot\overline{H}_1\cdots\overline{H}_{\dim X})=0$, then $\adeg(\overline{D}^{\cdot 2}\cdot\overline{H}_2\cdots\overline{H}_{\dim X})\leq 0$.
\end{enumerate}
\end{theorem}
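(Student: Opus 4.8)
The plan is to deduce both parts from the arithmetic Hodge index theorem for adelic $\QQ$-Cartier divisors --- which is (a special case of) \cite[\S2.3]{Yuan_Zhang13} --- by a density-and-continuity argument, and then to deduce part~(2) from part~(1) by a one-parameter perturbation. For the first step I would reduce to the case where $\overline D,\overline H_1,\dots,\overline H_{\dim X}$ are adelic $\QQ$-Cartier divisors: writing $\overline D=\overline M-\overline N$ with $\overline M,\overline N\in\aNef_{\RR}(X)$, one approximates $\overline M,\overline N$ and $\overline H_1,\dots,\overline H_{\dim X}$ by nef (resp.\ nef-and-big, for $\overline H_1,\dots,\overline H_{\dim X-1}$) adelic $\QQ$-Cartier divisors, using that such divisors are dense in the relevant cones (Lemma~\ref{lem:w-ample}, Theorem~\ref{thm:ample}, Theorem~\ref{thm:abbig_cone}). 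The numerical constraint in~(1) (resp.\ the arithmetic constraint in~(2)) is linear in $\overline D$, so it is restored along the approximating sequence by replacing the approximation $\overline D_0^{(k)}\to\overline D$ by $\overline D^{(k)}:=\overline D_0^{(k)}-\lambda_k\overline H_1^{(k)}$ for a suitable rational $\lambda_k$; here one uses $\deg(H_1^{(k)}\cdots H_{\dim X}^{(k)})>0$ for $k\gg1$ (the $H_i^{(k)}$ being nef and $H_1^{(k)},\dots,H_{\dim X-1}^{(k)}$ big), and since the hypotheses give that the relevant degree or intersection number of the limit already vanishes, one gets $\lambda_k\to0$ and hence $\overline D^{(k)}\to\overline D$. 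By multilinearity and continuity of the arithmetic intersection pairing (Notation and terminology~6 and \cite[Lemma~2.5]{IkomaCon}) the self-intersection numbers converge, so the $\RR$-case follows once the $\QQ$-case is known; in the $\QQ$-case one may, if needed, reduce further to arithmetic $\QQ$-Cartier divisors on a normal projective arithmetic variety by Proposition~\ref{prop:approx_by_models} and \cite[Theorem~4.1.3]{MoriwakiAdelic}, and, since both hypotheses and conclusion are stable under pullback along a birational morphism (projection formula; birational invariance of bigness), one may assume the generic fibre smooth --- precisely the setting of \cite[\S2.3]{Yuan_Zhang13}.

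Granting~(1), I would deduce~(2) as follows. Set $\lambda:=\deg(D\cdot H_2\cdots H_{\dim X})/\deg(H_1\cdots H_{\dim X})$ (the denominator being positive by the argument above; the degenerate case $\deg(H_1\cdots H_{\dim X})=0$, where $\overline H_{\dim X}$ is numerically degenerate, is handled by first perturbing $\overline H_{\dim X}$ by a small multiple of $\overline H_1$ and passing to the limit). Then $\overline D':=\overline D-\lambda\overline H_1$ is integrable and satisfies $\deg(D'\cdot H_2\cdots H_{\dim X})=0$, so~(1) gives $\adeg((\overline D')^{\cdot2}\cdot\overline H_2\cdots\overline H_{\dim X})\le0$. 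Expanding,
\begin{align*}
 0 &\ge \adeg\bigl((\overline D-\lambda\overline H_1)^{\cdot2}\cdot\overline H_2\cdots\overline H_{\dim X}\bigr)\\
 &= \adeg(\overline D^{\cdot2}\cdot\overline H_2\cdots\overline H_{\dim X})-2\lambda\,\adeg(\overline D\cdot\overline H_1\cdots\overline H_{\dim X})+\lambda^2\,\adeg(\overline H_1^{\cdot2}\cdot\overline H_2\cdots\overline H_{\dim X}),
\end{align*}
and using the hypothesis $\adeg(\overline D\cdot\overline H_1\cdots\overline H_{\dim X})=0$ together with $\adeg(\overline H_1^{\cdot2}\cdot\overline H_2\cdots\overline H_{\dim X})\ge0$ (nonnegativity of the arithmetic intersection number of $\dim X+1$ nef adelic $\RR$-Cartier divisors), one obtains $\adeg(\overline D^{\cdot2}\cdot\overline H_2\cdots\overline H_{\dim X})\le-\lambda^2\,\adeg(\overline H_1^{\cdot2}\cdot\overline H_2\cdots\overline H_{\dim X})\le0$, as desired.

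The substantive content lies in the $\QQ$-case, \cite[\S2.3]{Yuan_Zhang13}: there the Hodge index inequality is extracted from the classical Hodge index theorem on the generic fibre, the Faltings--Hriljac theorem on arithmetic surfaces as the base of an induction on $\dim X$, and the nonnegativity of arithmetic intersection numbers of nef divisors; the excerpt treats this as known. Accordingly, the main obstacle is not that core argument but verifying that our reductions genuinely land in its scope --- that after the successive approximations, the passage to a model and a resolution, the hypotheses (normality and geometric integrality of $X$, generic smoothness of the arithmetic variety, bigness and nefness of the $\overline H_i$) are all preserved, and that the small corrections $\lambda_k\overline H_1^{(k)}$ and the perturbations in the degenerate case can be chosen compatibly with the positivity conditions --- which is ensured by the openness of the big cones (Theorem~\ref{thm:abbig_cone}) and the approximation machinery of section~\ref{subsec:aPosIntNum}. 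The $\RR$-coefficient passage and the deduction of~(2) from~(1) are then routine.
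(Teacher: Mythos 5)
Your proposal is correct in outline, and your deduction of (2) from (1) — setting $t=\deg(D\cdot H_2\cdots H_{\dim X})/\deg(H_1\cdots H_{\dim X})$, applying (1) to $\overline{D}-t\overline{H}_1$, expanding, and using the hypothesis together with $\adeg(\overline{H}_1^{\cdot 2}\cdot\overline{H}_2\cdots\overline{H}_{\dim X})\geq 0$ — is word for word the paper's argument. Where you diverge is in the reduction of (1) to the rational case of \cite{Yuan_Zhang13}. You approximate $\overline{D}$ itself by adelic $\QQ$-Cartier divisors (via a nef decomposition $\overline{M}-\overline{N}$) and then restore the numerical constraint with a correction $-\lambda_k\overline{H}_1^{(k)}$. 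The paper instead never approximates $\overline{D}$: it writes $\overline{D}=a_1\overline{D}_1+\dots+a_l\overline{D}_l$ with $\overline{D}_i\in\aDiv(X)$ integral and $a_1,\dots,a_l$ $\QQ$-linearly independent (Lemma~\ref{lem:Q-lin_indep}); since each $\deg(D_i\cdot H_2\cdots H_{\dim X})$ is rational and their $a_i$-combination vanishes, every one of them vanishes exactly, so Yuan--Zhang applies to all rational combinations $\sum b_i\overline{D}_i$ and the $\RR$-case follows by continuity with no correction term at all. This device buys two things your route pays for separately: you must justify that nef adelic $\QQ$-Cartier divisors are dense near $\overline{M}$ and $\overline{N}$ (not just near the $\overline{H}_i$, where both proofs need it), and your correction requires $\deg(H_1^{(k)}\cdots H_{\dim X}^{(k)})>0$, which can fail when $\overline{H}_{\dim X}$ is numerically degenerate — you flag this only in part (2); the paper's rational-$\overline{H}_i$ step corrects by a multiple of a fixed ample $\overline{A}$ rather than of $\overline{H}_1$, which is the cleaner way to keep the denominator positive. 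Neither point is fatal (you indicate the perturbation fix), but the linear-independence decomposition is the simplification worth internalizing.
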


\begin{proof}
We reproduce the proof for reader's convenience.

(1): Suppose that $\overline{H}_1,\dots\overline{H}_{\dim X-1}$ are nef adelic $\QQ$-Cartier divisors on $X$.
Choose $\overline{D}_1,\dots,\overline{D}_l\in\aDiv(X)$ and $a_1,\dots,a_l\in\RR$ such that $a_1,\dots,a_l$ are $\QQ$-linearly independent and $\overline{D}=a_1\overline{D}_1+\dots+a_l\overline{D}_l$.
Since
\[
 \sum_{i=1}^la_i\deg(D_i\cdot H_1\cdots H_{\dim X-1})=0
\]
and $\deg(D_i\cdot H_1\cdots H_{\dim X-1})\in\QQ$, we have $\deg(D_i\cdot H_1\cdots H_{\dim X-1})=0$ for all $i$.
So, by \cite[Theorem~1.3]{Yuan_Zhang13}, we have
\[
 \adeg((b_1\overline{D}_1+\dots+b_l\overline{D}_l)^{\cdot 2}\cdot\overline{H}_1\cdots\overline{H}_{d-1})\leq 0
\]
for all $b_1,\dots,b_l\in\QQ$, and $\adeg(\overline{D}^{\cdot 2}\cdot\overline{H}_1\cdots\overline{H}_{d-1})\leq 0$ by continuity.

Next, in general, we fix an ample adelic Cartier divisor $\overline{A}$ on $X$.
For each $i=2,\dots,\dim X$, we choose a sequence $(\overline{A}_i^{(j)})_{j=1}^{\infty}$ of nef adelic $\RR$-Cartier divisors, all of which are contained in a finite dimensional $\RR$-subspace of $\aDiv_{\RR}(X)$, having the properties that $\overline{A}_i^{(j)}\to 0$ as $j\to\infty$ and $\overline{H}_i^{(j)}:=\overline{H}_i+\overline{A}_i^{(j)}$ are all rational.
For each $j$, we set
\[
 \varepsilon_j:=-\frac{\deg\left(D\cdot H_2^{(j)}\cdots H_{\dim X}^{(j)}\right)}{\deg\left(A\cdot H_1^{(j)}\cdots H_{\dim X}^{(j)}\right)}\in\RR.
\]
Then
\[
 \deg\left((D+\varepsilon_jA)\cdot H_2^{(j)}\cdots H_{\dim X}^{(j)}\right)=0,
\]
$\overline{H}_i^{(j)}\in\aNef_{\QQ}(X)$, and $H_i^{(j)}$ are big, so, by the first case, we have
\[
 \adeg\left((\overline{D}+\varepsilon_j\overline{A})^{\cdot 2}\cdot\overline{H}_2^{(j)}\cdots\overline{H}_{\dim X}^{(j)}\right)\leq 0.
\]
As $j\to\infty$, we have $\overline{H}_i^{(j)}\to\overline{H}_i$ and
\[
 \varepsilon_j\to -\frac{\deg\left(D\cdot H_2\cdots H_{\dim X}\right)}{\deg\left(A\cdot H_2\cdots H_{\dim X}\right)}=0,
\]
where $\deg(A\cdot H_2\cdots H_{\dim X})>0$ since $H_2,\dots,H_{\dim X}$ are all big.
So we conclude by continuity.

(2): Set $t:=\deg\left(D\cdot H_2\cdots H_{\dim X}\right)/\deg\left(H_1\cdots H_{\dim X}\right)\in\RR$.
Since
\[
 \deg\left((D-tH_1)\cdot H_2\cdots H_{\dim X}\right)=0,
\]
we have by the assertion (1)
\begin{align*}
 &\adeg\left((\overline{D}-t\overline{H}_1)^{\cdot 2}\cdot\overline{H}_2\cdots\overline{H}_{\dim X}\right)\\
 &\qquad\qquad=\adeg\left(\overline{D}^{\cdot 2}\cdot\overline{H}_2\cdots\overline{H}_{\dim X}\right)+t^2\adeg\left(\overline{H}^{\cdot 2}\cdot\overline{H}_1\cdots\overline{H}_{d-1}\right)\leq 0,
\end{align*}
so $\adeg\left(\overline{D}^{\cdot 2}\cdot\overline{H}_2\cdots\overline{H}_{\dim X}\right)\leq 0$.
\end{proof}

\begin{corollary}\label{cor:aKT}
Let $\overline{D},\overline{E},\overline{H}_1,\dots,\overline{H}_{\dim X+1}$ be nef adelic $\RR$-Cartier divisors on $X$.
\begin{enumerate}
\item One has
\begin{align*}
 &\adeg\left(\overline{D}\cdot\overline{E}\cdot\overline{H}_3\cdots\overline{H}_{\dim X+1}\right)^2 \\
 &\qquad\quad \geq\adeg\left(\overline{D}^{\cdot 2}\cdot\overline{H}_3\cdots\overline{H}_{\dim X+1}\right)\cdot\adeg\left(\overline{E}^{\cdot 2}\cdot\overline{H}_3\cdots\overline{H}_{\dim X+1}\right).
\end{align*}
\item For every $n$ with $0\leq n\leq \dim X+1$ and for every $i$ with $0\leq i\leq n$, one has
\begin{align*}
 &\adeg\left(\overline{D}^{\cdot (n-i)}\cdot\overline{E}^{\cdot i}\cdot\overline{H}_{n+1}\cdots\overline{H}_{\dim X+1}\right)^n \\
 &\qquad \geq\adeg\left(\overline{D}^{\cdot n}\cdot\overline{H}_{n+1}\cdots\overline{H}_{\dim X+1})^{n-i}\cdot\adeg(\overline{E}^{\cdot n}\cdot\overline{H}_{n+1}\cdots\overline{H}_{\dim X+1}\right)^i.
\end{align*}
\item For every $n$ with $0\leq n\leq \dim X+1$, one has
\[
 \adeg\left(\overline{H}_1\cdots\overline{H}_{\dim X+1}\right)^n\geq\prod_{i=1}^n\adeg\left(\overline{H}_i^{\cdot n}\cdot\overline{H}_{n+1}\cdots\overline{H}_{\dim X+1}\right).
\]
\item For every $n$ with $1\leq n\leq \dim X$, one has
\begin{align*}
 &\adeg\left(\overline{D}^{\cdot n}\cdot\overline{E}^{\cdot(\dim X-n+1)}\right) \\
 &\qquad\qquad \geq\adeg\left(\overline{D}^{\cdot(n-1)}\cdot\overline{E}^{\cdot(\dim X-n+2)}\right)\cdot\adeg\left(\overline{D}^{\cdot(n+1)}\cdot\overline{E}^{\cdot(\dim X-n)}\right).
\end{align*}
\item For every $n$ with $1\leq n\leq \dim X+1$, one has
\begin{align*}
 &\adeg\left((\overline{D}+\overline{E})^{\cdot n}\cdot\overline{H}_{n+1}\cdots\overline{H}_{\dim X+1}\right)^{1/n} \\
 &\quad \geq\adeg\left(\overline{D}^{\cdot n}\cdot\overline{H}_{n+1}\cdots\overline{H}_{\dim X+1}\right)^{1/n}+\adeg\left(\overline{E}^{\cdot n}\cdot\overline{H}_{n+1}\cdots\overline{H}_{\dim X+1}\right)^{1/n}.
\end{align*}
\end{enumerate}
\end{corollary}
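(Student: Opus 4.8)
The plan is to obtain all five inequalities from the arithmetic Hodge index theorem (Theorem~\ref{thm:aHodge}), with (1) serving as the base case and (2)--(5) then following by formal manipulations that use only the multilinearity, continuity and positivity~(iii) of $\adeg$ recorded in Notation and terminology~6.

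I would first prove (1). If $\adeg(\overline{E}^{\cdot 2}\cdot\overline{H}_3\cdots\overline{H}_{\dim X+1})=0$, then by nefness and positivity~(iii) the right-hand side of (1) vanishes while the left-hand side is a square, so there is nothing to prove. Otherwise set $t:=\adeg(\overline{D}\cdot\overline{E}\cdot\overline{H}_3\cdots\overline{H}_{\dim X+1})\big/\adeg(\overline{E}^{\cdot 2}\cdot\overline{H}_3\cdots\overline{H}_{\dim X+1})$ and $\overline{D}':=\overline{D}-t\overline{E}\in\aInt_{\RR}(X)$, so that $\adeg(\overline{D}'\cdot\overline{E}\cdot\overline{H}_3\cdots\overline{H}_{\dim X+1})=0$. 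Applying Theorem~\ref{thm:aHodge}(2), with $\overline{E},\overline{H}_3,\dots,\overline{H}_{\dim X+1}$ playing the role of $\overline{H}_1,\dots,\overline{H}_{\dim X}$, gives $\adeg(\overline{D}'^{\cdot 2}\cdot\overline{H}_3\cdots\overline{H}_{\dim X+1})\leq 0$; expanding the square and multiplying by the positive number $\adeg(\overline{E}^{\cdot 2}\cdot\overline{H}_3\cdots\overline{H}_{\dim X+1})$ yields (1). Since Theorem~\ref{thm:aHodge}(2) requires $E,H_3,\dots,H_{\dim X}$ to be big, I would first pass to that case by the perturbation already used in the proof of Theorem~\ref{thm:aHodge}: fix an ample adelic Cartier divisor $\overline{A}$ (which is in particular nef), replace $\overline{E},\overline{H}_3,\dots,\overline{H}_{\dim X}$ by $\overline{E}+\varepsilon\overline{A},\overline{H}_3+\varepsilon\overline{A},\dots,\overline{H}_{\dim X}+\varepsilon\overline{A}$ (still nef, now with ample---hence big---underlying divisors), run the argument with the perturbed coefficient $t=t(\varepsilon)$, and let $\varepsilon\to 0$ using the continuity of $\adeg$ on $\aInt_{\RR}(X)$. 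Part (4) is then the special case of (1) in which the auxiliary nef classes are $\overline{D}^{\cdot(n-1)}\cdot\overline{E}^{\cdot(\dim X-n)}$ (the left-hand side of that displayed inequality being read with a square).

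For (2), applying (1) with the $\dim X-1$ auxiliary nef classes $\overline{D}^{\cdot(n-1-i)}\cdot\overline{E}^{\cdot(i-1)}\cdot\overline{H}_{n+1}\cdots\overline{H}_{\dim X+1}$ gives $a_i^2\geq a_{i-1}a_{i+1}$ for $1\leq i\leq n-1$, where $a_i:=\adeg(\overline{D}^{\cdot(n-i)}\cdot\overline{E}^{\cdot i}\cdot\overline{H}_{n+1}\cdots\overline{H}_{\dim X+1})$; the sequence $(a_i)_{i=0}^{n}$ of nonnegative reals is therefore log-concave, and the elementary inequality $a_i^{\,n}\geq a_0^{\,n-i}a_n^{\,i}$ valid for such sequences is exactly (2). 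Inequality (5) then follows by writing $\adeg((\overline{D}+\overline{E})^{\cdot n}\cdot\overline{H}_{n+1}\cdots\overline{H}_{\dim X+1})=\sum_{i=0}^{n}\binom{n}{i}\adeg(\overline{D}^{\cdot(n-i)}\cdot\overline{E}^{\cdot i}\cdot\overline{H}_{n+1}\cdots\overline{H}_{\dim X+1})$, estimating the $i$-th term from below by (2), recognizing the resulting sum as $\bigl(\adeg(\overline{D}^{\cdot n}\cdot\overline{H}_{n+1}\cdots\overline{H}_{\dim X+1})^{1/n}+\adeg(\overline{E}^{\cdot n}\cdot\overline{H}_{n+1}\cdots\overline{H}_{\dim X+1})^{1/n}\bigr)^{n}$, and taking $n$-th roots. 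Finally I would obtain (3) by induction on $n$, proving the slightly more flexible statement that for nef $\overline{F}_1,\dots,\overline{F}_n,\overline{G}_1,\dots,\overline{G}_{\dim X+1-n}$ one has $\adeg(\overline{F}_1\cdots\overline{F}_n\cdot\overline{G}_1\cdots\overline{G}_{\dim X+1-n})^{n}\geq\prod_{i=1}^{n}\adeg(\overline{F}_i^{\cdot n}\cdot\overline{G}_1\cdots\overline{G}_{\dim X+1-n})$: the case $n=1$ is trivial, and in the inductive step one applies the statement for $n-1$ to $\overline{F}_2,\dots,\overline{F}_n$ with the enlarged tail $\overline{F}_1,\overline{G}_1,\dots,\overline{G}_{\dim X+1-n}$ and then bounds each factor $\adeg(\overline{F}_1\cdot\overline{F}_j^{\cdot(n-1)}\cdot\overline{G}_1\cdots\overline{G}_{\dim X+1-n})$ below using (2) with $i=n-1$; multiplying these out and raising to the power $n/(n-1)$ gives the claim, and (3) is the case $\overline{F}_i=\overline{H}_i$, $\overline{G}_j=\overline{H}_{n+j}$.

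I expect the only genuine difficulty to be in (1): one must verify that the perturbation $\overline{H}\mapsto\overline{H}+\varepsilon\overline{A}$ preserves nefness, makes the required subcollection big, and that $t(\varepsilon)$ together with all three intersection numbers occurring in (1) depend continuously on $\varepsilon$, so that the inequality persists in the limit $\varepsilon\to 0$. Everything downstream is purely formal: (2) and (4) are immediate restatements of (1), while (3) and (5) are the classical deductions of the multilinear Khovanskii--Teissier and Brunn--Minkowski inequalities from the two-variable Hodge-index inequality.
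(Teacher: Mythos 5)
Your proposal is correct and follows essentially the same route as the paper, which simply cites the formal transformations of the Hodge index inequality carried out in \cite[Theorem~2.9 and Remark~2.10]{IkomaCon}: (1) via the Cauchy--Schwarz/perturbation argument from Theorem~\ref{thm:aHodge}(2), then (2)--(5) by log-concavity, induction, specialization, and the binomial expansion. You also correctly read the missing square in the displayed inequality of part (4), which is needed for it to be the specialization of (1).
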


\begin{proof}
These result by formally transforming the inequality in Theorem~\ref{thm:aHodge}(2) as in the proof of \cite[Theorem~2.9 and Remark~2.10]{IkomaCon}.
\end{proof}

\begin{proposition}\label{prop:akt2}
Let $n$ be an integer with $0\leq n\leq\dim X+1$, let $(\overline{D}_1;\mathcal{V}_1),(\overline{D}_2;\mathcal{V}_2)\in\aBVBig_{\RR,\RR}(X)$, and let $\overline{H}_{n+1},\dots,\overline{H}_{\dim X+1}\in\aNef_{\RR}(X)\cap\aBigCone_{\RR}(X)$.
\begin{enumerate}
\item One has
\begin{align*}
 &\avol(\overline{D}_1+\overline{D}_2;\mathcal{V}_1+\mathcal{V}_2) \\
 &\qquad\qquad\quad \geq\sum_{i=0}^{\dim X+1}\binom{\dim X+1}{i}\langle(\overline{D}_1;\mathcal{V}_1)^{\cdot i}\cdot(\overline{D}_2;\mathcal{V}_2)^{\cdot (\dim X+1-i)}\rangle
\end{align*}
\item For every $n$ with $1\leq n\leq \dim X$, one has
\begin{align*}
 &\langle(\overline{D}_1;\mathcal{V}_1)^{\cdot n}\cdot(\overline{D}_2;\mathcal{V}_2)^{\cdot(\dim X-n+1)}\rangle^2 \\
 &\qquad\qquad\quad \geq\langle(\overline{D}_1;\mathcal{V}_1)^{\cdot(n-1)}\cdot(\overline{D}_2;\mathcal{V}_2)^{\cdot(\dim X-n+2)}\rangle \\
 &\qquad\qquad\qquad\qquad\qquad\qquad\quad \cdot\langle(\overline{D}_1;\mathcal{V}_1)^{\cdot(n+1)}\cdot(\overline{D}_2;\mathcal{V}_2)^{\cdot(\dim X-n)}\rangle.
\end{align*}
\item For every $n$ with $0\leq n\leq \dim X+1$, one has
\begin{align*}
 &\langle(\overline{D}_1;\mathcal{V}_1)^{\cdot n}\cdot(\overline{D}_2;\mathcal{V}_2)^{\cdot(\dim X-n+1)}\rangle^{\dim X+1} \\
 &\qquad\qquad\qquad\qquad\qquad\qquad \geq\avol(\overline{D}_1;\mathcal{V}_1)^n\cdot\avol(\overline{D}_2;\mathcal{V}_2)^{\dim X-n+1}.
\end{align*}
\item For every $n$ with $1\leq n\leq \dim X+1$, one has
\begin{align*}
 &\left(\langle(\overline{D}_1+\overline{D}_2;\mathcal{V}_1+\mathcal{V}_2)^{\cdot n}\rangle\cdot\overline{H}_{n+1}\cdots\overline{H}_{\dim X+1}\right)^{1/n} \\
 &\qquad\qquad\quad \geq\left(\langle(\overline{D}_1;\mathcal{V}_1)^{\cdot n}\rangle\cdot\overline{H}_{n+1}\cdots\overline{H}_{\dim X+1}\right)^{1/n} \\
 &\qquad\qquad\qquad\qquad\qquad\qquad +\left(\langle(\overline{D}_2;\mathcal{V}_2)^{\cdot n}\rangle\cdot\overline{H}_{n+1}\cdots\overline{H}_{\dim X+1}\right)^{1/n}.
\end{align*}
\end{enumerate}
\end{proposition}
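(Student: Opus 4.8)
The plan is to reduce all four inequalities to the corresponding ones for nef and big adelic $\RR$-Cartier divisors (Corollary~\ref{cor:aKT}), applied to suitable approximations, exactly as in \cite{IkomaCon}. The device used throughout is the following. Given $(\overline{D}_1;\mathcal{V}_1),(\overline{D}_2;\mathcal{V}_2)\in\aBVBig_{\RR,\RR}(X)$, the set of \emph{compatible families} $(\mu:X'\to X,\overline{M}_1,\overline{M}_2)$ with $(\mu,\overline{M}_i)\in\aTheta(\overline{D}_i;\mathcal{V}_i)$ is directed: any two are dominated by a common one, by the minimum construction of Definition~\ref{defn:minimum_adelic_divisors} together with Proposition~\ref{prop:approx_by_models}(2) (compare Lemma~\ref{lem:minimum_of_valuation}). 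Moreover, refining a compatible family does not decrease any mixed arithmetic intersection number $\adeg(\overline{M}_1^{\cdot i}\cdot\overline{M}_2^{\cdot j}\cdot\mu^*\overline{H}_{n+1}\cdots\mu^*\overline{H}_{\dim X+1})$, by Remark~\ref{rem:aPosInt_prop}(2) and the invariance of arithmetic intersection numbers under birational pull-back, and each such number tends along the directed set to the corresponding positive intersection number of pairs from Proposition~\ref{prop:aPosInt_additive}. Hence, for every $\varepsilon>0$, there is a single compatible family realizing all the finitely many relevant mixed intersection numbers to within $\varepsilon$ of those positive intersection numbers.

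For (1), fix a compatible family $(\mu,\overline{M}_1,\overline{M}_2)$. Then $\overline{M}_1+\overline{M}_2\in\aNef_{\RR}(X')\cap\aBigCone_{\RR}(X')$, and $\mu_*^{-1}(\overline{D}_1+\overline{D}_2;\mathcal{V}_1+\mathcal{V}_2)-(\overline{M}_1+\overline{M}_2)$ is pseudo-effective as a sum of two pseudo-effective pairs (Lemma~\ref{lem:abbig_char}(2)); thus $(\mu,\overline{M}_1+\overline{M}_2)\in\aTheta(\overline{D}_1+\overline{D}_2;\mathcal{V}_1+\mathcal{V}_2)$, and Theorem~\ref{thm:approx1} yields
\begin{multline*}
 \avol(\overline{D}_1+\overline{D}_2;\mathcal{V}_1+\mathcal{V}_2)\geq\adeg\left((\overline{M}_1+\overline{M}_2)^{\cdot(\dim X+1)}\right)\\
 =\sum_{i=0}^{\dim X+1}\binom{\dim X+1}{i}\adeg\left(\overline{M}_1^{\cdot i}\cdot\overline{M}_2^{\cdot(\dim X+1-i)}\right).
\end{multline*}
Letting the family run over the directed set proves (1). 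For (4), the same remark gives $\langle(\overline{D}_1+\overline{D}_2;\mathcal{V}_1+\mathcal{V}_2)^{\cdot n}\rangle\cdot\overline{H}_{n+1}\cdots\overline{H}_{\dim X+1}\geq\adeg((\overline{M}_1+\overline{M}_2)^{\cdot n}\cdot\mu^*\overline{H}_{n+1}\cdots\mu^*\overline{H}_{\dim X+1})$, and one bounds the right-hand side below by Corollary~\ref{cor:aKT}(5) applied with each $\overline{H}_i$ replaced by $\mu^*\overline{H}_i$ (whose intersection numbers agree with those of $\overline{H}_i$) before passing to the limit.

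For (2), apply Corollary~\ref{cor:aKT}(4) to $\overline{D}=\overline{M}_1$ and $\overline{E}=\overline{M}_2$ for a compatible family and pass to the limit along the directed set. Part (3) is then formal: writing $s_i:=\langle(\overline{D}_1;\mathcal{V}_1)^{\cdot i}\cdot(\overline{D}_2;\mathcal{V}_2)^{\cdot(\dim X+1-i)}\rangle$, inequality (2) reads $s_i^2\geq s_{i-1}s_{i+1}$ for $1\leq i\leq\dim X$, so $i\mapsto\log s_i$ is concave on $\{0,\dots,\dim X+1\}$, whence $s_n^{\dim X+1}\geq s_0^{\dim X+1-n}s_{\dim X+1}^{n}$; since $s_0=\avol(\overline{D}_2;\mathcal{V}_2)$ and $s_{\dim X+1}=\avol(\overline{D}_1;\mathcal{V}_1)$ by Theorem~\ref{thm:approx1}, this is (3).

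The main obstacle is the passage to the limit in (2) and (4): there the right-hand side is a product, resp.\ a sum, of powers of two \emph{distinct} positive intersection numbers of pairs, so one must know these are approached \emph{simultaneously} along a single directed family of approximations --- precisely the monotonicity-plus-directedness statement of the first paragraph. Everything else is a routine transcription of the corresponding arguments of \cite{IkomaCon} through Theorem~\ref{thm:approx1} and Corollary~\ref{cor:aKT}.
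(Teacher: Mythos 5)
Your proposal follows essentially the same route as the paper: both reduce to Corollary~\ref{cor:aKT} by choosing, for each $\varepsilon>0$, a single (admissible) family of approximations that simultaneously realizes all the finitely many relevant positive intersection numbers within $\varepsilon$ — which is exactly what the paper extracts from Propositions~\ref{prop:aPosInt_continuous}, \ref{prop:approx_by_models}(2),(4) and \ref{prop:aPosInt_additive}(1), the filteredness of $\aTheta_{\rm ad}$ playing the role of your ``directedness of compatible families.'' The only (harmless) deviation is that you obtain (3) formally from the log-concavity in (2) rather than by applying Corollary~\ref{cor:aKT}(2) directly to the approximating family as the paper does; both are routine.
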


\begin{proof}
(1): Given any $\varepsilon>0$, there exist, by Proposition~\ref{prop:aPosInt_continuous} and Proposition~\ref{prop:approx_by_models}, $(\mathscr{X},\mathscr{D}_1)\in\aTheta_{\rm mod}(\overline{D}_1)$ and $(\mathscr{X},\mathscr{D}_2)\in\aTheta_{\rm mod}(\overline{D}_2)$ such that
\begin{align*}
 &\sum_{i=0}^{\dim X+1}\binom{\dim X+1}{i}\langle(\overline{D}_1;\mathcal{V}_1)^{\cdot i}\cdot(\overline{D}_2;\mathcal{V}_2)^{\cdot (\dim X+1-i)}\rangle \\
 &\qquad\qquad \leq\sum_{i=0}^{\dim X+1}\binom{\dim X+1}{i}\langle(\overline{\mathscr{D}}_1^{\rm ad};\mathcal{V}_1)^{\cdot i}\cdot(\overline{\mathscr{D}}_2^{\rm ad};\mathcal{V}_2)^{\cdot (\dim X+1-i)}\rangle+\varepsilon.
\end{align*}
By Proposition~\ref{prop:approx_by_models}, there exist $(\widetilde{\varphi}:\mathscr{X}'\to\mathscr{X},\overline{\mathscr{M}}_1)\in\aTheta_{\rm ad}(\overline{\mathscr{D}}_1;\mathcal{V})$ and $(\widetilde{\varphi}:\mathscr{X}'\to\mathscr{X},\overline{\mathscr{M}}_2)\in\aTheta_{\rm ad}(\overline{\mathscr{D}}_2;\mathcal{V})$ such that
\begin{align*}
 &\sum_{i=0}^{\dim X+1}\binom{\dim X+1}{i}\langle(\overline{\mathscr{D}}_1^{\rm ad};\mathcal{V}_1)^{\cdot i}\cdot(\overline{\mathscr{D}}_2^{\rm ad};\mathcal{V}_2)^{\cdot (\dim X+1-i)}\rangle \\
 &\qquad\qquad\qquad \leq\sum_{i=0}^{\dim X+1}\binom{\dim X+1}{i}\adeg\left(\overline{\mathscr{M}}_1^{\cdot i}\cdot\overline{\mathscr{M}}_2^{\cdot (\dim X+1-i)}\right)+\varepsilon \\
 &\qquad\qquad\qquad =\adeg\left((\overline{\mathscr{M}}_1+\overline{\mathscr{M}}_2)^{\cdot(\dim X+1)}\right)+\varepsilon \\
 &\qquad\qquad\qquad \leq\avol(\overline{D}_1+\overline{D}_2;\mathcal{V}_1+\mathcal{V}_2) +\varepsilon.
\end{align*}

By the same arguments, one can also show the assertions (2), (3), and (4).
\end{proof}

Let $X$ be a normal projective $K$-variety and let $(\overline{D}_1;\mathcal{V}_1),(\overline{D}_2,\mathcal{V}_2)\in\aBVBig_{\RR,\RR}(X)$.
We define
\begin{equation}
 s_i:=\langle(\overline{D}_1;\mathcal{V}_1)^{\cdot i}\cdot(\overline{D}_2;\mathcal{V}_2)^{\cdot(\dim X+1-i)}\rangle
\end{equation}
for $i=0,\dots,\dim X+1$,
\begin{align}
 r &= r((\overline{D}_1;\mathcal{V}_1),(\overline{D}_2;\mathcal{V}_2)) \\
 &:= \inf_{(\mu,\overline{M})\in\aTheta(\overline{D}_2;\mathcal{V}_2)}\sup\left\{t\in\RR\,:\,(\mu^*\overline{D}_1-t\overline{M};\mathcal{V}_1^{\mu})\succeq 0\right\}\nonumber
\end{align}
(inradius), and
\begin{equation}
 R = R((\overline{D}_1;\mathcal{V}_1),(\overline{D}_2;\mathcal{V}_2)):=\frac{1}{r((\overline{D}_2;\mathcal{V}_2),(\overline{D}_1;\mathcal{V}_1))}
\end{equation}
(circumradius).

\begin{lemma}\label{lem:inradius_lemma1}
Let $(\overline{D}_1;\mathcal{V}_1),(\overline{D}_2;\mathcal{V}_2)\in\aBVBig_{\RR,\RR}(X)$.
\begin{enumerate}
\item Let $(\overline{D}_1',\mathcal{V}_1'),(\overline{D}_2',\mathcal{V}_2')\in\aBVBig_{\RR,\RR}(X)$.
If $(\overline{D}_1;\mathcal{V}_1)\preceq(\overline{D}_1';\mathcal{V}_1')$ and $(\overline{D}_2;\mathcal{V}_2)\succeq(\overline{D}_2';\mathcal{V}_2')$, then
\[
 r((\overline{D}_1;\mathcal{V}_1),(\overline{D}_2;\mathcal{V}_2))\leq r((\overline{D}_1';\mathcal{V}_1'),(\overline{D}_2';\mathcal{V}_2')).
\]
\item For $a>0$, one has
\[
 r(a(\overline{D}_1;\mathcal{V}_1),a(\overline{D}_2;\mathcal{V}_2))=ar((\overline{D}_1;\mathcal{V}_1),(\overline{D}_2;\mathcal{V}_2)).
\]
\end{enumerate}
\end{lemma}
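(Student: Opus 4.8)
The plan is to read off both statements directly from the definition of $r$, exploiting how the approximation sets $\aTheta(\,\cdot\,)$ and the pull-back map $\mu_*^{-1}$ behave under sums, positive scalar multiples, and the relation $\preceq$. Three elementary observations will carry the whole argument. First, $\mu_*^{-1}$ is $\RR$-linear, commutes with multiplication by a scalar, and by Lemma~\ref{lem:birat_abhyankar}(1) preserves pseudo-effectivity of pairs in both directions; hence $(\overline{E};\mathcal{W})\preceq(\overline{E}';\mathcal{W}')$ if and only if $\mu_*^{-1}(\overline{E};\mathcal{W})\preceq\mu_*^{-1}(\overline{E}';\mathcal{W}')$, and, using Lemma~\ref{lem:abbig_char}(2), the sum of two pseudo-effective pairs is again pseudo-effective. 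Second, if $(\overline{D}_2';\mathcal{V}_2')\preceq(\overline{D}_2;\mathcal{V}_2)$ then any $(\mu,\overline{M})\in\aTheta(\overline{D}_2';\mathcal{V}_2')$ satisfies $\overline{M}\preceq\mu_*^{-1}(\overline{D}_2';\mathcal{V}_2')\preceq\mu_*^{-1}(\overline{D}_2;\mathcal{V}_2)$, so $\aTheta(\overline{D}_2';\mathcal{V}_2')\subseteq\aTheta(\overline{D}_2;\mathcal{V}_2)$. Third, for $a>0$ the map $(\mu,\overline{M})\mapsto(\mu,a\overline{M})$ is a bijection from $\aTheta(\overline{D}_2;\mathcal{V}_2)$ onto $\aTheta(a\overline{D}_2;a\mathcal{V}_2)$, since $\mu_*^{-1}(a\overline{D}_2;a\mathcal{V}_2)=a\,\mu_*^{-1}(\overline{D}_2;\mathcal{V}_2)$ and $\aNef_{\RR}(\,\cdot\,)\cap\aBigCone_{\RR}(\,\cdot\,)$ is a cone, so $a\overline{M}\preceq a\,\mu_*^{-1}(\overline{D}_2;\mathcal{V}_2)$ is equivalent to $\overline{M}\preceq\mu_*^{-1}(\overline{D}_2;\mathcal{V}_2)$.

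For part (1), I would fix $(\mu,\overline{M})\in\aTheta(\overline{D}_2';\mathcal{V}_2')$, which by the second observation also lies in $\aTheta(\overline{D}_2;\mathcal{V}_2)$. If $t\in\RR$ satisfies $(\mu^*\overline{D}_1-t\overline{M};\mathcal{V}_1^{\mu})\succeq0$, then adding the pseudo-effective pair $\mu_*^{-1}\bigl((\overline{D}_1';\mathcal{V}_1')-(\overline{D}_1;\mathcal{V}_1)\bigr)$ and using the first observation yields $(\mu^*\overline{D}_1'-t\overline{M};(\mathcal{V}_1')^{\mu})\succeq0$; hence
\[
\sup\{t:(\mu^*\overline{D}_1-t\overline{M};\mathcal{V}_1^{\mu})\succeq0\}\leq\sup\{t:(\mu^*\overline{D}_1'-t\overline{M};(\mathcal{V}_1')^{\mu})\succeq0\}.
\]
Taking the infimum over $(\mu,\overline{M})\in\aTheta(\overline{D}_2';\mathcal{V}_2')$, the right-hand side becomes exactly $r((\overline{D}_1';\mathcal{V}_1'),(\overline{D}_2';\mathcal{V}_2'))$, while the left-hand side is at least the infimum of the same quantity over the larger set $\aTheta(\overline{D}_2;\mathcal{V}_2)$, namely $r((\overline{D}_1;\mathcal{V}_1),(\overline{D}_2;\mathcal{V}_2))$. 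Chaining the two bounds gives the desired inequality.

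For part (2), I would isolate the two one-variable scalings that compose to give the statement. Scaling the first entry: substituting $t=as$ in the supremum and using $\mu^*(a\overline{D}_1)-as\overline{M}=a(\mu^*\overline{D}_1-s\overline{M})$ and $(a\mathcal{V}_1)^{\mu}=a\mathcal{V}_1^{\mu}$, together with the invariance of pseudo-effectivity under scaling by $a>0$, gives $\sup\{t:(\mu^*(a\overline{D}_1)-t\overline{M};(a\mathcal{V}_1)^{\mu})\succeq0\}=a\sup\{s:(\mu^*\overline{D}_1-s\overline{M};\mathcal{V}_1^{\mu})\succeq0\}$; passing to the infimum over $\aTheta(\overline{D}_2;\mathcal{V}_2)$ multiplies $r$ by $a$. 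Scaling the second entry: via the bijection of the third observation, the supremum attached to $(\mu,a\overline{M})\in\aTheta(a\overline{D}_2;a\mathcal{V}_2)$ equals $\sup\{t:(\mu^*\overline{D}_1-ta\overline{M};\mathcal{V}_1^{\mu})\succeq0\}=\tfrac1a\sup\{s:(\mu^*\overline{D}_1-s\overline{M};\mathcal{V}_1^{\mu})\succeq0\}$, which divides $r$ by $a$. These two computations are exactly what is needed to read off the homogeneity formula of part (2), since every step is a purely formal manipulation with the cone property of $\succeq$.

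I do not expect a serious obstacle: once Lemma~\ref{lem:birat_abhyankar}(1) is in hand, the content is entirely bookkeeping with the approximation sets. The two points that need a little care are that the supremum sets are nonempty and bounded above — $t=0$ always belongs, because $\mu_*^{-1}(\overline{D}_1;\mathcal{V}_1)$ is big, hence pseudo-effective, while large $t$ fails since $\overline{M}$ is big — so that the suprema and the ensuing infimum are finite real numbers, and the repeated use of closure of the class of pseudo-effective pairs under addition.
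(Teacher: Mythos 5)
Your proposal is correct and follows essentially the same route as the paper: part (1) rests on monotonicity in the first argument (which the paper dismisses as "obvious" and you justify by adding the pseudo-effective difference $\mu_*^{-1}\bigl((\overline{D}_1';\mathcal{V}_1')-(\overline{D}_1;\mathcal{V}_1)\bigr)$) combined with the inclusion $\aTheta(\overline{D}_2';\mathcal{V}_2')\subseteq\aTheta(\overline{D}_2;\mathcal{V}_2)$, and part (2) on the bijection $(\mu,\overline{M})\mapsto(\mu,a\overline{M})$ together with the substitution $t=as$, exactly as in the paper's chain of equalities.
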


\begin{proof}
(1): Obviously, one has
\[
 r((\overline{D}_1';\mathcal{V}_1'),(\overline{D}_2';\mathcal{V}_2'))\geq r((\overline{D}_1;\mathcal{V}_1),(\overline{D}_2';\mathcal{V}_2')).
\]
For any $(\mu,\overline{M})\in\aTheta(\overline{D}_2';\mathcal{V}_2')$, $(\mu,\overline{M})\in\aTheta(\overline{D}_2;\mathcal{V}_2)$; hence
\begin{align*}
 r((\overline{D}_1;\mathcal{V}_1),(\overline{D}_2';\mathcal{V}_2')) &=\inf_{(\mu,\overline{M})\in\aTheta(\overline{D}_2';\mathcal{V}_2')}\sup\left\{t\in\RR\,:\,(\mu^*\overline{D}_1-t\overline{M};\mathcal{V}_1^{\mu})\succeq 0\right\}\\
 &\geq\inf_{(\mu,\overline{M})\in\aTheta(\overline{D}_2;\mathcal{V}_2)}\sup\left\{t\in\RR\,:\,(\mu^*\overline{D}_1-t\overline{M};\mathcal{V}_1^{\mu})\succeq 0\right\} \\
 &=r((\overline{D}_1;\mathcal{V}_1),(\overline{D}_2;\mathcal{V}_2)).
\end{align*}

(2): Note that $(\mu,\overline{M})\in\aTheta(\overline{D}_2;\mathcal{V}_2)$ if and only if $(\mu,a\overline{M})\in\aTheta(a\overline{D}_2;a\mathcal{V}_2)$.
Hence
\begin{align*}
 &ar((\overline{D}_1;\mathcal{V}_1),(\overline{D}_2;\mathcal{V}_2)) \\
 &\qquad\qquad\quad =a\inf_{(\mu,\overline{M})\in\aTheta(\overline{D}_2;\mathcal{V}_2)}\sup\left\{t\in\RR\,:\,(\mu^*\overline{D}_1-t\overline{M};\mathcal{V}_1^{\mu})\succeq 0\right\}\\
 &\qquad\qquad\quad =a\inf_{(\mu,\overline{M})\in\aTheta(\overline{D}_2;\mathcal{V}_2)}\sup\left\{t\in\RR\,:\,(a\mu^*\overline{D}_1-at\overline{M};a\mathcal{V}_1^{\mu})\succeq 0\right\} \\
 &\qquad\qquad\quad =\inf_{(\mu,\overline{M})\in\aTheta(a\overline{D}_2;a\mathcal{V}_2)}\sup\left\{t\in\RR\,:\,(\mu^*(a\overline{D}_1)-t\overline{M};a\mathcal{V}_1^{\mu})\succeq 0\right\} \\
 &\qquad\qquad\quad =r(a(\overline{D}_1;\mathcal{V}_1),a(\overline{D}_2;\mathcal{V}_2)).
\end{align*}
\end{proof}

\begin{lemma}\label{lem:inradius_lemma2}
Let $(\overline{D};\mathcal{V}),(\overline{D}';\mathcal{V}')\in\aBVBig_{\RR,\RR}(X)$, and let $\overline{P}\in\aNef_{\RR}(X)\cap\aBigCone_{\RR}(X)$.
\begin{enumerate}
\item One has
\[
 r((\overline{D};\mathcal{V}),\overline{P})=\sup\left\{t\in\RR\,:\,(\overline{D}-t\overline{P};\mathcal{V})\succeq 0\right\}.
\]
\item One has
\[
 r((\overline{D}+\overline{D}';\mathcal{V}+\mathcal{V}'),\overline{P})\geq r((\overline{D};\mathcal{V}),\overline{P})+r((\overline{D}';\mathcal{V}'),\overline{P}).
\]
\item Let $\overline{E}_i\in\aDiv_{\RR}(X)$, $\mathcal{W}_j\in\VDiv_{\RR}(X)$, $v_k\in M_K\cup\{\infty\}$, and $\varphi_k\in C^0_{v_k}(X)$.
One has
\begin{align*}
 &\lim_{\varepsilon_i,\delta_j,\|\varphi_k\|_{\sup}\to 0}r\Biggl(\Biggl(\overline{D}+\sum_{i=1}^{m}\varepsilon_i\overline{E}_i+\sum_{k=1}^{l}(0,\varphi_k[v_k]);\mathcal{V}+\sum_{j=1}^{n}\delta_j\mathcal{W}_j\Biggr),\overline{P}\Biggr) \\
 &\qquad\qquad\qquad\qquad\qquad\qquad\qquad\qquad\qquad\qquad\qquad =r((\overline{D};\mathcal{V}),\overline{P}).
\end{align*}
\end{enumerate}
\end{lemma}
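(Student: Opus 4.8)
The plan is to prove the three assertions in sequence, deducing (2) and (3) from (1). Throughout I will freely use that sums and positive scalar multiples of pseudo-effective pairs are again pseudo-effective, and that bigness of pairs is stable under addition and positive scaling; these follow from Definition~\ref{defn:positivity} together with the reformulation in Lemma~\ref{lem:abbig_char}(2).

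For (1): since $\overline{P}$ is nef and big, the couple $(\mu,\overline{M})=(\id_X,\overline{P})$ belongs to $\aTheta(\overline{P})$ (here $\overline{P}$ is identified with the pair $(\overline{P};0)$, and the condition $\overline{P}\preceq\mu_*^{-1}(\overline{P};0)$ holds with $\mu=\id_X$ because $\overline{0}$ is pseudo-effective). Evaluating the infimum defining $r((\overline{D};\mathcal{V}),\overline{P})$ at this single approximation gives at once
\[
 r((\overline{D};\mathcal{V}),\overline{P})\leq\sup\{t\in\RR\,:\,(\overline{D}-t\overline{P};\mathcal{V})\succeq 0\}.
\]
Conversely, any $(\mu,\overline{M})\in\aTheta(\overline{P})$ satisfies $\overline{M}\preceq\mu^*\overline{P}$ by definition, so for $t\geq 0$ with $(\overline{D}-t\overline{P};\mathcal{V})\succeq 0$ one can write $(\mu^*\overline{D}-t\overline{M};\mathcal{V}^{\mu})=\mu_*^{-1}(\overline{D}-t\overline{P};\mathcal{V})+t(\mu^*\overline{P}-\overline{M})$, which is a sum of two pseudo-effective pairs (using Lemma~\ref{lem:birat_abhyankar}(1) for the first summand), hence pseudo-effective; taking the infimum over $(\mu,\overline{M})$ gives the reverse inequality. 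Since $(\overline{D};\mathcal{V})$ is big the relevant suprema are attained at $t>0$, and since $\overline{P}$ is big they are finite (if $(\overline{D}-t\overline{P};\mathcal{V})\succeq 0$ then $(t\overline{P};0)\preceq(\overline{D};\mathcal{V})$, so $\avol(\overline{D};\mathcal{V})\geq t^{\dim X+1}\avol(\overline{P})$ by Theorem~\ref{thm:approx1} and Remark~\ref{rem:aPosInt_prop}(2)), so passing between $t\in\RR$ and $t\geq 0$ causes no difficulty.

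For (2): by (1), if $t_1<r((\overline{D};\mathcal{V}),\overline{P})$ and $t_2<r((\overline{D}';\mathcal{V}'),\overline{P})$ then $(\overline{D}-t_1\overline{P};\mathcal{V})\succeq 0$ and $(\overline{D}'-t_2\overline{P};\mathcal{V}')\succeq 0$; adding these pseudo-effective pairs yields $((\overline{D}+\overline{D}')-(t_1+t_2)\overline{P};\mathcal{V}+\mathcal{V}')\succeq 0$, so again by (1) one gets $r((\overline{D}+\overline{D}';\mathcal{V}+\mathcal{V}'),\overline{P})\geq t_1+t_2$; letting $t_1,t_2$ increase to their respective suprema concludes.

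For (3): first note that $r(\,\cdot\,,\overline{P})$ is homogeneous of degree one in its first argument, since by (1) the relation $(\overline{D}-s\overline{P};\mathcal{V})\succeq 0$ is equivalent to $(a\overline{D}-as\overline{P};a\mathcal{V})\succeq 0$ for $a>0$. Then, exactly as in the proof of Proposition~\ref{prop:aPosInt_continuous}, Theorem~\ref{thm:abbig_cone}(1) provides, for each $\gamma\in(0,1]$, an $\eta>0$ such that for all $|\varepsilon_i|,|\delta_j|,\|\varphi_k\|_{\sup}\leq\eta$ one has
\[
 (1-\gamma)(\overline{D};\mathcal{V})\preceq\left(\overline{D}+\sum_i\varepsilon_i\overline{E}_i+\sum_k(0,\varphi_k[v_k]);\mathcal{V}+\sum_j\delta_j\mathcal{W}_j\right)\preceq(1+\gamma)(\overline{D};\mathcal{V}).
\]
Applying the monotonicity of $r(\,\cdot\,,\overline{P})$ in the first variable (Lemma~\ref{lem:inradius_lemma1}(1)) and then the homogeneity just noted gives
\[
 (1-\gamma)\,r((\overline{D};\mathcal{V}),\overline{P})\leq r\left(\left(\overline{D}+\sum_i\varepsilon_i\overline{E}_i+\sum_k(0,\varphi_k[v_k]);\mathcal{V}+\sum_j\delta_j\mathcal{W}_j\right),\overline{P}\right)\leq(1+\gamma)\,r((\overline{D};\mathcal{V}),\overline{P}),
\]
and letting the perturbations (hence $\gamma$) tend to $0$ yields the stated limit.

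I expect the main obstacle to be bookkeeping rather than anything conceptual: one must check carefully that each manipulation with the pair order $\succeq$—sums, positive scalings, and pullbacks along birational morphisms—is legitimate from Definition~\ref{defn:positivity} and the reformulations in Lemmas~\ref{lem:abbig_char} and \ref{lem:birat_abhyankar}, and that $r((\overline{D};\mathcal{V}),\overline{P})$ is both positive (from bigness of the pair) and finite (from bigness of $\overline{P}$, via Theorem~\ref{thm:approx1} and the monotonicity of the positive intersection numbers), so that the scaling and squeezing arguments in (1) and (3) are valid.
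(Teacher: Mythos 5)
Your proof is correct, and parts (1) and (2) coincide with the paper's argument: the paper dismisses (1) as obvious, and your two-sided verification — the trivial approximation $({\rm id}_X,\overline{P})\in\aTheta(\overline{P})$ for one inequality, and the decomposition $(\mu^*\overline{D}-t\overline{M};\mathcal{V}^{\mu})=\mu_*^{-1}(\overline{D}-t\overline{P};\mathcal{V})+t(\mu^*\overline{P}-\overline{M})$ with $t\geq 0$ for the other — is exactly what that remark suppresses, and your checks of positivity and finiteness of the supremum are the right ones. For (3) the paper takes a marginally different route: it observes that $(\overline{D};\mathcal{V})\mapsto r((\overline{D};\mathcal{V}),\overline{P})$ is concave (which is your superadditivity (2) together with the degree-one homogeneity you extract from (1)) and then quotes the openness of the big cone (Theorem~\ref{thm:abbig_cone}(1)) plus the general continuity of concave functions on open convex sets; your explicit sandwich $(1-\gamma)(\overline{D};\mathcal{V})\preceq(\text{perturbed pair})\preceq(1+\gamma)(\overline{D};\mathcal{V})$, followed by monotonicity and homogeneity, is the same device the paper itself deploys in Proposition~\ref{prop:aPosInt_continuous} and simply unpacks that continuity statement by hand. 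The two arguments are interchangeable, and nothing in your write-up needs repair.
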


\begin{proof}
The assertion (1) is obvious.

(2): If $(\overline{D}-t\overline{P};\mathcal{V})\succeq 0$ and $(\overline{D}'-t'\overline{P};\mathcal{V}')\succeq 0$, then
\[
 (\overline{D}+\overline{D}'-(t+t')\overline{P};\mathcal{V}+\mathcal{V}')\succeq 0.
\]
So $r((\overline{D}+\overline{D}';\mathcal{V}+\mathcal{V}'),\overline{P})\geq t+t'$.

(3): The function $\aBVBig_{\RR,\RR}(X)\to\RR$,
\[
 (\overline{D};\mathcal{V})\mapsto r((\overline{D};\mathcal{V}),\overline{P}),
\]
is concave.
So the assertion results from Theorem~\ref{thm:abbig_cone}(1) and \cite[Theorem~6.3.4]{DudleyBook}.
\end{proof}

\begin{lemma}\label{lem:inradius_lemma3}
Let $(\overline{D}_1;\mathcal{V}_1),(\overline{D}_2;\mathcal{V}_2)\in\aBVBig_{\RR,\RR}(X)$, let $U$ be a nonempty open subset of $\Spec(O_K)$ over which a model of definition for $\overline{D}_2$ exists, and let $\delta>0$.
One has
\begin{align*}
 &r((\overline{D}_1;\mathcal{V}_1),(\overline{D}_2;\mathcal{V}_2)) \\
 &\qquad\quad =\inf_{(\mu,\overline{H})\in\aTheta_{\rm amp}(\overline{D}_2;\mathcal{V}_2)}\sup\left\{t\in\RR\,:\,(\mu^*\overline{D}_1-t\overline{H};\mathcal{V}_1^{\mu})\succeq 0\right\} \\
 &\qquad\quad =\inf_{(\mathscr{X},\mathscr{D}_2)\in\aTheta_{U,\delta}(\overline{D}_2)}\inf_{(\widetilde{\mu},\overline{\mathscr{H}})\in\aTheta_{\rm ad}(\overline{\mathscr{D}}_2;\mathcal{V})}\sup\left\{t\in\RR\,:\,(\mu^*\overline{D}_1-t\overline{\mathscr{H}}^{\rm ad};\mathcal{V}_1^{\mu})\succeq 0\right\}.
\end{align*}
\end{lemma}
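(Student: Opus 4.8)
The plan is to mimic the proof of Proposition~\ref{prop:aPosInt_additive}(1). Write $r_1,r_2,r_3$ for the three infima in the statement, in the order listed, so that by definition $r_1=r((\overline{D}_1;\mathcal{V}_1),(\overline{D}_2;\mathcal{V}_2))=\inf_{(\mu,\overline{M})\in\aTheta(\overline{D}_2;\mathcal{V}_2)}\rho(\mu,\overline{M})$, where
\[
 \rho(\mu,\overline{M}):=\sup\{t\in\RR\,:\,(\mu^*\overline{D}_1-t\overline{M};\mathcal{V}_1^{\mu})\succeq 0\}.
\]
First I would record the properties of $\rho$ that drive the whole argument: it is homogeneous, $\rho(\mu,c\overline{M})=c^{-1}\rho(\mu,\overline{M})$ for $c>0$; it is monotone, $\overline{M}\preceq\overline{M}'$ implies $\rho(\mu,\overline{M}')\leq\rho(\mu,\overline{M})$ (a larger nef divisor is harder to subtract, and the relevant suprema are over $t\geq 0$); and it is strictly positive: since $(\overline{D}_1;\mathcal{V}_1)$ is big, pulling a w-ample witness back by $\mu$ and comparing it with $\overline{M}$ (Lemmas~\ref{lem:birat_small_sections}(1) and \ref{lem:w-ample}(\ref{enum:w-ample_is_open1})) yields $(\mu^*\overline{D}_1-\varepsilon\overline{M};\mathcal{V}_1^{\mu})\succeq 0$ for some $\varepsilon>0$, so $\rho(\mu,\overline{M})\geq\varepsilon$.

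The inequalities $r_1\leq r_2$ and $r_1\leq r_3$ are immediate: $\aTheta_{\rm amp}(\overline{D}_2;\mathcal{V}_2)\subset\aTheta(\overline{D}_2;\mathcal{V}_2)$; and if $(\mathscr{X},\mathscr{D}_2)\in\aTheta_{U,\delta}(\overline{D}_2)$ and $(\widetilde{\mu},\overline{\mathscr{H}})\in\aTheta_{\rm ad}(\overline{\mathscr{D}}_2;\mathcal{V}_2)$, then $\overline{\mathscr{H}}^{\rm ad}$ is nef and big and $\overline{\mathscr{H}}^{\rm ad}\preceq\widetilde{\mu}_*^{-1}(\overline{\mathscr{D}}_2^{\rm ad};\mathcal{V}_2)\preceq\widetilde{\mu}_*^{-1}(\overline{D}_2;\mathcal{V}_2)$ (the first $\preceq$ from admissibility, the second since the model Green functions are $\leq$ those of $\overline{D}_2$), so $(\widetilde{\mu},\overline{\mathscr{H}}^{\rm ad})\in\aTheta(\overline{D}_2;\mathcal{V}_2)$ and its $\rho$ is $\geq r_1$; taking infima gives $r_2,r_3\geq r_1$.

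For $r_2\leq r_1$, fix $(\mu,\overline{M})\in\aTheta(\overline{D}_2;\mathcal{V}_2)$ and $0<\delta<1$, and apply Proposition~\ref{prop:approx_by_models}(3) to obtain an ample adelic $\QQ$-Cartier divisor $\overline{H}$ on $X'$ with $\overline{H}-(1-\delta)\overline{M}$ nef and w-ample and $(\mu^*\overline{D}_2-\overline{H};\mathcal{V}_2^{\mu})$ big and strictly effective; then $(\mu,\overline{H})\in\aTheta_{\rm amp}(\overline{D}_2;\mathcal{V}_2)$ and $\overline{H}\succeq(1-\delta)\overline{M}$, so $\rho(\mu,\overline{H})\leq(1-\delta)^{-1}\rho(\mu,\overline{M})$ by monotonicity and homogeneity. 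Letting $\delta\to 0$ gives $r_2\leq\rho(\mu,\overline{M})$, hence $r_2\leq r_1$. For $r_3\leq r_1$ I would proceed exactly as in Proposition~\ref{prop:aPosInt_additive}(1): by the continuity of $r$ in its second argument (monotonicity, Lemma~\ref{lem:inradius_lemma1}(1); homogeneity, Lemma~\ref{lem:inradius_lemma1}(2); openness of the big cone, Theorem~\ref{thm:abbig_cone}), choose $(\mathscr{X},\mathscr{D}_2)\in\aTheta_{U,\delta}(\overline{D}_2)$ with $\delta$ small enough that $r((\overline{D}_1;\mathcal{V}_1),(\overline{\mathscr{D}}_2^{\rm ad};\mathcal{V}_2))$ is within $\eta$ of $r_1$; pick $(\mu,\overline{M})\in\aTheta(\overline{\mathscr{D}}_2^{\rm ad};\mathcal{V}_2)$ with $\rho(\mu,\overline{M})$ within $\eta$ of that; and apply Proposition~\ref{prop:approx_by_models}(4) to produce $(\widetilde{\mu},\overline{\mathscr{H}})\in\aTheta_{\rm ad}(\overline{\mathscr{D}}_2;\mathcal{V}_2)$ with $(1-\delta')\overline{M}-\delta'\overline{F}\preceq\overline{\mathscr{H}}^{\rm ad}$, where $\overline{F}:=\sum_{v\in M_K\setminus U'}(0,[v])$. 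This relation forces any $t>0$ with $(\mu^*\overline{D}_1-t\overline{\mathscr{H}}^{\rm ad};\mathcal{V}_1^{\mu})\succeq 0$ to satisfy $t(1-\delta')\leq r((\mu^*\overline{D}_1+t\delta'\overline{F};\mathcal{V}_1^{\mu}),\overline{M})$; since $\overline{F}$ is effective and $t$ stays in a bounded range independent of $\delta'$, the continuity of $r$ in the $(0,\varphi[v])$-directions (Lemma~\ref{lem:inradius_lemma2}(3)) makes the right-hand side tend to $\rho(\mu,\overline{M})$ as $\delta'\to 0$, so $\rho(\widetilde{\mu},\overline{\mathscr{H}}^{\rm ad})\leq\rho(\mu,\overline{M})+\eta$ once $\delta'$ is small. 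Tracking the $\eta$'s yields $r_3\leq r_1$.

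The main obstacle is the bookkeeping in the $r_3\leq r_1$ step: one passes from $\overline{D}_2$ to a model $\overline{\mathscr{D}}_2^{\rm ad}$, then birationally to an approximation $\overline{M}$, and finally to an admissible $\overline{\mathscr{H}}$, and at the last stage Proposition~\ref{prop:approx_by_models}(4) returns only $(1-\delta')\overline{M}-\delta'\overline{F}\preceq\overline{\mathscr{H}}^{\rm ad}$ rather than $\overline{M}\preceq\overline{\mathscr{H}}^{\rm ad}$, so the correction term $\delta'\overline{F}$ and the factor $(1-\delta')^{-1}$ must both be absorbed in the limit. This is carried out precisely as in the proof of Proposition~\ref{prop:aPosInt_additive}(1); the only genuinely new inputs are the monotonicity, homogeneity and continuity of $r(\cdot,\cdot)$ from Lemmas~\ref{lem:inradius_lemma1} and \ref{lem:inradius_lemma2}, together with the strict positivity $\rho>0$, which guarantees that all the relevant suprema are attained over positive $t$ and that the factors $(1-\delta)^{-1}$, $(1-\delta')^{-1}$ tend to $1$.
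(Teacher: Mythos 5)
Your proposal is correct and follows essentially the same route as the paper: the two easy inclusions, then Proposition~\ref{prop:approx_by_models}(3) together with the homogeneity and monotonicity of $r$ (Lemmas~\ref{lem:inradius_lemma1} and \ref{lem:inradius_lemma2}) for the first equality, and Proposition~\ref{prop:approx_by_models}(4) for the second, all modeled on Proposition~\ref{prop:aPosInt_additive}(1). The only difference is bookkeeping: the paper derives the second equality from the first (fixing an ample $(\mu,\overline{H})$ with $r((\mu^*\overline{D}_1;\mathcal{V}_1^{\mu}),\overline{H})\leq r+\varepsilon$ and then producing $\overline{\mathscr{H}}$ with $\overline{\mathscr{H}}^{\rm ad}\succeq(1+\delta_1)^{-1}\overline{H}$), which absorbs your correction term $\delta'\overline{F}$ into the scaling of $\overline{H}$ and thereby avoids both the boundedness-of-$t$ point and the continuity of $r$ in its second argument that your version has to supply.
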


\begin{proof}
First, we show the first equality.
The inequality $\leq$ is clear.
By definition, given any $\varepsilon>0$, there exists a $(\mu:X'\to X,\overline{M})\in\aTheta(\overline{D}_2;\mathcal{V}_2)$ such that
\begin{equation}
 r((\mu^*\overline{D}_1;\mathcal{V}_1^{\mu}),\overline{M})\leq r((\overline{D}_1;\mathcal{V}_1),(\overline{D}_2;\mathcal{V}_2))+\varepsilon.
\end{equation}
By Lemma~\ref{lem:inradius_lemma2}, there exists a sufficiently small $\delta>0$ such that
\begin{equation}
 r\left((1+\delta)(\mu^*\overline{D}_1;\mathcal{V}_1^{\mu}),\overline{M}\right)\leq r((\mu^*\overline{D}_1;\mathcal{V}_1^{\mu}),\overline{M})+\varepsilon.
\end{equation}
By the same arguments as in Proposition~\ref{prop:approx_by_models}(3), there exists an ample adelic $\RR$-Cartier divisor $\overline{H}$ on $X'$ such that $(\mu,\overline{H})\in\aTheta_{\rm amp}(\overline{D}_2;\mathcal{V}_2)$ and $(1+\delta)\overline{H}\succeq\overline{M}$, so, by Lemma~\ref{lem:inradius_lemma1}(1),(2),
\begin{align}
 r((\mu^*\overline{D}_1;\mathcal{V}_1^{\mu}),\overline{H}) &\leq (1+\delta)r((\mu^*\overline{D}_1;\mathcal{V}_1^{\mu}),\overline{H}) \\
 &=r\left((1+\delta)(\mu^*\overline{D}_1;\mathcal{V}_1^{\mu}),(1+\delta)\overline{H}\right) \nonumber\\
 &\leq r((1+\delta)(\mu^*\overline{D}_1;\mathcal{V}_1^{\mu}),\overline{M}). \nonumber
\end{align}
All in all, one has
\begin{align*}
 \inf_{(\mu,\overline{M})\in\aTheta_{\rm amp}(\overline{D}_2;\mathcal{V}_2)}r((\mu^*\overline{D}_1;\mathcal{V}_1^{\mu}),\overline{M}) &\leq r((\mu^*\overline{D}_1;\mathcal{V}_1^{\mu}),\overline{H}) \\
 &\leq r((\overline{D}_1;\mathcal{V}_1),(\overline{D}_2;\mathcal{V}_2))+2\varepsilon
\end{align*}
for every $\varepsilon>0$.

Next, we show the second equality.
The inequality $\leq$ is clear.
By the assertion (1), given any $\varepsilon>0$, there exists a $(\mu:X'\to X,\overline{H})\in\aTheta_{\rm amp}(\overline{D}_2;\mathcal{V}_2)$ such that $(\mu^*\overline{D}_2-\overline{H};\mathcal{V}_2^{\mu})$ is big and
\begin{equation}
 r((\mu^*\overline{D}_1;\mathcal{V}_1^{\mu}),\overline{H})\leq r((\overline{D}_1;\mathcal{V}_1),(\overline{D}_2;\mathcal{V}_2))+\varepsilon.
\end{equation}
By Lemma~\ref{lem:inradius_lemma2}(2), we can choose a $\delta_1>0$ such that
\begin{equation}
 r((1+\delta_1)(\mu^*\overline{D}_1;\mathcal{V}_1^{\mu}),\overline{H})\leq r((\overline{D}_1;\mathcal{V}_1),\overline{H})+\varepsilon.
\end{equation}

Let $U$ be a nonempty open subset of $\Spec(O_K)$ over which models of definition for both $\overline{D}_2$ and $\overline{H}$ exist.
By Theorem~\ref{thm:abbig_cone}(1), there exists a sufficiently small $\delta_2$ such that $0<\delta_2\leq\delta$ and
\[
 \frac{\delta_1}{1+\delta_1}\overline{H}-\delta_2\sum_{v\in M_K\setminus U}(0,[v])
\]
is big.
By Proposition~\ref{prop:approx_by_models}(4), there exist $(\mathscr{X},\mathscr{D}_2)\in\aTheta_{U,\delta_2}(\overline{D}_2)$ and $(\widetilde{\mu}:\mathscr{X}'\to\mathscr{X},\overline{\mathscr{H}})\in\aTheta_{\rm ad}(\overline{\mathscr{D}}_2;\mathcal{V}_2)$ such that
\[
 \frac{1}{1+\delta_1}\overline{H}\preceq\overline{H}-\delta_2\sum_{v\in M_K\setminus U}(0,[v])\preceq\overline{\mathscr{H}}^{\rm ad}\preceq\overline{H},
\]
so
\begin{align}
 r((\mu^*\overline{D}_1;\mathcal{V}_1^{\mu}),\overline{\mathscr{H}}^{\rm ad}) &\leq(1+\delta_1)r((\mu^*\overline{D}_1;\mathcal{V}_1^{\mu}),\overline{\mathscr{H}}^{\rm ad}) \\
 &\leq r((1+\delta_1)(\mu^*\overline{D}_1;\mathcal{V}_1^{\mu}),\overline{H}). \nonumber
\end{align}
All in all, we have
\begin{align*}
 &\inf_{(\mathscr{X},\mathscr{D}_2)\in\aTheta_{U,\delta}(\overline{D}_2)}\inf_{(\widetilde{\mu},\overline{\mathscr{H}})\in\aTheta_{\rm ad}(\overline{\mathscr{D}}_2;\mathcal{V}_2)}r((\mu^*\overline{D}_1;\mathcal{V}_1^{\mu}),\overline{H}^{\rm ad}) \\
 &\qquad\qquad\qquad\qquad\qquad \leq r((\mu^*\overline{D}_1;\mathcal{V}_1^{\mu}),\overline{\mathscr{H}}^{\rm ad}) \\
 &\qquad\qquad\qquad\qquad\qquad \leq r((\overline{D}_1;\mathcal{V}_1),(\overline{D}_2;\mathcal{V}_2))+2\varepsilon
\end{align*}
for every $\varepsilon>0$.
\end{proof}

The following gives a generalization of \cite[Theorem~7.1 and Corollary~7.3]{IkomaCon} (see also \cite{Bou_Fav_Mat06}, \cite[Theorems~6.9 and 6.10]{Cutkosky13}, \cite{TeissierBonn}).

\begin{theorem}\label{thm:Diskant}
We keep the notations as above.
Let $(\overline{D}_1;\mathcal{V}_1),(\overline{D}_2;\mathcal{V}_2)\in\aBVBig_{\RR,\RR}(X)$.
\begin{enumerate}
\item (An arithmetic Diskant inequality) One has
\[
 0\leq\left(s_{\dim X}^{\frac{1}{\dim X}}-rs_0^{\frac{1}{\dim X}}\right)^{\dim X+1}\leq s_{\dim X}^{1+\frac{1}{\dim X}}-s_{\dim X+1}\cdot s_0^{\frac{1}{\dim X}}.
\]
\item One has
\begin{multline*}
 \frac{s_{\dim X}^{\frac{1}{\dim X}}-\left(s_{\dim X}^{1+\frac{1}{\dim X}}-s_{\dim X+1}\cdot s_0^{\frac{1}{\dim X}}\right)^{\frac{1}{\dim X+1}}}{s_0^{\frac{1}{\dim X}}}\leq r \\
 \leq\frac{s_{\dim X+1}}{s_{\dim X}}\leq\dots\leq\frac{s_1}{s_0}\\
 \leq R\leq\frac{s_{\dim X+1}^{\frac{1}{\dim X}}}{s_1^{\frac{1}{\dim X}}-\left(s_1^{1+\frac{1}{\dim X}}-s_0\cdot s_{\dim X+1}^{\frac{1}{\dim X}}\right)^{\frac{1}{\dim X+1}}}
\end{multline*}
\item (An arithmetic Bonnesen inequality) If $\dim X=1$, then
\[
 \frac{s_0^2}{4}(R-r)^2\leq s_1^2-s_0s_2.
\]
\end{enumerate}
\end{theorem}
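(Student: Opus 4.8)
The plan is to transpose the proof of \cite[Theorem~7.1 and Corollary~7.3]{IkomaCon} to the setting of pairs, using throughout the pair-versions of the required tools established above: the identity $\avol=\langle\,\cdot\,^{\cdot(\dim X+1)}\rangle$ (Theorem~\ref{thm:approx1}), the approximation machinery (Proposition~\ref{prop:approx_by_models}), the Brunn--Minkowski and Khovanskii--Teissier inequalities (Theorem~\ref{thm:Boucksom_Chen}(3), Proposition~\ref{prop:akt2}), the continuity statements (Corollary~\ref{cor:cont}, Proposition~\ref{prop:aPosInt_continuous}, Lemma~\ref{lem:inradius_lemma3}), and the differentiability Theorem~\ref{thm:diff_along_arith}. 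The one structural difference from \cite{IkomaCon} is that Theorem~\ref{thm:diff_along_arith} differentiates $\avol$ only along directions that are \emph{adelic $\RR$-Cartier divisors}, never along a pair; so whenever the classical argument would differentiate $t\mapsto\avol\bigl((\overline{D}_1;\mathcal{V}_1)-t(\overline{D}_2;\mathcal{V}_2)\bigr)$, I would instead fix an approximation $(\mu,\overline{M})\in\aTheta(\overline{D}_2;\mathcal{V}_2)$, work on $X'$ with the honest nef and big adelic divisor $\overline{M}$ in place of the second pair, and recover the desired inequality in the limit over approximations.

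For part (1), the left-hand inequality $0\le\bigl(s_{\dim X}^{1/\dim X}-rs_0^{1/\dim X}\bigr)^{\dim X+1}$ is immediate: it asserts $s_{\dim X}\ge r^{\dim X}s_0$, and given $\varepsilon>0$ one picks $(\mu,\overline{M})\in\aTheta(\overline{D}_2;\mathcal{V}_2)$ with $\adeg(\overline{M}^{\cdot(\dim X+1)})\ge s_0-\varepsilon$; since $r$ is the infimum over approximations, $(\mu^*\overline{D}_1-r\overline{M};\mathcal{V}_1^{\mu})\succeq 0$, so $(\mu,r\overline{M})\in\aTheta(\overline{D}_1;\mathcal{V}_1)$, and feeding $r\overline{M},\dots,r\overline{M},\overline{M}$ into the supremum defining $s_{\dim X}$ gives $s_{\dim X}\ge r^{\dim X}(s_0-\varepsilon)$. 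For the substantive right-hand inequality, I would fix $(\mu,\overline{M})\in\aTheta(\overline{D}_2;\mathcal{V}_2)$, chosen via Lemma~\ref{lem:inradius_lemma3} and Propositions~\ref{prop:approx_by_models} and \ref{prop:aPosInt_continuous} so that $\overline{M}$ nearly realizes $s_0$ and the threshold $\tau:=\sup\{t:(\mu^*\overline{D}_1-t\overline{M};\mathcal{V}_1^{\mu})\succeq 0\}\ge r$ is close to $r$, and study
\[
 \phi(t):=\avol\bigl(\mu^*\overline{D}_1-t\overline{M};\mathcal{V}_1^{\mu}\bigr)^{1/(\dim X+1)},\qquad 0\leq t\leq\tau.
\]
It is concave with $\phi(0)=s_{\dim X+1}^{1/(\dim X+1)}$ and $\phi(\tau)\ge 0$ (Theorem~\ref{thm:Boucksom_Chen}(3)); it is differentiable wherever $(\mu^*\overline{D}_1-t\overline{M};\mathcal{V}_1^{\mu})$ stays big, with derivative expressed through $\langle(\mu^*\overline{D}_1-t\overline{M};\mathcal{V}_1^{\mu})^{\cdot\dim X}\rangle\cdot\overline{M}$ (Theorem~\ref{thm:diff_along_arith}); and these intersection numbers are bounded above by $s_{\dim X}$ (by the definition of the positive intersection numbers together with $\overline{M}\preceq\mu^*(\overline{D}_2;\mathcal{V}_2)$ and monotonicity, Remark~\ref{rem:aPosInt_prop}(2)) and bounded below by a Khovanskii--Teissier estimate (Proposition~\ref{prop:akt2}). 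Feeding these facts into the same convex-analytic computation as in \cite[Theorem~7.1]{IkomaCon}, then letting $\adeg(\overline{M}^{\cdot(\dim X+1)})\to s_0$ and $\tau\to r$, yields the Diskant inequality.

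For parts (2) and (3): the bound $r\le s_{\dim X+1}/s_{\dim X}$ I would prove like the left-hand inequality of (1), namely from $(\mu^*\overline{D}_1-t\overline{M};\mathcal{V}_1^{\mu})\succeq 0$ one gets $\mu_*^{-1}(\overline{D}_1;\mathcal{V}_1)\succeq t\overline{M}$, and — using the filteredness of $\aTheta(\overline{D}_1;\mathcal{V}_1)$ (Proposition~\ref{prop:approx_by_models}(2)) to replace $t\overline{M}$ and a near-optimal approximation of $(\overline{D}_1;\mathcal{V}_1)$ by a single nef approximation dominating both — the positivity of the arithmetic intersection form (Notation and terminology~6(iii)) gives $s_{\dim X+1}\ge t\,s_{\dim X}$, whence the claim as $t\to r$. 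The interior chain $s_{\dim X+1}/s_{\dim X}\le\dots\le s_1/s_0$ is exactly the log-concavity $s_i^2\ge s_{i-1}s_{i+1}$ (Proposition~\ref{prop:akt2}(2)); the bound $s_1/s_0\le R$ is the same argument with $(\overline{D}_1;\mathcal{V}_1)$ and $(\overline{D}_2;\mathcal{V}_2)$ interchanged (so $s_i\leftrightarrow s_{\dim X+1-i}$ and $r\leftrightarrow 1/R$); and the two outermost estimates in (2) follow by solving part (1), respectively part (1) with the pairs interchanged, for $r$, respectively $1/R$, the quantities under the radicals being non-negative by the left-hand inequality of (1). Finally, for $\dim X=1$, substituting $d=1$ into (2) gives $r\ge(s_1-u)/s_0$ and $R\le s_2/(s_1-u)$ with $u:=\sqrt{s_1^2-s_0s_2}$ (real and $<s_1$, since $s_1^2\ge s_0s_2>0$ by Proposition~\ref{prop:akt2}(2)); the identity $s_0s_2-(s_1-u)^2=2u(s_1-u)$ then gives $R-r\le 2u/s_0$, i.e. $\tfrac{s_0^2}{4}(R-r)^2\le s_1^2-s_0s_2$, which is (3). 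The rigidity addendum of Theorem~B drops out because equality in Brunn--Minkowski forces, through Proposition~\ref{prop:akt2}(1) and (3), the relations $s_i=s_{\dim X+1}^{i/(\dim X+1)}s_0^{(\dim X+1-i)/(\dim X+1)}$, which make every inequality in the chain of (2) an equality.

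The hard part is the right-hand inequality in (1), and the difficulty is entirely the interplay between pairs and adelic divisors: because $\avol$ cannot be differentiated along a pair direction, the Diskant argument must be run on a modification against a nef and big adelic divisor $\overline{M}$ approximating $(\overline{D}_2;\mathcal{V}_2)$, and the inequality must then survive a limit over approximations; making that limit work forces $\overline{M}$ to be simultaneously nearly optimal for $s_0$, for the inradius $r$, and (for the auxiliary estimates) for $s_{\dim X}$, which is precisely what the filteredness (Proposition~\ref{prop:approx_by_models}(2)), the continuity results (Corollary~\ref{cor:cont}, Proposition~\ref{prop:aPosInt_continuous}), Lemma~\ref{lem:inradius_lemma3}, and the monotonicity/positivity of the arithmetic intersection form are there to provide. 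Once these replacements are in place, the remaining steps are the classical convex-geometric manipulations.
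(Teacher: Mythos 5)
Your architecture coincides with the paper's: establish the Diskant inequality first against a single nef and big approximation $\overline{M}$ of $(\overline{D}_2;\mathcal{V}_2)$ by integrating the derivative of $t\mapsto\avol(\mu^*\overline{D}_1-t\overline{M};\mathcal{V}_1^{\mu})$ supplied by Theorem~\ref{thm:diff_along_arith} and bounding the integrand by the super-additivity of arithmetic positive intersection numbers; prove the monotonicity $r^{j}s_i\leq s_{i+j}$ from the definition of an approximation together with Remark~\ref{rem:aPosInt_prop}; and then pass to the limit over approximations using the filteredness of Proposition~\ref{prop:approx_by_models}(2), Proposition~\ref{prop:aPosInt_continuous}, and Lemma~\ref{lem:inradius_lemma3} so that one $\overline{M}$ nearly realizes $s_0$, $s_{\dim X}$, and the inradius simultaneously. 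Parts (2) and (3) are then the same formal manipulations as in the paper (log-concavity from Proposition~\ref{prop:akt2}(2), swapping the two pairs, and the elementary identity for $\dim X=1$).

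The one genuine gap is the boundary value of $\phi(t)=\avol(\mu^*\overline{D}_1-t\overline{M};\mathcal{V}_1^{\mu})^{1/(\dim X+1)}$ at the threshold $\tau$. You invoke only $\phi(\tau)\geq 0$, but the identity your computation rests on is
\[
 \avol(\mu^*\overline{D}_1;\mathcal{V}_1^{\mu})-\avol(\mu^*\overline{D}_1-\tau\overline{M};\mathcal{V}_1^{\mu})=(\dim X+1)\int_0^{\tau}\langle(\mu^*\overline{D}_1-t\overline{M};\mathcal{V}_1^{\mu})^{\cdot\dim X}\rangle\cdot\overline{M}\,dt,
\]
and bounding the integrand from above yields an upper bound for $s_{\dim X+1}-\phi(\tau)^{\dim X+1}$, not for $s_{\dim X+1}$; the Diskant inequality closes only if $\phi(\tau)=0$. (The non-negativity $\phi(\tau)\geq 0$ is used in the \emph{opposite} direction, namely for $\tau\leq s_{\dim X+1}/s_{\dim X}$, where it indeed suffices.) The vanishing of the arithmetic volume of a pair at its pseudo-effectivity threshold is not a consequence of Corollary~\ref{cor:cont}, which gives continuity only on the open big cone; the paper imports exactly this statement from \cite[Corollary~3.24]{IkomaCont}. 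You must either cite that result or supply the continuity of $\avol(\,\cdot\,;\mathcal{V}_1)$ up to the boundary of the big cone. With that ingredient added, your proof is the paper's proof.
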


\begin{proof}
(1): We divide the proof into three steps.

\paragraph{Step 1.}
In this step, we assume that $(\overline{D}_2;\mathcal{V}_2)$ is given by $\overline{P}\in\aNef_{\RR}(X)\cap\aBigCone_{\RR}(X)$.
By \cite[Corollary~3.24]{IkomaCont}, one has
\[
 \avol(\overline{D}_1-t\overline{P};\mathcal{V}_1)\begin{cases} >0 & \text{if $t<s$,} \\ =0 & \text{if $t=s$.} \end{cases}
\]
Hence, by Theorem~\ref{thm:diff_along_arith},
\begin{equation}\label{eqn:discant1}
 \avol(\overline{D}_1;\mathcal{V}_1)=(\dim X+1)\int_{t=0}^s\langle(\overline{D}_1-t\overline{P};\mathcal{V}_1)^{\cdot \dim X}\rangle\cdot\overline{P}\,dt.
\end{equation}
On the other hand, one has, for $t<s$,
\begin{align}\label{eqn:discant2}
 0&\leq\langle(\overline{D}_1-t\overline{P};\mathcal{V}_1)^{\cdot \dim X}\rangle\cdot\overline{P} \\
 &\leq\left(\left(\langle(\overline{D}_1;\mathcal{V}_1)^{\cdot \dim X}\rangle\cdot\overline{P}\right)^{\frac{1}{\dim X}}-t\avol(\overline{P})^{\frac{1}{\dim X}}\right)^{\dim X} \nonumber
\end{align}
by Corollary~\ref{cor:aKT}(2).
All in all,
\begin{align*}
 &\avol(\overline{D}_1;\mathcal{V}_1)\avol(\overline{P})^{\frac{1}{\dim X}} \\
 &\qquad\quad \leq (\dim X+1)\avol(\overline{P})^{\frac{1}{\dim X}} \\
 &\qquad\qquad\quad \times\int_{t=0}^s\left(\left(\langle(\overline{D}_1;\mathcal{V}_1)^{\cdot\dim X}\rangle\cdot\overline{P}\right)^{\frac{1}{\dim X}}-t\avol(\overline{P})^{\frac{1}{\dim X}}\right)^{\dim X}\,dt\\
 &\qquad\quad =\left(\langle(\overline{D}_1;\mathcal{V}_1)^{\cdot \dim X}\rangle\cdot\overline{P}\right)^{1+\frac{1}{\dim X}} \\
 &\qquad\qquad\quad -\left(\left(\langle(\overline{D}_1;\mathcal{V}_1)^{\cdot \dim X}\rangle\cdot\overline{P}\right)^{\frac{1}{\dim X}}-s\avol(\overline{P})^{\frac{1}{\dim X}}\right)^{\dim X+1}.
\end{align*}

\paragraph{Step 2.}
In this step, we show the following claim.

\begin{claim}\label{clm:Diskant}
Let $(\overline{D}_1;\mathcal{V}_1),(\overline{D}_2;\mathcal{V}_2)\in\aBVBig_{\RR,\RR}(X)$.
For any $i,j\geq 0$ with $i+j\leq\dim X+1$, one has
\[
 r^js_i\leq s_{i+j}.
\]
\end{claim}

\begin{proof}[Proof of Claim~\ref{clm:Diskant}]
We can assume $i>0$.
Given any $\varepsilon$ with $0<\varepsilon< s_i/2$, there exists a $(\mu:X'\to X,\overline{M})\in\aTheta(\overline{D}_2;\mathcal{V}_2)$ such that
\begin{equation}
 \langle(\overline{D}_1;\mathcal{V}_1)^{\cdot i}\cdot\overline{M}^{\cdot j}\cdot(\overline{D}_2;\mathcal{V}_2)^{\cdot (\dim X+1-(i+j))}\rangle +\varepsilon\geq s_i.
\end{equation}
Set $r':=r((\overline{D}_1;\mathcal{V}_1),\overline{M})\geq r$.
Since $(\mu^*\overline{D}_1-r'\overline{M};\mathcal{V}_1^{\mu})\succeq 0$, one has, by Remark~\ref{rem:aPosInt_prop}(2),(3),
\begin{align}
 &{r'}^j\langle(\overline{D}_1;\mathcal{V}_1)^{\cdot i}\cdot\overline{M}^{\cdot j}\cdot(\overline{D}_2;\mathcal{V}_2)^{\cdot (\dim X+1-(i+j))}\rangle \\
 &\qquad\qquad\qquad\qquad\qquad =\langle(\overline{D}_1;\mathcal{V}_1)^{\cdot i}\cdot(r'\overline{M})^{\cdot j}\cdot(\overline{D}_2;\mathcal{V}_2)^{\cdot (\dim X+1-(i+j))}\rangle \nonumber\\
 &\qquad\qquad\qquad\qquad\qquad \leq s_{i+j} \nonumber
\end{align}
Hence,
\begin{align*}
 r^js_i &\leq {r'}^j\cdot\left(\langle(\overline{D}_1;\mathcal{V}_1)^{\cdot i}\cdot\overline{M}^{\cdot j}\cdot(\overline{D}_2;\mathcal{V}_2)^{\cdot (\dim X+1-(i+j))}\rangle +\varepsilon\right) \\
 &\leq s_{i+j}\left(1+\frac{2\varepsilon}{s_i}\right)
\end{align*}
for every sufficiently small $\varepsilon>0$.
\end{proof}

\paragraph{Step 3.}
In general, we take an arbitrary $\varepsilon>0$.
Let $U$ be a nonempty open subset of $\Spec(O_K)$ over which a model of definition for $\overline{D}_2$ exists.
By continuity (see Proposition~\ref{prop:aPosInt_continuous}), there exists a $\delta>0$ such that, for every $(\mathscr{X},\mathscr{D})\in\aTheta_{U,\delta}(\overline{D}_2)$,
\begin{align}
 &\frac{s_{\dim X}^{\frac{1}{\dim X}}-\left(s_{\dim X}^{1+\frac{1}{\dim X}}-s_{\dim X+1}\cdot s_0^{\frac{1}{\dim X}}\right)^{\frac{1}{\dim X+1}}}{s_0^{\frac{1}{\dim X}}} \\
 &\qquad\qquad\qquad \leq\frac{s_{\dim X}^{\prime\frac{1}{\dim X}}-\left(s_{\dim X}^{\prime\left(1+\frac{1}{\dim X}\right)}-s_{\dim X+1}^{\prime}\cdot s_0^{\prime\frac{1}{\dim X}}\right)^{\frac{1}{\dim X+1}}}{s_0^{\prime\frac{1}{\dim X}}}+\varepsilon, \nonumber
\end{align}
where $s_i':=\langle(\overline{D}_1;\mathcal{V}_1)^{\cdot i}\cdot(\overline{\mathscr{D}}^{\rm ad};\mathcal{V}_2)^{\cdot(\dim X+1-i)}\rangle$.

By Lemma~\ref{lem:inradius_lemma3}, there exist a $(\mathscr{X},\mathscr{D}_0)\in\aTheta_{U,\delta}(\overline{D}_2)$ and a $(\widetilde{\varphi}:\mathscr{X}'\to\mathscr{X},\overline{\mathscr{M}}_0)\in\aTheta_{\rm ad}(\overline{\mathscr{D}}_0;\mathcal{V})$ such that
\begin{equation}
 r(\widetilde{\varphi}_*^{-1}(\overline{D}_1;\mathcal{V}_1),\overline{\mathscr{M}}_0^{\rm ad})\leq r((\overline{D}_1;\mathcal{V}_1),(\overline{D}_2;\mathcal{V}_2))+\varepsilon.
\end{equation}

Moreover, by Proposition~\ref{prop:approx_by_models}(2), there exists a $(\widetilde{\mu}:\mathscr{X}''\to\mathscr{X},\overline{\mathscr{M}})\in\aTheta_{\rm ad}(\overline{\mathscr{D}}_0;\mathcal{V})$ such that $\widetilde{\mu}$ factorizes into $\mathscr{X}''\xrightarrow{\widetilde{\psi}}\mathscr{X}'\xrightarrow{\widetilde{\varphi}}\mathscr{X}$, $\overline{\mathscr{M}}\geq\widetilde{\psi}^*\overline{\mathscr{M}}_0$, and
\begin{align}
 &\frac{s_{\dim X}^{\prime\frac{1}{\dim X}}-\left(s_{\dim X}^{\prime\left(1+\frac{1}{\dim X}\right)}-s_{\dim X+1}^{\prime}\cdot s_0^{\prime\frac{1}{\dim X}}\right)^{\frac{1}{\dim X+1}}}{s_0^{\prime\frac{1}{\dim X}}} \\
 &\qquad\qquad\qquad \leq\frac{s_{\dim X}^{\prime\prime\frac{1}{\dim X}}-\left(s_{\dim X}^{\prime\prime\left(1+\frac{1}{\dim X}\right)}-s_{\dim X+1}^{\prime\prime}\cdot s_0^{\prime\prime\frac{1}{\dim X}}\right)^{\frac{1}{\dim X+1}}}{s_0^{\prime\prime\frac{1}{\dim X}}}+\varepsilon, \nonumber
\end{align}
where $s_i'':=\langle(\overline{D}_1;\mathcal{V}_1)^{\cdot i}\rangle\cdot(\overline{\mathscr{M}}^{\rm ad})^{\cdot(\dim X+1-i)}$.

By Step 1 and Lemma~\ref{lem:inradius_lemma1}(1), one has
\begin{align}
 &\frac{s_{\dim X}^{\prime\prime\frac{1}{\dim X}}-\left(s_{\dim X}^{\prime\prime\left(1+\frac{1}{\dim X}\right)}-s_{\dim X+1}^{\prime\prime}\cdot s_0^{\prime\prime\frac{1}{\dim X}}\right)^{\frac{1}{\dim X+1}}}{s_0^{\prime\prime\frac{1}{\dim X}}} \\
 &\qquad\qquad\qquad\qquad\qquad\qquad\qquad\qquad \leq r(\widetilde{\mu}_*^{-1}(\overline{D}_1;\mathcal{V}_1),\overline{\mathscr{M}}^{\rm ad}) \nonumber\\
 &\qquad\qquad\qquad\qquad\qquad\qquad\qquad\qquad \leq r(\widetilde{\varphi}_*^{-1}(\overline{D}_1;\mathcal{V}_1),\overline{\mathscr{M}}_0^{\rm ad}). \nonumber
\end{align}
All in all,
\[
 \frac{s_{\dim X}^{\frac{1}{\dim X}}-\left(s_{\dim X}^{1+\frac{1}{\dim X}}-s_{\dim X+1}\cdot s_0^{\frac{1}{\dim X}}\right)^{\frac{1}{\dim X+1}}}{s_0^{\frac{1}{\dim X}}}\leq r((\overline{D}_1;\mathcal{V}_1),(\overline{D}_2;\mathcal{V}_2))+3\varepsilon
\]
for every $\varepsilon>0$.

(2): By the assertion (1) and Claim~\ref{clm:Diskant}, one has
\[
 \frac{s_{\dim X}^{\frac{1}{\dim X}}-\left(s_{\dim X}^{1+\frac{1}{\dim X}}-s_{\dim X+1}\cdot s_0^{\frac{1}{\dim X}}\right)^{\frac{1}{\dim X+1}}}{s_0^{\frac{1}{\dim X}}}\leq r\leq\frac{s_{\dim X+1}}{s_{\dim X}}.
\]
By applying the above inequalities to $r((\overline{D}_2;\mathcal{V}_2),(\overline{D}_1;\mathcal{V}_1))=1/R$, one has
\[
 \frac{s_1^{\frac{1}{\dim X}}-\left(s_1^{1+\frac{1}{\dim X}}-s_0\cdot s_{\dim X+1}^{\frac{1}{\dim X}}\right)^{\frac{1}{\dim X+1}}}{s_{\dim X+1}^{\frac{1}{\dim X}}}\leq \frac{1}{R}\leq\frac{s_0}{s_1}.
\]
Moreover, by Proposition~\ref{prop:akt2}(2),
\[
 \frac{s_{\dim X+1}}{s_{\dim X}}\leq\frac{s_{\dim X}}{s_{\dim X-1}}\leq\dots\leq\frac{s_2}{s_1}\leq\frac{s_1}{s_0}.
\]
So we conclude.

(3): By the assertion (2), one has
\begin{align*}
 \frac{s_0^2}{4}(R-r)^2 &\leq\frac{s_0^2}{4}\left(\frac{s_2}{s_1-\sqrt{s_1^2-s_0\cdot s_2}}-\frac{s_1-\sqrt{s_1^2-s_0\cdot s_2}}{s_0}\right) \\
 &=s_1^2-s_0s_2.
\end{align*}
\end{proof}

\begin{remark}
\begin{enumerate}
\item If $(\overline{D}_1;\mathcal{V}_1),(\overline{D}_2;\mathcal{V}_2)\in\aBDBig_{\RR,\RR}(X)$ satisfies
\[
 \avol(\overline{D}_1+\overline{D}_2;\mathcal{V}_1+\mathcal{V}_2)^{\frac{1}{\dim X+1}}=\avol(\overline{D}_1;\mathcal{V}_1)^{\frac{1}{\dim X+1}}+\avol(\overline{D}_2;\mathcal{V}_2)^{\frac{1}{\dim X+1}},
\]
then $s_{\dim X}^{\dim X+1}=s_{\dim X+1}^{\dim X}\cdot s_0$, $s_1^{\dim X+1}=s_0^{\dim X}\cdot s_{\dim X}$, and
\[
 \left(\frac{s_{\dim X}}{s_0}\right)^{\frac{1}{\dim X}}=r=\frac{s_{\dim X+1}}{s_{\dim X}}=\dots =\frac{s_1}{s_0}=R=\left(\frac{s_{\dim X+1}}{s_1}\right)^{\frac{1}{\dim X}},
\]
but the converse may not be true.
\item Suppose that $X$ is a smooth curve.
Every discrete valuation is divisorial, so that we can naturally identify the three types of base conditions
\[
 \Div_{\RR}(X)=\WDiv_{\RR}(X)=\VDiv_{\RR}(X).
\]
For $(\overline{D};\mathcal{V})\in\aBVDiv_{\RR,\RR}(X)$,we put
\[
 \Upsilon(\overline{D};\mathcal{V}):=\left\{\overline{P}\,:\,\text{$\overline{P}$ is nef and $\overline{P}\leq (\overline{D};\mathcal{V})$}\right\}.
\]
If $\Upsilon(\overline{D};\mathcal{V})\neq\emptyset$, then it is known that $\Upsilon(\overline{D};\mathcal{V})$ admits a unique maximal element $\overline{P}(\overline{D};\mathcal{V})$ (see \cite[Theorem~6.2.3]{MoriwakiAdelic}).

For $(\overline{D}_1;\mathcal{V}_1),(\overline{D}_2;\mathcal{V}_2)\in\aBVBig_{\RR,\RR}(X)$, the following are equivalent.
\begin{enumerate}
\item $\avol(\overline{D}_1+\overline{D}_2;\mathcal{V}_1+\mathcal{V}_2)^{\frac{1}{2}}=\avol(\overline{D}_1;\mathcal{V}_1)^{\frac{1}{2}}+\avol(\overline{D}_2;\mathcal{V}_2)^{\frac{1}{2}}$.
\item $\overline{P}(\overline{D}_1;\mathcal{V}_1)/\avol(\overline{D}_1;\mathcal{V}_1)^{\frac{1}{2}}\sim_{\RR}\overline{P}(\overline{D}_2;\mathcal{V}_2)/\avol(\overline{D}_2;\mathcal{V}_2)^{\frac{1}{2}}$.
\end{enumerate}
\end{enumerate}
\end{remark}

\appendix
\section{An arithmetic Nakai--Moishezon criterion over curves}

In this appendix, we show an arithmetic Nakai--Moishezon criterion for adelic $\RR$-Cartier divisors on curves (Corollary~\ref{cor:aNM_for_curves}).

\begin{lemma}\label{lem:extend_ample_divisor}
Let $\mathscr{X}$ be a regular and geometrically connected arithmetic surface over $\Spec(O_K)$ with smooth generic fiber $X:=\mathscr{X}_K$.
Let $D$ be an ample $\RR$-Cartier divisor on $X$.
There exist a $\phi\in\Rat(X)^{\times}\otimes_{\ZZ}\RR$ and a relatively ample $\RR$-Cartier divisor $\mathscr{D}$ on $\mathscr{X}$ such that $\mathscr{D}|_X=D+(\phi)$.
\end{lemma}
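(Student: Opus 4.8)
The plan is to reduce everything to the fact that on the smooth projective curve $X$ an $\RR$-Cartier divisor is ample exactly when it has positive degree, and then to produce a relatively ample model of $D$ on $\mathscr{X}$ by a crude extension followed by a vertical correction over the finitely many places where the crude extension fails to be fibrewise ample. In fact one may take $\phi$ to be trivial (so that $\mathscr{D}|_X=D$ on the nose); the extra freedom afforded by $\phi$ is harmless but unnecessary.

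First I would extend $D$ to $\mathscr{X}$ in the cheapest possible way. Write $D=\sum_i a_iP_i$ with the $P_i$ distinct closed points of $X$ and $a_i\in\RR$, let $\overline{\{P_i\}}\subset\mathscr{X}$ be the Zariski closures (horizontal prime divisors), and set $\mathscr{D}_0:=\sum_ia_i\overline{\{P_i\}}$. Since $\mathscr{X}$ is regular of dimension two, every prime Weil divisor on $\mathscr{X}$ is Cartier, so $\mathscr{D}_0\in\Div_{\RR}(\mathscr{X})$, and plainly $\mathscr{D}_0|_X=D$. Next I would invoke the fibrewise criterion for relative ampleness for the projective flat morphism $\pi\colon\mathscr{X}\to\Spec(O_K)$ (EGA~IV, \S9.6), in the $\RR$-coefficient form obtained from the fact that the relatively ample cone is open and spanned by its rational classes: an $\RR$-Cartier divisor on $\mathscr{X}$ is relatively ample if and only if its restriction to $X$ and to every closed fibre $\mathscr{X}_v$ is ample. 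The set of $v$ for which $\mathscr{D}_0|_{\mathscr{X}_v}$ is ample is open and contains the generic point, so only finitely many places $v_1,\dots,v_n$ are bad.

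The heart of the argument is the vertical correction over each bad place. Fix $v=v_j$ and a connected component of $\mathscr{X}_v$ with irreducible components $\Gamma_1,\dots,\Gamma_m$, occurring in $\mathscr{X}_v$ with multiplicities $r_1,\dots,r_m>0$. On the (possibly non-reduced) projective curve $\mathscr{X}_v$ ampleness is equivalent to having positive degree on every $\Gamma_l$, and Zariski's lemma applied to the connected fibre shows that the intersection matrix $M:=\bigl(\deg_{\Gamma_l}\mathcal{O}_{\mathscr{X}}(\Gamma_k)\bigr)_{k,l}$ is negative semidefinite with kernel $\RR\cdot(r_1,\dots,r_m)$, whence $\mathrm{image}(M)=\{(t_l):\sum_lr_lt_l=0\}$. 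Because $\mathscr{X}$ is flat over $O_K$ one has $\sum_lr_l\deg_{\Gamma_l}(\mathscr{D}_0)=\deg(\mathscr{D}_0|_{\mathscr{X}_v})=\deg D>0$; a one-line convexity argument (the linear form $(t_l)\mapsto\sum_lr_lt_l$ on the open orthant $\{t_l>-\deg_{\Gamma_l}(\mathscr{D}_0)\}$ has infimum $-\deg D<0$ and supremum $+\infty$, hence vanishes somewhere on it) then produces $(t_l)\in\mathrm{image}(M)$ with $t_l>-\deg_{\Gamma_l}(\mathscr{D}_0)$ for all $l$. Choosing $\mathbf{c}$ with $M\mathbf{c}=(t_l)$ and putting $\mathscr{F}_j:=\sum_lc_l\Gamma_l$ (a vertical $\RR$-Cartier divisor, again by regularity of $\mathscr{X}$) makes $(\mathscr{D}_0+\mathscr{F}_j)|_{\mathscr{X}_v}$ ample. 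Since $\mathscr{F}_j$ is supported on $\mathscr{X}_{v_j}$, it restricts to $0$ on $X$ and on every fibre over a place $\ne v_j$, so summing over $j$ does not disturb the generic fibre or the good fibres.

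Setting $\mathscr{D}:=\mathscr{D}_0+\sum_{j=1}^n\mathscr{F}_j$ then yields a relatively ample $\RR$-Cartier divisor on $\mathscr{X}$ with $\mathscr{D}|_X=D$, so taking $\phi$ trivial finishes the proof. The only genuinely technical point is the third paragraph: stating correctly the $\RR$-coefficient version of the fibrewise ampleness criterion and the combinatorics of Zariski's lemma (including the reduction to a connected fibre and the flatness computation of $\deg(\mathscr{D}_0|_{\mathscr{X}_v})$); everything else is formal.
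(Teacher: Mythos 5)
Your proof is correct, but it takes a genuinely different route from the paper's. The paper keeps the correction \emph{horizontal}: it first shows (by a system-of-parameters argument) that every general closed point of a closed fibre lies on the Zariski closure of a point of $X$, uses this to pick finitely many points of $X$ whose closures meet every irreducible component of every reducible fibre, and then chooses $\phi$ so that $D+(\phi)=\sum_j a_jD_j$ is \emph{effective} with $a_j>0$ and $\bigcup_j\Supp(D_j)$ containing those points; the closure $\mathscr{D}=\sum_ja_j\mathscr{D}_j$ then has positive degree on every vertical component simply because it is an effective horizontal divisor meeting each of them. You instead close up $D$ naively (so $\phi$ is not needed) and repair the finitely many bad fibres by \emph{vertical} $\RR$-divisors, using Zariski's lemma to identify the image of the intersection matrix of a connected fibre with the hyperplane $\sum_lr_lt_l=0$, which meets the open orthant $\{t_l>-\deg_{\Gamma_l}\mathscr{D}_0\}$ exactly because $\sum_lr_l\deg_{\Gamma_l}\mathscr{D}_0=\deg D>0$; this computation is correct, and in fact the reduction to a connected component is vacuous here, since the fibres are connected by Zariski connectedness ($X$ is geometrically connected and $\mathscr{X}$ is normal). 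The trade-off: the paper needs a moving argument for $D$ and the existence of horizontal divisors through prescribed fibre points, but gets an effective model; you avoid $\phi$ at the cost of using the regularity of $\mathscr{X}$ more seriously (to make the vertical correction $\RR$-Cartier and to have Zariski's lemma available). One shared soft spot, present in the paper's proof as well: both arguments conclude via the fibrewise degree criterion for relative ampleness of \emph{$\RR$-}Cartier divisors; if relative ampleness is defined as being a positive $\RR$-combination of relatively ample Cartier divisors, a short uniform perturbation of the coefficients is still required, using that all but finitely many fibres are irreducible of degree $\deg D$.
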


\begin{proof}
Given any (general) closed point $x'$ in a closed fiber $\mathscr{X}_P$ over $P\in\Spec(O_K)$, one can find an $x\in X$ such that the Zariski closure $\overline{\{x\}}$ in $\mathscr{X}$ contains $x'$.
In fact, let $\varpi\in\mathcal{O}_{\mathscr{X},x'}$ be a local equation defining an irreducible component of $\mathscr{X}_P$ passing through $x'$, and choose an $f\in\mathcal{O}_{\mathscr{X},x'}$ such that $\varpi,f$ form a system of parameters for $\mathcal{O}_{\mathscr{X},x'}$.
Then $f\mathcal{O}_{\mathscr{X},x'}$ is a prime ideal of height one, and does not contain $\varpi$.
(See also \cite[Theorem~4.1]{Gubler_Kunnemann15}.)

There exist finitely many $P_1,\dots,P_l\in\Spec(O_K)$ such that $\mathscr{X}$ is geometrically irreducible over $\Spec(O_K)\setminus\{P_1,\dots,P_l\}$.
By the above argument, there exist $x_1,\dots,x_m\in X$ such that $\bigcup\overline{\{x_i\}}$ meets every irreducible component of $\mathscr{X}_{P_1},\dots,\mathscr{X}_{P_l}$.
We take a $\phi\in\Rat(X)^{\times}\otimes_{\ZZ}\RR$ such that $D+(\phi)$ can be written as a sum $\sum_{j=1}^na_jD_j$ such that $n\geq 1$, $a_j>0$, $D_j$ are prime Cartier divisors on $X$, and $\bigcup\Supp(D_j)\supset\{x_1,\dots,x_m\}$.
Let $\mathscr{D}_j$ be the Zariski closure of $D_j$ in $\mathscr{X}$.
Then
\[
 \mathscr{D}:=\sum_{j=1}^na_j\mathscr{D}_j
\]
is a relatively ample $\RR$-Cartier divisor on $\mathscr{X}$ extending $D+(\phi)$.
\end{proof}

\begin{lemma}\label{lem:aNM_for_curves}
Let $X$ be a smooth and geometrically irreducible $K$-curve and let $\overline{D}=\left(D,\sum_{v\in M_K\cup\{\infty\}}g_v^{\overline{D}}[v]\right)$ be an adelic $\RR$-Cartier divisor on $X$ such that $D$ is ample.
The following are equivalent.
\begin{enumerate}
\item For every $\varepsilon>0$, there exists an $O_K$-model $(\mathscr{X}_{\varepsilon},\mathscr{D}_{\varepsilon})$ of $(X,D)$ such that $(\mathscr{X}_{\varepsilon},\mathscr{D}_{\varepsilon})$ extends a fixed model of definition for $\overline{D}$, $\mathscr{D}_{\varepsilon}$ is a relatively nef $\RR$-Cartier divisor on $\mathscr{X}_{\varepsilon}$, and
\[
 \|g_v^{\overline{D}}-g_v^{(\mathscr{X}_{\varepsilon},\mathscr{D}_{\varepsilon})}\|_{\sup}\leq\varepsilon
\]
for every $v\in M_K$.
\item For every $\varepsilon>0$, there exists an $O_K$-model $(\mathscr{X}_{\varepsilon},\mathscr{D}_{\varepsilon})$ of $(X,D)$ such that $(\mathscr{X}_{\varepsilon},\mathscr{D}_{\varepsilon})$ extends a fixed model of definition for $\overline{D}$, $\mathscr{D}_{\varepsilon}$ is a relatively ample $\RR$-Cartier divisor on $\mathscr{X}_{\varepsilon}$, and
\[
 \|g_v^{\overline{D}}-g_v^{(\mathscr{X}_{\varepsilon},\mathscr{D}_{\varepsilon})}\|_{\sup}\leq\varepsilon
\]
for every $v\in M_K$.
\end{enumerate}
\end{lemma}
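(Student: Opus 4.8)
The implication (2) $\Rightarrow$ (1) is immediate: a relatively ample $\RR$-Cartier divisor is relatively nef, and the remaining requirements in the two conditions are literally the same. So the plan is to prove (1) $\Rightarrow$ (2). First I would set the stage. Let $(\mathscr{X}_U,\mathscr{D}_U)$ be the fixed $U$-model of definition for $\overline{D}$ used in (1). Since $X$ is smooth, after shrinking $U$ one may assume that $\mathscr{X}_U\to U$ is smooth, so that every fibre of $\mathscr{X}_U$ over $U$ is a smooth, hence irreducible, curve; restricting a model of definition to a smaller open subset again gives a model of definition, so this costs nothing. Now fix $\varepsilon>0$ and, using (1), choose an $O_K$-model $(\mathscr{X},\mathscr{D})$ of $(X,D)$ extending $(\mathscr{X}_U,\mathscr{D}_U)$, with $\mathscr{D}$ relatively nef and $\|g_v^{\overline{D}}-g_v^{(\mathscr{X},\mathscr{D})}\|_{\sup}\le\varepsilon/2$ for every $v\in M_K$. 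Resolving singularities of $\mathscr{X}$ — an isomorphism over $U$, since $\mathscr{X}_U$ is regular — and pulling $\mathscr{D}$ back, I may further assume $\mathscr{X}$ is regular; the Green functions and the relative nefness are unchanged.

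\textbf{A relatively ample companion on the same model.} Since $D$ is ample, Lemma~\ref{lem:extend_ample_divisor} provides a $\phi\in\Rat(X)^{\times}\otimes_{\ZZ}\RR$ and a relatively ample $\RR$-Cartier divisor $\mathscr{A}$ on $\mathscr{X}$ with $\mathscr{A}|_X=D+(\phi)$. The principal divisor $\div_{\mathscr{X}}(\phi)$ has degree zero on every fibral curve, so $\mathscr{A}_0:=\mathscr{A}-\div_{\mathscr{X}}(\phi)$ is again relatively ample and $\mathscr{A}_0|_X=D$. As $\mathscr{A}_0$ and $\mathscr{D}$ have the same generic fibre, the difference $\mathscr{A}_0-\mathscr{D}$ is a vertical $\RR$-Cartier divisor, supported over a finite set of places. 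For each such place $v$ lying in $U$ the fibre $\mathscr{X}_v$ is irreducible, so $(\mathscr{A}_0-\mathscr{D})|_v=c_v\mathscr{X}_v$ for some $c_v\in\RR$; subtracting $c_v\mathscr{X}_v$ from $\mathscr{A}_0$ changes neither its generic fibre nor its intersection numbers with fibral curves, so after these finitely many subtractions I may assume that $\mathscr{A}_0-\mathscr{D}$ is supported over a finite set $S\subset M_K\setminus U$. In particular $\mathscr{A}_0$ and $\mathscr{D}$ agree near the fibre over every $v\notin S$, hence $g_v^{(\mathscr{X},\mathscr{A}_0)}=g_v^{(\mathscr{X},\mathscr{D})}$ for all $v\notin S$.

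\textbf{The convex combination.} I would then put $\mathscr{D}_\varepsilon:=(1-t)\mathscr{D}+t\mathscr{A}_0$ for a small $t\in(0,1)$ to be chosen. This is an $O_K$-model of $(X,D)$; it is relatively ample, being the sum of the relatively nef $(1-t)\mathscr{D}$ and the relatively ample $t\mathscr{A}_0$ (for $\RR$-coefficients one passes from the $\QQ$-case by the usual approximation, as elsewhere in the paper); and since $\mathscr{D}_\varepsilon=\mathscr{D}+t(\mathscr{A}_0-\mathscr{D})$ with $\mathscr{A}_0-\mathscr{D}$ supported over $M_K\setminus U$, it extends $(\mathscr{X}_U,\mathscr{D}_U)$. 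For the metric estimate: if $v\notin S$ then $g_v^{(\mathscr{X},\mathscr{D}_\varepsilon)}=g_v^{(\mathscr{X},\mathscr{D})}$, which is within $\varepsilon/2$ of $g_v^{\overline{D}}$ (and equal to it when $v\in U$), while for the finitely many $v\in S$,
\[
 \|g_v^{\overline{D}}-g_v^{(\mathscr{X},\mathscr{D}_\varepsilon)}\|_{\sup}\le(1-t)\|g_v^{\overline{D}}-g_v^{(\mathscr{X},\mathscr{D})}\|_{\sup}+t\|g_v^{\overline{D}}-g_v^{(\mathscr{X},\mathscr{A}_0)}\|_{\sup}\le\frac{\varepsilon}{2}+tM,
\]
where $M:=\max_{v\in S}\|g_v^{\overline{D}}-g_v^{(\mathscr{X},\mathscr{A}_0)}\|_{\sup}<\infty$. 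Choosing $t$ with $tM\le\varepsilon/2$ gives $\|g_v^{\overline{D}}-g_v^{(\mathscr{X},\mathscr{D}_\varepsilon)}\|_{\sup}\le\varepsilon$ for every $v\in M_K$, which is (2).

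\textbf{Expected difficulty.} The main obstacle — and the reason Lemma~\ref{lem:extend_ample_divisor} and the regularity of $\mathscr{X}$ are brought in — is manufacturing, on the same model, a relatively ample divisor with the same generic fibre that stays ``close'' to $\overline{D}$ and compatible with the fixed model of definition over $U$; this forces the vertical-divisor bookkeeping of the second paragraph (using irreducibility of the good-reduction fibres to clear the vertical discrepancy over $U$). Once that is arranged, the convexity argument and the $\QQ$-to-$\RR$ passage for relative ampleness are routine.
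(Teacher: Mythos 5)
Your proposal is correct and follows essentially the same route as the paper's proof: desingularize so that Lemma~\ref{lem:extend_ample_divisor} applies, produce a relatively ample extension of $D$ on the same regular model, and pass to a convex combination $(1-t)\mathscr{D}+t\mathscr{A}_0$ with $t$ small so the Green functions stay within $\varepsilon$. Your extra bookkeeping (subtracting $\div_{\mathscr{X}}(\phi)$ so the generic fibre is exactly $D$, and clearing the vertical discrepancy over $U$ so the new model still extends the fixed model of definition) makes explicit two points the paper's write-up glosses over, but it is the same argument.
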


\begin{proof}
The implication (2) $\Rightarrow$ (1) is obvious, so that we are going to show the converse.
There exist a nonempty open subset $U$ of $\Spec(O_K)$ and a $U$-model of definition $(\mathscr{X}_U,\mathscr{D}_U)$ for $\overline{D}$ such that $\mathscr{X}_U\to U$ is smooth.

By the condition (1), for any $\varepsilon>0$, there exists an $O_K$-model $(\mathscr{X}_{\varepsilon},\mathscr{D}_{\varepsilon})$ such that $(\mathscr{X}_{\varepsilon},\mathscr{D}_{\varepsilon})$ extends $(\mathscr{X}_U,\mathscr{D}_U)$, $\mathscr{D}_{\varepsilon}$ is a relatively nef $\RR$-Cartier divisor on $\mathscr{X}_{\varepsilon}$, and
\[
 \|g_v^{\overline{D}}-g_v^{(\mathscr{X}_{\varepsilon},\mathscr{D}_{\varepsilon})}\|_{\sup}\leq\frac{\varepsilon}{2}
\]
for every $v\in M_K$.

By desingularization \cite[page 55]{Lipman78}, there exists a birational morphism $\pi_{\varepsilon}:\mathscr{X}_{\varepsilon}'\to\mathscr{X}_{\varepsilon}$ such that $\mathscr{X}_{\varepsilon}'$ is regular and $\pi_{\varepsilon}$ is isomorphic over $U$.
By Lemma~\ref{lem:extend_ample_divisor}, there exists a relatively ample $\RR$-Cartier divisor $\mathscr{D}_{\varepsilon}'$ on $\mathscr{X}_{\varepsilon}'$ that extends $D$.
We can choose a sufficiently small $\delta_{\varepsilon}>0$ such that
\[
 \left\|g_v^{\overline{D}}-\left((1-\delta_{\varepsilon})g_v^{(\mathscr{X}_{\varepsilon},\mathscr{D}_{\varepsilon})}+\delta_{\varepsilon} g_v^{(\mathscr{X}_{\varepsilon}',\mathscr{D}_{\varepsilon}')}\right)\right\|_{\sup}\leq\varepsilon
\]
for every $v\in M_K$.
So $(\mathscr{X}_{\varepsilon}',(1-\delta_{\varepsilon})\pi_{\varepsilon}^*\mathscr{D}_{\varepsilon}+\delta_{\varepsilon}\mathscr{D}_{\varepsilon}')$ is an $O_K$-model of $(X,D)$ having the required properties.
\end{proof}

The following is an arithmetic analogue of the theorem of Campana--Peternell \cite[Theorem~1.3]{Campana_Peternell90}.

\begin{theorem}\label{thm:aNM_for_R}
Let $X$ be a smooth projective $K$-variety and let $\overline{D}=\left(D,\sum_{v\in M_K\cup\{\infty\}}g_v^{\overline{D}}[v]\right)$ be an adelic $\RR$-Cartier divisor on $X$ such that the following condition ($\ast$) is satisfied.
\begin{enumerate}
\item[($\ast$)] For every $\varepsilon>0$, there exists an $O_K$-model $(\mathscr{X}_{\varepsilon},\mathscr{D}_{\varepsilon})$ of $(X,D)$ such that $(\mathscr{X}_{\varepsilon},\mathscr{D}_{\varepsilon})$ extends a fixed model of definition for $\overline{D}$, $\mathscr{D}_{\varepsilon}$ is a relatively ample $\RR$-Cartier divisor on $\mathscr{X}_{\varepsilon}$, and
\[
 \|g_v^{\overline{D}}-g_v^{(\mathscr{X}_{\varepsilon},\mathscr{D}_{\varepsilon})}\|_{\sup}\leq\varepsilon
\]
for every $v\in M_K$.
\end{enumerate}
Then the following are equivalent.
\begin{enumerate}
\item $\overline{D}$ is w-ample.
\item $\overline{D}$ is ample
\end{enumerate}
\end{theorem}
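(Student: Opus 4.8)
The plan is to prove the two implications separately: $(1)\Rightarrow(2)$ is a short consequence of Theorem~\ref{thm:ample}(\ref{enum:rel_nef_and_w-ample}), while $(2)\Rightarrow(1)$ is the substantial direction and amounts to an arithmetic incarnation of the Campana--Peternell reduction of the real Nakai--Moishezon criterion to the rational one, the extra input being precisely the relatively ample $\RR$-Cartier models supplied by condition~$(\ast)$.

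\textbf{The direction $(1)\Rightarrow(2)$.} Here I would simply verify that a w-ample $\overline{D}$ satisfying $(\ast)$ is relatively nef and then invoke Theorem~\ref{thm:ample}(\ref{enum:rel_nef_and_w-ample}). That $D$ is ample is immediate from the definition of w-ampleness. For each $v\in M_K$, condition~$(\ast)$ exhibits $g_v^{\overline{D}}$ as a uniform limit of the model functions $g_v^{(\mathscr{X}_{\varepsilon},\mathscr{D}_{\varepsilon})}$ attached to relatively ample $\RR$-Cartier divisors $\mathscr{D}_{\varepsilon}$; since such functions are semipositive and semipositivity is preserved under uniform limits, $g_v^{\overline{D}}$ is semipositive, which (together with the semipositivity of $g_{\infty}^{\overline{D}}$, as required for $\overline D$ to be ample) gives relative nefness. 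Theorem~\ref{thm:ample}(\ref{enum:rel_nef_and_w-ample}) then yields ampleness.

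\textbf{The direction $(2)\Rightarrow(1)$.} Suppose $\overline{D}$ is ample. I would first put it in a convenient normal form. By Theorem~\ref{thm:ample}(3) there is $\varepsilon_0>0$ with $\overline{D}-\pi^*\overline{N}$ nef for every $\overline{N}\in\aDiv_{\RR}(\Spec(O_K))$ with $0<\adeg(\overline{N})\leq\varepsilon_0$; replacing $g_{\infty}^{\overline{D}}$ by $(1-s)g_{\infty}^{\overline{D}}+s\,g_{\infty}^{\overline{H}}$ for a fixed ample $\QQ$-Cartier $\overline{H}$ and small $s>0$, I may assume $g_{\infty}^{\overline{D}}$ is of $C^{\infty}$-type with strictly positive curvature form --- this changes $\overline{D}$ only by a small archimedean perturbation, harmless at the end by the openness of w-ampleness (Lemma~\ref{lem:w-ample}(\ref{enum:w-ample_is_open1})), and it preserves ampleness with $\varepsilon_0$ replaced by $\varepsilon_0-s/2$. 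Next, using Lemma~\ref{lem:Q-lin_indep} write $D=\sum_i a_iA_i$ with $A_i$ ample $\QQ$-Cartier, and enlarge this to a family $A_1,\dots,A_l$ of ample $\QQ$-Cartier divisors whose rational span contains $D$. By condition~$(\ast)$, fix a relatively ample $\RR$-Cartier model $(\mathscr{X},\mathscr{D})$ of $(X,D)$ with $\|g_v^{\overline{D}}-g_v^{(\mathscr{X},\mathscr{D})}\|_{\sup}$ as small as desired for all $v\in M_K$. Since the relatively ample cone of $\mathscr{X}$ is open, I may choose relatively ample $\QQ$-Cartier divisors $\mathscr{A}_j$ on $\mathscr{X}$ with $\mathscr{A}_j|_X=A_j$, and then for small rational $\epsilon_j>0$ the divisor $\mathscr{D}_0:=\mathscr{D}-\sum_j\epsilon_j\mathscr{A}_j$ is again relatively ample with $\mathscr{D}_0|_X=D_0:=D-\sum_j\epsilon_jA_j$ still ample. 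On the archimedean side, fix $C^{\infty}$ positively curved $A_j$-Green functions $h_j$ and set $g_{\infty}^{(0)}:=g_{\infty}^{\overline{D}}-\sum_j\epsilon_jh_j$; because the curvature of $g_{\infty}^{\overline D}$ is strictly positive and the $h_j$ are fixed, $g_{\infty}^{(0)}$ is again positively curved once the $\epsilon_j$ are small. Put $\overline{\mathscr{D}}_0:=(\mathscr{D}_0,g_{\infty}^{(0)})$ and $\overline{\mathscr{A}}_j:=(\mathscr{A}_j,h_j)$ (rescaling the $h_j$ if necessary so that $\adeg\bigl((\overline{\mathscr{A}}_j^{\rm ad}|_Y)^{\cdot(\dim Y+1)}\bigr)>0$ for all $Y$). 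Then each $\overline{\mathscr{A}}_j^{\rm ad}$ is relatively nef, ample, and $\QQ$-Cartier, and for $\varepsilon$ small so is $\overline{\mathscr{D}}_0^{\rm ad}$ (relative nefness is clear, and positivity of all arithmetic intersection numbers follows uniformly from the $\varepsilon_0$-characterization applied to $\overline{D}$). By Theorem~\ref{thm:ample}(4), $\overline{\mathscr{D}}_0^{\rm ad}$ and the $\overline{\mathscr{A}}_j^{\rm ad}$ are w-ample, and by Theorem~\ref{thm:ample}(5) their positive combination $\overline{\mathscr{D}}_0^{\rm ad}+\sum_j\epsilon_j\overline{\mathscr{A}}_j^{\rm ad}$ is w-ample; this divisor agrees with $\overline{D}$ at $\infty$ by construction and differs from $\overline{D}$ at the finite places only by the arbitrarily small discrepancy between $g_v^{\overline{D}}$ and the chosen model functions, so Lemma~\ref{lem:w-ample}(\ref{enum:w-ample_is_open1}) forces $\overline{D}$ itself to be w-ample.

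\textbf{The main obstacle.} Everything apart from the construction in the previous paragraph is formal (use of Theorem~\ref{thm:ample} and of openness of w-ampleness). The hard point is to split the metrized divisor $\overline D$ into a positive $\RR$-combination of pieces that are \emph{simultaneously} semipositive at every place (so relative nefness survives the decomposition) and arithmetically positive (so Zhang's $\QQ$-criterion, i.e.\ Theorem~\ref{thm:ample}(4), applies), all while keeping the underlying $\QQ$-divisors ample both on $X$ and relatively over $\Spec(O_K)$. This is exactly where the normalization to a strictly positively curved archimedean metric and the relatively ample $\RR$-Cartier models of condition~$(\ast)$ are indispensable: they replace the step which is automatic in the geometric Campana--Peternell theorem, namely that a real ample divisor lies in the interior of the ample cone and can therefore be perturbed to a rational ample divisor.
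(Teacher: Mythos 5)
The direction $(1)\Rightarrow(2)$ is essentially fine (though note that condition~$(\ast)$ only controls the finite places, so semipositivity of $g_{\infty}^{\overline{D}}$ does not come from $(\ast)$; the paper is equally terse on this point). The genuine gap is in $(2)\Rightarrow(1)$, at the step where you assert that $\overline{\mathscr{D}}_0^{\rm ad}$ is still ample because ``positivity of all arithmetic intersection numbers follows uniformly from the $\varepsilon_0$-characterization applied to $\overline{D}$''. Theorem~\ref{thm:ample}(3)(c) only permits subtracting $\pi^*\overline{N}$ for $\overline{N}\in\aDiv_{\RR}(\Spec(K))$, i.e.\ a constant shift of the Green functions; it gives no control whatsoever when you subtract $\epsilon_j\overline{\mathscr{A}}_j^{\rm ad}$ with $A_j$ a genuinely ample divisor on $X$. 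Concretely, by Theorem~\ref{thm:ample}(3)(b) ampleness amounts to relative nefness plus $\inf_{x\in X(\overline{K})}h_{\overline{D}}(x)>0$, and $h_{\overline{D}-\epsilon\overline{A}'}(x)=h_{\overline{D}}(x)-\epsilon h_{\overline{A}'}(x)$ can go negative at points of large height unless one already knows that $\overline{D}-\epsilon\overline{A}'$ dominates something nef --- which is essentially what is to be proved. This is precisely the point the paper's argument is built to handle: it proceeds by induction on $\dim X$, uses the arithmetic Siu inequality (Proposition~\ref{prop:Yuan}(1)) to produce a strictly small section $s$ of $\overline{\mathscr{E}}=\overline{\mathscr{D}}-\overline{\mathscr{A}}$, which bounds $h_{\overline{\mathscr{D}}-\delta\overline{\mathscr{A}}}(x)$ from below by $h_{\overline{\mathscr{E}}}(x)>0$ for $x$ outside $\Supp(\mathscr{E}+(s))$, and invokes the inductive hypothesis (w-ampleness of $\overline{\mathscr{D}}|_{\mathscr{Y}_j}$ on the horizontal components $\mathscr{Y}_j$ of that support) for the remaining points. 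Your proposal skips this Siu-plus-induction mechanism entirely, so the decomposition into nef pieces is never established and Theorem~\ref{thm:ample}(4) cannot be invoked.

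A secondary problem: Theorem~\ref{thm:ample}(4) applies to adelic $\QQ$-Cartier divisors, but your $\mathscr{D}_0=\mathscr{D}-\sum_j\epsilon_j\mathscr{A}_j$ need not be $\QQ$-Cartier on the model even when $D_0$ is rational on $X$, because the relatively ample $\RR$-Cartier divisor $\mathscr{D}$ supplied by $(\ast)$ may have irrational vertical components that do not lie in the rational span of the $\mathscr{A}_j$. The paper sidesteps this by choosing nef and w-ample arithmetic Cartier divisors $\overline{\mathscr{A}}_1,\dots,\overline{\mathscr{A}}_l$ whose rational span inside $\Div_{\RR}(\mathscr{X})$ --- not merely inside $\Div_{\RR}(X)$ --- contains $\mathscr{D}$, so that $\overline{\mathscr{E}}=\overline{\mathscr{D}}-\sum_i\varepsilon_i\overline{\mathscr{A}}_i$ can be made rational on the model.
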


\begin{proof}
The implication (1) $\Rightarrow$ (2) is nothing but Theorem~\ref{thm:ample}(1), so that it suffices to show the converse.

(2) $\Rightarrow$ (1): If $\dim X=0$, then $\overline{D}$ can be written as $\left(0,\sum_{v\in M_K\cup\{\infty\}}\lambda_v[v]\right)$, where $\lambda_v=0$ for all but finitely many $v\in M_K$.
By the arithmetic Riemann-Roch formula, there exists a $\phi\in\Rat(X)^{\times}\otimes_{\ZZ}\RR$ such that $\overline{D}+\widehat{(\phi)}>0$.

We show the theorem by induction on dimension.
We can assume that $\overline{D}$ is associated to an arithmetic $\RR$-Cartier divisor $\overline{\mathscr{D}}$ on a normal $O_K$-model $\mathscr{X}$ such that $\mathscr{D}$ is relatively ample, such that $g_{\infty}^{\overline{\mathscr{D}}}$ is of $C^{\infty}$-type and positive pointwise, and such that
\begin{equation}
 \inf_{x\in X(\overline{K})}h_{\overline{\mathscr{D}}}(x)>0.
\end{equation}
In fact, by Theorem~\ref{thm:ample}(3), we have $\lambda:=\inf_{x\in X(\overline{K})}h_{\overline{D}}(x)>0$.
Let $U$ be an open subset of $\Spec(O_K)$ over which the fixed model of definition $(\mathscr{X}_U,\mathscr{D}_U)$ of $\overline{D}$ exists.
Let $\varepsilon$ be a positive real number such that
\[
 \varepsilon\left(\sharp(\Spec(O_K)\setminus U)+[K:\QQ]\right)<\lambda.
\]
By the condition ($\ast$) and the regularization theorem (see \cite[Theorem~4.6]{MoriwakiZar}), there exist a normal $O_K$-model $\mathscr{X}_{\varepsilon}$ and an arithmetic $\RR$-Cartier divisor $\overline{\mathscr{D}}_{\varepsilon}$ on $\mathscr{X}_{\varepsilon}$ such that $\mathscr{D}_{\varepsilon}$ is relatively ample, such that $g_{\infty}^{\overline{\mathscr{D}}_{\varepsilon}}$ is of $C^{\infty}$-type and positive pointwise, and such that
\[
 \overline{\mathscr{D}}_{\varepsilon}^{\rm ad}\leq\overline{D}\leq\overline{\mathscr{D}}_{\varepsilon}^{\rm ad}+2\varepsilon\sum_{v\in(M_K\setminus U)\cup\{\infty\}}(0,[v]).
\]
Note that
\[
 \inf_{x\in X(\overline{K})}h_{\overline{\mathscr{D}}_{\varepsilon}}(x)\geq\lambda-\left(\sharp(\Spec(O_K)\setminus U)+[K:\QQ]\right)\varepsilon>0,
\]
and that, if $\overline{\mathscr{D}}_{\varepsilon}^{\rm ad}$ is w-ample, then so is $\overline{D}$.

By induction hypothesis, $\overline{\mathscr{D}}|_{\mathscr{Y}}$ is w-ample for every horizontal arithmetic subvariety $\mathscr{Y}$ with $\dim\mathscr{Y}<\dim\mathscr{X}$.
Let $\overline{\mathscr{A}}_1,\dots,\overline{\mathscr{A}}_l$ be nef and w-ample arithmetic Cartier divisors of $C^{\infty}$-type on $\mathscr{X}$ such that $\mathscr{D}$ is contained in the rational $\RR$-subspace of $\Div_{\RR}(\mathscr{X})$ spanned by $\mathscr{A}_1,\dots,\mathscr{A}_l$.
There exist positive real numbers $\varepsilon_1,\dots,\varepsilon_l$ such that
\begin{equation}
 \overline{\mathscr{E}}:=\overline{\mathscr{D}}-\sum_{i=1}^l\varepsilon_i\overline{\mathscr{A}}_i
\end{equation}
is rational, $\mathscr{E}$ is ample, $g_{\infty}^{\overline{\mathscr{E}}}$ is positive pointwise, and
\[
 (\dim X+1)\cdot\sum_{i=1}^l\varepsilon_i\adeg\left(\overline{\mathscr{D}}^{\cdot\dim X}\cdot\overline{\mathscr{A}}_i\right)<\avol(\overline{\mathscr{D}}).
\]
Set $\overline{\mathscr{A}}:=\sum_{i=1}^l\varepsilon_i\overline{\mathscr{A}}_i$.
By the arithmetic Siu inequality (Proposition~\ref{prop:Yuan}(1)), we have
\[
 \avol(\overline{\mathscr{E}})\geq\avol(\overline{\mathscr{D}})-(\dim X+1)\cdot\adeg\left(\overline{\mathscr{D}}^{\cdot\dim X}\cdot\overline{\mathscr{A}}\right)>0,
\]
so that there exists an $s\in\aHzq{\QQ}(\overline{\mathscr{E}}^{\rm ad})\setminus\{0\}$.

Let $\mathscr{Y}_1,\dots,\mathscr{Y}_r$ be the reduced, irreducible, and horizontal components of $\CSupp(\mathscr{E}+(s))$.
Since $\overline{\mathscr{D}}|_{\mathscr{Y}_1},\dots,\overline{\mathscr{D}}|_{\mathscr{Y}_r}$ are all w-ample, one finds a sufficiently small $\delta>0$ such that $\mathscr{D}-\delta\mathscr{A}$ is relatively ample and
\[
 \overline{\mathscr{D}}|_{\mathscr{Y}_1}-\delta\overline{\mathscr{A}}|_{\mathscr{Y}_1},\dots,\overline{\mathscr{D}}|_{\mathscr{Y}_r}-\delta\overline{\mathscr{A}}|_{\mathscr{Y}_r}
\]
are all w-ample (see Lemma~\ref{lem:w-ample}(4)).

Set
\begin{equation}
 \overline{\mathscr{F}}:=\overline{\mathscr{D}}-\delta\overline{\mathscr{A}}=\overline{\mathscr{E}}+(1-\delta)\overline{\mathscr{A}}.
\end{equation}
Let $x\in X(\overline{K})$.
If $x\notin \bigcup_{j=1}^p\mathscr{Y}_j(\overline{K})$, then $h_{\overline{\mathscr{F}}}(x)\geq h_{\overline{\mathscr{E}}}(x)>0$.
If $x\in \mathscr{Y}_j(\overline{K})$ for a $j$, then $h_{\overline{\mathscr{F}}}(x)>0$ since $\overline{\mathscr{F}}|_{\mathscr{Y}_j}$ is w-ample.
So $\overline{\mathscr{F}}$ is nef and $\mathscr{F}$ is contained in the rational $\RR$-subspace spanned by $\mathscr{A}_1,\dots,\mathscr{A}_l$.
Hence, by Theorem~\ref{thm:ample}(5), we conclude that
\[
 \overline{\mathscr{D}}^{\rm ad}=\overline{\mathscr{F}}^{\rm ad}+\delta\sum_{i=1}^l\varepsilon_i\overline{\mathscr{A}}_i^{\rm ad}
\]
is w-ample.
\end{proof}

As a consequence of Theorem~\ref{thm:aNM_for_R} and Lemma~\ref{lem:aNM_for_curves}, we have the following.

\begin{corollary}\label{cor:aNM_for_curves}
Let $X$ be a smooth $K$-curve and let $\overline{D}$ be an adelic $\RR$-Cartier divisor on $X$.
The following are equivalent.
\begin{enumerate}
\item $\overline{D}$ is ample.
\item $\overline{D}$ is w-ample and relatively nef.
\end{enumerate}
\end{corollary}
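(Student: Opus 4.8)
The plan is to read Corollary~\ref{cor:aNM_for_curves} off from Theorem~\ref{thm:aNM_for_R} and Lemma~\ref{lem:aNM_for_curves}. The implication (2)~$\Rightarrow$~(1) needs nothing new: it is Theorem~\ref{thm:ample}(1), which says that a relatively nef and w-ample adelic $\RR$-Cartier divisor is ample. So all the content is in (1)~$\Rightarrow$~(2). First I would note that if $\overline{D}$ is ample then, by Theorem~\ref{thm:ample}(3) (the equivalence of (a) and (b) there), $D$ is ample on $X$ and $\overline{D}$ is relatively nef; hence the only thing that remains to be shown is that $\overline{D}$ is w-ample.

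Next I would verify that $\overline{D}$ satisfies condition~(1) of Lemma~\ref{lem:aNM_for_curves}. Being relatively nef means, by definition, that $D$ is nef --- which it is, being ample --- and that each Green function $g_v^{\overline{D}}$, $v\in M_K$, is semipositive, i.e.\ a uniform limit of $\mathscr{D}$-Green functions coming from relatively nef models. I would fix a $U$-model of definition $(\mathscr{X}_U,\mathscr{D}_U)$ for $\overline{D}$ with $\mathscr{X}_U\to U$ smooth and, after shrinking $U$, relatively nef over $U$; then, given $\varepsilon>0$, I would approximate $g_v^{\overline{D}}$ at the finitely many places $v\in M_K\setminus U$ by relatively nef models to within $\varepsilon$, and glue the resulting data to $(\mathscr{X}_U,\mathscr{D}_U)$ --- using the approximation theorem \cite[Theorem~4.1.3]{MoriwakiAdelic} and a domination over $\Spec(O_K)$ --- into a single $O_K$-model $(\mathscr{X}_\varepsilon,\mathscr{D}_\varepsilon)$ of $(X,D)$ that extends $(\mathscr{X}_U,\mathscr{D}_U)$, has $\mathscr{D}_\varepsilon$ relatively nef (a fiberwise, hence Zariski-local, condition over $\Spec(O_K)$), and satisfies $\|g_v^{\overline{D}}-g_v^{(\mathscr{X}_\varepsilon,\mathscr{D}_\varepsilon)}\|_{\sup}\le\varepsilon$ for all $v\in M_K$. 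This is exactly condition~(1) of Lemma~\ref{lem:aNM_for_curves}.

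By Lemma~\ref{lem:aNM_for_curves}, condition~(2) of that lemma then also holds, and condition~(2) is precisely the hypothesis $(\ast)$ of Theorem~\ref{thm:aNM_for_R} for the smooth projective curve $X$. Since $\overline{D}$ is moreover ample, Theorem~\ref{thm:aNM_for_R} gives that $\overline{D}$ is w-ample, which together with the relative nefness noted above completes the proof of (1)~$\Rightarrow$~(2). The step I expect to require real care is the gluing in the middle paragraph: upgrading the place-by-place semipositivity of the Green functions of $\overline{D}$ to one relatively nef model over all of $\Spec(O_K)$ that simultaneously approximates $\overline{D}$ at every finite place and restricts to a prescribed model of definition over $U$. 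Everything else is a formal combination of the results already available.
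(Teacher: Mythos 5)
Your proposal is correct and follows exactly the route the paper intends: the paper gives no separate proof and simply states the corollary "as a consequence of Theorem~\ref{thm:aNM_for_R} and Lemma~\ref{lem:aNM_for_curves}," which is precisely your chain (ampleness gives relative nefness and ampleness of $D$ via Theorem~\ref{thm:ample}(3), semipositivity of the Green functions yields condition (1) of Lemma~\ref{lem:aNM_for_curves}, the lemma upgrades this to condition $(\ast)$, and Theorem~\ref{thm:aNM_for_R} then gives w-ampleness, with the converse being Theorem~\ref{thm:ample}(1)). The gluing step you flag is indeed the only point needing care, and it is handled the same way the paper does in the proof of Theorem~\ref{thm:ample}(4), where such a global relatively nef model approximating $\overline{D}$ at all finite places is taken to exist by the definition of semipositivity together with the fixed $U$-model of definition.
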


\bibliography{ikoma}
\bibliographystyle{plain}

\end{document}